\newcommand\reallywidehat[1]{%
\savestack{\tmpbox}{\stretchto{%
  \scaleto{%
    \scalerel*[\widthof{\ensuremath{#1}}]{\kern-.6pt\bigwedge\kern-.6pt}%
    {\rule[-\textheight/2]{1ex}{\textheight}}
  }{\textheight}%
}{0.5ex}}%
\stackon[1pt]{#1}{\tmpbox}%
}
\definecolor{darkgreen}{rgb}{0,0.5,0}
\definecolor{darkblue}{rgb}{0,0,0.7}
\definecolor{darkred}{rgb}{0.9,0.1,0.1}
\definecolor{lightblue}{rgb}{0,0.51,1}
\newtheorem{theorem}{Theorem}
\newtheorem{proposition}{Proposition}
\newtheorem{lemma}[proposition]{Lemma}
\newtheorem{corollary}[proposition]{Corollary}
\theoremstyle{remark}
\newtheorem{remark}[proposition]{Remark}
\theoremstyle{definition}
\newtheorem{definition}[proposition]{Definition}
\newtheorem{assumption}[proposition]{Assumption}
\numberwithin{equation}{section}
\numberwithin{proposition}{section}
\newcommand{\Z}{\mathbb{Z}}
\newcommand{\N}{\mathbb{N}}
\newcommand{\R}{\mathbb{R}}
\newcommand{\ep}{\varepsilon}
\renewcommand{\subset}{\subseteq}
\newcommand{\cu}{{\scaleobj{1.2}{\square}}}
\DeclareMathOperator{\dist}{dist}
\DeclareMathOperator{\supp}{supp}
\DeclareMathOperator{\Rel}{Re}
\renewcommand{\bar}{\overline}
\renewcommand{\tilde}{\widetilde}
\begin{document}

\title[Local energy weak solutions in the half-space]{Local energy weak solutions for the Navier-Stokes equations in the half-space}

\author[Y. Maekawa]{Yasunori Maekawa}
\address[Y. Maekawa]{Kyoto University, Department of Mathematics, Kyoto, Japan}
\email{maekawa@math.kyoto-u.ac.jp}

\author[H. Miura]{Hideyuki Miura}
\address[H. Miura]{Tokyo Institute of Technology, Department of 
Mathematical and Computing Sciences, Tokyo, Japan}
\email{miura@is.titech.ac.jp}

\author[C. Prange]{Christophe Prange}
\address[C. Prange]{Universit\'e de Bordeaux, CNRS, UMR [5251], IMB, Bordeaux, France}
\email{christophe.prange@math.u-bordeaux.fr}

\keywords{}
\subjclass[2010]{}
\date{\today}

\maketitle

\selectlanguage{english}

\begin{abstract}
The purpose of this paper is to prove the existence of global in time local energy weak solutions to the Navier-Stokes equations in the half-space $\R^3_+$. Such solutions are sometimes called Lemari\'e-Rieusset solutions in the whole space $\R^3$. The main tool in our work is an explicit representation formula for the pressure, which is decomposed into a Helmholtz-Leray part and a harmonic part due to the boundary. We also explain how our result enables to reprove the blow-up of the scale-critical $L^3(\R^3_+)$ norm obtained by Barker and Seregin for solutions developing a singularity in finite time. 
\end{abstract}

\section{Introduction}

This paper is devoted to the proof of existence of local energy weak solutions to the Navier-Stokes equations in the half-space
\begin{equation}
\label{e.nse}
\left\{ 
\begin{aligned}
& \partial_tu+u\cdot\nabla u-\Delta u+\nabla p = 0, \quad \nabla\cdot u=0  & \mbox{in} & \ (0,T)\times\R^3_+, \\
& u = 0  & \mbox{on} & \ (0,T)\times\partial \R^3_+. 
\end{aligned}
\right.
\end{equation}
for initial data $u_0$ locally uniformly in $L^2$ and divergence-free. 
The study of weak finite energy solutions to \eqref{e.nse} with initial data $u_0\in L^2_{\sigma}(\Omega)$, where $\Omega$ can be for instance a bounded domain, $\R^3$ or $\R^3_+$, has a long history which goes back to the seminal works of Leray \cite{L34} and Hopf \cite{H51}. The study of infinite energy solutions is much more recent. It is interesting in its own right since one can study nontrivial dynamics generated by the solutions themselves and not driven by a source term. Let us just mention the latest works of Abe and Giga \cite{AG13,AG14,Abe15,Abe16} about Stokes and Navier-Stokes equations in $L^\infty$, of Gallay and Slijep\v cevi\'c \cite{GS15} about boundedness for 2D Navier-Stokes equations and of Maremonti and Shimizu \cite{MS18}, Kwon and Tsai \cite{KT} about global weak solutions with initial data non decaying at space infinity. 

\smallskip

We are interested in a special kind of infinite energy solutions, so-called local energy weak solutions. For these solutions the energy is locally uniformly bounded. This notion of solutions has been pioneered by Lemari\'e-Rieusset \cite{lemariebook} in the whole space $\R^3$, and later slightly extended by Kikuchi and Seregin \cite{KS07}. Our goal is to extend the notion of solution to the half-space $\R^3_+$ and to prove local in time as well as global in time existence results. This answers an open problem mentioned by Barker and Seregin in \cite[Section 1]{BS15}:
\begin{quote}
Unfortunately, and analogue of Lemari\'e-Rieusset type solutions for the half-space is not known yet. In fact it is an interesting open problem.
\end{quote}

\smallskip

The class of local energy weak solutions, which will be made precise in Definition \ref{def.weakles}, is very useful, even for the study of finite energy weak solutions to \eqref{e.nse}, so-called Leray-Hopf solutions, for at least three reasons.\\
\noindent The first reason is that they satisfy a local energy inequality. In particular, the solutions are 
suitable  in the sense of Caffarelli, Kohn and Nirenberg \cite{CKN82,Lin98}, so that we can apply $\ep$-regularity to them. The half-space analogues of \cite{CKN82,Lin98}, corresponding to the $\ep$-boundary regularity, have been worked out in \cite{Ser02,SSS04,SS14}.\\
\noindent The second reason is that local energy weak solutions appear as limits of rescaled solutions of the Navier-Stokes equations. This is the case for instance when studying the local behavior of a Leray-Hopf solution near a potential singularity. The energy being supercritical in 3D with respect to the Navier-Stokes scaling $u_\lambda(y,s)=\lambda u(\lambda y,\lambda^2 s)$, the energy blows-up when zooming. The limit object is still a solution of the Navier-Stokes system, not in the finite energy class, but in the local energy class.\\
\noindent Finally, the theory of local energy solutions plays also an important role in the seminal work of Jia and {{\v S}ver{\'a}k} \cite{JS} about the construction of forward self-similar solutions with large initial data. This work and the subsequent studies \cite{JS15,GS17} represent a big progress toward understanding non-uniqueness of Leray-Hopf solutions.

\smallskip

Combining the features of the local energy weak solutions emphasized in the previous paragraph makes them powerful objects to study, for instance, blow-up of scale-critical norms near potential singularities. In this way, Seregin \cite{Ser12} was able to improve the celebrated result of Escauriaza, Seregin and {{\v S}ver{\'a}k} \cite{ISS03}. Seregin proved that: if a weak finite energy solution $u$ to \eqref{e.nse} in $\R^3$ has a first singularity at time $T$, in the sense that $u$ is smooth in the time interval $(0,T)$ and that the $L^\infty$ norm of $u$ is infinite in any parabolic cylinder $B(x_0,\rho)\times (T-\rho^2,T)$, for fixed $x_0\in\R^3$ and any $\rho>0$, then
\begin{equation*}
\|u(\cdot,t)\|_{L^3(\R^3)}\longrightarrow\infty \qquad\mbox{as} \ \ t
\rightarrow T-0.
\end{equation*}
One of our objectives is to show that the solutions we construct make it possible to prove the blow-up of the $L^3$ norm in the case of the half-space $\R^3_+$ following the scheme in \cite{Ser12}. Hence, we will recover the result of \cite[Theorem 1.1]{BS15} of the blow-up of the $L^{3,q}$ norm $3\leq q<\infty$, in the case $q=3$.

\smallskip

The content of this paper was summarized in the review article \cite{P18}. In particular, our notion of local energy weak solutions is compared to the notion of weak solutions in the half-space appearing in the work of Maremonti and Shimizu \cite{MS18}.

\subsection{Definition of local energy weak solutions}

Let us first recall the definition of loc-uniform Lebesgue spaces: for $1\leq q\leq\infty$,
\begin{equation*}
L^q_{uloc}(\R^d_+):=\left\{f\in L^1_{loc}(\R^d_+)~|~ \sup_{\eta\in\mathbb Z^{d-1}\times\mathbb Z_{\geq 0}}\|f\|_{L^q(\eta+(0,1)^d)}<\infty\right\}.
\end{equation*}
Let us define the space $L^p_{uloc,\sigma} (\R^d_+)$ of solenoidal vector fields in $L^
q_{uloc}$ as follows:
\begin{align*}
L^q_{uloc,\sigma} (\R^d_+):= \left\{ f\in L^q_{uloc}(\R^d_+)^d ~|~ \int_{\R^d_+} f \cdot \nabla \varphi \,  d x =0~~{\rm for~any}~\varphi \in C_0^\infty (\overline{\R^d_+})\right\}.
\end{align*}
For more properties of these spaces of locally uniformly $p$-integrable functions, see \cite{MMP17} and the references cited therein. We also refer to Lemma \ref{lem.characterize}, which characterizes the functions of $\mathcal L^2_{uloc,\sigma}$ as the functions $L^2_{uloc,\sigma}$ which have some (not quantified) decay at infinity.

\smallskip

Here we state the definition of local energy weak solutions to \eqref{e.nse} when the initial data belongs to 
\begin{equation*}
\mathcal L^2_{uloc,\sigma}(\R^3_+):=\overline{C^\infty_{c,\sigma}}^{L^2_{uloc}}(\R^3_+).
\end{equation*}
We will actually be able to construct local energy weak solutions for data in $L^2_{uloc,\sigma} (\R^3_+)$. Nevertheless, the introduction of the space $\mathcal L^2_{uloc,\sigma}(\R^3_+)$ is useful since the solutions in this class decay at spatial infinity, and hence, the parasitic solutions (the flows driven by the pressure with linear growth) are automatically excluded in this class.  
Then we can state the definition of solutions in a simple fashion compared with the solutions in the class of nondecaying functions, where the structure of the pressure has to be included in the definition of solutions (see Remark \ref{rem.def.weakles} below on this point). Although $\mathcal L^2_{uloc,\sigma}(\R^3_+)$ is strictly smaller than $L^2_{uloc,\sigma}(\R^3_+)$ the study in this class has an important application to the blow up criterion of  solutions to \eqref{e.nse} in $L^3$, which will be discussed in Section \ref{sec.blowup}.
  
\begin{definition}\label{def.weakles} Let $T\in(0,\infty]$ and $Q_T := (0,T)\times \R^3_+$. A pair $(u,p)$ is called a local energy weak solution to \eqref{e.nse} in $Q_T$ with the initial data $u_0\in \mathcal L^2_{uloc,\sigma} (\R^3_+)$ if $(u,p)$ satisfies the following conditions:

\noindent {\rm (i)} We have $u\in L^\infty(0,T; \mathcal L^2_{uloc,\sigma} (\R^3_+))$ if $T<\infty$, 
 $u\in L^\infty_{loc}([0,T); \mathcal L^2_{uloc,\sigma} (\R^3_+))$ if $T=\infty$ and 
$p\in L^\frac32_{loc} ((0,T)\times \overline{\R^3_+})$,  and 
\begin{align}
\sup_{x\in \R^3_+} \int_0^{T'} \| \nabla u \|_{L^2 (B(x)\cap \R^3_+)}^2 d t + \sup_{x\in \R^3_+} \left(\int_\delta^{T'} \| \nabla p \|_{L^\frac98 (B(x)\cap \R^3_+)}^\frac32 d t\right)^\frac23 <\infty
\end{align}
for all finite $T'\in (0, T]$ and $\delta \in (0,T')$. Here $B(x)$ is the ball of radius $1$ centered at $x$.

\noindent {\rm (ii)} The pair $(u,p)$ satisfies 
\begin{align}
\begin{split}
& \int_0^T - \langle u, \partial_t \varphi \rangle_{L^2(\R^3_+)}  + \langle \nabla u, \nabla \varphi \rangle_{L^2(\R^3_+)} - \langle p, {\rm div}\, \varphi \rangle_{L^2 (\R^3_+)} + \langle u\cdot \nabla u, \varphi \rangle_{L^2 (\R^3_+)}  \, d t =0 \\
&\text{ for any }~~\varphi \in C_c^\infty ((0,T)\times \overline{\R^3_+})^3 \text{ such that }~~\varphi|_{x_3=0}=0.
\end{split}
\end{align}

\noindent {\rm (iii)} The function $t\mapsto \langle u(t), w \rangle_{L^2(\R^3_+)}$ belongs to $C([0,T))$ for any compactly supported $w\in L^2(\R^3_+)^3$. Moreover, for any compact set $K\subset \overline{\R^3_+}$,
\begin{align}\label{e.cvinit}
\lim_{t\rightarrow 0} \| u(t) -u_0 \|_{L^2(K)} =0.
\end{align} 

\noindent {\rm (iv)} The pair $(u,p)$ satisfies the local energy inequality:
for any $\chi\in C^\infty_c((0,T)\times \overline{\R^3_+})$ and for $a.e.~t\in (0,T)$,
\begin{align}\label{e.locenineq} 
\begin{split}
& \| (\chi u)(t) \|_{L^2(\R^3_+)}^2 + 2\int_0^t \| \chi \nabla u \|_{L^2 (\R^3_+)}^2 d s \\
& \qquad \leq \int_0^t \langle |u|^2, \partial_s \chi^2 + \Delta \chi^2 \rangle_{L^2 (\R^3_+)}  + \langle u\cdot \nabla \chi^2, |u|^2 + 2 p \rangle_{L^2 (\R^3_+)} d s.
\end{split}
\end{align}
\end{definition}

\begin{remark}\label{rem.def.weakles}{\rm (1) Our definition is close to the one used in \cite{JS2,JS},
where the authors defined local energy weak solutions in $(0,T) \times \R^3$  which decay at spatial infinity and they do not include the representation formula for the pressure. 
For $T<\infty$ one can also define local energy weak solutions for initial data in $L^2_{uloc,\sigma} (\R^3_+)$. In this case one has to replace the condition $u\in L^\infty (0,T; \mathcal{L}^2_{uloc,\sigma} (\R^3_+))$ by 
$u\in L^\infty(0,T; L^2_{uloc,\sigma} (\R^3_+))$. 
However, since the solutions in this class do not decay at all as $|x|\rightarrow \infty$, the uniqueness is violated even for smooth solutions unless one imposes some additional condition on the pressure. This lack of the uniqueness is brought by the flows driven by  the pressure, called parasitic solutions.  
A typical way to exclude such parasitic solutions is to assume in addition that the pressure $\nabla p$ is written as $\nabla p (t) = \nabla p _{Helm}  (t) + \nabla p_{Har} (t)$, where for $a.e.\, t\in (0,T)$, $\nabla p_{Helm} (t)$ is defined as the solution to the Poisson equations $-\Delta p_{Helm} (t) = \nabla \cdot \nabla (u (t) \otimes u (t))$ in $\R^3_+$ and $\partial_3 p_{Helm} (t)=0$ on $\partial \R^3_+$ which is expressed in terms of  (some derivatives of) the Newton potential, while $\nabla p_{Har} (t)$ is the harmonic pressure which satisfies $\Delta p_{Har} (t)=0$ in $\R^3_+$ and 
\begin{align}
\lim_{R\rightarrow \infty} \| \nabla' p_{Har} (t) \|_{\{|x'|\leq 1, R<x_d<R+1\}} =0 \qquad \text{ for }~~ a.e. ~ t\in (0,T).
\end{align}
This condition for the pressure is not needed for solutions in $L^\infty (0,T; \mathcal{L}^2_{uloc,\sigma} (\R^3_+))$, since the solutions in this class decay at spatial infinity, and thus, the parasitic solutions are automatically excluded.

\noindent (2) The $\ep$-regularity theorem holds for any local energy weak solutions in Definition \ref{def.weakles}. This is not trivial since the regularity assumption for our local energy weak solutions is not strong enough and therefore one has to show that any local energy weak solution admits additional regularity so that the known $\ep$-regularity theorem is applied. To this end a detailed study of the pressure term is required, which will be done in Section \ref{sec.pressureest}, and we also need a uniqueness result (Liouville theorem) for the homogeneous Stokes system obtained in our companion paper \cite[Theorem 5]{MMP17}. This issue will be handled in Section \ref{sec.property.weak}.

\noindent (3) According to (iv) in Definition \ref{def.weakles}, only test functions $\chi$ compactly supported in space and time are allowed in the energy inequality \eqref{e.locenineq}. However, the continuity at $0$ stated in point (iii) of Definition \ref{def.weakles} allows to take test functions constant in time. Let $\chi\in C^\infty_c(\overline{\R^3_+})$. For $\delta>0$, let $\eta\in C^\infty(\R)$ is a cut-off such that $|\eta|\leq 1$, $\eta\equiv 0$ on $(-\infty,1)$ and $\eta\equiv 1$ on $(2,\infty)$. Then $\chi_\delta:=\chi(\eta(\tfrac\cdot{\delta})-\eta(\tfrac{T-\cdot}{\delta}))\in C^\infty_c((0,T) \times \overline{\R^3_+})$ is an admissible test function in \eqref{e.locenineq}. Plugging $\chi_\delta$ in \eqref{e.locenineq}, we let $\delta\rightarrow 0$. Only one term really deserves some attention. We have
\begin{align*}
&\int_0^t\int_{\R^3_+}|u|^2\partial_s(\chi^2\eta(\tfrac\cdot{\delta})^2)dxds-\int_0^t\int_{\R^3_+}|u|^2\partial_s(\chi^2)-\int_{\R^3_+}|\chi u_0|^2dx\\
=\ &\int_0^t\int_{\R^3_+}|u|^2\partial_s(\chi^2)\eta(\tfrac\cdot{\delta})^2dxds-\int_0^t\int_{\R^3_+}|u|^2\partial_s(\chi^2)\eta(\tfrac\cdot{\delta})^2dxds\\
&+\int_0^{2\delta}\int_{\R^3_+}|u|^2\chi^2\partial_s(\eta(\tfrac\cdot{\delta})^2)dxds-\int_{\R^3_+}|\chi u_0|^2dx\\
=\ &o(\delta).
\end{align*}
Indeed, 
\begin{align*}
&\int_0^{2\delta}\int_{\R^3_+}|u|^2\chi^2\partial_s(\eta(\tfrac\cdot{\delta})^2)dxds-\int_{\R^3_+}|\chi u_0|^2dx\\
=\ &2\delta^{-1}\int_0^{2\delta}\int_{\R^3_+}|u|^2\chi^2\eta'(\tfrac\cdot{\delta})\eta(\tfrac\cdot{\delta})dxds-\int_{\R^3_+}|\chi u_0|^2dx\\
=\ &2\delta^{-1}\int_0^{2\delta}\int_{\R^3_+}(|u(\cdot,s)|^2-|u_0|^2)\eta'(\tfrac\cdot{\delta})\eta(\tfrac\cdot{\delta})\chi^2dxds\\
&+\int_0^{2\delta}\partial_s(\eta(\tfrac\cdot{\delta})^2)\int_{\R^3_+}|u_0|^2\chi^2dxds-\int_{\R^3_+}|\chi u_0|^2dx,
\end{align*}
where the first term in the right hand side goes to zero by the boundedness of the Hardy-Littlewood maximal function on $L^\infty$ and the local strong convergence to initial data \eqref{e.cvinit}, and the sum of the last two terms in the right hand side is zero. We hence obtain 
\begin{align}\label{e.locenineqbis} 
\begin{split}
& \| (\chi u)(t) \|_{L^2(\R^3_+)}^2 + 2\int_0^t \| \chi \nabla u \|_{L^2 (\R^3_+)}^2 d s \\
& \qquad \leq \| \chi u_0 \|_{L^2(\R^3_+)}^2+\int_0^t \langle |u|^2, \partial_s \chi^2 + \Delta \chi^2 \rangle_{L^2 (\R^3_+)}  + \langle u\cdot \nabla \chi^2, |u|^2 + 2 p \rangle_{L^2 (\R^3_+)} d s.
\end{split}
\end{align}
This result will be used in Section \ref{sec.decay}.}
\end{remark}

\subsection{Outline of our results}

The main result of our paper is stated as follows:

\begin{theorem}\label{prop.global.weak} 
For any $u_0 \in \mathcal{L}^2_{uloc,\sigma} (\R^3_+)$ there exists a local energy weak solution $(u,p)$ to \eqref{e.nse} in $Q_\infty$ with initial data $u_0$.
\end{theorem}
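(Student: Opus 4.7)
The plan is the classical approximation--compactness scheme in the Lemari\'e-Rieusset spirit, adapted to the half-space: regularize the initial datum so that the truncated datum lies in $L^2_\sigma(\R^3_+)$, solve the associated Leray--Hopf problem, derive a priori bounds uniform in the approximation parameter in the local energy class, pass to the limit to obtain a local-in-time solution, and then iterate to cover $(0,\infty)$.

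\textbf{Approximation and a priori estimates.} I would fix smooth cut-offs $\chi_n$ supported on $\{|x|<n+1\}\cap\R^3_+$ and define $u_0^{(n)}\in L^2_\sigma(\R^3_+)$ by correcting $\chi_n u_0$ via a Bogovskii-type operator to restore the divergence-free condition and the no-slip boundary condition. Since $u_0\in\mathcal L^2_{uloc,\sigma}(\R^3_+)$ has some decay at infinity (Lemma \ref{lem.characterize}), $u_0^{(n)}\to u_0$ strongly in $L^2_{uloc}$. For each $n$, classical Leray--Hopf theory in the half-space produces a weak solution $(u^{(n)},p^{(n)})$. The heart of the argument is to establish
\[
\sup_{n,\,0\leq t\leq T_\ast}\|u^{(n)}(t)\|_{L^2_{uloc}}^2+\sup_{n,\,x}\int_0^{T_\ast}\!\|\nabla u^{(n)}\|_{L^2(B(x)\cap\R^3_+)}^2\,dt<\infty,
\]
uniformly in $n$, on a time interval $T_\ast=T_\ast(\|u_0\|_{L^2_{uloc}})$. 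This is obtained by localizing the energy identity against $\phi_x^2 u^{(n)}$, with $\phi_x$ concentrated in $B(x,2)\cap\R^3_+$; the drift and pressure--transport terms are then dominated using the $3$D interpolation $\|u\|_{L^3_{uloc}}\lesssim\|u\|_{L^2_{uloc}}^{1/2}\|u\|_{H^1_{uloc}}^{1/2}$, producing a Gronwall-type inequality of the form $N(t)\leq N(0)+C\int_0^t(1+N(s)^3)\,ds$ for the localized energy $N(t)$.

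\textbf{The pressure: the main obstacle.} The principal difficulty, specific to the half-space, is controlling the pressure $p^{(n)}$ through its decomposition $\nabla p^{(n)}=\nabla p_{Helm}^{(n)}+\nabla p_{Har}^{(n)}$ described in Remark \ref{rem.def.weakles}. The Helmholtz--Leray piece can be handled by Calder\'on--Zygmund estimates localized on cubes, in the spirit of Lemari\'e-Rieusset, giving control of $\|p_{Helm}^{(n)}\|_{L^{3/2}_{uloc}}$ in terms of $\|u^{(n)}\|_{L^3_{uloc}}^2$, modulo a tail that decays away from the support of the source. The harmonic piece is more delicate: lacking an interior source, it is reconstructed via a boundary representation and is only controlled in a slightly weaker topology, which is why Definition \ref{def.weakles} only requires $\nabla p\in L^{9/8}_{uloc}$ rather than $L^{3/2}_{uloc}$. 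These bounds, which are established in Section \ref{sec.pressureest}, close the a priori estimate and also yield the local strong convergence of pressures that will be needed to pass to the limit.

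\textbf{Compactness, limit, and globalization.} With these uniform bounds I would apply Aubin--Lions on each $B(x_0,R)\cap\R^3_+$ to extract $u^{(n)}\to u$ strongly in $L^3_{t,x,loc}$, weakly-$\star$ in $L^\infty_tL^2_{uloc}$, and weakly in $L^2_tH^1_{uloc}$, together with $p^{(n)}\to p$ in $L^{3/2}_{loc}$. Strong local convergence is sufficient to pass to the limit in all the nonlinear terms in the weak formulation and in the local energy inequality \eqref{e.locenineq}, yielding items (i), (ii) and (iv) of Definition \ref{def.weakles}; the weak continuity in time (iii) together with the initial condition \eqref{e.cvinit} follows from the energy inequality as in Leray's argument, using $u_0^{(n)}\to u_0$ in $L^2_{uloc}$. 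Finally, evaluating the local energy inequality at $t=T_\ast$ bounds $\|u(T_\ast)\|_{L^2_{uloc}}$ in terms of $\|u_0\|_{L^2_{uloc}}$; restarting from $u(T_\ast)\in\mathcal L^2_{uloc,\sigma}$ and iterating, with each step contributing a time increment bounded below in a controlled way, extends the local solution to $(0,\infty)$.
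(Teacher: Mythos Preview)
Your local-existence scheme is a legitimate variant of the paper's: the paper regularizes the nonlinearity (a mollified transport term $F_\epsilon(v)\cdot\nabla v$) rather than the initial datum, but the uniform local-energy Gronwall estimate and the compactness/limit procedure you outline are essentially the content of Section~\ref{sec.locexiles}, and your sketch of the pressure control matches the decomposition in Section~\ref{sec.pressureest}.

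The genuine gap is in your globalization step. You write that the local energy inequality bounds $\|u(T_\ast)\|_{L^2_{uloc}}$ in terms of $\|u_0\|_{L^2_{uloc}}$, and that iterating with ``each step contributing a time increment bounded below in a controlled way'' covers $(0,\infty)$. But the local-energy inequality only gives $\|u(T_\ast)\|_{L^2_{uloc}}^2\leq M\|u_0\|_{L^2_{uloc}}^2$ with some $M>1$, and the local existence time behaves like a negative power of $\|u_0\|_{L^2_{uloc}}$ (see the bound on $T_0$ in the proof of Proposition~\ref{prop.vunif}). Iterating, the norms may grow geometrically and the time increments shrink geometrically; nothing prevents their sum from being finite. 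There is no global a priori bound on $\|u(t)\|_{L^2_{uloc}}$ analogous to the energy identity for Leray--Hopf solutions, so naive restart does not close.

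The paper's mechanism for reaching an arbitrary finite time $T$ is entirely different and is the core of the proof. One first invokes the spatial-decay result (Theorem~\ref{prop.decayinfty}) together with $\epsilon$-regularity (Theorem~\ref{thm.ep.regularity}) to show that the local solution instantaneously gains integrability: for some $t_0$ in the local-existence interval, $u(t_0)\in\mathcal L^4_{uloc,\sigma}(\R^3_+)$. One then splits $u(t_0)=u^{1,\epsilon}(t_0)+u^{2,\epsilon}(t_0)$ with $\|u^{1,\epsilon}(t_0)\|_{L^4_{uloc}}$ arbitrarily small and $u^{2,\epsilon}(t_0)\in L^2_\sigma(\R^3_+)$. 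The small part generates a mild solution on all of $(t_0,T)$ (by \cite[Proposition~7.1]{MMP17}), while the finite-energy part generates a global Leray--Hopf solution to a perturbed system; their sum is shown to satisfy the local energy inequality and is glued to $u$ on $[0,t_0]$. Only after this ``any finite $T$'' result is established does a trivial recursion (adding unit time intervals) yield $T=\infty$. Your proposal misses the regularity gain and the small/large decomposition, which are precisely what circumvent the lack of a global $L^2_{uloc}$ bound.
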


This result states the global in time existence of local energy weak solutions in the sense of Definition \ref{def.weakles}. It is the analog for the half-space of the theorem of Lemari\'e-Rieusset \cite[Theorem 33.1]{lemariebook} and of Kikuchi and Seregin \cite[Theorem 1.5]{KS07} for the whole space $\R^3$. Local in time existence of local energy weak solutions for data in $L^2_{uloc,\sigma}(\R^3_+)$ is proved in Section \ref{sec.locexiles}, see Proposition \ref{prop.local}. 

\smallskip

The proof of Theorem \ref{prop.global.weak} goes roughly as follows. The evolution starts with a rough data barely locally in $L^2$, $u_0\in\mathcal L^2_{uloc}$. The local in time local energy weak solution obtained thanks to Proposition  \ref{prop.local} instantly becomes slightly more regular, $u(\cdot,t_0)\in\mathcal L^4_{uloc,\sigma}(\R^3_+)$ for almost all $t_0$ in the time existence interval. This allows to decompose the data $u(\cdot,t_0)$ into a large $C^\infty_c(\R^3_+)$ part for which we have global in time Leray-Hopf solutions, and a small part in $L^4_{uloc,\sigma}(\R^3_+)$ for which we have local in time existence of mild solutions thanks to Proposition 7.1 in \cite{MMP17}. The difficult part of this reasoning is to transfer the decay of the initial data $u_0\in\mathcal L^2_{uloc,\sigma}$ to the solution $u(\cdot,t)$, i.e. to prove that not only $u(\cdot,t_0)\in L^4_{uloc,\sigma}(\R^3_+)$ for almost all $t_0$, but that actually $u(\cdot,t_0)\in\mathcal L^4_{uloc,\sigma}(\R^3_+)$ for almost all $t_0$. This issue is already addressed in \cite[Proposition 32.2]{lemariebook} and \cite[Theorem 1.4]{KS07} in the case of the whole space. We handle this question for $\R^3_+$. Our main result in this direction is the following theorem, which holds under the assumption below. Let $T>0$ and $\delta>0$ be fixed.

\begin{assumption}\label{assu.A}
There exists $A_{T,\delta}\geq 1$ such that for all $u_0\in\mathcal L^2_{uloc,\sigma}(\R^3_+)$, for every solution $u$ to \eqref{e.nse} in $Q_T$ in the sense of Definition \ref{def.weakles} with initial data $u_0$, if $\|u_0\|_{L^2_{uloc}(\R^3_+)}\leq \delta$ 
then
\begin{equation}\label{e.apriorilenassu}
\sup_{t\in(0,T)}\sup_{\eta\in\Z^3_+}\int_{\cu(\eta)}|u(\cdot,t)|^2+\int_0^T\int_{\cu(\eta)}|\nabla u|^2+\left(\int_0^T\int_{\cu(\eta)}|u|^3\right)^\frac23\leq A_{T,\delta}.
\end{equation}
\end{assumption}

\begin{theorem}\label{prop.decayinfty}
Assume that Assumption \ref{assu.A} holds. Then for all $u_0\in\mathcal L^2_{uloc,\sigma}(\R^3_+)$, all weak local energy solution $u$ to \eqref{e.nse} on $Q_T$ in the sense of Definition \ref{def.weakles} with initial data $u_0$ satisfies
\begin{align}\label{e.decayatinfty}
&\sup_{t\in (0,T)}\sup_{\eta\in\Z^3_+}\int_{\cu(\eta)}|\vartheta_R u(\cdot,t)|^2+\int_0^T\int_{\cu(\eta)}|\vartheta_R\nabla u|^2\\
&\qquad+\left(\int_0^T\int_{\cu(\eta)}|\vartheta_R u|^3\right)^\frac23+\left(\int_\delta^T\int_{\cu(\eta)}|p|^\frac32\right)^\frac23\stackrel{R\rightarrow \infty}{\longrightarrow}0,\nonumber
\end{align}
for all $\delta\in(0,T)$, with $\vartheta$ the cut-off defined in Section \ref{sec.notations} and $\vartheta_R:=\vartheta(\cdot/R)$.
\end{theorem}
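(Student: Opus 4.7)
The strategy is to reduce to the uniform small-data bound of Assumption \ref{assu.A} through a splitting of the initial data and a weighted local energy inequality for the perturbation. Fix $\ep>0$. Since $u_0\in\mathcal L^2_{uloc,\sigma}(\R^3_+)$, Lemma \ref{lem.characterize} gives $\|\vartheta_{R_0}u_0\|_{L^2_{uloc}}\to 0$ as $R_0\to\infty$; choose $R_0$ so large that this norm is below the threshold $\delta$ of Assumption \ref{assu.A} (divided by a universal constant absorbing the loss of a Bogovskii corrector). With $\phi\in C^\infty_c(B(0,R_0))$ a cutoff equal to $1$ on $B(0,R_0/2)$, set
$$v_0:=\phi u_0-\mathcal B[\nabla\phi\cdot u_0]\in L^2_\sigma(\R^3_+),\qquad w_0:=u_0-v_0\in\mathcal L^2_{uloc,\sigma}(\R^3_+),$$
where $\mathcal B$ is a Bogovskii operator on the annulus $R_0/2\leq|x|\leq R_0$ (adapted when this annulus meets $\partial\R^3_+$). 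Then $v_0$ is compactly supported and $\|w_0\|_{L^2_{uloc}}\leq\delta$.

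Next, let $v$ be a global Leray--Hopf weak solution to \eqref{e.nse} with initial data $v_0$: classical half-space theory yields $v\in L^\infty(0,T;L^2)\cap L^2(0,T;H^1_0)$ satisfying the global energy inequality and, by Ladyzhenskaya interpolation, $v\in L^{10/3}_{t,x}$. Dominated convergence then gives
$$\|\vartheta_R v\|_{L^\infty_tL^2_{uloc}}+\|\vartheta_R\nabla v\|_{L^2_tL^2_{uloc}}+\|\vartheta_R v\|_{L^3_tL^3_{uloc}}\xrightarrow{R\to\infty}0.$$
Setting $w:=u-v$ and letting $q$ denote the associated pressure, $(w,q)$ is divergence free and satisfies the perturbed Navier--Stokes system
$$\partial_tw-\Delta w+\mathrm{div}(w\otimes w+v\otimes w+w\otimes v)+\nabla q=0,$$
with initial data $w_0$ satisfying the smallness hypothesis of Assumption \ref{assu.A}.

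I would then derive a weighted local energy inequality for $w$ by testing the perturbed equation against $\chi^2 w$ with $\chi=\vartheta_R\phi_\eta$, where $\phi_\eta$ is a unit-scale bump on the cube $\cu(\eta)$, $\eta\in\Z^3_+$. Besides the usual initial, Laplacian commutator, cubic and pressure contributions in $(w,q)$, this produces three cross terms coupling $w$ to $v$; these are absorbed using the decay of $v$ from the previous step. The cubic term in $w$ and the initial contribution are controlled via the small-data uniform bound of Assumption \ref{assu.A}, applied to $w$ thanks to its small-data initial condition. The crux of the argument is the pressure integral $\int\!\int(w\cdot\nabla\chi^2)q$: since $q$ is nonlocal and, in the half-space, splits as a Helmholtz--Leray piece plus a boundary-harmonic piece, I would invoke the explicit representation formula of Section \ref{sec.pressureest} and decompose the source $w\otimes w+v\otimes w+w\otimes v$ into a near-field part (within a large ball about $\eta$, handled by Calder\'on--Zygmund and the local small-data bound for $w$) and a far-field part (outside, with polynomial Riesz-kernel decay against the uniform $L^2_{uloc}$ norm of $w$); the boundary-harmonic piece is controlled via its boundary trace using the decay of $u$ along $\partial\R^3_+$ at infinity. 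A Gr\"onwall-in-$t$ argument, $\sup$ over $\eta$, and letting $R\to\infty$ yield the first three limits in \eqref{e.decayatinfty}; the pressure statement then follows by applying the same representation to $u$ on cubes where $u$ is already known to be small.
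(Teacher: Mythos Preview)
Your splitting $u=v+w$ with $v$ a Leray--Hopf solution starting from compactly supported $v_0\in L^2_\sigma$ runs into a fundamental obstruction: you cannot, in general, derive a local energy inequality for the \emph{difference} of two weak solutions. The local energy inequality \eqref{e.locenineq} is part of the definition of each solution separately, obtained by a limiting procedure from regularized approximations; it does not survive subtraction, because the cross terms $\langle v\cdot\nabla w+w\cdot\nabla v,\chi^2 w\rangle$ you need would require testing one weak solution against another, and neither has enough regularity to justify this. (This is exactly the difficulty underlying weak--strong uniqueness.) So the step ``derive a weighted local energy inequality for $w$ by testing the perturbed equation against $\chi^2 w$'' is not available.

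A second gap: Assumption~\ref{assu.A} is stated only for local energy weak solutions of \eqref{e.nse}, not for the perturbed system satisfied by $w$. Even if you could somehow get an energy inequality for $w$, you would have no a priori bound of the type \eqref{e.apriorilenassu} to feed into it; and note that $A_{T,\delta}$ is not asserted to vanish with $\delta$, so smallness of $\|w_0\|_{L^2_{uloc}}$ alone does not yield smallness of $w$.

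The paper avoids both issues by never splitting $u$. Instead it tests the local energy inequality for $u$ itself against $\psi=\vartheta_R^2\chi_{x_0,1}$, and observes that every nonlinear term has the structure $\vartheta_R^2|u|^2u=|\vartheta_R u|^2 u$: two factors pair with $\vartheta_R$ to form $\alpha_R,\beta_R,\gamma_R$, and the remaining bare factor of $u$ is controlled by the a priori constant $A_{T,\delta}$ from Assumption~\ref{assu.A} (which \emph{does} apply, since $u$ is a genuine solution of \eqref{e.nse}). This makes the resulting Gronwall inequality \emph{linear} in $Y_R=\alpha_R^{21}+\beta_R^{21}$, with inhomogeneity $\|\vartheta_R u_0\|_{L^2_{uloc}}+R^{-1}\log R\to 0$. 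The heavy lifting is the pressure term $\int p\,\nabla(\vartheta_R^2\chi_{x_0,1})\cdot u$, handled via the explicit decomposition \eqref{e.decomppressure} and the commutator trick $\vartheta_R(x)=\vartheta_R(z)+(\vartheta_R(x)-\vartheta_R(z))$ to push $\vartheta_R$ through the nonlocal kernels.
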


Explaining how to prove Theorem \ref{prop.decayinfty} leads us to the central results our work. The starting point to get the decay estimate \eqref{e.decayatinfty} is the local energy inequality \eqref{e.locenineq} tested against $\varphi:=\vartheta_R^2\chi_{x_0}$, where $\chi_{x_0}$ is a cut-off supported around $x_0$. Estimating the right hand side of the energy inequality requires precise estimates for the pressure. Therefore, a lot of work is devoted to studying the pressure of solutions in the sense of Definition \ref{def.weakles}. The foremost novelty of our paper is to provide a decomposition of the pressure along with estimates. In the whole space, the pressure solves 
\begin{equation*}
-\Delta p=\nabla\cdot(\nabla\cdot u\otimes u)\quad\mbox{in}\quad\R^3.
\end{equation*}
It is equal to the Helmholtz pressure of the Helmholtz-Leray decomposition. At least formally, we can represent this pressure using the fundamental solution of $-\Delta$. We then decompose the integral into a local part and a nonlocal part. To handle the nonlocal part, the point is that the pressure is defined up to a constant (possibly depending on time), so that one can gain the additional integrability needed to estimate the large scales of the data (which may not decay). For all $x_0\in\R^3$, there exists a function $c_{x_0}(t)\in L^\frac32(0,T)$ such that for all $(x,t)\in\R^3\times(0,T)$,
\begin{align}\label{e.decpressureR3}
\begin{split}
p(x,t)-c_{x_0}(t)=\ &\underbrace{-\frac13|u(x,t)|^2+\frac1{4\pi}\int_{B(x_0,2)}K(x-y)\cdot u(y,t)\otimes u(y,t)dy}_{p_{loc}}\\
&+\underbrace{\frac1{4\pi}\int_{\R^3\setminus B(x_0,2)}(K(x-y)-K(x_0-y))\cdot u(y,t)\otimes u(y,t)dy}_{p_{nonloc}},
\end{split}
\end{align}
with $K:=\nabla^2(\frac1{|x|})$, see \cite{lemariebook,KS07}. This decomposition is then used to estimate $\int_{\R^3}p\nabla(\vartheta_R^2\chi_{x_0})u$. 

\smallskip

In the present work, we generalize the representation formula \eqref{e.decpressureR3} to the case of the $\R^3_+$. Due to the boundary $\partial \R^3_+$, in addition to the Helmholtz pressure, a harmonic pressure has to be taken into account. Indeed, the pressure solves 
\begin{equation}
\label{e.pressurehp}
\left\{ 
\begin{aligned}
& -\Delta p = \nabla\cdot(\nabla\cdot u\otimes u)  & \mbox{in} &\ \R^3_+, \\
& \partial_d p = \gamma|_{x_3=0}\Delta u_{3}  & \mbox{on} &\ \partial \R^3_+. 
\end{aligned}
\right.
\end{equation}
We are able to provide an explicit representation for the Helmholtz part of the pressure, as well as for the harmonic part. Each pressure has to be splitted (as above for $\R^3$) into a local part and a nonlocal part. It is the purpose of Section \ref{sec.pressureest} to do this work. The precise decomposition of the pressure is given in \eqref{e.decomppressure}. Propositions \ref{prop.estlipressure}, \ref{prop.estlocpressure} and \ref{prop.nlpressure} are pivotal results in our work: the estimates for each pressure terms are provided there. To our knowledge, such an extensive study of the pressure in a domain with boundaries is new. We are able to provide explicit representation formulas. In this matter, we rely on the results for the linear theory in $\R^3_+$ obtained in the companion paper \cite{MMP17}. As a word of conclusion, let us mention that this level of precision in the description of the pressure can be achieved due to the special structure of $\R^3_+$, which allows to use the Fourier transform in the horizontal direction and hence to obtain explicit formulas. In more general domains (exterior domains, domains with unbounded curved boundaries), the study of solutions with locally integrable data relies on mild assumptions on the pressure which make it possible to rule out parasitic solutions. In this vein, see for instance the works \cite{AG13,Abe15} for data in $L^\infty_\sigma$.

\subsection{Notations}
\label{sec.notations}

We gather some notations which are used recurrently in this paper. We define $\cu(x_0):=x_0+(-\frac12,\frac12)^{d}\cap\R^d_+$ and for $r>0$, $r\cu(x_0):=x_0+(-\frac r2,\frac r2)^{d}\cap\R^d_+$. Moreover, $\cu'(x_0):=x'_0+(-\frac12,\frac12)^{d-1}\subset \R^{d-1}$ and $\cu_d(x_0):=x_{0,d}+(-\frac12,\frac12)\subset \R$. The function $\chi\in C^\infty_c(\R^d)$ stands for a non negative cut-off, equal to $1$ on $\cu(0)$ and $0$ on $2\cu(0)^c$, and $\chi_r\in C^\infty_c(\R^d)$ stands for a non negative cut-off, equal to $1$ on $r\cu(0)$ and $0$ on $(r+1)\cu(0)^c$. We also let $\chi_{\!_{\, x_0,r}}=\chi_r(\cdot-x_0)$. Finally, $\vartheta\in C^\infty(\R^d)$ denotes a non negative cut-off equal to $0$ on $\cu(0)$ and $1$ on $2\cu(0)^c$. The notation $P(\cdot)=e^{-(-\Delta')^\frac12\cdot}$ denotes the Poisson semigroup. The notation $m_\alpha(D')$ stands for a tangential Fourier multiplier homogeneous of order $\alpha$, $\alpha>-d+1$ which may change from line to line.

\subsection{Overview of the paper}

The structure of the paper is as follows. First we derive properties of the local energy weak solutions in the sense of Definition \ref{def.weakles} (Section \ref{sec.pressureest}, Section \ref{sec.property.weak}, Section \ref{sec.decay}). Then, we investigate local in time and global in time existence results (Section \ref{sec.locexiles} and Section \ref{sec.global.weak}). Eventually, we apply the results of the paper to investigate the blow-up of the scale-critical norm $L^3$ (Section \ref{sec.blowup}). Let us now describe each section in more details. Section \ref{sec.pressureest} is the foundation for the paper. The decomposition of the pressure, along with the representation formulas and estimates are provided there. In Section \ref{sec.property.weak} we give further properties that all solutions in the sense of Definition \ref{def.weakles} share. The goal is to show that $\ep$-regularity results apply for our class of solutions. In Section \ref{sec.decay}, we prove the crucial result, Theorem \ref{prop.decayinfty} enabling to transfer the decay of the initial data $u_0\in\mathcal L^2_{uloc,\sigma}$ to the solution. Section \ref{sec.locexiles} addresses the local in time existence of local energy weak solutions for data in $u_0\in L^2_{uloc,\sigma}$. This result, combined with Theorem \ref{prop.decayinfty} about the spatial decay of solutions makes it possible to prove Theorem \ref{prop.global.weak} in 
Section \ref{sec.global.weak}. We eventually use Theorem \ref{prop.decayinfty} in Section \ref{sec.blowup} to prove the blow-up of $\|u(\cdot,t)\|_{L^3(\R^3_+)}$ when $t\rightarrow T$, in case $T<\infty$ is the time of the first blow-up for $u$. This result is stated in Theorem \ref{theo.blowup}. In Appendix \ref{sec.lerayproj} we state results about the Helmholtz-Leray projection. Most of these results are taken from the companion paper \cite{MMP17}.

\section{Pressure estimates}
\label{sec.pressureest}

Let $x_0\in\R^3_+$ be fixed. Our goal is to get local estimates for the pressure $p(x,t)$ of the Navier-Stokes initial value problem \eqref{e.nse} 
for $x\in\cu(x_0)$, in terms of the local energy norm of the velocity $u$. 
For this purpose, let us consider the cut-off $\chi_{\!_{\, 4}}:=\chi_{\!_{\, x_0,4}}$, which is defined in Section \ref{sec.notations}. We will also need to work with the cut-off $\chi_{\!_{\, 2}}:=\chi_{\!_{\, x_0,2}}$. The first step is to decompose the solution $u$ of \eqref{e.nse} into a local part $u_{loc}^{x_0}$ with finite energy and a nonlocal part $u_{nonloc}^{x_0}$ with locally finite energy. Most of time we drop the superscript $x_0$ in this section, because it is clear that the quantities depending on the cut-off $\chi_{\!_{\, 4}}$ or $\chi_{\!_{\, 2}}$ depend on $x_0$, which is fixed here. let $(u,p)$ be any local weak solution to \eqref{e.nse} with initial data $u_0$ in the sense of Definition \ref{def.weakles}. Let us denote by $u_{li}$, $u_{loc}^{u\otimes u}$, and $u_{nonloc}^{u\otimes u}$ the solutions of the following systems:
\begin{equation}
\label{e.linearsol}
\left\{ 
\begin{aligned}
& \partial_t u_{li}-\Delta u_{li}+\nabla p_{li}  =0 & \mbox{in} & \ (0,T)\times\R^3_+, \\
& \nabla\cdot u_{li}=0, & &\\
& u_{li} = 0  & \mbox{on} & \ (0,T)\times\partial \R^3_+,\\
& u_{li}(\cdot,0)= u_0, & &
\end{aligned}
\right.
\end{equation}
\begin{equation}
\label{e.locnse}
\left\{ 
\begin{aligned}
& \partial_t u_{loc}^{u\otimes u} -\Delta u_{loc}^{u\otimes u} +\nabla p_{loc}^{u\otimes u} = -\nabla\cdot(\chi_{\!_{\, 4}}^2 u\otimes u) & \mbox{in} & \ (0,T)\times\R^3_+, \\
& \nabla\cdot u_{loc}^{u\otimes u}=0, & &\\
& u_{loc}^{u\otimes u} = 0  & \mbox{on} & \ (0,T)\times\partial \R^3_+,\\
& u_{loc}^{u\otimes u} (\cdot,0)=0, & &
\end{aligned}
\right.
\end{equation}
and the nonlocal part solves 
\begin{equation}
\label{e.nonlocnse}
\left\{ 
\begin{aligned}
& \partial_tu_{nonloc}^{u\otimes u} -\Delta u_{nonloc}^{u\otimes u} +\nabla p_{nonloc}^{u\otimes u}=-\nabla\cdot((1-\chi_{\!_{\, 4}}^2) u\otimes u), &  &\\
& \nabla\cdot u_{nonloc}^{u\otimes u} =0, & \mbox{in} & \ (0,T)\times\R^3_+, \\
& u_{nonloc}^{u\otimes u}  = 0  & \mbox{on} & \ (0,T)\times\partial \R^3_+,\\
& u_{nonloc}^{u\otimes u} (\cdot,0)=0. & &
\end{aligned}
\right.
\end{equation}
The couple $(u_{li}, p_{li})$ is the solution to the Stokes system,  $(u_{loc}^{u\otimes u}, p_{loc}^{u\otimes u})$ is the local and nonlinear part, and $(u_{nonlocal}^{u\otimes u}, p_{nonlocal}^{u\otimes u})$ is the nonlocal and nonlinear part.
These are constructed as mild solutions which satisfy the integral representation formula.
Formally we have $u=u_{li}+u_{loc}^{u\otimes u} + u_{nonloc}^{u\otimes u}$ and $p=p_{li} + p_{loc}^{u\otimes u} + p_{nonloc}^{u\otimes u}$ for the solution $(u,p)$ to \eqref{e.nse}, which will be rigorously verified when $(u,p)$ is a local weak energy solution in the sense of Definition \ref{def.weakles}. For each system \eqref{e.locnse} and \eqref{e.nonlocnse}, we will split the pressure into a Helmholtz part, which comes from the Helmholtz-Leray decomposition of the source term, and a harmonic part, which is due to the boundary.

\smallskip

In this paragraph, we concentrate on the linear pressure $p_{li}$. 
We obtain a representation formula and estimate it directly. Notice that the pressure $p_{li}$ is the solution of 
\begin{equation}
\label{e.pressureu0loc}
\left\{ 
\begin{aligned}
& -\Delta p_{li} = 0  & \mbox{in} &\ \R^3_+, \\
& \partial_d p_{li} = \gamma|_{x_3=0}\Delta u_{li,3}  & \mbox{on} &\ \partial \R^3_+. 
\end{aligned}
\right.
\end{equation}
In other words, the Helmholtz part of the pressure is zero, so $p_{li}$ is equal to its harmonic part. The representation formula for $p_{li}$ follows by inverse Laplace transform from the formula for the pressure of the resolvent problem given in \cite[Section 2]{MMP17}. 
Hence, it is formally written as 
\begin{align}\label{e.forpressureu0loc'}
\begin{split}
p_{li} (x,t)
& := \frac{1}{2\pi i} \int_\Gamma e^{\lambda t}\int_{\R^3_+}q_\lambda(x'-z',x_3,z_3)\cdot u_0'(z',z_3)dz'dz_3d\lambda\\
& = \frac{1}{2\pi i} \int_\Gamma e^{\lambda t}\int_{\R^3_+}q_\lambda(x'-z',x_3,z_3)\cdot \chi_{\!_{\, 4}} u_0'(z',z_3)dz'dz_3d\lambda\\
& \quad + \int_\Gamma e^{\lambda t}\int_{\R^3_+}q_\lambda(x'-z',x_3,z_3)\cdot (1-\chi_{\!_{\, 4}}) u_0'(z',z_3)dz'dz_3d\lambda\\
& = : p_{loc}^{u_0} (x,t) + \tilde p_{nonloc}^{u_0} (x,t).
\end{split}
\end{align}
Here 
\begin{equation}\label{e.defqlambda}
q_\lambda:\ \R^{2}\times(0,\infty)\times (0,\infty)\rightarrow\mathbb C^{2} 
\end{equation}
is the harmonic pressure kernel for the resolvent problem, and $\Gamma=\Gamma_\kappa$ with $\kappa\in (0,1)$ is the curve 
\begin{equation}\label{e.defgammacurve}
\{\lambda\in \mathbb{C}~|~|{\rm arg}\, \lambda|=\eta, ~|\lambda|\geq \kappa\} \cup \{\lambda\in \mathbb{C}~|~|{\rm arg}\, \lambda|\leq \eta, ~|\lambda|=\kappa\}
\end{equation}
for some $\eta \in (\frac{\pi}{2},\pi)$. The local pressure $p_{loc}^{u_0}$ is certainly the most subtle term to analyze. Because $\chi_{\!_{\, 4}} u_0$ is barely in $L^2$, it is not regular enough to be in the admissible class for the initial data of \cite{GS91}. On the other hand, due to the decay properties of the kernel $q_\lambda$ the representation of $\tilde p_{nonloc}^{u_0}$ is not well defined. This however motivates the following definition of the harmonic nonlocal pressure as follows:
\begin{align}\label{e.forharmpressure.u_0}
p_{nonloc}^{u_0} (x,t):= \frac{1}{2\pi i} \int_\Gamma e^{\lambda t}\int_{\R^3_+}q_{\lambda,x,x_0}(z',z_3)\cdot (1-\chi_{\!_{\, 4}})u_0'(z',z_3)dz'dz_3d\lambda,
\end{align}
where
\begin{equation}
\label{def.qlambdax0}
q_{\lambda,x,x_0}(z',z_3):=q_\lambda(x'-z',x_3,z_3)-q_\lambda(x_0'-z',x_{0,3},z_3).
\end{equation}
Then \eqref{e.forharmpressure.u_0} is well-defined for nonlocalized data.
Then, since $p_{li}$ can be defined modulo constants, instead of \eqref{e.forpressureu0loc'}, 
we define $p_{li}$ as
\begin{align}\label{e.forpressureu0loc}
\begin{split}
p_{li} (x,t) & = : p_{loc}^{u_0} (x,t) +  p_{nonloc}^{u_0} (x,t).
\end{split}
\end{align}

\smallskip

For the local pressure in \eqref{e.locnse}, we first decompose the source term by using the Helmholtz decomposition. We have
\begin{equation*}
\nabla\cdot(\chi_{\!_{\, 4}}^2u\otimes u)=\mathbb P\nabla\cdot(\chi_{\!_{\, 4}}^2u\otimes u)+\mathbb Q\nabla\cdot(\chi_{\!_{\, 4}}^2u\otimes u)=\mathbb P\nabla\cdot(\chi_{\!_{\, 4}}^2u\otimes u)+\nabla p_{loc,H}^{u\otimes u}.
\end{equation*}
Notice that there exists a constant $c\in\R$ such that
\begin{equation}\label{e.def.pHloc}
p_{loc,H}^{u\otimes u}(x,t):=c\chi_{\!_{\, 4}}^2|u|^2(x,t)+\int_{\R^3_+}\nabla^2_zN(x'-z',x_3,z_3)\chi_{\!_{\, 4}}^2u\otimes u(z',z_3,t)dz'dz_3,
\end{equation}
where $N$ is the Neumann function for the half-space. We now denote by $p_{loc,harm}^{u\otimes u}$ the remaining (harmonic) pressure, defined in the following way
\begin{equation}\label{e.defplocharmuu}
p_{loc,harm}^{u\otimes u}:=p_{loc}^{u\otimes u}-p_{loc,H}^{u\otimes u}.
\end{equation}
By definition the pair $(u_{loc,harm}^{u\otimes u},p_{loc,harm}^{u\otimes u})$ solves a system akin \eqref{e.locnse} but with source term $-\mathbb P\nabla\cdot(\chi_{\!_{\, 4}}^2u\otimes u)$. 
By the Poincar\'e-Sobolev-Wirtinger inequality (see \cite[Section II.6]{Galdi_book}) 
there exists a constant $c(t)\in\R$ and a constant $C<\infty$ such that 
\begin{equation}\label{e.psw}
\|p_{loc,harm}^{u\otimes u}-c(t)\|_{L^{\frac32}(0,T;L^{\frac95}(\R^3_+))}\leq C\|\nabla p_{loc,harm}^{u\otimes u}\|_{L^{\frac32}(0,T;L^{\frac98}(\R^3_+))}.
\end{equation}
Since $p_{loc,harm}^{u\otimes u}$ is defined up to a constant, we assume, without loss of generality that $c=0$. The difficulty for this pressure terms comes from the fact that one has to estimate singular integral operators in space and time. When possible, we will directly rely on maximal regularity results for the Stokes system in the half-space, see \cite{GS91}. 

\smallskip

Let us now spend some time explaining how to get a formula for the nonlocal pressure $p_{nonloc}^{u\otimes u} (x,t)$ at a point $x\in\cu(x_0)$ making sense for non decaying data. 
The difficulty comes from ensuring that the kernels in the representation formulas have enough decay at infinity to make sense for non decaying data. Such issues are of course already present in the whole space. Nevertheless, the case of the half-space is more involved for two reasons: (i) besides the Helmholtz pressure, one has to analyze a harmonic pressure driven by the trace of $\Delta u_3$ on the boundary $\partial \R^3_+$, (ii) the expression for the Helmholtz-Leray projection is more complicated, see \eqref{e.leraydivtan}. We decompose the pressure in \eqref{e.nonlocnse} into $\nabla p_{nonloc}^{u\otimes u} =\nabla p_{nonloc,H}^{u\otimes u}+\nabla p_{harm}^{u\otimes u}$, where $p_{nonloc,H}^{u\otimes u}$ is the Helmholtz pressure and $p_{harm}^{u\otimes u}$ is the harmonic pressure due to the presence of the boundary $\partial\R^3_+$. The Helmholtz pressure is given by the decomposition of $\nabla\cdot((1-\chi_{\!_{\, 4}}^2)u\otimes u)$ into
\begin{align*}
\nabla\cdot((1-\chi_{\!_{\, 4}}^2)u\otimes u)=\ &\mathbb P\nabla\cdot((1-\chi_{\!_{\, 4}}^2)u\otimes u)+\mathbb Q\nabla\cdot((1-\chi_{\!_{\, 4}}^2)u\otimes u)\\
=\ &\mathbb P\nabla\cdot((1-\chi_{\!_{\, 4}}^2)u\otimes u)+\nabla p_{nonloc,H}^{u\otimes u},
\end{align*}
where $\mathbb P$ is the Helmholtz-Leray projection in $\R^3_+$ defined in \cite[Section 6]{MMP17}. Hence $p_{nonloc,H}^{u\otimes u}$ is a solution of the following Neumann problem, 
\begin{equation*}
\label{e.Helmholtzpressure}
\left\{ 
\begin{aligned}
& -\Delta p_{nonloc,H}^{u\otimes u} =  \nabla\cdot\nabla((1-\chi_{\!_{\, 4}}^2)u\otimes u) & \mbox{in} &\ \R^3_+, \\
& \partial_d p_{nonloc,H}^{u\otimes u} = \nabla\cdot((1-\chi_{\!_{\, 4}}^2)uu_3)  & \mbox{on} &\ \partial \R^3_+. 
\end{aligned}
\right.
\end{equation*}
which decays away from the boundary. Notice that $p_{nonloc,H}^{u\otimes u}$ is defined up to a constant $p_{nonloc,H}^{u\otimes u,x_0}(t)$. Using the Neumann function $N$ for the half-space, we can express $p_{nonloc,H}^{u\otimes u}(x,t)+p_{nonloc,H}^{u\otimes u,x_0}(t)$ as a singular integral with the kernel $\nabla^2_zN$. This kernel has the critical decay $\frac{1}{|x-z|^3}$, which is not enough to handle non localized data. Having the constant depend on $x_0$ and $t$ makes it possible to gain additional decay of the kernel. Aspects related to the definition of the pressure for non decaying data have been investigated intensively in \cite{KS07}, though in the case of the whole space $\R^3$. Here, we adapt their ideas to the case of $\R^3_+$. Formally, we would like to choose
\begin{equation*}
p_{nonloc,H}^{u\otimes u,x_0}(t):=\int_{\R^3_+}\nabla^2_zN(x_0'-z',x_{0,3},z_3)(1-\chi_{\!_{\, 4}}^2(z',z_3))u\otimes u(z',z_3,t)dz'dz_3.
\end{equation*}
Although this quantity is not well defined because the kernel $\nabla^2_zN$ is decaying too slowly, it motivates the following definition of the Helmholtz pressure
\begin{multline}\label{e.forphelm}
p_{nonloc,H}^{u\otimes u}(x,t)
:=\int_{\R^3_+}\nabla^2_zN_{x,x_0}(z',z_3)(1-\chi_{\!_{\, 4}}^2(z',z_3))u\otimes u(z',z_3,t)dz'dz_3,
\end{multline}
with 
\begin{equation}\label{e.defNxx_0}
N_{x,x_0}(z',z_3):=N(x'-z',x_3,z_3)-N(x'_0-z',x_{0,3},z_3).
\end{equation}
Formula \eqref{e.forphelm} makes now sense for data $u$ bounded in the local energy norm. 

\smallskip

The harmonic pressure is the solution of the following Neumann boundary value problem
\begin{equation}\label{e.pbpharm}
\left\{ 
\begin{aligned}
& -\Delta p_{harm}^{u\otimes u} = 0 & \mbox{in} &\ \R^3_+, \\
& \partial_d p_{harm}^{u\otimes u} = \gamma|_{x_3=0}\Delta u_{nonloc,3}^{u\otimes u}  & \mbox{on} &\ \partial \R^3_+.
\end{aligned}
\right.
\end{equation}
For the harmonic pressure, the equation and the Neumann condition in the system \eqref{e.pbpharm} are automatically compatible. The representation formula for $p_{harm}^{u\otimes u}$ follows by inverse Laplace transform from the formula for the pressure of the resolvent problem given in \cite[Section 2]{MMP17}. Again, notice that the pressure is defined up to some constant $p_{harm}^{x_0}(t)$ depending only on $x_0$ and on time. As above for the Helmholtz pressure, the reason for being of this constant is to ensure that we have enough decay at the large scales. Formally, we would like to take 
\begin{align*}
 p_{harm}^{u\otimes u}(x_0, t) :=\ &\frac{1}{2\pi i} \int_0^t\int_\Gamma e^{\lambda (t-s)}\int_{\R^3_+}q_\lambda(x_0'-z',x_{0,3},z_3) \\
& \qquad \cdot \Big (\mathbb P\nabla\cdot ((1-\chi_{\!_{\, 4}}^2)u\otimes u)\Big)'(z',z_3,s)dz'dz_3d\lambda ds,
\end{align*}
where $q_\lambda$ and $\Gamma$ are defined as above in \eqref{e.defqlambda} and \eqref{e.defgammacurve}, but due to the decay properties of the kernel $q_\lambda$ this constant is not well defined. 
As in the case of $p_{li}$, we therefore define the harmonic pressure $p_{harm}^{u\otimes u}$ as 
\begin{align}\label{e.forharmpressure}
\begin{split}
p_{harm}^{u\otimes u} (x,t) :=\ & \frac1{2\pi i}\int_0^t\int_\Gamma e^{\lambda (t-s)}\int_{\R^3_+}q_{\lambda}(x'-z',x_3,z_3)\\
&\quad\cdot \chi_2^2 \left(\mathbb P\nabla\cdot ((1-\chi_{\!_{\, 4}}^2)u\otimes u)\right)'(z',z_3,s)dz'dz_3d\lambda ds,\\
& + \frac1{2\pi i}\int_0^t\int_\Gamma e^{\lambda (t-s)}\int_{\R^3_+}q_{\lambda,x,x_0}(z',z_3)\\
&\quad\cdot  (1-\chi_2^2) \left(\mathbb P\nabla\cdot ((1-\chi_{\!_{\, 4}}^2)u\otimes u)\right)'(z',z_3,s)dz'dz_3d\lambda ds\\
 =:\ & p_{harm,\leq 1}^{u\otimes u} (x,t) + p_{harm,\geq 1}^{u\otimes u} (x,t),
\end{split}
\end{align}
where $q_{\lambda,x,x_0}$ is defined by \eqref{def.qlambdax0}. The formula \eqref{e.forharmpressure} makes sense for non localized data, 
and this argument again essentially relies on the fact that the pressure can be determined up to constants. 

\smallskip

To put it in a nutshell, we remark that we have formally decomposed the pressure $p$ in the system \eqref{e.nse} in the following way
\begin{align}\label{e.decomppressure}
\begin{split}
p:=p^{(x_0)} & =p_{li} + p_{loc}^{u\otimes u} +p_{nonloc}^{u\otimes u} \\
& = \underbrace{p_{loc}^{u_0}+ p_{nonloc}^{u_0}}_{=p_{li}} + \underbrace{p_{loc,H}^{u\otimes u}+p_{loc,harm}^{u\otimes u}}_{= p_{loc}^{u\otimes u}}+ \underbrace{p_{nonloc,H}^{u\otimes u} + p_{harm,\leq 1}^{u\otimes u} + p_{harm,\geq 1}^{u\otimes u}}_{=p_{nonloc}^{u\otimes u}}.
\end{split}
\end{align}
It is essential to keep in mind that every term in decomposition above depends on $x_0$. However, for two points $x_0$ and $x_0'$, the definition $p^{(x_0)}-p^{(x_0')}$ is a constant that depends only on time. 
In Section \ref{sec.property.weak} the decomposition \eqref{e.decomppressure} will be verified for any local weak solutions in the sense of Definition \ref{def.weakles}.
We aim now at estimating every term in the right hand side of \eqref{e.decomppressure} in $\cu(x_0)$ for a fixed $x_0\in \R^3_+$. The results are summarized in the following three propositions.

\begin{proposition}[Estimates for the linear pressure terms]\label{prop.estlipressure}
Let $T>0$. There exists a constant $C(T)<\infty$ such that for $t\in (0,T)$, 
\begin{align}
\frac t{\log(e+t)} \| \nabla p_{li} (t) \|_{L^2_{uloc}(\R^3_+)} & \leq C \| u_0 \|_{L^2_{uloc} (\R^3_+)},\label{e.estpli.1}\\
t^{\frac34} \|p_{loc}^{u_0} (t)\|_{L^2(\cu(x_0))} & \leq C\|u_0\|_{L^2(5\cu(x_0))},\label{e.estplocu0}\\
t^\frac34 \|p_{nonloc}^{u_0} (t) \|_{L^\infty(\cu(x_0)))} + t^\frac34 \| \nabla p_{nonloc}^{u_0} (t) \|_{L^\infty(\cu(x_0)))} & \leq C\|u_0\|_{L^2_{uloc}(\R^{3}_+)}.\label{e.estpharmu0}
\end{align}
Moreover, \eqref{e.estpli.1} holds with $C$ independent of $T$. 
\end{proposition}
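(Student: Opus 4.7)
The proof rests on the contour representations \eqref{e.forpressureu0loc} and \eqref{e.forharmpressure.u_0} together with bounds on the resolvent pressure kernel $q_\lambda$ derived in \cite[Section 2]{MMP17}. Schematically $q_\lambda(x'-z',x_3,z_3)$ behaves, along the Dunford contour $\Gamma$, as a harmonic-extension-type kernel that decays pointwise both in $|x'-z'|$ and exponentially in $\sqrt{|\lambda|}(x_3+z_3)$, with $|\lambda|$-weights that can be rescaled. The recurring trick is the substitution $\lambda\mapsto \mu/t$ on $\Gamma$, which converts $|\lambda|$-weights into $t$-weights and produces, through the action of $e^{\lambda t}$, the powers of $t$ appearing in \eqref{e.estplocu0}--\eqref{e.estpharmu0}.

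For \eqref{e.estplocu0} I would plug the definition of $p_{loc}^{u_0}$ into $\|p_{loc}^{u_0}(t)\|_{L^2(\cu(x_0))}$, and use Minkowski to pull the $L^2_x$-norm inside the $\lambda$ and $z$ integrals. The crucial point is that the cutoff $\chi_{\!_{\,4}}$ restricts the $z$-integration to $5\cu(x_0)$, so only the Cauchy--Schwarz pairing of $\|q_\lambda(\cdot{-}z',\cdot,z_3)\|_{L^2(\cu(x_0))}$ with $\|\chi_{\!_{\,4}} u_0\|_{L^2(5\cu(x_0))}$ is needed. The kernel norm is estimated by the $\lambda$-dependent bounds of \cite{MMP17}; integration along $\Gamma$ after rescaling $\lambda=\mu/t$ yields exactly the factor $t^{-3/4}$, which is the parabolic weight corresponding to a single $1/4$-derivative loss in the boundary trace.

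For the nonlocal bound \eqref{e.estpharmu0}, the gain comes from the subtraction in $q_{\lambda,x,x_0}(z',z_3)=q_\lambda(x'-z',x_3,z_3)-q_\lambda(x'_0-z',x_{0,3},z_3)$. For $x\in\cu(x_0)$, the mean value theorem provides a factor $|x-x_0|\lesssim 1$ against $\nabla_y q_\lambda$ evaluated at a point in $\cu(x_0)$, and the kernel estimates of \cite{MMP17} give enhanced decay in $|x_0-z|$ that is integrable at infinity. Decomposing the $z$-integral into dyadic shells $2^j\cu(x_0)\setminus 2^{j-1}\cu(x_0)$, applying Cauchy--Schwarz on each shell against $\|u_0\|_{L^2(2^j\cu(x_0))}\lesssim 2^{3j/2}\|u_0\|_{L^2_{uloc}}$, and using the improved decay to ensure summability in $j$ gives a bound uniform in $x\in\cu(x_0)$; rescaling $\lambda=\mu/t$ on $\Gamma$ produces the $t^{-3/4}$ factor. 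The gradient variant is obtained by differentiating once more in $x$ before subtracting, which uses $\nabla_x^2 q_\lambda$ and keeps the same level of $z$-summability.

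For \eqref{e.estpli.1} the right object to estimate is $\nabla p_{li}=\Delta u_{li}-\partial_t u_{li}$, where $u_{li}$ is the Stokes semigroup applied to $u_0$; here $\nabla p_{li}$ is independent of $x_0$ and only its $L^2(\cu(x_0))$-norm taken uniformly in $x_0$ matters. The result reduces to $L^2_{uloc}$ resolvent estimates for the Stokes system established in \cite{MMP17}, with the contour integration producing the power $1/t$ from the parabolic scaling. The logarithmic loss $\log(e+t)$ is the footprint of the low-frequency part of $\Gamma$ in the half-space with nondecaying data: summing the interactions of the unit cube $\cu(x_0)$ with all cubes within parabolic distance $\sqrt{t}$ produces a logarithmically divergent count at large $t$ that cannot be absorbed. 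This step is the main obstacle, because one cannot simply combine \eqref{e.estplocu0} and \eqref{e.estpharmu0}: the splitting into local and nonlocal parts depends on $x_0$, while \eqref{e.estpli.1} is a bound on the single function $\nabla p_{li}$. Addressing it requires the semigroup-level $L^2_{uloc}$ estimates of the companion paper, which already encode the necessary logarithmic correction.
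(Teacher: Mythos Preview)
Your treatment of \eqref{e.estpli.1} and \eqref{e.estpharmu0} matches the paper's. For \eqref{e.estpli.1} the paper also writes $\nabla p_{li}=-\partial_t u_{li}+\Delta u_{li}$ and invokes the $L^2_{uloc}$ semigroup bound \cite[Proposition~5.3]{MMP17}, which already carries the factor $t^{-1}\log(e+t)$; no separate contour argument is run. For \eqref{e.estpharmu0} the paper, like you, uses the extra decay from the subtracted kernel $q_{\lambda,x,x_0}$, sums over unit cubes (your dyadic shells), gets the bound $C|\lambda|^{-1/4}\|u_0\|_{L^2_{uloc}}$, and integrates along $\Gamma$ to recover $t^{-3/4}$.

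The gap is in \eqref{e.estplocu0}, which the paper flags as ``the term that requires most care''. The Cauchy--Schwarz pairing you propose does not close when $x_0$ is near the boundary. With the pointwise bound $|q_\lambda(x'-z',x_3,z_3)|\lesssim e^{-|\lambda|^{1/2}z_3}(|x'-z'|+x_3+z_3)^{-2}$, pulling the $L^2_x(\cu(x_0))$ norm all the way inside and then applying Cauchy--Schwarz in $z$ requires $\|q_\lambda(x-\cdot)\|_{L^2_z(5\cu(x_0))}\in L^2_x(\cu(x_0))$; a direct computation shows this double integral diverges logarithmically at $\{x'=z',\ x_3=z_3=0\}$, and the factor $e^{-|\lambda|^{1/2}z_3}$ does not help there. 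The paper instead exploits the \emph{convolution} structure in the tangential variables: Young's inequality gives, for $x_3+z_3\le\tfrac12$,
\[
\Big\|\int_{\R^2} q_\lambda(\cdot-z',x_3,z_3)\,\chi_{\!_{\,4}}u_0(z',z_3)\,dz'\Big\|_{L^2_{x'}}\ \lesssim\ e^{-|\lambda|^{1/2}z_3}\,|\log(x_3+z_3)|\,\|\chi_{\!_{\,4}}u_0(\cdot,z_3)\|_{L^2_{z'}},
\]
and the mild $|\log x_3|$ singularity is square-integrable in $x_3$ over $\cu_3(x_0)$. A H\"older estimate on $\int_0^1 e^{-|\lambda|^{1/2}z_3}\|\chi_{\!_{\,4}}u_0(\cdot,z_3)\|_{L^2}\,dz_3$ then produces the key factor $|\lambda|^{-1/4}$, and integration over $\Gamma$ (equivalently your rescaling $\lambda=\mu/t$) yields $t^{-3/4}$. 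In short: replace Cauchy--Schwarz in $z$ by Young in $z'$ followed by a careful $z_3$-integral.
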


Let $T>0$ be fixed. Notice that \eqref{e.estplocu0} implies that for all $p\in[1,\frac43)$, there exists a constant $C(T)<\infty$ such that
\begin{equation*}
\|p_{loc}^{u_0}\|_{L^p(0,T;L^2(\cu(x_0)))}\leq C\|u_0\|_{L^2(5\cu(x_0))}.
\end{equation*}
Moreover, for all $\delta\in(0,T)$, there exists a constant $C(T,\delta)<\infty$ such that 
\begin{equation*}
\|p_{loc}^{u_0}\|_{L^\infty(\delta,T;L^2(\cu(x_0)))}\leq C\|u_0\|_{L^2(5\cu(x_0))}.
\end{equation*}

\begin{proposition}[Estimates for the local pressure terms]\label{prop.estlocpressure}
Let $T>0$. There exists a constant $C(T)<\infty$ such that 
\begin{align}\label{e.estplocuunew}
\begin{split}
&\left\|p^{u\otimes u}_{loc,H}\right\|_{L^\frac32(0,T;L^\frac32(\cu(x_0)))}+\left\|p^{u\otimes u}_{loc,harm}\right\|_{L^\frac32(0,T;L^\frac32(\cu(x_0)))} + \left\| \nabla p^{u\otimes u}_{loc,harm}\right\|_{L^\frac32(0,T;L^\frac98(\R^3_+))}\\
\leq\  &
C
\left(\left\|u\right\|_{L^\infty(0,T;L^2(5\cu(x_0)))}^2
+\left\|\nabla u\right\|_{L^2(0,T;L^2(5\cu(x_0)))}^2\right).
\end{split}
\end{align}
\end{proposition}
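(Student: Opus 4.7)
The estimate decomposes into three parts corresponding to the three summands of \eqref{e.estplocuunew}: (i) the Helmholtz pressure $p_{loc,H}^{u\otimes u}$, (ii) the gradient $\nabla p_{loc,harm}^{u\otimes u}$, and (iii) the harmonic pressure $p_{loc,harm}^{u\otimes u}$. Part (iii) reduces to (ii) via the Poincar\'e-Sobolev-Wirtinger inequality \eqref{e.psw} (with $c=0$ by the convention of the paper) combined with the inclusion $L^{9/5}(\cu(x_0))\hookrightarrow L^{3/2}(\cu(x_0))$ on the bounded cube, so the real work is in (i) and (ii).

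For (i), I use the explicit representation \eqref{e.def.pHloc}. The Neumann function on $\R^3_+$ is $N(x,z)=-(4\pi)^{-1}(|x-z|^{-1}+|x-z^*|^{-1})$ with $z^*=(z_1,z_2,-z_3)$; its mixed second derivatives split into a standard Calder\'on-Zygmund kernel plus a reflected part, both of which define bounded operators on $L^{3/2}(\R^3_+)$. This yields $\|p_{loc,H}^{u\otimes u}(t)\|_{L^{3/2}(\R^3_+)}\lesssim \|\chi_4 u(t)\|_{L^3(5\cu(x_0))}^2$. To integrate in time, the Ladyzhenskaya-Prodi-Serrin bound (Gagliardo-Nirenberg plus Sobolev on the bounded set $5\cu(x_0)$) gives $\|\chi_4 u\|_{L^{10/3}(0,T;L^{10/3})}\lesssim \|u\|_{L^\infty L^2(5\cu(x_0))}+\|\nabla u\|_{L^2 L^2(5\cu(x_0))}$; H\"older in time (using $T<\infty$) and in space (using the compact support of $\chi_4$) downgrade this to $L^3(0,T;L^3)$ and produce the desired quadratic bound.

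For (ii), the pair $(u_{loc,harm}^{u\otimes u},p_{loc,harm}^{u\otimes u})$ solves the Stokes system on $\R^3_+$ with zero initial data, no-slip boundary, and source $-\mathbb{P}\nabla\cdot(\chi_4^2 u\otimes u)$. The anisotropic maximal $L^{3/2}(0,T;L^{9/8}(\R^3_+))$-regularity of Giga-Sohr \cite{GS91} gives $\|\nabla p_{loc,harm}^{u\otimes u}\|_{L^{3/2}(L^{9/8})}\lesssim \|\mathbb{P}\nabla\cdot(\chi_4^2 u\otimes u)\|_{L^{3/2}(L^{9/8})}$. Since $\mathbb{P}$ is bounded on $L^{9/8}(\R^3_+)$ (Appendix~\ref{sec.lerayproj}) and $\nabla\cdot u=0$, it remains to bound the two terms in the identity $\nabla\cdot(\chi_4^2 u\otimes u)=\chi_4^2(u\cdot\nabla)u+2\chi_4(u\cdot\nabla\chi_4)u$. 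H\"older yields $\|\chi_4^2 u\cdot\nabla u\|_{L^{9/8}}\le \|\chi_4 u\|_{L^{18/7}}\|\nabla u\|_{L^2}$ and $\|\chi_4(u\cdot\nabla\chi_4)u\|_{L^{9/8}}\lesssim \|u\|_{L^{9/4}(5\cu(x_0))}^2$; Gagliardo-Nirenberg then controls $\|\chi_4 u\|_{L^{18/7}}$ and $\|u\|_{L^{9/4}(5\cu(x_0))}$ by appropriate powers of $\|u\|_{L^2(5\cu(x_0))}+\|\nabla u\|_{L^2(5\cu(x_0))}$, and a H\"older integration in $(0,T)$ delivers the required quadratic bound.

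The main obstacle is the exponent bookkeeping in part (ii). The spatial exponent $9/8$ is fixed by the Sobolev embedding $\dot W^{1,9/8}(\R^3_+)\hookrightarrow L^{9/5}(\R^3_+)$ used in step (iii), while the temporal exponent $3/2$ is dictated by the energy class. The interpolation exponents in Gagliardo-Nirenberg and H\"older must be tuned so that after the final H\"older integration in $(0,T)$, the total homogeneity in $\|u\|_{L^\infty L^2(5\cu(x_0))}$ and $\|\nabla u\|_{L^2 L^2(5\cu(x_0))}$ is precisely $3$, matching the $3/2$-power of the $L^{3/2}(L^{9/8})$ norm. The anisotropic maximal regularity of Stokes on $\R^3_+$ and the $L^q$-boundedness of $\mathbb{P}$ are invoked as black boxes from \cite{GS91} and the companion paper \cite{MMP17}.
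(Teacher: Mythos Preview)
Your proof is correct and follows essentially the same approach as the paper: the $L^{3/2}$-boundedness of the singular integral defining $p_{loc,H}^{u\otimes u}$ for part (i), Giga--Sohr maximal regularity for the Stokes system with source $-\mathbb{P}\nabla\cdot(\chi_4^2 u\otimes u)$ for part (ii), and the Poincar\'e--Sobolev--Wirtinger inequality \eqref{e.psw} for part (iii). The only minor differences are in the interpolation exponents chosen for the lower-order term $\mathbb{P}(\nabla(\chi_4^2)u\otimes u)$ (you use $L^{9/4}$ where the paper uses $L^{5/3}$) and in the level of detail for part (i), where the paper simply invokes ``$L^3$ boundedness of singular integral operators'' while you spell out the $L^{10/3}$ energy bound explicitly.
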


\begin{proposition}[Estimates for the nonlocal pressure terms]\label{prop.nlpressure}
Let $T>0$ and $1\leq q<\infty$. There exist constants $C(T), C_q (T) <\infty$ such that for almost all $t\in (0,T)$,
\begin{align}
\|p_{nonloc,H}^{u\otimes u}(\cdot,t)\|_{L^\infty(\cu(x_0))} + \|\nabla p_{nonloc,H}^{u\otimes u}(\cdot,t)\|_{L^\infty(\cu(x_0))} \leq\ & C\|u(\cdot,t)\|_{L^2_{uloc}(\R^3_+)}^2,\label{e.boundnlochelmpressure}\\
\|p_{harm,\leq 1}^{u\otimes u} (\cdot,t)\|_{L^\infty(\cu(x_0))} + \|\nabla p_{harm,\leq 1}^{u\otimes u} (\cdot,t)\|_{L^q(\cu(x_0))}\leq\ & C_q \|u\|_{L^\infty (0,t; L^2_{uloc}(\R^3_+))}^2, \label{e.locestpharmuu}
\end{align}
and 
\begin{align}
\begin{split}
& \|p_{harm,\geq 1}^{u\otimes u}\|_{L^2 (0,T; L^\infty (\cu(x_0)))} + \|\nabla p_{harm,\geq 1}^{u\otimes u}(\cdot,t)\|_{L^2 (0,T; L^\infty (\cu(x_0)))}  \\
& \leq C \Big ( \sup_{\eta\in \Z^3_+} \|\nabla u\|_{L^2 (0,T;L^2 (\cu(\eta)))}^2 + \int_0^T \|u (s)\|_{ L^2_{uloc}(\R^3_+)}^2\, d s \Big ).
\end{split}\label{e.nonlocestpharmuu}
\end{align}
\end{proposition}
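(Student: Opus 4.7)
The three estimates share a common principle: the kernel subtraction present in $N_{x,x_0}$ and $q_{\lambda,x,x_0}$ must produce enough additional decay at infinity to compensate for the fact that the source $u\otimes u$ is only locally in $L^1$. My plan is to prove the three estimates in the order stated, leveraging \eqref{e.boundnlochelmpressure} to shortcut \eqref{e.locestpharmuu}.

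\textbf{Estimate \eqref{e.boundnlochelmpressure} for $p_{nonloc,H}^{u\otimes u}$.} Starting from \eqref{e.forphelm} and the explicit form of the Neumann function of the half-space (fundamental solution plus its mirror image), one has the pointwise bounds $|\nabla_z^2 N(x-z)|\lesssim |x-z|^{-3}$ and $|\nabla_x \nabla_z^2 N(x-z)|\lesssim |x-z|^{-4}$ for $x,z\in\R^3_+$. A mean-value argument on the segment $[x_0,x]$, using $|x-x_0|\lesssim 1$ for $x\in\cu(x_0)$, upgrades the decay of the difference to
\[
|\nabla_z^2 N_{x,x_0}(z)|\lesssim |x_0-z|^{-4},\qquad |z-x_0|\geq 2,
\]
on the support of $1-\chi_{\!_{\,4}}^2$. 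For $\nabla_x p_{nonloc,H}^{u\otimes u}$, the $x_0$-term disappears under $\nabla_x$ and the decay $|x-z|^{-4}$ is direct. Decomposing the integration domain $\R^3_+$ into unit cubes $\cu(\eta)$, applying Cauchy--Schwarz on each, and summing, I use that there are $O(k^2)$ cubes at distance $k$ from $x_0$, so the series $\sum_{k\geq 2} k^{-4}\cdot k^2$ converges and controls everything by $\|u(\cdot,t)\|_{L^2_{uloc}(\R^3_+)}^2$.

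\textbf{Estimate \eqref{e.locestpharmuu} for $p_{harm,\leq 1}^{u\otimes u}$.} The decisive observation is that on the support of $\chi_{\!_{\,2}}^2\subset 3\cu(x_0)$ one has $1-\chi_{\!_{\,4}}^2\equiv 0$, so from the identity $\mathbb P\nabla\cdot F = \nabla\cdot F - \nabla p_{nonloc,H}^{u\otimes u}$ applied to $F=(1-\chi_{\!_{\,4}}^2)u\otimes u$,
\[
\chi_{\!_{\,2}}^2\,\mathbb P\nabla\cdot((1-\chi_{\!_{\,4}}^2)u\otimes u) = -\chi_{\!_{\,2}}^2\,\nabla p_{nonloc,H}^{u\otimes u}.
\]
Applying \eqref{e.boundnlochelmpressure} (after enlarging $\cu(x_0)$ to $3\cu(x_0)$, which is harmless since the estimate is uniform in $x_0$) gives an $L^\infty_tL^\infty_x$ control of this source by $\|u\|_{L^\infty(0,t;L^2_{uloc})}^2$, with compact support in $3\cu(x_0)$. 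Then $p_{harm,\leq 1}^{u\otimes u}$ is obtained by applying the inverse Laplace transform of $q_\lambda$ along $\Gamma$; the resolvent kernel estimates from \cite{MMP17} together with standard $L^p$-bounds for the tangential Fourier multipliers $m_\alpha(D')$ composing $q_\lambda$ yield the $L^\infty$ bound for $p_{harm,\leq 1}^{u\otimes u}$ and the $L^q$ bound for its gradient for any $q<\infty$ (the constant $C_q$ degenerating at $q=\infty$ being the reason the statement restricts to $q$ finite).

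\textbf{Estimate \eqref{e.nonlocestpharmuu} for $p_{harm,\geq 1}^{u\otimes u}$.} Here the source $(1-\chi_{\!_{\,2}}^2)\mathbb P\nabla\cdot((1-\chi_{\!_{\,4}}^2)u\otimes u)$ is not compactly supported and the cancellation built into $q_{\lambda,x,x_0}=q_\lambda(x'-\cdot,x_3,\cdot)-q_\lambda(x_0'-\cdot,x_{0,3},\cdot)$ must be exploited. I integrate by parts (tangentially) to move the horizontal divergence off $u\otimes u$ onto the kernel $q_{\lambda,x,x_0}$, and split $\mathbb P\nabla\cdot((1-\chi_{\!_{\,4}}^2)u\otimes u) = \nabla\cdot((1-\chi_{\!_{\,4}}^2)u\otimes u)-\nabla p_{nonloc,H}^{u\otimes u}$; the first piece gives a kernel integration over $\{|z-x_0|\geq 2\}$ and the second piece is already controlled pointwise by \eqref{e.boundnlochelmpressure}. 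A mean-value argument on $q_\lambda$, combined with the horizontal resolvent symbol bounds of \cite{MMP17}, produces an integrable kernel in $z'$ whose $\R^{d-1}$-integral after Laplace inversion along $\Gamma$ is finite and yields the required decay. The terms involving the normal derivative $\partial_3$ in $\mathbb P\nabla\cdot$ are integrated by parts against $q_{\lambda,x,x_0}$ to trade a vertical derivative for a horizontal one at the cost of boundary traces and of one derivative on $u$, which explains the appearance of $\sup_\eta \|\nabla u\|_{L^2(0,T;L^2(\cu(\eta)))}^2$. The $L^2$-in-time integrability follows from the standard maximal-regularity character of the Stokes pressure operator when the forcing is in $L^2_tL^2_{uloc}$.

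\textbf{Main obstacle.} The technical core is the kernel analysis for \eqref{e.nonlocestpharmuu}: one must quantitatively control $q_{\lambda,x,x_0}$ and its derivatives uniformly in $\lambda\in\Gamma$ so that, after integration over the contour and over the whole half-space, the combined decay of the kernel difference (one extra factor of $|x-x_0|$) and the smoothing provided by the resolvent produces both the horizontal decay needed to sum over unit cubes and the $L^2$-in-time regularity. Matching the large-$|\lambda|$ and small-$|\lambda|$ regimes of $q_\lambda$ with the boundary structure of $\R^3_+$, and tracking how many derivatives are absorbed by the source versus the kernel, is the most delicate step and relies crucially on the explicit half-space Stokes analysis of \cite{MMP17}.
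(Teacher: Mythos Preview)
Your treatment of \eqref{e.boundnlochelmpressure} and \eqref{e.locestpharmuu} is essentially the paper's own argument: the mean-value bound on $\nabla_z^2 N_{x,x_0}$, and the key identity $\chi_{\!_{\,2}}^2\,\mathbb P\nabla\cdot((1-\chi_{\!_{\,4}}^2)u\otimes u)=\chi_{\!_{\,2}}^2\,\nabla p_{nonloc,H}^{u\otimes u}$ (up to a sign), followed by the pointwise resolvent-kernel estimates.

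For \eqref{e.nonlocestpharmuu} there is a real gap. Your proposal to write $\mathbb P\nabla\cdot F=\nabla\cdot F-\nabla p_{nonloc,H}^{u\otimes u}$ and then say ``the second piece is already controlled pointwise by \eqref{e.boundnlochelmpressure}'' fails: that estimate only bounds $\nabla p_{nonloc,H}^{u\otimes u}$ on $\cu(x_0)$, where the singularity of the Neumann kernel is absent by the support of $1-\chi_{\!_{\,4}}^2$. In the integral defining $p_{harm,\geq 1}^{u\otimes u}$ you must control $\nabla p_{nonloc,H}^{u\otimes u}(z)$ for \emph{all} $z\in\supp(1-\chi_{\!_{\,2}}^2)$, and for such $z$ (in particular $|z-x_0|\geq 5$) the point $z$ lies inside the support of $(1-\chi_{\!_{\,4}}^2)u\otimes u$, so the kernel $\nabla_z\nabla_w^2 N(z-w)\sim|z-w|^{-4}$ is genuinely singular and there is no pointwise $L^\infty$ bound---only an $L^p_{uloc}$ bound via Calder\'on--Zygmund, which is not enough here.

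The paper proceeds differently. It never writes $\mathbb P\nabla\cdot F$ as $\nabla\cdot F-\nabla p_H$; instead it uses the explicit half-space formula for $(\mathbb P\nabla\cdot F)'$ from the appendix, which splits into local ``type A'' terms $\partial_\alpha(v\otimes w)$ and nonlocal ``type B'' terms of Poisson-semigroup form. Type A terms are integrated by parts onto $\partial_\alpha q_{\lambda,x,x_0}$, and type B terms are handled via a dedicated lemma bounding them in $L^1_{uloc}$. In both cases the vertical integral is split at $z_3=\tfrac12$; the near-boundary piece $\int_0^{1/2}$ produces a factor $z_3^{-2}$ which is controlled by the \emph{Hardy inequality} using $u|_{x_3=0}=0$. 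This is the actual origin of the $\sup_\eta\|\nabla u\|_{L^2_tL^2(\cu(\eta))}^2$ term---not an integration by parts in $\partial_3$ as you suggest. Finally, the $L^2$-in-time integrability does not come from maximal regularity but from Young's convolution inequality in time applied to the Laplace-inverted resolvent kernel.
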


\subsection{Estimates for the linear pressure terms: proof of Proposition \ref{prop.estlipressure}}
\label{sec.lipressure}

\begin{proof}[Proof of estimate \eqref{e.estpli.1} for $p_{li}$]
As for the estimate of $\nabla p_{li}(t)$, we use the equation $\nabla p_{li} = -\partial_t u_{li} + \Delta u_{li}$ and hence by Proposition 5.3 in \cite{MMP17} there exists a constant $C$ such that for all $t\in(0,\infty)$,
\begin{align*}
\| \nabla p_{li} (t) \|_{L^2_{uloc} (\R^3_+)} \leq\ & \| \partial_t u_{li} (t) \|_{L^2_{uloc} (\R^3_+)} + \| \Delta u_{li} (t) \|_{L^2_{uloc} (\R^3_+)}\\
\leq\ & \frac{C\log(e+t)}{t} \| u_0 \|_{L^2_{uloc} (\R^3_+)}.
\end{align*}
The result is proved.
\end{proof}

We now turn to the estimate of $p^{u_0}_{loc}$. Let us again emphasize that this is the term that requires most care. Indeed we cannot rely of the maximal regularity of \cite{GS91} since $u_0$ is no more than locally in $L^2$. Therefore, we have to estimate the integral formula \eqref{e.forpressureu0loc} directly. We need to be careful so as to avoid dealing with singular integrals in time.

\begin{proof}[Proof of estimate \eqref{e.estplocu0} for $p_{loc}^{u_0}$]
The proof is based on a direct estimate of formula \eqref{e.forpressureu0loc'}. 
Applying Minkowski's inequality, we have for fixed $x_3\in\cu(x_0)$,
\begin{align}\label{e.decompnn+1}
&\left\|\int_0^\infty\int_{\R^{2}}q_\lambda(\cdot-z',x_3,z_3)\cdot\chi_{\!_{\, 4}} u_0(z',z_3)dz'dz_3\right\|_{L^2(\cu'(x_0))}\\
\leq\ & \sum_{n=0}^\infty\int_n^{n+1}\left\|\int_{\R^{2}}q_\lambda(\cdot-z',x_3,z_3)\cdot\chi_{\!_{\, 4}} u_0(z',z_3)dz'\right\|_{L^2(\cu'(x_0))}dz_3\nonumber
\end{align}
Let $x\in\cu(x_0)$. Then we have 
\begin{align*}
\lefteqn{\left|\int_{\R^3_+}q_\lambda(x'-z',x_3,z_3)\cdot \chi_{\!_{\, 4}} u_0(z',z_3)dz'dz_3\right|}\\
\leq\ & C\int_{\R^3_+}\frac{e^{-|\lambda|^\frac12z_3}}{(|x'-z'|+x_3+z_3)^{2}}|\chi_{\!_{\, 4}} u_0|dz'dz_3\\
\leq\ & C\int_0^\infty e^{-|\lambda|^\frac12z_3} \int_{\R^{2}}\frac{\mathbf{1}_{(-6,6)^{2}}(x'-z')}{(|x'-z'|+x_3+z_3)^{2}}|\chi_{\!_{\, 4}} u_0(z',z_3)|dz'dz_3.
\end{align*}
Then Young's inequality for convolutions gives, for almost all $z_3\in(0,\infty)$,
\begin{align*}
& \lefteqn{\left\|\int_{\R^{2}}\frac{\mathbf{1}_{(-6,6)^{2}}(x'-z')}{(|x'-z'|+x_3+z_3)^{2}}|\chi_{\!_{\, 4}} u_0(z',z_3)|dz'\right\|_{L^2_{z'}(\R^{2})}}\\
& \leq
\begin{cases} 
& C|\log (x_3+z_3)| \|\chi_{\!_{\, 4}} u_0(\cdot,z_3)\|_{L^2_{z'}(\R^{2})} \qquad x_3 + z_3\leq \frac12,\\
& C \|\chi_{\!_{\, 4}} u_0(\cdot,z_3)\|_{L^2_{z'}(\R^{2})} \qquad x_3 + z_3 >\frac12.
\end{cases}
\end{align*}
Now, combining this with \eqref{e.decompnn+1}, we obtain
\begin{align}
\lefteqn{\left\|\int_0^\infty\int_{\R^{2}}q_\lambda(\cdot-z',x_3,z_3)\cdot\chi_{\!_{\, 4}} u_0(z',z_3)dz'dz_3\right\|_{L^2(\cu(x_0))}}\nonumber\\
\leq\ & \sum_{n=0}^\infty\left\|\int_n^{n+1}\left\|\int_{\R^{2}}q_\lambda(\cdot-z',x_3,z_3)\cdot\chi_{\!_{\, 4}} u_0(z',z_3)dz'\right\|_{L^2(\cu'(x_0))}dz_3\right\|_{L^2(\cu_3(x_0))}\nonumber\\
\begin{split}
\leq\ & C\sum_{n=1}^\infty\left\|\int_n^{n+1} e^{-|\lambda|^\frac12 z_3}\|\chi_{\!_{\, 4}} u_0(\cdot,z_3)\|_{L^2_{z'}(\R^{2})}dz_3\right\|_{L^2(\cu_3(x_0))} \\
& \quad + 
\begin{cases}
& C  \left\| |\log x_3 | \int_0^{1} e^{-|\lambda|^\frac12 z_3}\|\chi_{\!_{\, 4}} u_0(\cdot,z_3)\|_{L^2_{z'}(\R^{2})}dz_3\right\|_{L^2(\cu_3(x_0))} \qquad (x_3\leq \frac12),\\
& C  \left\|\int_0^{1} e^{-|\lambda|^\frac12 z_3}\|\chi_{\!_{\, 4}} u_0(\cdot,z_3)\|_{L^2_{z'}(\R^{2})}dz_3\right\|_{L^2(\cu_3(x_0))}  \qquad (x_3 > \frac12).
\end{cases}
\end{split}\label{e.decompnn+1bis}
\end{align}
We study each term in the sum in the right hand side of \eqref{e.decompnn+1bis}. Of course, most of the terms in the sum on $n$ are $0$, due to the fact that $\chi_{\!_{\, 4}} u_0$ is compactly supported. For $n\geq 1$, we have
\begin{align*}
&\int_n^{n+1} e^{-|\lambda|^\frac12z_3}\|\chi_{\!_{\, 4}} u_0(\cdot,z_3)\|_{L^2_{z'}(\R^{2})}dz_3\leq  e^{-|\lambda|^\frac12n}\|\chi_{\!_{\, 4}} u_0\|_{L^2(\R^{3}_+)}.
\end{align*}
Thus,
\begin{align*}
&\sum_{n=1}^\infty\left\|\int_n^{n+1} e^{-|\lambda|^\frac12z_3}\|\chi_{\!_{\, 4}} u_0(\cdot,z_3)\|_{L^2_{z'}(\R^{2})}dz_3\right\|_{L^2(\cu_3(x_0))}\\
\leq\ & \|\chi_{\!_{\, 4}} u_0\|_{L^2(\R^{3}_+)}\sum_{n=1}^\infty e^{-|\lambda|^\frac12n}\leq\frac{C}{|\lambda|^\frac12}\|\chi_{\!_{\, 4}} u_0\|_{L^2(\R^{3}_+)}.
\end{align*}
As for the last term in the right-hand side of \eqref{e.decompnn+1bis}, the direct computation yields
the bound such as $C|\lambda|^{-\frac14} \|\chi_{\!_{\, 4}} u_0\|_{L^2(\R^{3}_+)}$ by using the H{\"o}lder inequality for the integral $\int_0^1\ldots d z_3$. 
In other words, for $\kappa\in(0,1)$ fixed, the right-hand side in \eqref{e.decompnn+1bis} is bounded by
\begin{equation*}
C\left(\frac{1}{|\lambda|^\frac12}+\frac{1}{|\lambda|^\frac{1}{4}}\right)\|\chi_{\!_{\, 4}} u_0\|_{L^2(\R^{3}_+)}\leq \frac{C}{|\lambda|^\frac{1}{4}}\|\chi_{\!_{\, 4}} u_0\|_{L^2(\R^{3}_+)}
\end{equation*}
for $|\lambda|\geq \kappa$ and a constant $C(\kappa)<\infty$, which is the situation in which we are interested; see \eqref{e.defgammacurve} for the choice of the curve $\Gamma$ on which $q_\lambda$ will be integrated. We are now close to the conclusion. Indeed,
\begin{align*}
\lefteqn{\|p_{loc}^{u_0} (t)\|_{L^2(\cu(x_0))}}\\
\leq\ & \int_\Gamma e^{\Rel(\lambda)t}\left\|\int_{\R^3_+}q_\lambda(x'-z',x_3,z_3)\cdot \chi_{\!_{\, 4}} u_0(z',z_3)dz'dz_3\right\|_{L^2(\cu(x_0))}|d\lambda| \\
\leq\ & \int_\Gamma \frac{e^{\Rel(\lambda)t}}{|\lambda|^\frac{1}{4}}|d\lambda| \|\chi_{\!_{\, 4}} u_0\|_{L^2(\R^{3}_+)}\\
\leq\ & C t^{-\frac{3}{4}} \|\chi_{\!_{\, 4}} u_0\|_{L^2(\R^{3}_+)}.
\end{align*}
The result is proved. 
\end{proof}

\begin{proof}[Proof of estimate \eqref{e.estpharmu0} for $p_{nonloc}^{u_0}$]
We directly estimate the first term in the right hand side of \eqref{e.forharmpressure}. The main huge simplification compared to the estimate for $p_{loc}^{u_0}$ comes from the fact that $(1-\chi_{\!_{\, 4}}) u_0$ is supported away from the singularity of the kernel at $x_0$. Minkowski's inequality implies that, 
\begin{align*}
&\left\|\int_0^\infty\int_{\R^{2}}q_{\lambda,x,x_0}(z',z_3)\cdot (1-\chi_{\!_{\, 4}}) u_0(z',z_3)dz'dz_3\right\|_{L^\infty(\cu(x_0))}\nonumber\\
\leq\ & \sum_{n=0}^\infty\sum_{\eta'\in\Z^{2}}\left\|\int_n^{n+1}\int_{\cu'(\eta')}\frac{e^{-|\lambda|^\frac12z_3}}{(1+|x'-z'|)^3}\cdot|u_0(z',z_3)|dz'dz_3\right\|_{L^\infty(\cu(x_0))}\nonumber\\
\leq\ & C(|\lambda|^{-\frac14} + |\lambda|^{-\frac12} )\|u_0\|_{L^2_{uloc}(\R^{3}_+)}.
\end{align*}
Hence we have as above,
\begin{equation*}
\|p_{nonloc}^{u_0} (t)\|_{L^\infty(\cu(x_0)))}
\leq \int_\Gamma e^{\Rel(\lambda)t}  |\lambda|^{-\frac14} |d\lambda| \|u_0\|_{L^2_{uloc}(\R^{3}_+)}
\leq C t^{-\frac34} \|u_0\|_{L^2_{uloc}(\R^{3}_+)}.
\end{equation*}
The estimate of $\nabla p^{u_0}_{nonloc} (t)$ is obtained in the same manner.
This yields the result.
\end{proof}

\subsection{Estimates for the local pressure terms: proof of Proposition \ref{prop.estlocpressure}}
\label{sec.lpressure}

\begin{proof}[Proof of estimate \eqref{e.estplocuunew}]
The estimate for $p^{u\otimes u}_{loc,H}$ is a consequence of the $L^3$ boundedness of singular integral operators. 
The proof of the estimate for $p^{u\otimes u}_{loc,harm}$ is based on the maximal regularity theory for the Stokes system of Giga and Sohr \cite{GS91}. We recall that by assumption $p_{loc,harm}^{u\otimes u}$ satisfies \eqref{e.psw} with $c=0$. 
The pressure $p_{loc,harm}^{u\otimes u}$ is the pressure of the system \eqref{e.locnse} where the right hand side has been replaced by the divergence-free field:
\begin{equation}\label{e.rhslocnse}
F:=\mathbb P\left(\nabla(\chi_{\!_{\, 4}}^2)u\otimes u\right)+\mathbb P\left(\chi_{\!_{\, 4}}^2u\cdot\nabla u\right).
\end{equation}
We recall that the Helmholtz-Leray projection $\mathbb P$ is bounded on $L^q(\R^3_+)$ for $1<q<\infty$. Hence, it is clear that the least regular term in \eqref{e.rhslocnse} is $\mathbb P\left(\chi_{\!_{\, 4}}^2u\cdot\nabla u\right)\in L^\frac32(0,T;L^\frac98(\R^3_+))$. Therefore, we aim at controlling $F$ in \eqref{e.rhslocnse} in $L^\frac32(0,T;L^\frac98(\R^3_+))$. By \eqref{e.psw} (with $c=0$ by assumption) and estimate (2.22) of \cite[Theorem 2.8]{GS91} we then have
\begin{align*}
&\left\|p^{u\otimes u}_{loc,harm}\right\|_{L^\frac32(0,T;L^\frac32(\cu(x_0)))} +  \left\| \nabla p^{u\otimes u}_{loc,harm}\right\|_{L^\frac32(0,T;L^\frac98(\R^3_+))}\\
\leq\ & C\left\|\nabla p^{u\otimes u}_{loc,harm}\right\|_{L^\frac32(0,T;L^\frac98(\R^3_+))}\leq C\|F\|_{L^\frac32(0,T;L^\frac98(\R^3_+))}.
\end{align*}
Let us now estimate each term in \eqref{e.rhslocnse}. For the first one, Gagliardo-Nirenberg's inequality implies
\begin{align*}
&\left(\int_{\R^3_+}|\mathbb P(\chi_{\!_{\, 4}}^2 u\cdot \nabla u)|^\frac98\right)^\frac89\\
\leq\ &\|\chi_{\!_{\, 4}} u(\cdot,t)\|_{L^\frac{18}7(\R^3_+)}\|\chi_{\!_{\, 4}}\nabla u(\cdot,t)\|_{L^2(\R^3_+)}\\
\leq\ &C\|u(\cdot,t)\|_{L^2(5\cu(x_0))}\|\nabla u(\cdot,t)\|_{L^2(5\cu(x_0))}+C\|u(\cdot,t)\|_{L^2(5\cu(x_0))}^\frac23\|\nabla u(\cdot,t)\|_{L^2(5\cu(x_0))}^\frac43.
\end{align*}
Therefore,
\begin{align*}
\left\|\mathbb P(\chi_{\!_{\, 4}}^2 u\cdot \nabla u)\right\|_{L^\frac32(0,T;L^\frac98(\R^3_+))}\leq\ &CT^\frac16\left\|u\right\|_{L^\infty(0,T;L^2(5\cu(x_0)))}\left\|\nabla u\right\|_{L^2(0,T;L^2(5\cu(x_0)))}\nonumber\\
& +C\left\|u\right\|_{L^\infty(0,T;L^2(5\cu(x_0)))}^\frac23\left\|\nabla u\right\|_{L^2(0,T;L^2(5\cu(x_0)))}^\frac43
\nonumber
\\
\leq\ &
C\left\|u\right\|_{L^\infty(0,T;L^2(5\cu(x_0)))}^2
+C
\left\|\nabla u\right\|_{L^2(0,T;L^2(5\cu(x_0)))}^2,
\end{align*}
where $C$ depends only on $T>0$. We also have
\begin{align*}
\left\|\mathbb P(\nabla(\chi_{\!_{\, 4}}^2)u\otimes u)\right\|_{L^\frac32(0,T;L^\frac98(\R^3_+))}\leq\ & CT^\frac1{15}\left\|u\otimes u\right\|_{L^\frac53(0,T;L^\frac53(5\cu(x_0)))}\\
\leq\ & CT^\frac1{15}\|u\|_{L^\infty(0,T;L^2(5\cu(x_0)))}^2\\
&+CT^\frac1{15}\left\|u\right\|_{L^\infty(0,T;L^2(5\cu(x_0)))}^\frac45\left\|\nabla u\right\|_{L^2(0,T;L^2(5\cu(x_0)))}^\frac65
\\
\leq\ &
C\left\|u\right\|_{L^\infty(0,T;L^2(5\cu(x_0)))}^2
+C
\left\|\nabla u\right\|_{L^2(0,T;L^2(5\cu(x_0)))}^2.
\end{align*}
This completes the proof.
\end{proof}

Let us notice that from the proof we actually have slightly better integrability in space for $p^{u\otimes u}_{loc,harm}$. Indeed,
\begin{equation*}
\left\|p^{u\otimes u}_{loc,harm}\right\|_{L^\frac32(0,T;L^\frac95(\cu(x_0)))}
\leq C\left\|\nabla p^{u\otimes u}_{loc,harm}\right\|_{L^\frac32(0,T;L^\frac98(\R^3_+))}.
\end{equation*}
However, the exponent $\frac32$ in both time and space is enough for our purposes.

\subsection{Estimates for the nonlocal pressure terms: proof of Proposition \ref{prop.nlpressure}}
\label{sec.nlpressure}

One key advantage of estimating the nonlocal part (versus the local part) of the pressure is that we are away from the singularity. 
Hence, since the kernels in \eqref{e.forharmpressure} have enough decay for the integrals to converge for non localized data, we have some room. In particular we can put additional derivatives on the kernels by integrations by parts. We rely on the decomposition for the Helmholtz-Leray projection given in Lemma \ref{lem.termsB}.

\begin{proof}[Proof of estimate \eqref{e.boundnlochelmpressure} for $p_{nonloc,H}^{u\otimes u}$]
The Helmholtz pressure is estimated in the same way as for the whole space. Indeed, the kernel decays as follows: for all $x\in\cu(x_0)$, for all $(z',z_3)\in\R^3_+$,
\begin{equation*}
\left|\nabla^2_zN(x'-z',x_3,z_3)-\nabla^2_zN(x'_0-z',x_{0,3},z_3)\right|\leq \frac{C|x-x_0|}{|x-z|^{4}}\leq \frac{C}{|x-z|^{4}}
\end{equation*}
with a constant $C<\infty$. Hence, for all $x\in\cu(x_0)$, for all $t\in (0,T)$, 
\begin{align*}
|p_{nonloc,H}^{u\otimes u}(x,t)|\leq\ & C\int_{\R^3}\frac{|u\otimes u(z,t)|}{1+|x-z|^4}dz\\
=\ &\sum_{\eta\in\Z^3}\frac{1}{1+|\eta|^4}\int_{\cu(\eta)}|u(\cdot,t)|^2\leq C\|u(\cdot,t)\|_{L^2_{uloc}(\R^3_+)}^2,
\end{align*}
where we have extended $u$ by $0$ on $\R^3\setminus\R^3_+$ as usual.
The estimate of $\nabla p_{nonloc,H}^{u\otimes u}$ is obtained in the same manner. The result is proved.
\end{proof}

\begin{proof}[Proof of estimate \eqref{e.locestpharmuu} for $p_{harm,\leq 1}^{u\otimes u}$]
Let us recall the formula
\begin{align}\label{proof.fromYM.1}
\begin{split}
p_{harm,\leq 1}^{u\otimes u} (x,t) &= \frac1{2\pi i}\int_0^t\int_\Gamma e^{\lambda (t-s)}\int_{\R^3_+}q_{\lambda}(x'-z',x_3,z_3)\\
&\qquad\cdot \Big( \chi_2^2 \mathbb P\nabla\cdot ((1-\chi_{\!_{\, 4}}^2)u\otimes u)\Big)'(z',z_3,s)dz'dz_3d\lambda ds.
\end{split}
\end{align}
The key observation is that, in virtue of the support of the cut-off functions,
\begin{align}\label{e.relchi2ppH}
\chi_2^2 \mathbb P\nabla\cdot ((1-\chi_{\!_{\, 4}}^2)u\otimes u) = \chi_2^2 \nabla p_{nonloc,H}^{u\otimes u}
\end{align}
and the right-hand side has an enough regularity as estimated in the proof of \eqref{e.boundnlochelmpressure} above, i.e., 
\begin{align}\label{proof.fromYM.2}
\|  \chi_2^2 \mathbb P\nabla\cdot ((1-\chi_{\!_{\, 4}}^2)u\otimes u) \|_{L^1\cap L^\infty} \leq C \| u (\cdot, t)\|_{L^2_{uloc} (\R^3_+)}^2.
\end{align}
Let us give the estimate of $\nabla p^{u\otimes u}_{harm,\leq 1}$. 
From the pointwise estimate of $\nabla q_\lambda$ and the compactness of the support of $\chi_2$, we see
\begin{align*}
&|\nabla p^{u\otimes u}_{harm,\leq 1} (x,t)|\\
 \leq\ & C \int_0^t \int_{\R^3_+} \int_\Gamma e^{\Re (\lambda) (t-s) -c |\lambda|^\frac12 z_3} |d\lambda| \frac{1}{(|x'-z'|+x_3+z_3|)^3} |\chi_2^2 \nabla p_{nonloc,H}^{u\otimes u} | d z d s\\
 \leq\ & C \int_0^t (t-s)^{-1+\frac{\sigma}{2}} \int_{\R^3_+} \frac{1}{z_3^\sigma (|x'-z'|+x_3+z_3)^3} |\chi_2^2 \nabla p_{nonloc,H}^{u\otimes u} | d z d s\\
 \leq\ & C \int_0^t (t-s)^{-1+\frac{\sigma}{2}} \int_0^\infty \frac{1}{z_3^\sigma (x_3+z_3)} d z_3 d s \| u\|_{L^\infty (0,t; L^2_{uloc} (\R^3_+))}^2\\
 \leq\ & C t^\frac{\sigma}{2} x_3^{-\sigma}\| u\|_{L^\infty (0,t; L^2_{uloc} (\R^3_+))}^2,
\end{align*}
where $\sigma \in (0,1)$. We used the bound $e^{-|\lambda|^\frac12z_3}\leq C_\sigma z_3^{-\sigma}|\lambda|^{-\frac\sigma 2}$. Notice that $C$ depends only on $\sigma$. 
This proves the derivative estimate in \eqref{e.locestpharmuu}.
The estimate of $p^{u\otimes u}_{harm,\leq 1} (x,t)$ is shown similarly or even easily, by observing for any $\sigma\in (0,1)$ and $0<\ep\ll 1$,
\begin{align*}
&| p^{u\otimes u}_{harm,\leq 1} (x,t)|\\
 \leq\ & C \int_0^t \int_{\R^3_+} \int_\Gamma e^{\Re (\lambda) (t-s) -c |\lambda|^\frac12 z_3} |d\lambda| \frac{1}{(|x'-z'|+x_3+z_3|)^2} |\chi_2^2 \nabla p_{nonloc,H}^{u\otimes u} | d z d s\\
 \leq\ & C \int_0^t (t-s)^{-1+\frac{\sigma}{2}} \int_{\R^3_+} \frac{1}{z_3^\sigma (|x'-z'|+x_3+z_3)^2} |\chi_2^2 \nabla p_{nonloc,H}^{u\otimes u} | d z d s\\
 \leq\ & C \int_0^t (t-s)^{-1+\frac{\sigma}{2}} \int_{\max(x_3-3,0)}^{x_3+3} \frac{1}{z_3^\sigma (x_3+z_3)^\ep} d z_3 d s \| u\|_{L^\infty (0,t; L^2_{uloc} (\R^3_+))}^2\\
 \leq\ & C t^\frac{\sigma}{2}\| u\|_{L^\infty (0,t; L^2_{uloc} (\R^3_+))}^2.
\end{align*}
Here we have used \eqref{proof.fromYM.2} and the compactness of the support of $\chi_2$. The details are omitted here. The proof of \eqref{e.locestpharmuu} is complete.
\end{proof}

\begin{proof}[Proof of estimate \eqref{e.nonlocestpharmuu} for $p_{harm,\geq 1}^{u\otimes u}$]
Considering the expression \eqref{e.leraydivtan}, we notice that there are two types of terms we have to deal with. Let 
$$
v,\, w\in L^\infty\big(0,T;L^2_{uloc}(\R^3_+)\big)\cap L^2\big(0,T; H^1_{0,loc}(\R^3_+)\big)
$$ 
(typically $v=u$ and $w=u$ or variants). We have to estimate
\begin{equation}\label{e.intqlambdanonloc}
\int_0^t\int_\Gamma e^{\lambda (t-s)}\int_{\R^3_+}q_{\lambda,x,x_0}(z',z_3) \cdot (1-\chi_2^2) F(z',z_3,s)dz'dz_3d\lambda ds
\end{equation}
with $F$ replaced by
\begin{equation}
\tag{type A}\label{e.termA}
F_A(z',z_3,s):=\partial_\alpha((1-\chi_{\!_{\, 4}}^2)v\otimes w)\big)'(z',z_3,s)
\end{equation}
for some $\alpha\in\{1,\ldots 3\}$, or
\begin{multline}
\tag{type B}\label{e.termB}
F_B(z',z_3,s)\\
:=m_0(D')\nabla'\otimes\nabla'\int_0^\infty\left[P(|z_3-y_3|)+P(z_3+y_3)\right](1-\chi_{\!_{\, 4}}^2)v\otimes w(z',y_3,s)dy_3
\end{multline}
where $m_0(D')$ is a (tangential) Fourier multiplier homogeneous of order $0$ (see Appendix \ref{sec.lerayproj}). Here $q_{\lambda,x,x_0}$ stands for $q_\lambda(x'-\cdot,x_3,\cdot)-q_\lambda(x_0'-\cdot,x_{0,3},\cdot)$, according to definition \eqref{def.qlambdax0}. The idea for both \eqref{e.termA} and \eqref{e.termB} is to transfer some derivatives from the source term to the kernel. Of course, integrating by parts implies that some derivatives fall on the cut-off $1-\chi_{\!_{\, 2}}^2$. These terms are much simpler to analyze since 
\begin{equation}\label{e.disjsupp}
\dist\left(\supp(\nabla(\chi_{\!_{\, 2}}^2)),\supp(1-\chi_{\!_{\, 4}}^2)\right)\geq 1\quad\mbox{and}\quad \dist\left(\supp(\nabla(\chi_{\!_{\, 2}}^2)),\cu(x_0)\right)\geq 1,
\end{equation}
so that neither the singularity of the Neumann kernel, nor the one of the Helmholtz-Leray projection are seen. 
Below, we focus on the terms where none of the derivatives falls on the cut-off $1-\chi_{\!_{\, 2}}^2$.

\smallskip

\noindent\emph{Terms \eqref{e.termA}.} Notice that $x\in\cu(x_0)$ and by definition of the cut-off $\chi_{\!_{\, 2}}$, the integral in $z$ in \eqref{e.intqlambdanonloc} is on $2\cu(x_0)^c$. Integrating by parts in \eqref{e.intqlambdanonloc} and using the pointwise bound on $\nabla^2 q_\lambda$ proved in \cite[Section 3]{MMP17}, we reduce the problem to estimating
\begin{align*}
&\bigg|\int_0^t\int_\Gamma e^{\lambda (t-s)}\int_{\R^3_+}\partial_\alpha q_{\lambda,x,x_0}(z',z_3)\cdot (1-\chi_{\!_{\, 2}}^2)\big((1-\chi_{\!_{\, 4}}^2)v\otimes w)\big)'(z',z_3,s)dz'dz_3d\lambda ds\bigg|\\
\leq\ & C\int_0^t\int_\Gamma e^{\Rel(\lambda) (t-s)}\int_{\R^3_+}
\frac{e^{-c|\lambda|^\frac12z_3}}{(1+|x'-z'|)^{3}} (|\lambda|^\frac12 + \frac{1}{1+|x'-z'|})|v\otimes w(z',z_3,s)|dz'dz_3|d\lambda| ds\\
\leq\ & C\int_0^t\int_\Gamma |\lambda|^\frac12 e^{\Rel(\lambda) (t-s)}\int_{\R^3_+}
\frac{e^{-c|\lambda|^\frac12z_3}}{(1+|x'-z'|)^{3}} |v\otimes w(z',z_3,s)|dz'dz_3|d\lambda| ds.
\end{align*} 
Here we have used $|\lambda|\geq \kappa$ by the choice of the curve $\Gamma$.
We then have
\begin{align*}
&\int_{\R^3_+}\frac{e^{-c|\lambda|^\frac12z_3}}{(1+|x'-z'|)^{3}}|v\otimes w|dz'dz_3\\
\leq\ &  \sum_{\eta'\in\Z^2}\frac{1}{1+|\eta'|^{3}}\int_{0}^\frac12 e^{-c|\lambda|^\frac12 z_3}\int_{\cu'(\eta')}|v\otimes w|dz'dz_3\\
& + \sum_{\eta'\in\Z^2}\frac{1}{1+|\eta'|^{3}}\int_\frac12^\infty e^{-c|\lambda|^\frac12z_3}\int_{\cu'(\eta')}|v\otimes w|dz'dz_3\\
\leq\ &  \sum_{\eta'\in\Z^2}\frac{1}{1+|\eta'|^{3}}\int_{0}^\frac12 e^{-c|\lambda|^\frac12 z_3}\int_{\cu'(\eta')}|v\otimes w|dz'dz_3 \\
& + C\sum_{n=1}^\infty e^{-c'|\lambda|^\frac12n} \|v(\cdot,s)\|_{L^2_{uloc}(\R^3_+)}\|w(\cdot,s)\|_{L^2_{uloc}(\R^3_+)}\\
\leq\ & \sum_{\eta'\in\Z^2}\frac{1}{1+|\eta'|^{3}}\int_{0}^\frac12 e^{-c|\lambda|^\frac12 z_3}\int_{\cu'(\eta')}|v\otimes w|dz'dz_3  \\
& + e^{-c'|\lambda|^\frac12} \|v(\cdot,s)\|_{L^2_{uloc}(\R^3_+)}\|w(\cdot,s)\|_{L^2_{uloc}(\R^3_+)}.
\end{align*}
Here $c'=c/2$. 
We eventually get, for almost every $t\in (0,T)$,
\begin{align*}
&\bigg \|\int_0^t\int_\Gamma e^{\lambda (t-s)}\int_{\R^3_+}\partial_\alpha q_{\lambda,x,x_0}(z',z_3)\cdot (1-\chi_{\!_{\, 2}}^2)\big((1-\chi_{\!_{\, 4}}^2)v\otimes w)\big)'(z',z_3,s)dz'dz_3d\lambda ds\bigg \|_{L^\infty (\cu(x_0))}\\
\leq\ & C \sum_{\eta'\in\Z^2} \frac{1}{1+|\eta'|^{3}} \int_{\cu'(\eta')}  \int_{0}^\frac12 \int_\Gamma |\lambda|^\frac12 e^{-c|\lambda|^\frac12 z_3}   \int_0^t  e^{\Re(\lambda) (t-s)}|v (s) \otimes w (s) | \, ds  |d \lambda| dz_3 d z' \\
& + \, C \int_0^t \|v(s) \|_{L^2_{uloc}(\R^3_+)}\|w (s) \|_{L^2_{uloc}(\R^3_+)} \, ds.
\end{align*}
We notice here that there exists $C(T)>0$ such that $e^{\Re(\lambda) (t-s)} \leq C e^{-c|\lambda| (t-s)}$ for $0<t-s\leq T$ and $\lambda \in \Gamma$.
Therefore, by the Young inequality for convolution in the time variable,
the norm of  $L^2(0,T; L^\infty (\cu(x_0)))$ of this term is bounded from above as 
\begin{align*}
& C  \sum_{\eta'\in\Z^2}  \frac{1}{1+|\eta'|^{3}} \int_{\cu'(\eta')}  \int_{0}^\frac12 \int_\Gamma e^{-c|\lambda|^\frac12 z_3} \int_0^T |v (s) \otimes w (s) | \, ds  |d \lambda| dz_3 d z'  \\
& + \,  C T \int_0^T \|v(s) \|_{L^2_{uloc}(\R^3_+)}\|w (s) \|_{L^2_{uloc}(\R^3_+)} \, ds\\
\leq\ & C  \sum_{\eta'\in\Z^2}  \frac{1}{1+|\eta'|^{3}} \int_{\cu'(\eta')}  \int_{0}^\frac12 \int_0^T z_3^{-2} |v (s) \otimes w (s) | \, ds dz_3 d z'  \\
& + \, C \int_0^T \|v(s) \|_{L^2_{uloc}(\R^3_+)}\|w (s) \|_{L^2_{uloc}(\R^3_+)} \, ds\\
\leq\ & C  \sum_{\eta'\in\Z^2}  \frac{1}{1+|\eta'|^{3}} \int_0^T \|\partial_3 v (s) \|_{L^2 (\cu(\eta',0))} \| \partial_3 w (s) \|_{L^2(\cu(\eta',0))} \, ds  \\
& + \, C  \int_0^t \|v(s) \|_{L^2_{uloc}(\R^3_+)}\|w (s) \|_{L^2_{uloc}(\R^3_+)} \, ds,
\end{align*}
where we have used the Hardy inequality about $z_3$ variable and $\cu(\eta',0)=\cu'(\eta')\times [0,1/2]$. Then the first term is clearly bounded from above by 
$$C \sup_{\eta'\in \Z^2}  \| \nabla v \|_{L^2(0,T; L^2(\cu (\eta',0)))}  \| \nabla w \|_{L^2(0,T; L^2(\cu (\eta',0)))},$$ as desired.  

\smallskip

\noindent\emph{Terms \eqref{e.termB}.} We rely on Lemma \ref{lem.termsB} to estimate these terms. Using this lemma we have
\begin{align}
\label{e.splitb12}
\begin{split}
&\bigg|\int_0^t\int_\Gamma e^{\lambda (t-s)}\int_{\R^3_+}q_{\lambda,x,x_0}(z',z_3)\cdot (1-\chi_{\!_{\, 2}}^2)F_Bdz'dz_3d\lambda ds\bigg|\\
\leq\ &\bigg|\int_0^t\int_\Gamma e^{\lambda (t-s)}\int_{\R^3_+}q_{\lambda,x,x_0}(z',z_3)\cdot (1-\chi_{\!_{\, 2}}^2)\mathcal B_1dz'dz_3d\lambda ds\bigg|\\
&+\bigg|\int_0^t\int_\Gamma e^{\lambda (t-s)}\int_{\R^3_+}\nabla'^{2}q_{\lambda,x,x_0}(z',z_3)\cdot(1-\chi_{\!_{\, 2}}^2)\mathcal B_2dz'dz_3d\lambda ds\bigg|\\
&+\quad\mbox{commutator terms}.
\end{split}
\end{align} 
The terms designated by ``commutator terms'' correspond to one or two derivatives falling on the the cut-off $1-\chi_{\!_{\, 2}}^2$. We explained above that these terms are much easier to handle so we focus on the two first terms in the right hand side of \eqref{e.splitb12}. 
We now use the bounds on $\mathcal B_1$ and $\mathcal B_2$ proved in Lemma \ref{lem.termsB} below. The estimate is similar to the one for \eqref{e.termA} above. It is actually simpler, since we do not have here the additional 	factor $|\lambda|^\frac12$. We sketch how to estimate the first term in the right hand side above. Using Young's inequality for convolutions in time, we have that the $L^2(0,T; L^\infty (\cu(x_0)))$ of
\begin{align*}
&\left|\int_0^t\int_\Gamma e^{\lambda (t-s)}\int_{\R^d_+}q_{\lambda,x,x_0}(z',z_3)\cdot (1-\chi_{\!_{\, 2}}^2)\mathcal B_1dz'dz_3d\lambda ds\right|
\end{align*}
is bounded by
\begin{align*}
& C  \sum_{\eta'\in\Z^2}  \frac{1}{1+|\eta'|^{3}} \int_{\cu'(\eta')}  \int_{0}^\frac12 \int_0^T z_3^{-1} |v (s) \otimes w (s) | \, ds dz_3 d z'  \\
& + \, C \|v\|_{L^\infty(0,T;L^2_{uloc}(\R^3_+))}\|w\|_{L^\infty(0,T;L^2_{uloc}(\R^3_+))}\\
\leq\ & C  \sum_{\eta'\in\Z^2}  \frac{1}{1+|\eta'|^{3}} \int_0^T \|\partial_3 v (s) \|_{L^2 (\cu(\eta',0))} \| w (s) \|_{L^2(\cu(\eta',0))} \, ds  \\
& + \, C \int_0^T \|v(s) \|_{L^2_{uloc}(\R^3_+)}\|w (s) \|_{L^2_{uloc}(\R^3_+)} \, ds.
\end{align*}
The second term in the right hand side of \eqref{e.splitb12} is even simpler to handle, since the kernel $\nabla'^{2}q_{\lambda,x,x_0}$ has even more spatial decay.
The same is true for the estimate of  $\nabla p^{u\otimes u}_{harm}$. 
This concludes the proof of the estimate.
\end{proof}

\section{Properties of weak solutions}\label{sec.property.weak}
In this section we show the basic properties of local energy weak solutions in the sense of Definition \ref{def.weakles}.
The goal is to prove that any local energy weak solution is a mild solution and admits additional regularity which enable us to apply the $\ep$-regularity theorem for the half-space \cite{Ser02,SSS04,SS14}.
The $\ep$-regularity for solutions of \eqref{e.nse} is the key in Section \ref{sec.global.weak} for the global existence and Section \ref{sec.blowup} for the blow-up result. The first result of this section is stated as follows,
where the uniqueness result for solutions to the Stokes system \cite[Theorem 5]{MMP17} plays a crucial role.
\begin{proposition}\label{prop.property.weaksol} Let $(u,p)$ be any local energy weak solutions with initial data $u_0\in \mathcal{L}^2_{uloc,\sigma} (\R^3_+)$ in the sense of Definition \ref{def.weakles}. Then 
\begin{align}
u(t) = e^{-t{\bf A}} u_0  - \int_0^t e^{-(t-s){\bf A}} \mathbb{P}\nabla \cdot (u\otimes u) ds, 
\end{align}
$p$ admits the decomposition of \eqref{e.decomppressure}, and 
\begin{align}
\partial_t u, ~ \nabla^2 u, ~ \nabla p \in L^\frac32_{loc} ((0,T]; L^\frac98_{loc} (\overline{\R^3_+})).
\end{align}
\end{proposition}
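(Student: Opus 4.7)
\emph{Proof proposal.} The plan is to build a candidate mild solution and its pressure out of the auxiliary pieces constructed in Section~\ref{sec.pressureest}, and then identify it with $(u,p)$ via a Liouville-type uniqueness argument for the homogeneous Stokes system in $\R^3_+$.

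First I would feed the velocity $u$ of the hypothesis back into the splitting of Section~\ref{sec.pressureest}: let $u_{li}$, $u_{loc}^{u\otimes u}$, $u_{nonloc}^{u\otimes u}$ be the mild solutions of \eqref{e.linearsol}, \eqref{e.locnse}, \eqref{e.nonlocnse}, and let $p_{li}$, $p_{loc}^{u\otimes u}$, $p_{nonloc}^{u\otimes u}$ be the associated pressures, further decomposed as in \eqref{e.decomppressure}. Propositions~\ref{prop.estlipressure}, \ref{prop.estlocpressure} and \ref{prop.nlpressure} guarantee that each of these terms is well-defined for $u_0\in\mathcal{L}^2_{uloc,\sigma}(\R^3_+)$ and for $u$ in the local energy class of Definition~\ref{def.weakles}. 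Setting $\tilde u := u_{li}+u_{loc}^{u\otimes u}+u_{nonloc}^{u\otimes u}$ and $\tilde p := p_{li}+p_{loc}^{u\otimes u}+p_{nonloc}^{u\otimes u}$, the integral representations for the Stokes semigroup yield by construction
\[
\tilde u(t)=e^{-t\mathbf{A}}u_0-\int_0^t e^{-(t-s)\mathbf{A}}\mathbb{P}\nabla\cdot(u\otimes u)\,ds,
\]
and show that $(\tilde u,\tilde p)$ solves \eqref{e.nse} distributionally.

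Next I would consider the difference $(v,q):=(u-\tilde u,\,p-\tilde p)$. By linearity, $(v,q)$ is a distributional solution of the homogeneous Stokes system with zero initial data:
\[
\partial_t v-\Delta v+\nabla q=0,\quad \nabla\cdot v=0\ \text{ in }\ Q_T,\quad v|_{x_3=0}=0,\quad v(\cdot,0)=0.
\]
Both $u$ and $\tilde u$ live in $L^\infty(0,T;\mathcal{L}^2_{uloc,\sigma}(\R^3_+))$, and the integrability built into Definition~\ref{def.weakles} together with the pressure bounds of Propositions~\ref{prop.estlipressure}--\ref{prop.nlpressure} applied to $\tilde p$ confine $\nabla q$ (up to a purely time-dependent additive constant) to the admissible class of the Stokes Liouville theorem. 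Applying \cite[Theorem~5]{MMP17} then forces $v\equiv 0$, which simultaneously gives the mild formula for $u$ and, by reading off the pressure, the decomposition \eqref{e.decomppressure} for $p$.

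Once the mild identity is established, the regularity claim follows by collecting estimates on the three pieces. For the local nonlinear part, the source $\chi_{\!_{\, 4}}^2 u\otimes u$ lies in $L^{3/2}(0,T;L^{9/8})$ by the Sobolev-type estimate already carried out in the proof of Proposition~\ref{prop.estlocpressure}, so the Giga--Sohr maximal regularity of \cite{GS91} applied to \eqref{e.locnse} yields $\partial_t u_{loc}^{u\otimes u},\ \nabla^2 u_{loc}^{u\otimes u},\ \nabla p_{loc,harm}^{u\otimes u}\in L^{3/2}(0,T;L^{9/8}(\R^3_+))$. The linear part inherits pointwise-in-$t$ smoothing away from $t=0$ from the Stokes semigroup bounds of \cite[Proposition~5.3]{MMP17}, and the nonlocal pieces were already shown to be smooth on $\cu(x_0)$ in Propositions~\ref{prop.estlipressure} and \ref{prop.nlpressure}; summing delivers the stated local regularity.

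The hard part is the uniqueness step: one must verify that $\nabla q$ falls into the class covered by the half-space Stokes Liouville theorem of \cite{MMP17}. This is not automatic from the mere integrability $p\in L^{3/2}_{loc}$ demanded in Definition~\ref{def.weakles}; it is precisely the role of the fine Helmholtz/harmonic and local/nonlocal pressure splitting of Section~\ref{sec.pressureest} to provide the quantitative control needed to rule out nontrivial (parasitic) harmonic pressure contributions at spatial infinity.
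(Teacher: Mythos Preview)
Your outline follows the same strategy as the paper: build the mild candidate $\tilde u$ with its structured pressure $\tilde p$, then apply the Stokes Liouville theorem of \cite[Theorem~5]{MMP17} to the difference. However, you skip one technical step that the paper treats carefully. You assert that the Duhamel integral $\int_0^t e^{-(t-s)\mathbf{A}}\mathbb{P}\nabla\cdot(u\otimes u)\,ds$ is well-defined and that $(\tilde u,\tilde p)$ solves the system distributionally, invoking Propositions~\ref{prop.estlipressure}--\ref{prop.nlpressure}. Those propositions control the \emph{pressure} pieces, but for $u$ merely in the local energy class, $u\otimes u$ lies only in $L^1_{uloc}$ and has no global integrability; making sense of $e^{-(t-s)\mathbf{A}}\mathbb{P}\nabla\cdot$ on such data, and verifying that the resulting $\tilde u$ has the weak time-continuity and the membership in a suitable $L^q_{uloc}$ class required by \cite[Theorem~5]{MMP17}, is not automatic. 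The paper handles this by an approximation: it replaces $u_0$ by $u_0^\ep\in C^\infty_{0,\sigma}$ and $u\otimes u$ by $\varphi_\ep u\otimes u$ with a spatial cut-off $\varphi_\ep$, so that each of $e^{-(t-s)\mathbf{A}}$, $\mathbb P$, $\nabla\cdot$ acts in a classical $L^q$ framework; it then proves $v^\ep\to v$ in $L^\infty(0,T;L^q_{uloc})$ for $q\in[1,\tfrac32)$ (crucially using $u\in L^\infty(0,T;\mathcal L^2_{uloc,\sigma})$ so that $\|(1-\varphi_\ep)u\|_{L^2_{uloc}}\to 0$) and passes to the limit in both the equation and the pressure decomposition. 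Without this step your claim that $\tilde u\in L^\infty(0,T;\mathcal L^2_{uloc,\sigma})$ is unjustified, and the hypotheses of the Liouville theorem are not verified.

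A second, smaller point: for the regularity statement the paper does not apply Giga--Sohr maximal regularity directly to $\tilde u$. Instead, once the pressure estimates from Propositions~\ref{prop.estlipressure}--\ref{prop.nlpressure} are available uniformly in~$\ep$, it treats $\nabla p^\ep$ as a known forcing in the \emph{heat} equation and invokes local parabolic regularity to obtain $\partial_t v^\ep,\ \nabla^2 v^\ep\in L^{3/2}(\delta,T;L^{9/8}(\cu(x_0)))$ uniformly; the limit then gives the claim for $v=u$. Your route via Giga--Sohr on the local piece alone does not obviously cover $u_{nonloc}^{u\otimes u}$, whose source $(1-\chi_4^2)u\otimes u$ is not in any global $L^p$ space.
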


\begin{proof} Set 
\begin{align*}
v(t) = e^{-t{\bf A}} u_0 - \int_0^t e^{-(t-s){\bf A}} \mathbb{P} \nabla \cdot (u \otimes u ) d s.
\end{align*}
Let $\{u_0^\ep\}_{0<\ep<1}\subset C_{0,\sigma}^\infty (\R^3_+)$ be a sequence such that $u_0^\ep\rightarrow u_0$ in $L^2_{uloc} (\R^3_+)^3$ as $\ep\rightarrow 0$, and let $\varphi_\ep\in C_0^\infty (\R^3)$ be a smooth cut-off satisfying $\varphi_\ep=1$ on $|x|\leq \frac{1}{\ep}$ and $\varphi=0$ on $|x|\geq \frac{2}{\ep}$.
Then we set 
\begin{align*}
v^\ep(t) = e^{-t{\bf A}} u_0^\ep - \int_0^t e^{-(t-s){\bf A}} \mathbb{P} \nabla \cdot (\varphi_\ep u \otimes u ) d s.
\end{align*}
We see that $\|e^{-t{\bf A}} u_0^\ep-e^{-t{\bf A}} u_0\|_{L^2_{uloc}(\R^3_+)}\leq C \| u_0^\ep - u_0 \|_{L^2_{uloc}(\R^3)}\rightarrow 0$ as $\ep\rightarrow 0$ and  that, for $q\in [1,\frac32)$,
\begin{align}\label{proof.prop.property.weaksol.1}
& \|  \int_0^t e^{-(t-s){\bf A}} \mathbb{P} \nabla \cdot (\varphi_\ep u \otimes u -u\otimes u ) d s\|_{L^{q}_{uloc}(\R^3_+)} \nonumber \\
& \leq \int_0^t \|  e^{-(t-s){\bf A}} \mathbb{P} \nabla \cdot (\varphi_\ep u \otimes u -u\otimes u )\|_{L^{q}_{uloc} (\R^3_+)}  d s \nonumber \\
& \leq C\int_0^t (t-s)^{-\frac12-\frac32(1-\frac1q)} \| (1-\varphi_\ep) u\otimes u \|_{L^{1}_{uloc} (\R^3_+)} ds \nonumber \\
& \text{ (by  applying \cite[Theorem 3]{MMP17})}\nonumber\\
& \leq C \int_0^T |t-s|^{-2+\frac{3}{2q}} \| u \|_{L^2_{uloc}(\R^3_+)} \| (1-\varphi_\ep) u \|_{L^2_{uloc}(\R^3_+)} ds.
\end{align}
Since $u\in L^\infty (0,T; \mathcal{L}^2_{uloc,\sigma} (\R^3_+))$ we have $\| (1-\varphi_\ep) u (s) \|_{L^2_{uloc}(\R^3_+)} \rightarrow 0$ as $\ep\rightarrow 0$ for $a.e.\, s\in (0,T)$, while we have the trivial bound $\| (1-\varphi_\ep) u (s) \|_{L^2_{uloc}(\R^3_+)}\leq \| u (s) \|_{L^2_{uloc}(\R^3_+)}$. Thus, splitting 
the time integral around the singularity $s=t$ in \eqref{proof.prop.property.weaksol.1}  
and applying the Lebesgue convergence theorem 
in the region $|t-s| \ge \delta$ with small $\delta>0$, we obtain
\begin{align*}
& \int_0^t e^{-(t-s){\bf A}} \mathbb{P} \nabla \cdot (\varphi_\ep u \otimes u ) ds \,  \rightarrow  \, \int_0^t e^{-(t-s){\bf A}} \mathbb{P} \nabla \cdot (u\otimes u ) d s \\
& \qquad \qquad \text{in }~L^\infty (0,T; L^q_{uloc} (\R^3_+)), \qquad q\in [1,\textstyle{\frac32}).
\end{align*}
As a consequence, we have 
\begin{align*}
v^\ep \rightarrow v \quad \text{as } ~ \ep\rightarrow 0 \qquad \text{in }~L^\infty (0,T; L^q_{uloc} (\R^3_+)), \qquad q\in [1,\textstyle{\frac32}).
\end{align*}
In virtue of this convergence the local regularity of $v$ and of the associated pressure is obtained from the one of $(v^\ep,p^\ep)$ by taking the limit.
Since $\nabla \cdot (\varphi_\epsilon u\otimes u )\in L^\frac54 ((0,T)\times \R^3_+)$ for each $\epsilon \in (0,1)$ by the Sobolev embedding theorem and the regularity assumption on the local energy weak solutions, each action of $e^{-(t-s){\bf A}}$, $\mathbb{P}$, and $\nabla \cdot$ in the definition of $v^\ep$ is well-defined in a classical $L^q$ framework. Moreover, the maximal regularity gives the bound $\partial_t v^\ep, \nabla ^2 v^\ep\in L^\frac 54 ((0,T)\times \R^3_+)$ and $v^\ep$ satisfies 
\begin{align}\label{proof.prop.property.weaksol.2}
\partial_t v^\ep - \Delta v^\ep + \nabla p^\ep = - \nabla \cdot (\varphi_\ep u\otimes u),\quad \nabla \cdot v^\ep =0 \qquad (t,x) \in (0,T)\times \R^3_+   
\end{align}
and $v^\ep|_{\partial\R^3_+}=0$ in $t\in (0,T)$, $v^\ep|_{t=0} =u_{0,\ep}$.
Here $p^\ep \in L^\frac54 ((0,T)\times \R^3_+)$ is the pressure associated with $v^\ep$, which admits the representation and the decomposition as in \eqref{e.decomppressure} with $u_0$ and $u\otimes u$ simply replaced by $u_0^\ep$ and $\varphi_\ep u\otimes u$, respectively: $p^\ep = p^\ep_{li} +p^{\varphi_\ep u\otimes u}_{loc} + p^{\varphi_\ep u\otimes u}_{nonloc} = p^{u_{0,\ep}}_{loc} + p^{u_{0,\ep}}_{nonloc} + p^{\varphi_\ep u\otimes u}_{loc,harm}+ p_{loc,H}^{\varphi_\ep u\otimes u} + p^{\varphi_\ep u\otimes u}_{loc} + p_{nonloc,H}^{\varphi_\ep u\otimes u} +  p^{\varphi_\ep u\otimes u}_{harm,\leq 1} +  p^{\varphi_\ep u\otimes u}_{harm,\geq 1}$.
Hence, each term in this decomposition satisfies the similar estimates in Propositions \ref{prop.estlipressure}, \ref{prop.estlocpressure}, and \ref{prop.nlpressure}, which are uniform in $\ep\in (0,1)$. More precisely, for all $T>0$ there exists a constant $C(T)<\infty$ such that for all $t\in(0,T)$,
\begin{align*}
\frac t{\log(e+t)} \| \nabla p_{li}^\ep  (t)\|_{L^2_{uloc}(\R^3_+)} & \leq C \| u_{0,\ep} \|_{L^2_{uloc} (\R^3_+)},\\
t^{\frac34} \|p_{loc}^{u_{0,\ep}} (t)\|_{L^2(\cu(x_0))}  & \leq C\|u_{0,\ep}\|_{L^2_{uloc} (\R^3_+)},\\
t^{\frac34} \|p_{nonloc}^{u_{0,\ep}} (t)\|_{L^2(\cu(x_0))} + t^\frac34 \| \nabla p_{nonloc}^{u_{0,\ep}} (t) \|_{L^2 (\R^3_+)} & \leq C\|u_{0,\ep}\|_{L^2_{uloc} (\R^3_+)}.
\end{align*}
Furthermore, for all $T>0$ and $1\leq q<\infty$, there exist $C(T), C_q(T)<\infty$, such that for all $t\in(0,T)$,
\begin{align*}
& \left\|p^{\varphi_\ep u\otimes u}_{loc,H}\right\|_{L^\frac32(0,T;L^\frac32(\cu(x_0)))}+ \left\|p^{\varphi_\ep u\otimes u}_{loc,harm}\right\|_{L^\frac32(0,T;L^\frac32(\cu(x_0)))} + \left\| \nabla p^{\varphi_\ep u\otimes u}_{loc,harm}\right\|_{L^\frac32(0,T;L^\frac98(\R^3_+))}\\
\leq\  &
C\sup_{\eta \in \Z^3_+}\left(\left\|u\right\|_{L^\infty(0,T;L^2(\cu(\eta)))}^2
+\left\|\nabla u\right\|_{L^2(0,T;L^2(\cu(\eta)))}^2\right),
\end{align*}
and 
\begin{align*}
& \|p_{nonloc,H}^{\varphi_\ep u\otimes u} (\cdot,t)\|_{L^\infty(\cu(x_0))} + \|\nabla p_{nonloc,H}^{\varphi_ \ep u\otimes u} (\cdot,t)\|_{L^\infty(\cu(x_0))} \leq C\|u(\cdot,t)\|_{L^2_{uloc}(\R^3_+)}^2,\\
& \|p_{harm,\leq 1}^{\varphi_\ep u\otimes u} (\cdot,t)\|_{L^\infty(\cu(x_0))} + \|\nabla p_{harm,\leq 1}^{\varphi_\ep u\otimes u} (\cdot,t)\|_{L^q (\cu(x_0))} \leq  C_q \|u\|_{L^\infty(0,t;L^2_{uloc}(\R^3_+))}^2,\\
& \|p_{harm,\geq 1}^{\varphi_\ep u\otimes u} \|_{L^2 (0,T; L^\infty(\cu(x_0)))} + \|\nabla p_{harm,\geq 1}^{\varphi_\ep u\otimes u} \|_{L^2 (0,T; L^\infty(\cu(x_0)))}\\
\leq\ & C( \sup_{\eta\in \Z^3} \| \nabla u \|_{L^2 (0,T; L^2(\cu(\eta)))}^2 +  \|u\|_{L^\infty(0,T;L^2_{uloc}(\R^3_+))}^2 ).
\end{align*}
Here the constant $C$ depends on $T$ but is independent of $\ep\in (0,1)$.
Since we have obtained the estimates for the pressure in positive time, by regarding $\nabla p^\ep$ as a given forcing term in \eqref{proof.prop.property.weaksol.2},
we can apply the local regularity estimate of the inhomogeneous heat equations, which results in, for any $\delta\in (0,T)$,
\begin{align}\label{proof.prop.property.weaksol.3}
\begin{split}
& \| \partial_t v^\ep \|_{L^\frac32 (\delta,T; L^\frac98 (\cu(x_0)))} + 
\| \nabla^2 v^\ep \|_{L^\frac32 (\delta,T; L^\frac98 (\cu(x_0)))} \\
& \leq  C\Big (\left\|u\right\|_{L^\infty(0,T;L^2_{uloc}(\R^3_+))}^2
+   \sup_{\eta \in \Z^3_+} \left\|\nabla u\right\|_{L^2(0,T;L^2(\cu(\eta)))}^2 \\
& \quad  + \| \nabla  v^\ep \|_{L^\frac32 (\frac{\delta}{2},T; L^\frac98 (2\cu(x_0)))}  + \| v^\ep \|_{L^\frac32 (\frac{\delta}{2},T; L^\frac98 (2\cu(x_0)))}  \Big ).
\end{split}
\end{align} 
Here the constant $C$ depends only on $T$ and $\delta$.
By using the bound 
\begin{align*}
\| v^\ep\|_{L^\infty (0,T; L^q_{uloc}(\R^3_+))} \leq C \| u\|_{L^\infty (0,T; L^2_{uloc} (\R^3_+))}, \qquad q\in 
[1,\textstyle{\frac32})
\end{align*}
which follows as in the computation of \eqref{proof.prop.property.weaksol.1},
we also have 
\begin{align*}
& \| \nabla  v^\ep \|_{L^\frac32 (\frac{\delta}{2},T; L^\frac98 (2\cu(x_0)))}  + \| v^\ep \|_{L^\frac32 (\frac{\delta}{2},T; L^\frac98 (2\cu(x_0)))}  \\
& \leq C \big ( \left\|u\right\|_{L^\infty(0,T;L^2_{uloc} (\R^3_+))}^2
+ \sup_{\eta \in \Z^3_+} \left\|\nabla u\right\|_{L^2(0,T;L^2(\cu(\eta)))}^2 \big).
\end{align*}
Thus we conclude from \eqref{proof.prop.property.weaksol.3} and by taking the limit $\ep\rightarrow 0$ that 
\begin{align}\label{proof.prop.property.weaksol.4}
\begin{split}
& \| \partial_t v \|_{L^\frac32 (\delta,T; L^\frac98 (\cu(x_0)))} + 
\| \nabla^2 v \|_{L^\frac32 (\delta,T; L^\frac98 (\cu(x_0)))} \\
& \leq C\big ( \left\|u\right\|_{L^\infty(0,T;L^2_{uloc}(\R^3_+))}^2 +\sup_{\eta \in \Z^3_+}\left\|\nabla u\right\|_{L^2(0,T;L^2(\cu(\eta)))}^2\big),
\end{split}
\end{align}
and $v$ satisfies 
\begin{align*}
\partial_t v - \Delta v + \nabla p_v = - \nabla \cdot (u\otimes u), \quad \nabla \cdot v=0 \qquad (t,x) \in (0,T)\times \R^3_+
\end{align*}
and $v|_{\partial\R^3_+}=0$ in $(0,T)$ and $v|_{t=0}=u_0$.
Here $p_v$ is the associated pressure for $v$, which is obtained as a limit of $p^\ep$.
Then $p_v$ satisfies the representation and the decomposition of \eqref{e.decomppressure},
and each term in \eqref{e.decomppressure} satisfies the estimates in Propositions \ref{prop.estlipressure}, \ref{prop.estlocpressure}, and \ref{prop.nlpressure}.
It is  easy to see that the map $[0,T) \mapsto \int_{\R^3_+} v (x,t)  \cdot \varphi  (x) dx$ belongs to $C([0,T))$ for any $\varphi \in C_0^\infty (\overline{\R^3_+})^3$. 
Indeed, the linear term $e^{-t{\bf A}} u_0$ belongs to $C([0,\infty); L^2_{uloc,\sigma} (\R^3_+))$ since $\{e^{-t{\bf A}}\}_{t\geq 0}$ defines a bounded analytic semigroup in $L^2_{uloc,\sigma} (\R^3_+)$ (by \cite[Theorem 2]{MMP17}) and is a $C_0$-analytic semigroup in $L^2_\sigma (\R^3_+)$, which implies that $e^{-t{\bf A}} u_0\in C([0,\infty); L^2_{uloc,\sigma} (\R^3_+))$ for $u_0\in \mathcal{L}^2_{uloc,\sigma} (\R^3_+)$ by the density argument.
On the other hand, the inhomogeneous term in the definition of $v$ belongs to $C([0,T); L^q_{uloc,\sigma} (\R^3_+))$ for $q\in (1,\frac32)$: this is proved by using the fact that $\{e^{-t{\bf A}}\}_{t\geq 0}$ is a bounded analytic semigroup in $L^q_{uloc,\sigma} (\R^3_+)$ for $1<q<\infty$ again by \cite[Theorem 2]{MMP17} and the estimate for $e^{-t{\bf A}}\mathbb{P}\nabla \cdot$ in \cite[Theorem 3]{MMP17} as in the proof of \eqref{proof.prop.property.weaksol.1}. The details are omitted here.
Thus, from the uniqueness result of the weak solution to the Stokes system, proved in \cite[Theorem 5]{MMP17}, we have $u=v$ and also $p=p_v$ (up to some constant). The proof is complete.
\end{proof}

In virtue of the additional regularity obtained in Proposition \ref{prop.property.weaksol}, the $\ep$-regularity theorem by Seregin et al \cite{SSS04} can be applied for our class of weak solutions.

\begin{theorem}[{\cite[Theorem 1.1]{SSS04}, \cite[Theorem 14.4]{lemariebook}}]\label{thm.ep.regularity} There exist $\ep_*>0$ and $R_*>0$ such that the following statement holds.
Let $(u,p)$ be any local energy weak solution to \eqref{e.nse} with initial data $u_0\in \mathcal{L}^2_{uloc,\sigma} (\R^3_+)$ in the sense of Definition \ref{def.weakles}. Let $(t_0,x_0)\in (0,T]\times \overline{\R^3_+}$.
 If 
\begin{align*}
\frac{1}{\rho_*^2}\int_{t_0-\rho_*^2}^{t_0} \int_{B (x_0,\rho_*)\cap \R^3_+} \Big ( |u|^3 + |p|^\frac32 \Big ) d x d t <\ep_* 
\end{align*}
for some $\rho_*\in (0, \min\{R_*, \sqrt{t_0}\}]$ then $u$ is H{\"o}lder continuous on $[t_0-\frac{\rho_*}{16},t_0] \times \overline{B (x_0,\frac{\rho_*}{8})\cap \R^3_+}$.
\end{theorem}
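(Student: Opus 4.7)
The proof is essentially a direct application of the boundary $\ep$-regularity result of \cite{SSS04} (when $x_0$ is close to $\partial\R^3_+$) and its classical interior counterpart due to Caffarelli--Kohn--Nirenberg and Lin \cite{CKN82,Lin98} (when $x_0$ is in the interior); the statement above is simply a transcription of those results. The only task is therefore to verify that local energy weak solutions in the sense of Definition \ref{def.weakles} actually meet the hypotheses of these $\ep$-regularity theorems.

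First, I would note that condition (iv) of Definition \ref{def.weakles} is exactly the local energy inequality, so $(u,p)$ is a suitable weak solution in the sense of \cite{CKN82,SSS04}. However, the $\ep$-regularity theorems in those references require more local regularity on $(u,p)$ than Definition \ref{def.weakles} gives a priori — specifically, one needs $\partial_t u,\nabla^2 u,\nabla p \in L^{3/2}_{loc}((0,T];L^{9/8}_{loc}(\overline{\R^3_+}))$, together with $p\in L^{3/2}_{loc}((0,T)\times\overline{\R^3_+})$. This is precisely the content of Proposition \ref{prop.property.weaksol} above, which identifies $u$ with the mild solution and then uses the pressure decomposition \eqref{e.decomppressure} together with the quantitative bounds of Propositions \ref{prop.estlipressure}, \ref{prop.estlocpressure}, and \ref{prop.nlpressure} to upgrade the parabolic regularity.

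With this regularity in hand, one splits into two cases depending on the location of $x_0$. If $\dist(x_0,\partial\R^3_+) \geq \rho_*$ the parabolic cylinder $B(x_0,\rho_*)\cap\R^3_+ \times (t_0-\rho_*^2,t_0)$ lies entirely in the interior of $\R^3_+$, and the interior $\ep$-regularity result of \cite{CKN82,Lin98} (in the form of \cite[Theorem 14.4]{lemariebook}) applies and yields the H\"older continuity on $[t_0-\tfrac{\rho_*^2}{16},t_0]\times\overline{B(x_0,\tfrac{\rho_*}{8})}$. Otherwise, $x_0$ lies within distance $\rho_*$ of the boundary and \cite[Theorem 1.1]{SSS04} delivers the analogous conclusion up to $\partial\R^3_+$. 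The smallness assumption on the scale-invariant quantity $\rho_*^{-2}\int\!\!\int(|u|^3+|p|^{3/2})$ stated here matches the one required in both results.

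The main (and really the only nontrivial) point in this reduction is the verification of the extra parabolic regularity; this is exactly why the theorem is recorded here as a consequence of Proposition \ref{prop.property.weaksol} rather than just quoted. A solution in the sense of Definition \ref{def.weakles} alone carries only a loc-uniform $L^{3/2}_tL^{9/8}_x$ bound on $\nabla p$ away from $t=0$, which is not obviously enough to feed into \cite{SSS04}; it is the mild-solution identification and the fine-grained pressure analysis of Section \ref{sec.pressureest} that make the $\ep$-regularity machinery applicable to our class.
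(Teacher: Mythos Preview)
Your proposal is correct and follows essentially the same approach as the paper: split according to whether $x_0$ is near the boundary (apply \cite[Theorem 1.1]{SSS04}) or in the interior (apply \cite[Theorem 14.4]{lemariebook}), after noting that Proposition \ref{prop.property.weaksol} supplies the additional parabolic regularity $\partial_t u,\nabla^2 u,\nabla p\in L^{3/2}_{loc}L^{9/8}_{loc}$ needed to place $(u,p)$ in the class where those $\ep$-regularity theorems apply. The paper's proof is terser---it uses the threshold $x_{0,3}\lessgtr\min\{R_*,\sqrt{t_0}\}$ rather than your $\dist(x_0,\partial\R^3_+)\lessgtr\rho_*$, and leaves the role of Proposition \ref{prop.property.weaksol} implicit from the surrounding discussion---but the substance is identical.
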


\begin{proof} When $x_0\in \partial\R^3_+$ then the result follows from Seregin et al \cite[Theorem 1.1]{SSS04}, and thus the same is shown when $\{x\in \overline{\R^3_+}~|~0\leq x_{0,3} \leq \min\{R_*, \sqrt{t_0}\}\}$ by taking $R_*$ smaller than in \cite[Theorem 1.1]{SSS04} if necessary. When $x_0\in \{x\in \R^3_+~|~x_{0,3}\geq \min\{R_*,\sqrt{t_0}\}\}$ then the statement falls into the interior $\ep$-regularity theorem, see, e.g., \cite[Theorem 14.4]{lemariebook}. The proof is complete.
\end{proof}

\section{Decay of the Leray solutions at $\infty$}
\label{sec.decay}

Let $T>0$ and $\delta>0$ be fixed. For the whole section, we work under Assumption \ref{assu.A}. The goal in this section is to prove Theorem \ref{prop.decayinfty}, i.e. to show that if the initial data has some decay at $\infty$, then any weak local energy solution $u$ to \eqref{e.nse}, with initial data $u_0$, will decay at infinity. The assumption $u_0\in\mathcal L^2_{uloc,\sigma}(\R^3_+)$ is redundant for solutions in the sense of Definition \ref{def.weakles}. However, we choose to add it in the statement of Theorem \ref{prop.decayinfty} in order to stress that this is the key to the decay of the solution at space infinity. It is easy to get that the third term in the left hand side of \eqref{e.decayatinfty} is bounded by the two first terms. Indeed
\begin{multline}\label{e.gagnirulocR}
\int_0^t\int_{\cu(\eta)}|\vartheta_R u|^3\\
\leq C\left(\int_0^t\left(\int_{\cu(\eta)}|\vartheta_R u(x,s)|^2dx\right)^3ds\right)^\frac14\left(\int_0^t\int_{\cu(\eta)}|\vartheta_R u|^2+|\vartheta_R \nabla u|^2+R^{-2}|u|^2\right)^\frac34,
\end{multline}
for all $t\in[0,T]$ and all $\eta\in\Z^3_+$. Remember that $\vartheta_R$ cuts off the part of $u$ around $0$. Hence display \eqref{e.decayatinfty} shows that the local energy of $u$ goes to zero at spatial infinity. Let us denote by $A_{T,\delta}$ the constant in the right hand side of the a priori estimate \eqref{e.apriorilenassu}. We also define the quantities
\begin{align*}
\alpha_R(t):=
\sup_{\eta\in\Z^3_+}\int_{\cu(\eta)}|\vartheta_Ru(\cdot,t)|^2, &\qquad
\beta_R(t):=
\sup_{\eta\in\Z^3_+}
\int_0^{t}\int_{\cu(\eta)}|\vartheta_R\nabla u|^2,\\
\gamma_R(t):=\sup_{\eta\in\Z^3_+}
\left(\int_0^{t}\int_{\cu(\eta)}|\vartheta_Ru|^3\right)^\frac23&.
\end{align*}
Notice that our definition of $\gamma_R$ differs from the one of \cite{KS07}. Our quantity has the same homogeneity with respect to $u$ as $\alpha_R$ and $\beta_R$. By \eqref{e.gagnirulocR} it is straightforward to see that
\begin{equation}\label{e.estgammaR}
\gamma_R(t)\leq C\left(\int_0^t\alpha_R^3(s)ds
\right)^\frac16\left(\int_0^t\alpha_R(s)ds+\beta_R(t)+R^{-2}A_{T,\delta}\right)^\frac12,
\end{equation}
for all $t\in[0,T]$. The following inequalities will be useful to give a simpler form to our estimates: for all $t\in[0,T]$,
\begin{subequations}\label{e.eqsimpl}
\begin{align}
\int_0^t\alpha_R^3(s)ds\leq\ & T^\frac67\left(\int_0^t\alpha_R^{21}(s)ds\right)^\frac17,\\
\int_0^t\alpha_R(s)ds\leq\ & T^\frac{20}{21}\left(\int_0^t\alpha_R^{21}(s)ds\right)^\frac1{21},\\
\int_0^t\alpha_R^\frac34(s)ds\leq\ & T^\frac{27}{28}\left(\int_0^t\alpha_R^{21}(s)ds\right)^\frac1{28}.
\end{align}
Moreover, it follows from \eqref{e.estgammaR} that for all $\delta>0$, there exists a constant $C(\delta,T)<\infty$ such that
\begin{equation}
\gamma_R(t)\leq \delta\beta_R(t)+C(\delta,T)\left(\int_0^t\alpha_R^{21}(s)ds\right)^\frac1{21}+CA_{T,\delta}R^{-2},
\end{equation}
\end{subequations}
for all $t\in[0,T]$.

\smallskip

An estimate similar to \eqref{e.decayatinfty} was derived by Kikuchi and Seregin in \cite{KS07} for $\R^3$ (see also \cite[Chapter 32]{lemariebook}). Here the difficulty comes again from the pressure estimates, which are more subtle than in the whole space. Our main tool for the proof of Theorem  \ref{prop.decayinfty} is the following a priori estimate.

\begin{lemma}\label{lem.aprioridecay}
Assume that Assumption \ref{assu.A} holds. Then for all $u_0\in\mathcal L^2_{uloc,\sigma}(\R^3_+)$, there exists a constant $C(T,\|u_0\|_{L^2_{uloc}})<\infty$ such that all weak local energy solution $u$ to \eqref{e.nse} on $Q_T$ in the sense of Definition \ref{def.weakles} with initial data $u_0$ satisfies 
for all $R\geq 1$, for all $t\in[0,T]$,
\begin{equation}\label{e.lem.aprioridecay}
\alpha_R(t)+\beta_R(t)\leq C(T,u_0)\left(\left(\int_0^t\alpha_R^{21}(s)ds\right)^\frac1{21}+R^{-1}(\log R)+\|\vartheta_Ru_0\|_{L^2_{uloc}(\R^3_+)}\right).
\end{equation}
\end{lemma}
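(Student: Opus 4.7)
The plan is to apply the local energy inequality \eqref{e.locenineqbis} with the test function $\chi^2:=\chi_{\!_{\, x_0,1}}^2\,\vartheta_R^2$ for a fixed $x_0\in\Z^3_+$, and then to take the supremum of the resulting estimate over $x_0$. The left-hand side of \eqref{e.locenineqbis} provides a lower bound proportional to $\alpha_R(t)+2\beta_R(t)$, while the initial data term is immediately controlled by $\|\vartheta_R u_0\|_{L^2_{uloc}(\R^3_+)}^2$. The remaining task is to bound the three contributions $\int_0^t\!\int|u|^2\Delta\chi^2$, $\int_0^t\!\int u\cdot\nabla\chi^2\,|u|^2$ and $2\int_0^t\!\int u\cdot\nabla\chi^2\,p$, uniformly in $x_0$, by the right-hand side of \eqref{e.lem.aprioridecay}.

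The two non-pressure contributions are handled by the Leibniz rule applied to $\chi^2=\chi_{\!_{\, x_0,1}}^2\vartheta_R^2$. Each derivative landing on $\vartheta_R^2$ produces a factor $R^{-1}$; since Assumption \ref{assu.A} controls the local $L^2_tL^2_x$ and $L^3_tL^3_x$ norms of $u$ on unit cubes by $A_{T,\delta}$, the corresponding contribution is $O(R^{-1})$ with constants depending only on $T$. Derivatives falling on $\chi_{\!_{\, x_0,1}}^2$ leave $\vartheta_R^2$ attached to $|u|^k$, producing $\le C\int_0^t\alpha_R(s)\,ds$ for the quadratic term and, via the bound $\vartheta_R^2|u|^3\le(\vartheta_R u)^2|u|$ on the bounded support of $\nabla\chi_{\!_{\, x_0,1}}^2$ together with Hölder's inequality and Assumption \ref{assu.A}, $\le C\,\gamma_R(t)\,A_{T,\delta}^{1/2}$ for the cubic term. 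The interpolations \eqref{e.eqsimpl} then reduce both to $\tfrac14\beta_R(t)+C\bigl(\int_0^t\alpha_R^{21}\bigr)^{1/21}+C R^{-2}$, and the $\beta_R$ piece is absorbed on the left.

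The main obstacle is the pressure integral $2\int_0^t\!\int u\cdot\nabla\chi^2\,p\,ds$. We substitute the decomposition \eqref{e.decomppressure} computed relative to the same base point $x_0$, and estimate each of the seven pieces via Propositions \ref{prop.estlipressure}--\ref{prop.nlpressure}. For the linear pieces $p_{loc}^{u_0}$ and $p_{nonloc}^{u_0}$, estimates \eqref{e.estplocu0}--\eqref{e.estpharmu0} yield factors of $\|u_0\|_{L^2(5\cu(x_0))}$ and $\|u_0\|_{L^2_{uloc}}$ respectively; because $\chi_{\!_{\, x_0,1}}^2\vartheta_R^2$ is nontrivial only for $|x_0|\gtrsim R$, on $5\cu(x_0)$ one has $u_0\equiv\vartheta_{R'}u_0$ for some $R'$ of order $R$, so both factors reduce to $\|\vartheta_R u_0\|_{L^2_{uloc}}$ after rescaling constants. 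For the local nonlinear pieces $p^{u\otimes u}_{loc,H}$ and $p^{u\otimes u}_{loc,harm}$, estimate \eqref{e.estplocuunew} supplies an $L^{3/2}_{t,x}$ bound controlled by $A_{T,\delta}$, which paired with $u\cdot\nabla\chi^2$ through Hölder's inequality reduces to the same structure as the cubic term already handled. For the nonlocal nonlinear pieces, estimates \eqref{e.boundnlochelmpressure}--\eqref{e.nonlocestpharmuu} give $L^\infty(\cu(x_0))$ bounds of order $\|u\|_{L^2_{uloc}}^2\le A_{T,\delta}$; the factor $R^{-1}$ supplied by $\nabla\vartheta_R$ in $\nabla\chi^2$, combined with a careful summation over the unit cells of $\Z^3_+$ intersecting the transitional region $|x|\sim R$ (responsible for the logarithmic factor), produces the $R^{-1}\log R$ remainder.

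Collecting all contributions yields
$$\alpha_R(t)+\beta_R(t)\le C\Bigl(\|\vartheta_R u_0\|_{L^2_{uloc}}^2+\bigl(\textstyle\int_0^t\alpha_R^{21}\bigr)^{1/21}+R^{-1}\log R\Bigr)$$
with $C=C(T,\|u_0\|_{L^2_{uloc}})$. Applying the trivial inequality $\|\vartheta_R u_0\|_{L^2_{uloc}}^2\le\|u_0\|_{L^2_{uloc}}\|\vartheta_R u_0\|_{L^2_{uloc}}$ to linearize the initial data term (the excess factor being absorbed into $C$) then produces \eqref{e.lem.aprioridecay}.
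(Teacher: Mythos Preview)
Your overall strategy (testing the local energy inequality against $\vartheta_R^2\chi_{\!_{\, x_0,1}}^2$ and estimating $I_1$ through $I_4$) matches the paper, and your treatment of $I_1$, $I_2$, $I_3$, and the part of the pressure term $I_{4,1}$ carrying $\nabla\vartheta_R$ is fine. The gap is in the hard part of the pressure term, namely
\[
I_{4,2}=\int_0^t\!\int p\,\vartheta_R\nabla\chi_{\!_{\, x_0,1}}^2\cdot(\vartheta_R u),
\]
where \emph{no} factor $R^{-1}$ is available. Here your plan is to invoke Propositions~\ref{prop.estlipressure}--\ref{prop.nlpressure} directly and pair the resulting $L^{3/2}$ or $L^\infty$ bounds on $p$ with $\vartheta_R u$. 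But those propositions bound $p$ by the \emph{global} quantities $\|u_0\|_{L^2_{uloc}}$ and $\|u\|_{L^2_{uloc}}$, not by $\vartheta_R$-weighted ones. For the nonlocal pieces $p_{nonloc}^{u_0}$, $p_{nonloc,H}^{u\otimes u}$, $p_{harm}^{u\otimes u}$ this is unavoidable: the kernels see $u_0$ and $u$ on all of $\R^3_+$, including the ball $|x|\lesssim R$ where $\vartheta_R=0$, so the claim that ``both factors reduce to $\|\vartheta_R u_0\|_{L^2_{uloc}}$'' is simply false. For the local pieces $p_{loc,H}^{u\otimes u}$, $p_{loc,harm}^{u\otimes u}$ your H\"older pairing gives at best
\[
|I_{4,2}|\le C\,\|p_{loc}\|_{L^{3/2}}\,\|\vartheta_R u\|_{L^3}\le C\,A_{T,\delta}\,\gamma_R(t)^{1/2},
\]
which is \emph{not} the ``same structure as the cubic term'' (that one produced $\gamma_R$, not $\gamma_R^{1/2}$). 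The paper explicitly points out that a bound $|I_{4,2}|\le C\gamma_R^{1/2}$ leads to a nonlinear differential inequality $Z_R\le C(\int Z_R^{1/2}+\eta(R))$ which does not close: after Young's inequality you are left with an additive constant independent of $R$, and no decay follows.

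What the paper actually does for $I_{4,2}$ is substantially harder: it returns to the representation formulas for each pressure piece and pushes the cutoff $\vartheta_R(x)$ through the integral kernels via the splitting $\vartheta_R(x)=(\vartheta_R(x)-\vartheta_R(z))+\vartheta_R(z)$ together with $|\vartheta_R(x)-\vartheta_R(z)|\le C\min(R^{-1}|x-z|,1)$. The first piece of the splitting produces the $R^{-1}\log R$ remainder (this is where the logarithm enters, from the kernel decay being exactly critical), while the second produces $\vartheta_R u_0$ or $\vartheta_R u$ under the integral, yielding bounds that are \emph{linear} in $\alpha_R,\beta_R,\gamma_R$. For $p_{loc,harm}^{u\otimes u}$ this even requires writing the Stokes system satisfied by $\vartheta_R u_{loc,harm}^{u\otimes u}$ and re-applying maximal regularity. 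None of this work can be bypassed by citing Propositions~\ref{prop.estlipressure}--\ref{prop.nlpressure} as stated.
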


This a priori estimate will be established below. With Lemma \ref{lem.aprioridecay}, we can now prove Proposition \ref{lem.aprioridecay}.

\begin{proof}[Proof of Proposition \ref{e.decayatinfty}]
Denoting by 
\begin{equation*}
Y_R(t):=\alpha_R^{21}(t)+\beta_R^{21}(t),
\end{equation*}
for all $t\in(0,T)$, we get from \eqref{e.lem.aprioridecay} the following differential inequality
\begin{equation}\label{e.diffineqY}
Y_R(t)\leq C\left(\int_0^tY_R(s)ds+R^{-1}(\log R)+\|\vartheta_Ru_0\|_{L^2_{uloc}(\R^3_+)}\right),
\end{equation}
with a constant $C(T,u_0)<\infty$. The convergence result \eqref{e.decayatinfty} follows now from a classical Gronwall-type argument (see \cite{KS07} for a similar argument). 
\end{proof}

A remarkable point is that the differential inequality \eqref{e.diffineqY} derived from \eqref{e.lem.aprioridecay} is linear. This comes from the fact that it is an inequality involving $\vartheta_Ru$. All the nonlinear terms, e.g. $I_3$ below, have a structure ressembling
\begin{equation*}
\vartheta_R^2|u|^2u=|\vartheta_Ru|^2u.
\end{equation*}
The remaining term $u$ which is not paired with $\vartheta_R$ will be estimated by the a priori estimate \eqref{e.apriorilenassu}.

\smallskip

The remainder of this section is devoted to the proof of Lemma \ref{lem.aprioridecay}. We assume Assumption \ref{assu.A}. Let $u_0\in\mathcal L^2_{uloc}(\R^3_+)$ such that $\|u_0\|_{L^2_{uloc}(\R^3_+)}\leq\delta$ and $u$ be any solution to \eqref{e.nse} in $Q_T$ in the sense of Definition \ref{def.weakles} with initial data $u_0$. For fixed $x_0\in\R^3_+$ and $R\geq 1$, the idea is to test the local energy inequality with $\psi:=\vartheta_R^2\chi_{\!_{\, x_0,1}}\in C^\infty_c(\R^d_+)$. This test function is constant in time.  According to Remark \ref{rem.def.weakles} (3), such test functions are admissible in the local energy inequality. Let us emphasize that the strong convergence \eqref{e.cvinit} is fundamental here and enables to transfer the decay at infinity of the initial data $u_0$ to the solution $u$. 
We have from \eqref{e.locenineqbis}
\begin{align}\label{e.locenineqR}
\begin{split}
&\int_{\R^3_+}\psi|u(\cdot,t)|^2+2\int_0^t\int_{\R^3_+}\psi|\nabla u|^2\\
\leq\ &\int_{\R^3_+}\psi|u_0|^2+\int_0^t\int_{\R^3_+}\Delta\psi |u|^2+\int_0^t\int_{\R^3}\nabla\psi\cdot u|u|^2+2\int_0^t\int_{\R^3_+}\nabla\psi\cdot up\\
=:\ &I_1+I_2+I_3+I_4.
\end{split}
\end{align}
The aim is on the one hand to take advantage of the fact that one gains $R^{-1}$ when one derivative falls on $\vartheta_R$, and on the other hand to combine $\vartheta_R$ and $u$. When a $R^{-1}$ (or better) has been gained, one has won enough decay in $R$, so that we can simply use the global a priori estimate \eqref{e.apriorilenassu} on $u$ in the local energy norm. For the other terms where no $R^{-1}$ has been gained, it is of course important to estimate them in terms of $\vartheta_R$ in order to be able to apply a Gronwall-type lemma. 

\smallskip 

\noindent\emph{Step 1.} The three first terms in the right hand side of \eqref{e.locenineqR} can be handled identically to \cite{KS07}. For $I_1$, we simply have
\begin{equation}\label{e.I1}
|I_1|\leq C\|\vartheta_R u_0\|_{L^2_{uloc}(\R^3_+)}^2.
\end{equation}
For $I_2$, a direct computation yields
\begin{equation*}
\Delta\psi=2|\nabla\vartheta_R|^2\chi_{\!_{\, x_0,1}}+2\vartheta_R\Delta\vartheta_R\chi_{\!_{\, x_0,1}}+4\vartheta_R\nabla\vartheta_R\cdot\nabla\chi_{\!_{\, x_0,1}}+\vartheta_R^2\Delta\chi_{\!_{\, x_0,1}},
\end{equation*}
so that we gain at least $R^{-1}$ for every term but the last. It follows the rough bound
\begin{align}\label{e.I2}
|I_2(t)|\leq\ & C\int_0^t\int_{2\cu(x_0)}|\vartheta_R u|^2+CR^{-1}A_{T,\delta}\nonumber\\
\leq\ &C\int_0^t\sup_{\eta\in\Z^3_+}\int_{\cu(\eta)}|\vartheta_R u|^2+CR^{-1}A_{T,\delta}\nonumber\\
\leq\ &C\int_0^t\alpha_R(s)ds+CA_{T,\delta}R^{-1},
\end{align}
for all $t\in[0,T]$. For $I_3$, we simply get for all $\delta>0$, there exists a constant $C(\delta,T,u_0)<\infty$ such that for all $t\in [0,T]$,
\begin{align}\label{e.I3}
|I_3(t)|\leq\ & CA_{T,\delta}^\frac32R^{-1}+C\left(\int_0^t\int_{2\cu(x_0)}|\vartheta_Ru|^3\right)^\frac23\left(\int_0^t\int_{2\cu(x_0)}|u|^3\right)^\frac13\nonumber\\
\leq\ &CA_{T,\delta}^\frac32R^{-1}+CA_{T,\delta}^\frac12\gamma_R(t)\nonumber\\
\leq\ &CA_{T,\delta}^\frac32R^{-1}+\delta\beta_R(t)+C(\delta,T,u_0)A_{T,\delta}^\frac12\left(\int_0^t\alpha_R^{21}(s)ds\right)^\frac1{21}.
\end{align}
The estimate of $I_4$ is the heart of the matter. We decompose $I_4$ as follows:
\begin{equation*}
I_4=4\int_0^t\int_{\R^3_+}\chi_{\!_{\, x_0,1}}^\frac12p\nabla\vartheta_R\cdot \vartheta_R\chi_{\!_{\, x_0,1}}^\frac12u+\int_0^t\int_{\R^3_+} p\vartheta_R \nabla\chi_{\!_{\, x_0,1}}\cdot \vartheta_Ru=:I_{4,1}+I_{4,2}.
\end{equation*}
The first term in the right hand side, $I_{4,1}$ is easy to handle, because it has a $\nabla\vartheta_R$ which allows for the gain of $R^{-1}$. One can thus rely on the estimates derived in Section \ref{sec.pressureest} and on the global a priori bound \eqref{e.apriorilenassu} to obtain
\begin{equation}\label{e.I41}
|I_{4,1}(t)|\leq CA_{T,\delta}^\frac12\left(\|u_0\|_{L^2_{uloc}(\R^3_+)}+A_{T,\delta}\right)R^{-1}\leq CA_{T,\delta}^\frac32R^{-1},
\end{equation}
for all $t\in [0,T]$. 

\smallskip

The rest of this section is devoted to the estimate for $I_{4,2}$. For $I_{4,2}$, the main difficulty  is that we lack an estimate for $\vartheta_Rp$ in terms of quantities for $\vartheta_Ru$. It is not enough to just bound 
\begin{equation*}
|I_{4,2}|\leq C\left(\|u_0\|_{L^2_{uloc}(\R^3_+)}+A_{T,\delta}\right)\gamma_R^\frac12(t),
\end{equation*}
because that would lead to a nonlinear differential inequality of the type
\begin{equation*}
Z_R(t)\leq C\left(\int_0^tZ_R^\frac12(s)ds+\eta(R)\right),
\end{equation*}
with $\eta(R)\rightarrow 0$ when $R\rightarrow\infty$. 
Though we do have an estimate of $p$ in terms of $u$ (see Section \ref{sec.pressureest}), we have no information about the dependence in $R$. Therefore, we need to go back to the representation formula for $p$ given in \eqref{e.decomppressure} and estimate term by term. We have $\vartheta_R p=\vartheta_R p_{li}+\vartheta_R p_{loc}+\vartheta_R p_{nonloc}$, following the notations introduced in Section \ref{sec.pressureest}. Step 2 below is devoted the analysis of the linear pressure terms $\vartheta_R p_{li}$ related to the initial data, Step 3 to the local pressure $\vartheta_R p_{loc}$, while in Step 4 we handle the nonlocal pressure $\vartheta_R p_{nonloc}$. There are two recurrent ideas. The first one is to decompose the cut-off $\vartheta_R$ as follows,
\begin{equation}\label{e.deccutoff}
\vartheta_R(x',x_3)=\vartheta_R(x',x_3)-\vartheta_R(y',y_3)+\vartheta_R(y',y_3).
\end{equation} 
The second, is to use the following inequality
\begin{equation}\label{e.cutoffR-1}
\left|\vartheta_R(x',x_3)-\vartheta_R(y',y_3)\right|\leq C\min(R^{-1}|x-y|,1),
\end{equation}
for the difference in the right hand side of \eqref{e.deccutoff}.

\smallskip

\noindent\emph{Step 2: linear pressure terms.} For the linear pressure, we have from \eqref{e.decomppressure} $\vartheta_R p_{li}=\vartheta_R p_{loc}^{u_0}+\vartheta_R p_{nonloc}^{u_0}$. Thanks to the representation formula \eqref{e.forpressureu0loc}, for the first term we have\begin{align*}
&\vartheta_Rp_{loc}^{u_0}(x,t)
:=\int_\Gamma e^{\lambda t}\int_{\R^3_+}q_\lambda(x'-z',x_3,z_3)\cdot \vartheta_R(z',z_3)\chi_{\!_{\, 4}} u_0'(x',x_3)dz'dz_3d\lambda\\
=\ &\int_\Gamma e^{\lambda t}\int_{\R^3_+}q_\lambda(x'-z',x_3,z_3)\cdot \vartheta_R(z',z_3)\chi_{\!_{\, 4}} u_0'(z',z_3)dz'dz_3d\lambda\\
&+\int_\Gamma e^{\lambda t}\int_{\R^3_+}q_\lambda(x'-z',x_3,z_3)\cdot (\vartheta_R(x',x_3)-\vartheta_R(z',z_3))\chi_{\!_{\, 4}} u_0'(z',z_3)dz'dz_3d\lambda\\
=:\ &J_1+J_2.
\end{align*}
The estimates of Proposition \ref{prop.estlipressure} give for the first term in the right hand side above, for all $p\in[1,\frac{4}{3})$, there exists a constant $C(p)<\infty$ such that
\begin{equation*}
\left\|J_1\right\|_{L^p(0,T;L^2(2\cu(x_0)))}
\leq C\|\vartheta_Ru_0\|_{L^2(5\cu(x_0))}.
\end{equation*}
As for the second term in the right hand side above, using \eqref{e.cutoffR-1}, 
and the fact that $|x-z|\leq 6$ for all $x\in\cu(x_0)$ and $z\in\supp(\chi_{\!_{\, 4}})$, we obtain
\begin{align*}
&\left|\int_\Gamma e^{\lambda t}\int_{\R^3_+}q_\lambda(x'-z',x_3,z_3)\cdot (\vartheta_R(x',x_3)-\vartheta_R(z',z_3))\chi_{\!_{\, 4}} u_0'(z',z_3)dz'dz_3d\lambda\right|\\
\leq\ &CR^{-1}\int_\Gamma e^{\Rel(\lambda) t}\int_{\R^3_+}|q_\lambda(x'-z',x_3,z_3)||\chi_{\!_{\, 4}} u_0'(z',z_3)|dz'dz_3d\lambda.
\end{align*}
The estimates of Proposition \ref{prop.estlipressure} then give, for all $p\in[1,\frac{4}{3})$, there exists a constant $C(p)<\infty$ such that
\begin{equation*}
\left\|J_2\right\|_{L^p(0,T;L^2(2\cu(x_0)))}\leq CR^{-1}\|u_0\|_{L^2(5\cu(x_0))}.
\end{equation*}
Hence, for $p\in[1,\frac{4}{3})$, there exists a constant $C(p)<\infty$ such that
\begin{align}
&\left|\int_0^t\int_{\R^3_+} \vartheta_Rp_{loc}^{u_0} \nabla\chi_{\!_{\, x_0,1}}\cdot \vartheta_R u\right|\nonumber\\
\leq\ &C\|\vartheta_R p_{loc}^{u_0}\|_{L^p(0,T;L^2(2\cu(x_0)))}\|\vartheta_R u\|_{L^\infty(0,T;L^2(\cu(x_0)))}\nonumber\\
\leq\ & CA_{T,\delta}^\frac12\left(\|\vartheta_Ru_0\|_{L^2_{uloc}(\R^3_+)}+R^{-1}\|u_0\|_{L^2_{uloc}(\R^3_+)}\right).\label{e.estthetaRplocu0}
\end{align}
It remains to handle $p_{nonloc}^{u_0}$. Let $x\in\cu(x_0)$. From $\dist\left(\cu(x_0),\supp(1-\chi_{\!_{\, 4}})\right)\geq 1$, it follows that the singularity of the kernel in the representation formula \eqref{e.forharmpressure} is not seen. We have 
\begin{align*}
&\left|\int_{\R^3_+}q_{\lambda,x,x_0}(z',z_3)\cdot \vartheta_R(x)(1-\chi_{\!_{\, 4}})(z) u_0(z',z_3)dz'dz_3\right|\\
\leq\ &\int_{\R^3_+}\frac{e^{-|\lambda|^\frac12z_3}}{(1+|x-z|)^3}\left|\vartheta_R(x)-\vartheta_R(z)\right| |u_0(z',z_3)|dz'dz_3\\
&+\int_{\R^3_+}\frac{e^{-|\lambda|^\frac12z_3}}{(1+|x-z|)^3} \vartheta_R(z)|u_0(z',z_3)|dz'dz_3\\
=:\ &J_3+J_4.
\end{align*}
The second term is easily bounded as follows
\begin{equation*}
\|J_4(\cdot,t)\|_{L^\infty(\cu(x_0))}\leq C|\lambda|^{-\frac14}\|\vartheta_R u_0\|_{L^2_{uloc}(\R^3_+)}.
\end{equation*}
For the first term, we have
\begin{align*}
&\|J_3(\cdot,t)\|_{L^\infty(\cu(x_0))}\\
\leq\ &CR^{-1}\int_{x_0+(-R,R)^2\times(0,R)}\frac{e^{-|\lambda|^\frac12z_3}}{(1+|x_0-z|)^2}|u_0(z',z_3)|dz'dz_3\\
&+C\int_0^\infty\int_{\R^2\setminus (x_0'+(-R,R)^2)}\frac{e^{-|\lambda|^\frac12z_3}}{(1+|x_0-z|)^3}|u_0(z',z_3)|dz'dz_3\\
&+C\int_{\R_+\setminus (x_{0,3}-R,x_{0,3}+R)}\int_{\R^2}\frac{e^{-|\lambda|^\frac12z_3}}{(1+|x_{0,3}-z_3|+|x_0'-z'|)^3}|u_0(z',z_3)|dz'dz_3\\
\leq\ &C|\lambda|^{-\frac14}R^{-1}(\log R)\|u_0\|_{L^2_{uloc}(\R^3_+)}.\\
\end{align*}
Hence, Minkowski's inequality implies that
\begin{align*}
&\|\vartheta_Rp_{nonloc}^{u_0}(\cdot,t)\|_{L^\infty(\cu(x_0))}\\
\leq\ &\int_\Gamma e^{\Rel(\lambda)t}\left\|\int_{\R^3_+}q_{\lambda,x,x_0}(z',z_3)\cdot \vartheta_R(x)(1-\chi_{\!_{\, 4}})(z) u_0(z',z_3)dz'dz_3\right\|_{L^\infty(\cu(x_0))}|d\lambda|\\
\leq\ &C\int_\Gamma e^{\Rel(\lambda)t}|\lambda|^{-\frac14}|d\lambda|\left(\|\vartheta_R u_0\|_{L^2_{uloc}(\R^3_+)}+R^{-1}(\log R)\|u_0\|_{L^2_{uloc}(\R^3_+)}\right).
\end{align*}
It follows that
\begin{multline*}
\|\vartheta_Rp_{nonloc}^{u_0}(\cdot,t)\|_{L^\infty(\cu(x_0))}\leq Ct^{-\frac34}\left(\|\vartheta_R u_0\|_{L^2_{uloc}(\R^3_+)}+R^{-1}(\log R)\|u_0\|_{L^2_{uloc}(\R^3_+)}\right).
\end{multline*}
Therefore, for all $p\in[1,\frac43)$, there exists a constant $C(p)<\infty$ such that 
\begin{equation*}
\|\vartheta_Rp_{nonloc}^{u_0}\|_{L^p(0,T;L^\infty(\cu(x_0)))}
\leq C\left(\|\vartheta_R u_0\|_{L^2_{uloc}(\R^3_+)}+R^{-1}(\log R)\|u_0\|_{L^2_{uloc}(\R^3_+)}\right),
\end{equation*}
and 
\begin{multline}\label{e.estthetaRpharmu0}
\left|\int_0^t\int_{\R^3_+} \vartheta_Rp_{nonloc}^{u_0} \nabla\chi_{\!_{\, x_0,1}}\cdot \vartheta_R u\right|\\
\leq CA_{T,\delta}^\frac12T^\frac16\left(\|\vartheta_R u_0\|_{L^2_{uloc}(\R^3_+)}+R^{-1}(\log R)\|u_0\|_{L^2_{uloc}(\R^3_+)}\right),
\end{multline}
with a constant $C<\infty$. This concludes the study of the linear pressure terms. 

\smallskip

\noindent\emph{Step 3: local pressure terms.} We now turn to the term $\vartheta_R p_{loc,harm}^{u\otimes u}$. It requires some preliminary work. The idea is to write the system satisfied by $\vartheta_R u_{loc,harm}^{u\otimes u}$. It will satisfy \eqref{e.locnse} up to lower order terms, loss of incompressibility on $\supp(\nabla\vartheta_R)$ and zero initial data. We have
 \begin{equation}
\label{e.locnsecutoff}
\left\{ 
\begin{aligned}
& \partial_t\left(\vartheta_R u_{loc,harm}^{u\otimes u}\right)-\Delta\left(\vartheta_R u_{loc,harm}^{u\otimes u}\right)+\nabla\left(\vartheta_R p_{loc,harm}^{u\otimes u}\right)= F,\\
& \nabla\cdot \left(\vartheta_R u_{loc,harm}^{u\otimes u}\right)=G & \mbox{in} & \ (0,T)\times\R^3_+,\\
& \vartheta_R u_{loc,harm}^{u\otimes u} = 0  & \mbox{on} & \ (0,T)\times\partial \R^3_+,\\
& \vartheta_R u_{loc,harm}^{u\otimes u}(\cdot,0)= 0, & &
\end{aligned}
\right.
\end{equation}
where the source term is
\begin{align}\label{e.defF}
F:=\ &\nabla\vartheta_R p_{loc,harm}^{u\otimes u}-\Delta\vartheta_R u_{loc,harm}^{u\otimes u}-2\nabla\vartheta_R\cdot\nabla u_{loc,harm}^{u\otimes u}\\
&-\vartheta_R\mathbb P(\chi_{\!_{\, 4}}^2 u\cdot\nabla u)-\vartheta_R\mathbb P(\nabla(\chi_{\!_{\, 4}}^2) u\otimes u)\notag 
\end{align}
and 
\begin{equation}\label{e.defG}
G:=\nabla\vartheta_R\cdot u_{loc,harm}^{u\otimes u}.
\end{equation}
We perform one additional decomposition in order to deal separately with the right hand side and the lack of incompressibility. We have
\begin{equation*}
\left(\vartheta_R u_{loc}^{u\otimes u},\vartheta_R p_{loc}^{u\otimes u}\right)=\left(v_F,p_F\right)+\left(v_G,p_G\right),
\end{equation*}
where $\left(v_F,p_F\right)$ solves the Stokes system \eqref{e.locnsecutoff} with $G=0$ and $\left(v_G,p_G\right)$ solves the Stokes system \eqref{e.locnsecutoff} with $F=0$. The least regular term in the right hand side of \eqref{e.defF} is the fourth term, $\vartheta_R\mathbb P(\chi_{\!_{\, 4}}^2 u\cdot\nabla u)$. It belongs to $L^\frac32(0,T;L^\frac98(\R^3_+))$, which is the energy space. In this space $\mathbb P(\vartheta_R\chi_{\!_{\, 4}}^2 u\cdot\nabla u)$ is bounded via Gagliardo-Nirenberg's inequality as follows
\begin{align*}
&\|\vartheta_R\chi_{\!_{\, 4}}^2 u\cdot\nabla u\|_{L^\frac32(0,t;L^\frac98(\R^3_+))}\\
\leq\ &\left(\int_0^t\|\vartheta_Ru(\cdot,s)\|_{L^\frac{18}{7}(2\cu(x_0))}^\frac32\|\nabla u(\cdot,s)\|_{L^2(2\cu(x_0))}^\frac32ds\right)^\frac23\\
\leq\ &C\left(\int_0^t\|\vartheta_Ru(\cdot,s)\|_{L^2(2\cu(x_0))}^\frac32\|\nabla u(\cdot,s)\|_{L^2(2\cu(x_0))}^\frac32ds\right)^\frac23\\
& +\left(\int_0^t\|\vartheta_Ru(\cdot,s)\|_{L^2(2\cu(x_0))}\|\nabla(\vartheta_Ru)(\cdot,s)\|_{L^2(2\cu(x_0))}^\frac12\|\nabla u(\cdot,s)\|_{L^2(2\cu(x_0))}^\frac32ds\right)^\frac23\\
\leq\ &CA_{T,\delta}^\frac12\|\vartheta_Ru\|_{L^6(0,t;L^2(2\cu(x_0)))}\\
&+CA_{T,\delta}R^{-\frac13}+CA_{T,\delta}^\frac23\|\vartheta_Ru\|_{L^\infty(0,t;L^2(2\cu(x_0)))}^\frac23.
\end{align*}
The $L^\infty(0,t;L^2(2\cu(x_0)))$ norm of $\vartheta_R u$ is however difficult to handle in view of the Gronwall estimate. Therefore, we estimate $\vartheta_R\mathbb P(\chi_{\!_{\, 4}}^2 u\cdot\nabla u)$, and actually $F$ in whole, in a subcritical energy space. We have room for that. Any space $L^\frac32(0,T;L^q(\R^3_+))$ with $1<q<\frac98$ would work. Notice that $q=\frac98$ is excluded because of the reason mentioned above (energy space). Moreover, $q=1$ is excluded because it is ruled out in the maximum regularity theorem for the Stokes system of Giga and Sohr \cite[Theorem 3.1]{GS91}, which we apply to estimate $p_F$. According to \cite[Theorem 3.1]{GS91}, we will get that $p_F\in L^\frac32(0,T;L^r(\R^3_+))$, with $1+\frac3r=\frac3q$, so that necessarily $r\in(\frac32,\frac95)$ and
\begin{equation*}
p_F\nabla\chi_{\!_{\, x_0,1}}\in L^\frac32(0,T\times\R^3_+)\subset L^\frac32(0,T;L^r(\R^3_+)).
\end{equation*}
This is clearly enough to bound the integral $I_{4,2}$. Therefore, we choose to estimate $F$ in $L^\frac32(0,T;L^\frac{10}9(\R^3_+))$ but this choice is somewhat arbitrary. In this case, $q=\frac{10}9$, which yields $r=\frac{30}{17}$. Let us now carry out the estimates for $p_F$ first, and then $p_G$. Since the Helmholtz-Leray projection is bounded on $L^\frac{10}{9}(\R^3_+)$, we have
\begin{equation*}
\left\|\mathbb P(\chi_{\!_{\, 4}}^2 u\cdot \nabla u)\right\|_{L^\frac32(0,T;L^\frac{10}9(\R^3_+))}+\left\|\mathbb P(\nabla(\chi_{\!_{\, 4}}^2)u\otimes u)\right\|_{L^\frac32(0,T;L^\frac{10}9(\R^3_+))}\leq A_{T,\delta}.
\end{equation*}
Theorem 3.1 in \cite{GS91} implies that
\begin{multline*}
\|u_{loc,harm}^{u\otimes u}\|_{L^\frac32(0,T;L^\frac92(\R^3_+))}+\|\nabla u_{loc,harm}^{u\otimes u}\|_{L^\frac32(0,T;L^\frac95(\R^3_+))}+\|p_{loc,harm}^{u\otimes u}\|_{L^\frac32(0,T;L^\frac95(\R^3_+))}\leq A_{T,\delta}.
\end{multline*} 
Subsequently, the first three terms in the right hand side of \eqref{e.defF} are bounded by
\begin{multline*}
\left\|\nabla\vartheta_R p_{loc,harm}^{u\otimes u}\right\|_{L^\frac32(0,T;L^\frac95(\R^3_+))}+\left\|\Delta\vartheta_R u_{loc,harm}^{u\otimes u}\right\|_{L^\frac32(0,T;L^\frac92(\R^3_+))}\\
+\left\|2\nabla\vartheta_R\cdot\nabla u_{loc,harm}^{u\otimes u}\right\|_{L^\frac32(0,T;L^\frac95(\R^3_+))}\leq CA_{T,\delta}R^{-1}.
\end{multline*}
It remains to estimate the last three terms. For these terms, we rely on Lemma \ref{lem.comm} in order to commute the cut-off $\vartheta_R$ and the Helmhotz-Leray projection $\mathbb P$. The commutator term itself, is actually more regular, so below, we focus always on the term where the cut-off falls on the function. We have
\begin{align*}
&\|\vartheta_R\mathbb P(\chi_{\!_{\, 4}}^2 u\cdot\nabla u)\|_{L^\frac32(0,t;L^\frac{10}9(\R^3_+))}\\
\leq\ &\|\mathbb P(\vartheta_R\chi_{\!_{\, 4}}^2 u\cdot\nabla u)\|_{L^\frac32(0,t;L^\frac{10}9(\R^3_+))}+\|[\vartheta_R,\mathbb P](\chi_{\!_{\, 4}}^2 u\cdot\nabla u)\|_{L^\frac32(0,t;L^\frac{30}{17}(\R^3_+))}.
\end{align*}
From the second term, we gain $R^{-1}$. By the boundedness of the Helmholtz-Leray projection and H\"older's inequality, we have
\begin{align*}
&\|\vartheta_R\chi_{\!_{\, 4}}^2 u\cdot\nabla u\|_{L^\frac32(0,t;L^\frac{10}9(\R^3_+))}\\
\leq\ &\left(\int_0^t\|\vartheta_Ru(\cdot,s)\|_{L^\frac{5}{2}(2\cu(x_0))}^6ds\right)^\frac16\left(\int_0^t\|\nabla u(\cdot,s)\|_{L^2(2\cu(x_0))}^2ds\right)^\frac12.
\end{align*}
We now estimate the first factor in the right hand side. We have
\begin{align*}
&\left(\int_0^t\|\vartheta_Ru(\cdot,s)\|_{L^\frac52(2\cu(x_0))}^6ds\right)^\frac16\\
\leq\ &C\|\vartheta_Ru\|_{L^{42}(0,t;L^2(2\cu(x_0)))}^\frac{7}{10}\|\vartheta_Ru\|_{L^{2}(0,t;H^1(2\cu(x_0)))}^\frac{3}{10}\\
\leq\ &C\left(\int_0^t\alpha_R^{21}(s)ds\right)^\frac1{60}\left(\int_0^t\alpha_R(s)ds+\beta_R(t)+R^{-2}A_{T,\delta}\right)^\frac{3}{20}
\end{align*}
For the fifth term in the right hand side of \eqref{e.defF} we have
\begin{align*}
&\|\vartheta_R\nabla(\chi_{\!_{\, 4}}^2) u\otimes u\|_{L^\frac32(0,t;L^\frac98(\R^3_+))}\leq C\|\vartheta_Ru\otimes u\|_{L^\frac32(\R^3_+\times (0,t))}\\
\leq\ & C\|\vartheta_Ru\|_{L^3(\R^3_+\times (0,t))}\|u\|_{L^3(\R^3_+\times (0,t))}\leq CA_{T,\delta}^\frac12\gamma_R^\frac12(t).
\end{align*}
Therefore, $p_F$ is bounded by 
\begin{align}\label{e.estpf}
&\|p_F\|_{L^\frac32((0,t);L^\frac{30}{17}(2\cu(x_0)))}\\
\leq\ &CA_{T,\delta}^\frac12\left(\int_0^t\alpha_R^{21}(s)ds\right)^\frac1{60}\left(\int_0^t\alpha_R(s)ds+\beta_R(t)+R^{-2}A_{T,\delta}\right)^\frac{3}{20}\nonumber\\
&+CA_{T,\delta}^\frac12\gamma_R^\frac12(t)+C(\log R)A_{T,\delta}R^{-1}+CA_{T,\delta}^\frac12\left(\int_0^t\alpha_R^\frac34(s)\right)^\frac23.\nonumber
\end{align}
where we have again applied \cite[Theorem 3.1]{GS91}. We now turn to the estimate for $p_G$. Mimicking an idea of \cite{FGS06}, we introduce the solution $(E,q)$ of the following stationary Stokes problem with non homogeneous divergence
\begin{equation}
\label{e.statstokesdiv}
\left\{ 
\begin{aligned}
& -\Delta E+\nabla q= 0&\mbox{in} & \ \R^3_+,\\
& \nabla \cdot E = \nabla\vartheta_R\cdot u_{loc,harm}^{u\otimes u},  &&\\
&E=0& \mbox{on} & \ \partial \R^3_+.
\end{aligned}
\right.
\end{equation}
Notice that $E$ depends on $t$. At time $t=0$, $E(0)=0$. Furthermore $\partial_tE$ is the solution of \eqref{e.statstokesdiv} with $\nabla\vartheta_R\cdot \partial_tu_{loc}^{u\otimes u}$ as the inhomogeneous source term in the equation on the divergence. It follows from \cite[Theorem IV.3.3]{Galdi_book} that there exists a constant $C<\infty$ such that
\begin{equation*}
\|\nabla E\|_{L^\frac32(0,T;L^\frac92(\R^3_+))}+\|q\|_{L^\frac32(0,T;L^\frac92(\R^3_+))}\leq C\|\nabla\vartheta_R\cdot u_{loc,harm}^{u\otimes u}\|_{L^\frac32(0,T;L^\frac92(\R^3_+))}.
\end{equation*}
Moreover,
\begin{equation*}
\|\nabla \partial_tE\|_{L^\frac32(0,T;L^\frac98(\R^3_+))}+\|\partial_tq\|_{L^\frac32(0,T;L^\frac98(\R^3_+))}\leq C\|\nabla\vartheta_R\cdot \partial_tu_{loc,harm}^{u\otimes u}\|_{L^\frac32(0,T;L^\frac98(\R^3_+))}.
\end{equation*}
Hence, since $\partial_tE=0$ on $\partial \R^3_+$, we have by the Poincar\'e-Sobolev inequality that
\begin{align}\label{e.partialtE}
\|\partial_tE\|_{L^\frac32(0,T;L^\frac95(\R^3_+))}\leq\ & C\|\nabla \partial_tE\|_{L^\frac32(0,T;L^\frac98(\R^3_+))}\\
\leq\ &C\|\nabla\vartheta_R\cdot \partial_tu_{loc,harm}^{u\otimes u}\|_{L^\frac32(0,T;L^\frac98(\R^3_+))}\nonumber\\
\leq\ &CR^{-1}A_{T,\delta}\nonumber.
\end{align}
We subsequently decompose the pair $(v_G,p_G)$ into 
\begin{equation*}
v_G=\tilde{v}_G+E,\qquad p_G=\tilde{p}_G+q,
\end{equation*}
where the pair $(\tilde{v}_G,\tilde{p}_G)$ solves of course
\begin{equation}
\label{e.tildevG}
\left\{ 
\begin{aligned}
& \partial_t\tilde{v}_G-\Delta\tilde{v}_G+\nabla\tilde{p}_G= -\partial_tE&\mbox{in} & \ (0,T)\times\R^3_+,\\
& \nabla\cdot \tilde{v}_G=0, & &\\
& \tilde{v}_G = 0  & \mbox{on} & \ (0,T)\times\partial \R^3_+,\\
& \tilde{v}_G(\cdot,0) = 0. & &
\end{aligned}
\right.
\end{equation}
The maximal regularity of Theorem 3.1 in \cite{GS91} together with estimate \eqref{e.partialtE} implies that, up to adding a constant to $\tilde{p}_G$,
\begin{equation*}
\|\tilde{p}_G\|_{L^\frac32(0,T;L^\frac92(\R^3_+))}\leq C\|\nabla\tilde{p}_G\|_{L^\frac32(0,T;L^\frac95(\R^3_+))}\leq CA_{T,\delta}R^{-1}.
\end{equation*}
To conclude, we have proved
\begin{align}\label{e.estpg}
&\|p_G\|_{L^\frac32(0,T;L^\frac92(\R^3_+))}\\
\leq\ & \|\tilde{p}_G\|_{L^\frac32(0,T;L^\frac92(\R^3_+))}+\|q\|_{L^\frac32(0,T;L^\frac92(\R^3_+))}\leq CA_{T,\delta}R^{-1}.\nonumber
\end{align}
The estimate for $\vartheta_Rp_{loc,H}^{u\otimes u}$ follows from the combination of the $L^3$ boundedness of singular integral operators and of the commutator lemma, Lemma \ref{lem.comm}. In the end, \eqref{e.estpf} and \eqref{e.estpg} imply the following estimate: there exists a constant $C<\infty$ such that
\begin{align*}
&\left|\int_0^t\int_{\R^3_+} \vartheta_Rp_{loc}^{u\otimes u} \nabla\chi_{\!_{\, x_0,1}}\cdot \vartheta_R u\right|\nonumber\\
\leq\ &C\|\vartheta_R u\|_{L^3(0,t;L^3(\cu(x_0)))}\|\vartheta_R p_{loc}^{u\otimes u}\|_{L^\frac32(0,t;L^\frac32(\cu(x_0)))}\nonumber\\
\leq\ &CA_{T,\delta}^\frac12\gamma_R(t)^\frac12\left(\left(\int_0^t\alpha_R^{21}(s)ds\right)^\frac1{60}\left(\int_0^t\alpha_R(s)ds+\beta_R(t)+R^{-2}A_{T,\delta}\right)^\frac{3}{20}\right.\nonumber\nonumber\\
&\left.+C\gamma_R^\frac12(t)+CA_{T,\delta}^\frac12R^{-1}(\log R)+C\left(\int_0^t\alpha_R^\frac34(s)\right)^\frac23\right).
\end{align*}
We will rewrite this last inequality in a simpler form, more useful for the Gronwall estimate. Using the inequalities \eqref{e.eqsimpl}, we obtain for $\nu>0$
\begin{align}\label{e.estthetaRplocuu}
&\left|\int_0^t\int_{\R^3_+} \vartheta_Rp_{loc}^{u\otimes u} \nabla\chi_{\!_{\, x_0,1}}\cdot \vartheta_R u\right|\nonumber\\
\leq\ &\nu\beta_R(t)+C(\delta,T,\nu)A_{T,\delta}^\frac12\left(\int_0^t\alpha_R^{21}(s)ds\right)^\frac1{21}+C(T)A_{T,\delta}^\frac32R^{-1}(\log R),
\end{align}
with a constant $C(\delta,T,\nu)<\infty$ and $C(T)<\infty$. This concludes the study of $\vartheta_Rp_{loc}$.

\smallskip

\noindent\emph{Step 4: nonlocal pressure terms.} According to \eqref{e.decomppressure}, we have $\vartheta_R p_{nonloc}=\vartheta_R p_{nonloc,H}^{u\otimes u}+\vartheta_R p_{harm}^{u\otimes u}$. For these terms we rely on the decomposition \eqref{e.deccutoff} and the inequality \eqref{e.cutoffR-1}. The Helmholtz pressure is easy to estimate. We have for almost every $t\in(0,T)$, 
\begin{align*}
\|\vartheta_Rp_{nonloc,H}^{u\otimes u}(\cdot,t)\|_{L^\infty(\cu(x_0))}\leq\ & CR^{-1}(\log R)A_{T,\delta}+CA_{T,\delta}^\frac12\|\vartheta_R u(\cdot,t)\|_{L^2_{uloc}(\R^3_+)}\\
\leq\ &CA_{T,\delta}R^{-1}(\log R)+CA_{T,\delta}^\frac12\alpha_R^\frac12(t).\nonumber
\end{align*}
Therefore,
\begin{align}\label{e.estvarthetapH}
\left|\int_0^t\int_{\R^3_+} \vartheta_Rp_{nonloc,H}^{u\otimes u} \nabla\chi_{\!_{\, x_0,1}}\cdot \vartheta_R u\right|\leq\ & CA_{T,\delta}^\frac32R^{-1}(\log R)T^\frac23\\
&+CA_{T,\delta}^\frac12\gamma_R^\frac12(t)\left(\int_0^t\alpha_R(s)^\frac34ds\right)^\frac23.\nonumber
\end{align}
For the terms $p_{harm,\leq 1}^{u\otimes u}$ below, we will need the estimate of $\nabla(\vartheta_R p_{nonloc,H}^{u\otimes u})$. According to \eqref{e.forphelm}
\begin{align*}
&\nabla(\vartheta_Rp_{nonloc,H}^{u\otimes u})(x,t)\\
:=\ &\int_{\R^3_+}(\nabla_z',\partial_{x_3})\nabla^2_zN(x'-z',x_3,z_3)(1-\chi_{\!_{\, 4}}^2(z',z_3))(\vartheta_R(x)-\vartheta_R(z))u\otimes u(z',z_3,t)dz'dz_3\\
&+\nabla\vartheta_R(x)\int_{\R^3_+}\nabla^2_zN_{x,x_0}(x'-z',x_3,z_3)(1-\chi_{\!_{\, 4}}^2(z',z_3))u\otimes u(z',z_3,t)dz'dz_3\\
&+\int_{\R^3_+}(\nabla_z',\partial_{x_3})\nabla^2_zN(x'-z',x_3,z_3)(1-\chi_{\!_{\, 4}}^2(z',z_3))\vartheta_R(z) u\otimes u(z',z_3,t)dz'dz_3\\
=:\ & K_1+K_2+K_3.
\end{align*}
where $N_{x,x_0}$ is defined by \eqref{e.defNxx_0}. 
We clearly have
\begin{align*}
\|K_1(\cdot,t)\|_{L^\infty(\cu(x_0))}\leq\ &CA_{T,\delta}(R^{-1}\log R),\\
\|K_2(\cdot,t)\|_{L^\infty(\cu(x_0))}\leq\ &CA_{T,\delta}R^{-1},\\
\|K_1(\cdot,t)\|_{L^\infty(\cu(x_0))}\leq\ &CA_{T,\delta}^\frac12\alpha_R^\frac12(t),
\end{align*}
so that 
\begin{equation}\label{e.estnablathetaRpH}
\|\nabla(\vartheta_Rp_{nonloc,H}^{u\otimes u})(\cdot,t)\|_{L^\infty(\cu(x_0))}\leq\ CA_{T,\delta}R^{-1}(\log R)+CA_{T,\delta}^\frac12\alpha_R^\frac12(t).
\end{equation}
For $\vartheta_Rp_{harm}^{u\otimes u}$, we decompose again into $\vartheta_Rp_{harm}^{u\otimes u}=\vartheta_Rp_{harm,\leq 1}^{u\otimes u}+\vartheta_Rp_{harm,\geq 1}^{u\otimes u}$ and analyze the two terms separately. Let us start with the analysis of $\vartheta_Rp_{harm,\leq 1}^{u\otimes u}$, which is parallel to the proof of \eqref{e.locestpharmuu}. We have
\begin{align*}
&\left|\int_0^t\int_\Gamma e^{\lambda (t-s)}\int_{\R^3_+}q_{\lambda}(x'-z',x_3,z_3)\cdot \chi_2^2 \left(\mathbb P\nabla\cdot ((1-\chi_{\!_{\, 4}}^2)u\otimes u)\right)'(z',z_3,s)dz'dz_3d\lambda ds\right|\\
\leq\ &\int_0^t\int_\Gamma e^{\Rel(\lambda)(t-s)}\int_{\R^3_+}\frac{e^{-c|\lambda|^\frac12z_3}\left|\vartheta_R(x)-\vartheta_R(z)\right|}{(1+|x-z|)^{2}}\left| \chi_2^2 \left(\mathbb P\nabla\cdot ((1-\chi_{\!_{\, 4}}^2)u\otimes u)\right)'\right|dz'dz_3|d\lambda| ds\\
&+\int_0^t\int_\Gamma e^{\Rel(\lambda)(t-s)}\int_{\R^3_+}\frac{e^{-c|\lambda|^\frac12z_3}}{(1+|x-z|)^{2}}\left| \vartheta_R(z)\chi_2^2 \left(\mathbb P\nabla\cdot ((1-\chi_{\!_{\, 4}}^2)u\otimes u)\right)'\right|dz'dz_3|d\lambda| ds\\
=:\ &K_4+K_5.
\end{align*}
For both terms, the relation \eqref{e.relchi2ppH} is the basis of our estimates. For $K_4$ we rely on \eqref{proof.fromYM.2} for the estimate of $\left(\mathbb P\nabla\cdot ((1-\chi_{\!_{\, 4}}^2)u\otimes u)\right)'$ and on \eqref{e.cutoffR-1} to bound $\vartheta_R(x)-\vartheta_R(z)$. Using the fact that $\chi_2$ is compactly supported, we obtain for all $\sigma\in(0,1)$
\begin{align*}
|K_4|\leq\ & C \int_0^t \int_{\R^3_+} \int_\Gamma e^{\Re (\lambda) (t-s) -c |\lambda|^\frac12 z_3} |d\lambda| \frac{1}{|x'-z'|} |\vartheta_R(x)-\vartheta_R(z)||\chi_2^2 \nabla p_{nonloc,H}^{u\otimes u} | d z d s\\
\leq\ & CR^{-1}\int_0^t (t-s)^{-1+\frac{\sigma}{2}} \int_{\R^3_+} \frac{1}{z_3^\sigma |x'-z'|}|\chi_2^2 \nabla p_{nonloc,H}^{u\otimes u} | d z d s\\
\leq\ & CR^{-1}\int_0^t (t-s)^{-1+\frac{\sigma}{2}} \int_{\max(x_3-3,0)}^{x_3+3} \frac{1}{z_3^\sigma} d z_3 d s \| u\|_{L^\infty (0,t; L^2_{uloc} (\R^3_+))}^2\\
\leq\ & CR^{-1} t^\frac{\sigma}{2}A_{T,\delta}.
\end{align*}
Concerning $K_5$, we have 
\begin{equation*}
\vartheta_R(z)\chi_2^2 \left(\mathbb P\nabla\cdot ((1-\chi_{\!_{\, 4}}^2)u\otimes u)\right)'=\vartheta_R\chi_2^2\nabla p_{nonloc,H}^{u\otimes u}=\chi_2^2\nabla(\vartheta_R p_{nonloc,H}^{u\otimes u})-\chi_2^2(\nabla\vartheta_R) p_{nonloc,H}^{u\otimes u}.
\end{equation*}
Hence, using the estimate \eqref{e.estnablathetaRpH} for $\nabla(\vartheta_R p_{nonloc,H}^{u\otimes u})$ and the estimate \eqref{e.boundnlochelmpressure} for $p_{nonloc,H}^{u\otimes u}$, we eventually get for all $\sigma\in(0,1)$,
\begin{align*}
|K_5|\leq\ & Ct^\sigma\left(A_{T,\delta}R^{-1}(\log R)+A_{T,\delta}^\frac12\alpha_R^\frac12(t)\right).
\end{align*}
This ends the estimate for $p_{harm,\leq 1}^{u\otimes u}$: there exists $C(T,\sigma)<\infty$ and $C<\infty$ such that
\begin{align}\label{e.estthetaRpharmleq1}
&\left|\int_0^t\int_{\R^3_+} \vartheta_Rp_{harm,\leq 1}^{u\otimes u} \nabla\chi_{\!_{\, x_0,1}}\cdot \vartheta_R u\right|\nonumber\\
\leq\ &C(T,\sigma)A_{T,\delta}^\frac32R^{-1}(\log R)+CA_{T,\delta}^\frac12\int_0^ts^\sigma\alpha_R(s)ds.
\end{align}
We now turn to the term $p_{harm,\geq 1}^{u\otimes u}$. We analyze separately the terms \eqref{e.termA'} and \eqref{e.termB'} according to the decomposition given in Appendix \ref{sec.lerayproj}. The analysis is in the same vein as the one carried out in the proof of \eqref{e.locestpharmuu} in Section \ref{sec.nlpressure}. Here as above additional difficulties comes from handling the dependence in $R$. We start with the terms \eqref{e.termA'}. The argument is in fact similar to the proof of \eqref{e.termA} in \eqref{e.nonlocestpharmuu}. Integrating by parts, we are led to studying
\begin{align*}
&\bigg|\int_0^t\int_\Gamma e^{\lambda (t-s)}\int_{\R^3_+}\partial_\alpha q_{\lambda,x,x_0}(z',z_3)\cdot \vartheta_R(x)\big((1-\chi_{\!_{\, 4}}^2)v\otimes w)\big)'(z',z_3,s)dz'dz_3d\lambda ds\bigg|\\
\leq\ & C\int_0^t\int_\Gamma |\lambda|^\frac12 e^{\Rel(\lambda) (t-s)}\int_{\R^3_+}\frac{e^{-c|\lambda|^\frac12z_3}\left|\vartheta_R(x)-\vartheta_R(z)\right|}{(1+|x-z|)^{3}}|v\otimes w(z',z_3,s)|dz'dz_3|d\lambda| ds\\
&+C\int_0^t\int_\Gamma |\lambda|^\frac12 e^{\Rel(\lambda) (t-s)}\int_{\R^3_+}\frac{e^{-c|\lambda|^\frac12z_3}}{(1+|x-z|)^{3}}|\vartheta_Rv\otimes w(z',z_3,s)|dz'dz_3|d\lambda| ds\\
=:\ &K_6+K_7.
\end{align*} 
As for $K_7$, the argument is exactly the same as the proof of \eqref{e.termA} in \eqref{e.nonlocestpharmuu}, and we have 
\begin{align*}
\| K_7 \|_{L^2 (0,t; L^\infty(\cu(x_0)))} \leq C A_{T,\delta}^\frac12 \Big ( \int_0^t \alpha_R (s)^\frac12 \, d s + \beta_R (t)^\frac12 \Big ).
\end{align*}
The estimate of $K_6$ is obtained also by a simple modification of the above proof. 
Indeed, in this case it suffices to use the bound 
\begin{align*}
& \int_{\R^3_+}\frac{e^{-c|\lambda|^\frac12z_3}\left|\vartheta_R(x)-\vartheta_R(z)\right|}{(1+|x-z|)^{3}}|v\otimes w|dz'dz_3\\
& \leq \frac{C}{R} \sum_{\eta'\in \Z^2,|\eta'|\leq R} \frac{1}{1+|\eta'|^2} (\int_0^\frac12 + \int_\frac12^\infty ) \int_{\cu'(\eta')} | v\otimes w |dz'dz_3\\
& + C \sum_{\eta'\in \Z^2, |\eta'|\geq R} \frac{1}{1+|\eta'|^3} (\int_0^\frac12 + \int_\frac12^\infty ) \int_{\cu'(\eta')} | v\otimes w |dz'dz_3.
\end{align*}
Then the proof of \eqref{e.termA} in \eqref{e.nonlocestpharmuu} yields 
\begin{equation*}
\|K_6(\cdot,t)\|_{L^2(0,t; L^\infty(\cu(x_0)))}\leq CA_{T,\delta}R^{-1} (\log R).
\end{equation*}
The details are omitted here. The terms of \eqref{e.termB'} are proved in the same manner by applying the argument of \eqref{e.termB} in the proof of \eqref{e.nonlocestpharmuu} and in fact easier, due to the absence of the factor $|\lambda|^\frac12$ in the resolvent kernel in this case. In particular, the estimates as in $K_6+K_7$ are valid also for the terms  \eqref{e.termB'}. Therefore, we have for all $\nu>0$,
\begin{align}\label{e.estthetaRpharmgeq1}
&\left|\int_0^t\int_{\R^3_+} \vartheta_Rp_{harm,\geq1}^{u\otimes u} \nabla\chi_{\!_{\, x_0,1}}\cdot \vartheta_R u\right|\nonumber\\
\leq\ &\nu\beta_R(t)+C(\delta,T,\nu)A_{T,\delta}^\frac12\left(\int_0^t\alpha_R(s)ds\right)+C(T)A_{T,\delta}^\frac32R^{-1}(\log R),
\end{align}
with a constant $C(\delta,T,\nu)<\infty$ and $C(T)<\infty$. This concludes the study of $p_{harm,\geq1}^{u\otimes u}$. Hence, combining \eqref{e.eqsimpl} with \eqref{e.estvarthetapH}, \eqref{e.estthetaRpharmleq1} and \eqref{e.estthetaRpharmgeq1} we eventually get for all $\nu>0$, there exists a constant $C(\delta,T,\nu)<\infty$ and $C(T)<\infty$ such that
\begin{align}\label{e.estthetaRpnonloc}
\begin{split}
&\left|\int_0^t\int_{\cu(x_0)}\vartheta_Rp_{nonloc}\nabla\chi_{\!_{\, x_0,1}}\cdot \vartheta_R u\right|\\
\leq\ &\nu\beta_R(t)
+C(\delta,T,\nu)A_{T,\delta}^\frac12\left(\int_0^t\alpha_R^{21}(s)ds\right)^\frac1{21}+C(T)A_{T,\delta}^\frac32R^{-1}(\log R).
\end{split}
\end{align}
This concludes the estimates for the pressure and Step 4. 

\smallskip

Taking $\nu=\frac13$, we have that estimates \eqref{e.I1}, \eqref{e.I2}, \eqref{e.I3}, \eqref{e.I41}, \eqref{e.estthetaRplocu0}, \eqref{e.estthetaRpharmu0}, \eqref{e.estthetaRplocuu} and \eqref{e.estthetaRpnonloc} immediately imply the differential inequality of Lemma~\ref{lem.aprioridecay}.

\section{Local existence of local energy weak solutions}
\label{sec.locexiles}

This section is devoted to the proof of the local existence of the local energy weak solution.

\begin{proposition}\label{prop.local}
For all $u_0 \in L^2_{uloc,\sigma}(\R^3_+)$, there exist $T_0>0$ depending only on $\|u_0\|_{L^2_{uloc}}$
and a local energy weak solution of \eqref{e.nse} in $(0,T_0)$. 
\end{proposition}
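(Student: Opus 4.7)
The plan is to obtain local energy weak solutions as limits of smooth approximations, using the pressure decomposition of Section \ref{sec.pressureest} to close a local energy estimate on a short time interval whose length depends only on $\|u_0\|_{L^2_{uloc}}$. First, I would regularize the data: pick $u_0^\ep \in C^\infty_{c,\sigma}(\R^3_+)$ with $u_0^\ep \to u_0$ strongly in $L^2_{uloc}(\R^3_+)$ and $\|u_0^\ep\|_{L^2_{uloc}} \leq 2\|u_0\|_{L^2_{uloc}}$ (obtained by truncation followed by a divergence-preserving mollification, using the Bogovskii-type correction to keep the field solenoidal and vanishing on $\partial\R^3_+$). For each $\ep$, the data belong to $L^2_\sigma(\R^3_+)$, so a standard Leray--Hopf construction yields a global suitable weak solution $(u^\ep, p^\ep)$ to \eqref{e.nse}, with pressure admitting the decomposition \eqref{e.decomppressure}.

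The heart of the argument is a uniform-in-$\ep$ local energy estimate on $(0,T_0)$. Testing the local energy inequality against $\chi_{\!_{\, x_0,1}}^2$ (admissible by the time-independent-cutoff reasoning in Remark \ref{rem.def.weakles}(3)) yields, for each $x_0 \in \R^3_+$,
\begin{align*}
E_{x_0}(t) + 2\int_0^t\!\!\int_{\R^3_+}\chi_{\!_{\, x_0,1}}^2|\nabla u^\ep|^2
\leq \|u_0^\ep\|_{L^2(2\cu(x_0))}^2 + C\int_0^t\!\!\int_{2\cu(x_0)}|u^\ep|^2 + |u^\ep|^3 + |p^\ep u^\ep|,
\end{align*}
where $E_{x_0}(t) := \|(\chi_{\!_{\, x_0,1}} u^\ep)(t)\|_{L^2(\R^3_+)}^2$. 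Denoting
\[
\mathcal{E}(t) := \sup_{x_0\in\R^3_+} E_{x_0}(t), \qquad \mathcal{D}(t) := \sup_{x_0\in\R^3_+}\int_0^t\!\!\int_{2\cu(x_0)} |\nabla u^\ep|^2,
\]
the idea is to show that $\mathcal{E}(t) + \mathcal{D}(t) \leq C\|u_0\|_{L^2_{uloc}}^2 + C\,t^{\alpha}\Phi\bigl(\mathcal{E}(t) + \mathcal{D}(t)\bigr)$ for some $\alpha > 0$ and a continuous nondecreasing $\Phi$, whence a continuity argument produces $T_0 = T_0(\|u_0\|_{L^2_{uloc}}) > 0$ and a uniform bound. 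The cubic term $|u^\ep|^3$ is handled by Gagliardo--Nirenberg as in \eqref{e.gagnirulocR}, yielding an extra factor of $\mathcal{D}(t)^{3/4}$ absorbable for small $t$.

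The principal obstacle, as usual, is the pressure term $\int_0^t\int p^\ep u^\ep \cdot \nabla(\chi_{\!_{\, x_0,1}}^2)$. This is precisely where the decomposition \eqref{e.decomppressure} and the estimates of Propositions \ref{prop.estlipressure}, \ref{prop.estlocpressure} and \ref{prop.nlpressure} are invoked term by term. The linear pieces $p_{loc}^{u_0^\ep}$ and $p_{nonloc}^{u_0^\ep}$ are controlled by $\|u_0^\ep\|_{L^2_{uloc}}$ with an integrable-in-time weight $t^{-3/4}$; the local Helmholtz and harmonic parts $p_{loc,H}^{u\otimes u}$, $p_{loc,harm}^{u\otimes u}$ are controlled by the energy on $5\cu(x_0)$, hence by $\mathcal{E}(t) + \mathcal{D}(t)$ with a small $t^{\alpha}$ prefactor; and the nonlocal pieces $p_{nonloc,H}^{u\otimes u}$, $p_{harm,\leq 1}^{u\otimes u}$, $p_{harm,\geq 1}^{u\otimes u}$ are controlled by $\|u^\ep\|_{L^\infty(0,t;L^2_{uloc})}^2$ plus a local dissipation term. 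Combining these with H\"older and taking $t$ small closes the estimate.

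Once the uniform bound on $(0,T_0)$ is obtained, I would pass to the limit $\ep \to 0$. The bounds give $u^\ep$ bounded in $L^\infty(0,T_0; L^2_{uloc,\sigma})$ with $\nabla u^\ep$ bounded in $L^2_t L^2_{uloc,x}$; the equation together with the pressure estimates gives $\partial_t u^\ep$ locally bounded in a negative-regularity space. Aubin--Lions on each cube $\cu(x_0) \times (0,T_0)$ produces a subsequence converging strongly in $L^3_{loc}$, which suffices to pass to the limit in $u^\ep \otimes u^\ep$ and in the pressure formulas. The limit $(u,p)$ satisfies (i) and (ii) of Definition \ref{def.weakles}; (iii) follows from the equi-continuity in time of $\langle u^\ep(\cdot), w\rangle$ for $w \in L^2$ compactly supported (again using the equation and pressure bounds) together with \eqref{e.cvinit} inherited from $u_0^\ep \to u_0$ in $L^2_{loc}$; and (iv) follows from the local energy equality satisfied by each smooth $u^\ep$ after taking the limit with lower semicontinuity for the gradient term and strong $L^3_{loc}$ convergence for the cubic term.
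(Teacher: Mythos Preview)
Your regularization strategy differs from the paper's and has a gap at the first step. You claim to pick $u_0^\ep \in C^\infty_{c,\sigma}(\R^3_+)$ with $u_0^\ep \to u_0$ strongly in $L^2_{uloc}$, but this is impossible for general $u_0 \in L^2_{uloc,\sigma}$: by definition, the $L^2_{uloc}$-closure of $C^\infty_{c,\sigma}$ is the strict subspace $\mathcal{L}^2_{uloc,\sigma}$, and Proposition~\ref{prop.local} is stated for the larger space. You only actually use $L^2_{loc}$ convergence plus a uniform $L^2_{uloc}$ bound later on, but producing such a sequence of compactly supported solenoidal fields via truncation and a Bogovskii correction requires controlling the $L^2_{uloc}$ norm (not the $L^2$ norm) of a corrector supported on an annulus of growing size, and this is not addressed.

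The paper avoids localizing the data altogether by regularizing the \emph{equation} rather than just the data: it replaces $u\cdot\nabla u$ by $F_\ep(v)\cdot\nabla v$, where $F_\ep$ is a half-space mollifier preserving both the divergence-free condition and $v_3|_{x_3=0}=0$, and takes mollified (but not truncated) initial data $F_\ep(u_0)\in L^2_{uloc,\sigma}$. The regularized system is then solved by a Banach fixed-point argument directly in $C([0,T_*);L^2_{uloc,\sigma})\cap C((0,T_*);L^\infty)$ using the $L^2_{uloc}$ Stokes semigroup bounds of \cite{MMP17}. This yields smooth mild solutions that are genuinely continuous in $L^2_{uloc}$, so the local energy \emph{equality} holds and the bootstrap inequality $E(T)\leq \|u_0\|_{L^2_{uloc}}^2+CT^{1/24}(1+E(T)+E(T)^3)$ closes by a clean continuity argument. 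In your Leray--Hopf route, by contrast, $u^\ep$ is only weakly continuous in time and need not be smooth beyond a strong-solution interval whose length shrinks as $\ep\to 0$; hence $t\mapsto\sup_{x_0}\|u^\ep(\cdot,t)\|_{L^2(\cu(x_0))}^2$ is merely lower semicontinuous and the bootstrap does not close without an additional smoothing of the equation, which is precisely what the paper's $F_\ep$ supplies in a single step.
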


\subsection{Regularized problem}

For $\ep
>0$, we first study  the regularized problem for 
$(v,q)=(v^\ep, q^\ep)$:
\begin{equation}
\label{e.rnse}
\left\{ 
\begin{aligned}
& \partial_t v+F_\ep(v) \cdot\nabla v-\Delta v+\nabla q = 0,  \quad \nabla\cdot v=0 & \mbox{in} & \ (0,T)\times\R^3_+, \\
& v = 0  & \mbox{on} & \ (0,T)\times\partial \R^3_+,\\
&v|_{t=0}=v_0:=F_\ep(u_0)
& \mbox{in}  & \ \R^3_+.
\end{aligned}
\right.
\end{equation} 
Here $F_\ep$ is a mollification operator for the vector fields in $\R^3_+$ defined as
$F_{\ep}(u)= (\omega_\ep * \tilde{u})|_{\R^3_+}$, where $\omega_\ep$ is 
a standard radial symmetric mollifier supported in the ball $B(0,\ep)$ and 
$\tilde{u}$ is the extension of $u$ given by 
\begin{align*}
\tilde{u}(x) & = u(x)~~~{\rm if}~~x_3>0\,,\\
\tilde{u}'(x', x_3) & = u'(x', -x_3)~~{\rm and}~~
\tilde{u}_3 (x',x_3) = - u_n (x', -x_3)~~~{\rm if}~~x_3<0\,.
\end{align*}
Then 
$F_\ep(u)$ satisfies ${\rm div}\, F_\ep(u) =0$ in $\R^n$ 
and $F_{\ep}(u_3)_n=0$ for $x_3=0$ by the symmetry.
The following local well-posedness result can be shown by 
the contraction principle.

\begin{proposition}
\label{prop.localmild}
Let $u_0 \in L^2_{uloc,\sigma}(\R^3_+)$. 
For $\ep>0$ there exist $T_*=T_*(\ep)>0$ and a unique mild solution to the 
problem \eqref{e.rnse} in $C([0,T_*);L^2_{uloc,\sigma}(\R^3_+))
\cap C((0,T_*);L^\infty(\R^3_+))$. Moreover if $T_*$ is 
the maximal existence time for the mild solution, $v$ satisfies 
$\lim_{t \uparrow T_*} \|v(t)\|_{L^2_{uloc}}=\infty$.
\end{proposition}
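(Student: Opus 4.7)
The approach is a standard Banach fixed point argument in the mild formulation. First I would rewrite \eqref{e.rnse} as an integral equation. Since the mollification is designed so that $F_\ep(v)$ is divergence-free in $\R^3$ with $F_\ep(v)_3=0$ on $\partial\R^3_+$, the nonlinear term can be put in divergence form $F_\ep(v)\cdot\nabla v=\nabla\cdot(F_\ep(v)\otimes v)$. Applying $\mathbb{P}$ and Duhamel's formula gives
\begin{equation*}
v(t)=e^{-t\mathbf{A}}v_0-\int_0^t e^{-(t-s)\mathbf{A}}\mathbb{P}\nabla\cdot(F_\ep(v)\otimes v)(s)\,ds=:\Phi(v)(t).
\end{equation*}
The goal is to show that $\Phi$ is a contraction on a small ball in $X_T:=L^\infty(0,T;L^2_{uloc,\sigma}(\R^3_+))$ for $T=T_*(\ep)$ chosen sufficiently small.

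The key ingredients are two families of estimates. From \cite[Theorem 2]{MMP17}, $\{e^{-t\mathbf{A}}\}_{t\geq 0}$ is a bounded analytic semigroup on $L^2_{uloc,\sigma}(\R^3_+)$, so $\|e^{-t\mathbf{A}}v_0\|_{L^2_{uloc}}\leq C\|v_0\|_{L^2_{uloc}}$; and from \cite[Theorem 3]{MMP17}, $\|e^{-t\mathbf{A}}\mathbb{P}\nabla\cdot f\|_{L^2_{uloc}}\leq Ct^{-1/2}\|f\|_{L^2_{uloc}}$. The second family concerns the mollifier: the reflection extension together with convolution against $\omega_\ep$ gives
\begin{equation*}
\|F_\ep(w)\|_{L^\infty(\R^3_+)}\leq C\ep^{-3/2}\|w\|_{L^2_{uloc}(\R^3_+)},
\end{equation*}
and consequently $\|F_\ep(v)\otimes v\|_{L^2_{uloc}}\leq C\ep^{-3/2}\|v\|_{L^2_{uloc}}^2$. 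Combining these two bounds yields
\begin{equation*}
\|\Phi(v)(t)\|_{L^2_{uloc}}\leq C\|v_0\|_{L^2_{uloc}}+C\ep^{-3/2}\int_0^t(t-s)^{-1/2}\|v(s)\|_{L^2_{uloc}}^2\,ds\leq C\|v_0\|_{L^2_{uloc}}+C\ep^{-3/2}T^{1/2}\|v\|_{X_T}^2,
\end{equation*}
and an analogous difference estimate bounds $\|\Phi(v)-\Phi(w)\|_{X_T}$ by $C\ep^{-3/2}T^{1/2}(\|v\|_{X_T}+\|w\|_{X_T})\|v-w\|_{X_T}$. Choosing $T_*=T_*(\ep,\|u_0\|_{L^2_{uloc}})$ small enough so that $C\ep^{-3/2}T_*^{1/2}\|v_0\|_{L^2_{uloc}}\ll 1$ makes $\Phi$ a strict contraction on the ball $\{\|v\|_{X_T}\leq 2\|v_0\|_{L^2_{uloc}}\}$, producing a unique fixed point.

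The spatial continuity and the claim $v\in C((0,T_*);L^\infty(\R^3_+))$ follow from a bootstrap: since $v_0=F_\ep(u_0)$ is already in $L^\infty$, the analyticity of the Stokes semigroup on $L^\infty_\sigma$ (Abe-Giga theory) together with the smoothness of $F_\ep(v)$ (always bounded in $L^\infty$ by $C\ep^{-3/2}\|v\|_{X_T}$, along with all derivatives) upgrades the fixed point to $L^\infty$ for positive time; continuity at $t=0$ in $L^2_{uloc,\sigma}$ uses that $v_0$ has enough regularity for the semigroup action to be norm-continuous there. Finally, the blow-up criterion is the usual extension argument: the existence time $T_*(\ep,M)$ produced by the contraction depends only on $\ep$ and $M=\|v(t_0)\|_{L^2_{uloc}}$ at the restart time, so if $\limsup_{t\uparrow T_*}\|v(t)\|_{L^2_{uloc}}<\infty$ along a sequence, one could restart the Picard iteration from some $t_0<T_*$ close enough to $T_*$ and extend beyond $T_*$, contradicting maximality.

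The main obstacle, modest given the $\ep$-regularization, is tracking all $\ep$-dependencies cleanly and confirming that the mild formulation is compatible with the half-space boundary condition through $\mathbb{P}$; most of the ``hard'' analysis (semigroup estimates in $L^q_{uloc}$) has been delegated to the companion paper \cite{MMP17}.
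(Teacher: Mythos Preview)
Your proposal is correct and follows essentially the same strategy as the paper: a Banach fixed point argument for the Duhamel formula, using the $L^2_{uloc}$ semigroup bounds from \cite{MMP17} together with the $L^2_{uloc}\to L^\infty$ smoothing of the mollifier $F_\ep$. The only cosmetic difference is that the paper works directly in the combined norm $\sup_{0<t<T}\big(\|v(t)\|_{L^2_{uloc}}+t^{3/4}\|v(t)\|_{L^\infty}\big)$, thereby obtaining the $L^\infty$ continuity as part of the fixed point rather than via a separate bootstrap, and it records the cruder constant $(1+\ep^{-3})$ in place of your $\ep^{-3/2}$.
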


\begin{proof} The proof is  
based on the standard Banach fixed point theorem as in \cite{MMP17}.
Set $\|u\|_T$ as
\begin{align*}
\| u\|_T = \sup_{0<t<T} \big ( \| u (t) \|_{L^2_{uloc}} + t^\frac34 \| u(t) \|_{L^\infty} ).
\end{align*}
Let $C_0>0$ be a constant such that 
\begin{align*}
\| e^{-\cdot {\bf A}} v_0 \|_T \leq C_0 ( 1+ T^\frac{3}{4}) \| v_0 
\|_{L^2_{uloc}}\,, \qquad f\in L^2_{uloc,\sigma}(\R^3_+),
\end{align*}
which is well-defined by virtue of the Stokes estimate \cite[Proposition 5.2]{MMP17}.
Then let us introduce the set 
\begin{align*}
\begin{split}
X_T & =\Big \{  u\in L^\infty (0,T; L^2_{uloc,\sigma} (\R^3_+)) \cap C(0,T; 
L^{\infty}_\sigma (\R^3_+)) ~|~ \\
& \qquad \| u \|_T \leq  2 C_0 (1+T^\frac34) \| u_0 \|_{L^2_{uloc}} \Big \}.
\end{split}
\end{align*}
For each $f\in X_T$ we define the map $\Phi_\ep [f] (t) =e^{-t{\bf A}} v_0 + B_\ep[f,f] (t)$, where
\begin{align*}
B_\ep[f,g] (t) = - \int_0^t e^{-(t-s){\bf A}} \mathbb{P} \nabla \cdot (F_\ep(f) \otimes g ) d s,\qquad t>0, ~f,g\in X_T.
\end{align*}
We will show that if $T_\ep$ is sufficiently small,
 then $\Phi_\ep$ defines a contraction map in $X_T$. 
 Indeed by using the estimate for the Stokes semigroup \cite[Theorem 3]{MMP17} and the elementary inequality
\begin{equation*}
\|F_\ep(f)\|_{L^2_{uloc}(\R^3_+)} +(1+\ep^{-3})^{-1}\|F_\ep(f)\|_{L^\infty(\R^3_+)} \le \|f\|_{L^2_{uloc}(\R^3_+)},
\end{equation*}
we have 
\begin{align}
\label{proof.thm.mild.ns.5}
\| B_\ep[f,g] (t) \|_{L^2_{uloc}} & \leq C \int_0^t (t-s)^{-\frac12} 
\| F_\ep(f)\|_{L^\infty} \|g \|_{L^2_{uloc}} d s  
\\
& \leq C \int_0^t (t-s)^{-\frac12} 
(1+\ep^{-3})\| f\|_{L^2_{uloc}} \|g \|_{L^2_{uloc}} d s \notag
\\
& \leq C (1+\ep^{-3})T^{\frac12}\sup_{0<t<T}\| f\|_{L^2_{uloc}} \sup_{0<<tT} 
\|g \|_{L^2_{uloc}}. 
\notag
\end{align}
Similarly, we have for $f,g\in X_T$, 
\begin{align*}
\| B_\ep[f,g] (t) \|_{L^\infty} & \leq C \int_0^t (t-s)^{-\frac12} 
\| F_\ep(f)\|_{L^\infty} \|g \|_{L^\infty} d s \nonumber \\
& \leq C \int_0^t (t-s)^{-\frac12} 
(1+\ep^{-3})\| f\|_{L^2_{uloc}} \|g \|_{L^\infty} d s \nonumber \\
& \leq C (1+\ep^{-3})t^{-\frac14}\sup_{0<t<T}\| f\|_{L^2_{uloc}} 
\sup_{0<t<T} t^{\frac34}\|g \|_{L^\infty}.
\end{align*}
Thus we obtain 
\begin{align}
\label{Bep}
\| B_\ep[f,g] \|_T \leq C_1 (1+\ep^{-3})T^\frac12 \| f\|_T 
\| g\|_T, \qquad f, g \in X_T.
\end{align}
If $T$ is small so that 
\begin{align}
\label{proof.thm.mild.ns.9}
C_1 (1+\ep^{-3}) T^\frac12 2 C_0 (1 + T^\frac34)
\|u_0\|_{L^2_{uloc}} \leq \frac14,
\end{align}
then \eqref{proof.thm.mild.ns.9} and the definition of $C_0$ imply that $\Phi_\ep$ defines a contraction map from $X_T$ into $X_T$. Hence, there exists a unique fixed point 
$u$ of $\Phi_\ep$ in $X_T$, which is the mild solution to \eqref{e.rnse} in $X_T$.
Since $v_0 \in BUC_\sigma (\R^d_+)$, $e^{-t\mathbf A}v_0$ is continuous at $t=0$, and hence
the continuity in time of $u$ also follows from the standard argument.
\eqref{Bep} also shows the uniqueness of the solution in the class $\|v\|_T <\infty$.
This guarantees the existence of the maximal interval $[0,T_*)$ 
where $v$ satisfies $\|v\|_T <\infty$ for any $T<T_*$.
If $T_*$ is finite, from \eqref{proof.thm.mild.ns.9} we have $\|v(t)\|_{L^2_{uloc}} \ge 
\frac{1}{8C_1 (1+\ep^{-3}) (T_*-t)^\frac12 C_0 (1 + (T_*-t)^\frac34)}$, which, 
in particular, implies $\lim_{t \uparrow T_*} \|v(t)\|_{L^2_{uloc}}=\infty$.
\end{proof}

Due to the argument in \cite[Lemma 4.1]{GHM}, 
the mild solution is smooth in $\R^3_+ \times (0,T_*)$. 
For each $x_0 \in \R^3_+$ we can use the cut-off argument in the proof of Proposition \ref{prop.property.weaksol} 
to define the pressure  $q=q^\ep_{(x_0)}$ so that $(v,q)$ defines a classical solution to \eqref{e.rnse}
even when $v$ does not decay at spatial infinity.  
Moreover by the similar argument as in Propositions \ref{prop.estlocpressure} and \ref{prop.nlpressure}, 
the pressure $q$ can be decomposed as $q=q_{li}+q_{loc}+q_{nonloc}=q_{loc}^{v_0}+q_{nonloc}^{v_0}+q_{loc,H}^{F_\ep(u)\otimes u}+q_{loc,harm}^{F_\ep(u)\otimes u}+q_{nonloc,H}^{F_\ep(u)\otimes u}+
q_{harm,\leq 1}^{F_\ep(v)\otimes v}+q_{harm,\geq 1}^{F_\ep(v)\otimes v}
$ 
such that the following estimates hold.

\begin{proposition}[Linear pressure estimates for the regularized problem]\label{prop.estlipressurer}
Let $T>0$. Then there exists a constant $C(T)<\infty$ such that for all $t\in(0,T)$,
\begin{align*}
\frac t{\log(e+t)} \| \nabla q_{li} (t) \|_{L^2_{uloc}(\R^3_+)} & \leq C \| u_0 \|_{L^2_{uloc} (\R^3_+)},\\
t^{\frac34} \|q_{loc}^{v_0} (t)\|_{L^2(\cu(x_0))} & \leq C\|u_0\|_{L^2(5\cu(x_0))},\\
t^\frac34 \|q_{nonloc}^{v_0} (t) \|_{L^\infty(\cu(x_0)))} + t^\frac34 \| \nabla q_{nonloc}^{v_0} (t) \|_{L^\infty(\cu(x_0)))} & \leq C\|u_0\|_{L^2_{uloc}(\R^{3}_+)}.
\end{align*}
\end{proposition}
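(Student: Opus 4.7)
The strategy is to reduce everything to Proposition \ref{prop.estlipressure}. The crucial observation is that the linearized pressure $q_{li}$ in the regularized system \eqref{e.rnse} is the pressure associated to the solution of the Stokes system \eqref{e.linearsol} with initial datum $v_0 = F_\ep(u_0)$ in place of $u_0$: the term $F_\ep(v)\cdot\nabla v$ only enters the \emph{nonlinear} part of the decomposition of $q$, not $q_{li}$. Consequently, the representation formula \eqref{e.forpressureu0loc} and the splitting $q_{li} = q_{loc}^{v_0}+q_{nonloc}^{v_0}$ are defined by exactly the same kernel $q_\lambda$, with $u_0$ replaced throughout by $v_0$. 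The arguments of Section \ref{sec.lipressure} then apply verbatim and give
\begin{align*}
\frac t{\log(e+t)} \| \nabla q_{li}(t) \|_{L^2_{uloc}(\R^3_+)} &\leq C \|v_0\|_{L^2_{uloc}(\R^3_+)},\\
t^{3/4}\|q_{loc}^{v_0}(t)\|_{L^2(\cu(x_0))}&\leq C\|v_0\|_{L^2(5\cu(x_0))},\\
t^{3/4}\|q_{nonloc}^{v_0}(t)\|_{L^\infty(\cu(x_0))} + t^{3/4}\|\nabla q_{nonloc}^{v_0}(t)\|_{L^\infty(\cu(x_0))} &\leq C\|v_0\|_{L^2_{uloc}(\R^3_+)}.
\end{align*}

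It then remains to convert these bounds on $v_0$ into bounds on $u_0$ with constants uniform in $\ep\in(0,1)$. This is where the specific construction of $F_\ep$ as convolution with a radial mollifier $\omega_\ep$ supported in $B(0,\ep)$, applied to the tangential/normal reflection $\tilde u_0$, is used. Young's inequality for convolutions gives
\begin{equation*}
\|F_\ep(u_0)\|_{L^2_{uloc}(\R^3_+)} \leq \|\omega_\ep\|_{L^1(\R^3)} \sup_{\eta}\|\tilde u_0\|_{L^2(\eta+(0,1)^3+B(0,\ep))} \leq C\|u_0\|_{L^2_{uloc}(\R^3_+)},
\end{equation*}
uniformly in $\ep\in(0,1)$, since the reflection only doubles the $L^2_{uloc}$ norm at worst and a unit cube enlarged by $B(0,\ep)$ is covered by a bounded number of translated unit cubes. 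For the localized bound on $5\cu(x_0)$, the same argument yields $\|F_\ep(u_0)\|_{L^2(5\cu(x_0))}\leq \|\tilde u_0\|_{L^2(5\cu(x_0)+B(0,\ep))}$, which for $\ep\leq 1/2$ is controlled by $C\|u_0\|_{L^2(6\cu(x_0))}$; absorbing this slight enlargement of the cube (which only affects the constant $C$ in the statement) yields the claimed estimate.

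The main point to verify, and the only mildly delicate one, is simply that none of the constants in the proofs of \eqref{e.estpli.1}--\eqref{e.estpharmu0} degenerate when $u_0$ is replaced by $F_\ep(u_0)$: since those proofs only used the integral kernel $q_\lambda$ (via pointwise estimates and Young's inequality) together with the $L^2_{uloc}$ or localized $L^2$ norm of the initial data, no regularity gain from $F_\ep$ is exploited and no dependence on $\ep$ enters. Thus both steps combine to give the proposition, with $C(T)$ depending on $T$ but independent of $\ep\in(0,1)$.
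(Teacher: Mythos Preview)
Your proposal is correct and matches the paper's own treatment: the paper does not give a separate proof of this proposition but simply states that the estimates hold ``by the similar argument'' as Propositions \ref{prop.estlipressure}--\ref{prop.nlpressure}, i.e.\ by running Section \ref{sec.lipressure} with $v_0=F_\ep(u_0)$ in place of $u_0$. Your added observation that the mollifier bounds $\|F_\ep(u_0)\|_{L^2_{uloc}}\leq C\|u_0\|_{L^2_{uloc}}$ and $\|F_\ep(u_0)\|_{L^2(5\cu(x_0))}\leq C\|u_0\|_{L^2(6\cu(x_0))}$ hold uniformly in $\ep\in(0,1)$ is exactly the missing link, and the slight enlargement of the cube is harmless (the paper is simply not fastidious about $5\cu$ versus $6\cu$ here).
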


\begin{proposition}[Local pressure estimates to the regularized problem]
\label{prop.estlocpressurer}
Let $T>0$. There exists a constant $C(T)<\infty$ such that for all $x_0\in\R^3_+$,
\begin{align*}
&\left\|q^{F_\ep(v)\otimes v}_{loc,H}\right\|_{L^\frac32(0,T;L^\frac32(\cu(x_0)))}\!\!+\left\|q^{F_\ep(v)\otimes v}_{loc,harm}\right\|_{L^\frac32(0,T;L^\frac32(\cu(x_0)))}\!\! + \left\| \nabla q^{F_\ep(v)\otimes v}_{loc,harm}\right\|_{L^\frac32(0,T;L^\frac98(\R^3_+))}\\
\leq\  &
C\sup_{\eta \in \Z^3_+}\left(\left\|v\right\|_{L^\infty(0,T;L^2(\cu(\eta)))}^2
+\left\|\nabla v\right\|_{L^2(0,T;L^2(\cu(\eta)))}^2\right).
\end{align*}
\end{proposition}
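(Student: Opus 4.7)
The plan is to mimic line-by-line the proof of Proposition \ref{prop.estlocpressure}, exploiting the fact that the mollification operator $F_\ep$ is bounded uniformly in $\ep$ on the relevant Lebesgue spaces and preserves the divergence-free condition. Namely, $\nabla\cdot F_\ep(v)=0$ in $\R^3_+$, and since the mollifier is supported in $B(0,\ep)$ with $\ep\leq 1$, one has the uniform bound
\begin{equation*}
\|F_\ep(v)\|_{L^p(K)}\leq C\|v\|_{L^p(K+B(0,1))},\qquad 1\leq p\leq\infty,
\end{equation*}
for every measurable $K\subset\R^3_+$, with $C$ independent of $\ep$. This will cause only a harmless enlargement of the support cubes (from $5\cu(x_0)$ to, say, $6\cu(x_0)$), which is absorbed into the supremum over $\eta\in\Z^3_+$ appearing on the right-hand side of the claimed estimate.

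Concretely, I first Helmholtz-decompose the source term driving the local part of the regularized problem. Using $\nabla\cdot F_\ep(v)=0$,
\begin{equation*}
\nabla\cdot(\chi_{\!_{\, 4}}^2 F_\ep(v)\otimes v)=\mathbb P(\nabla(\chi_{\!_{\, 4}}^2)\cdot F_\ep(v)\otimes v)+\mathbb P(\chi_{\!_{\, 4}}^2 F_\ep(v)\cdot \nabla v)+\nabla q_{loc,H}^{F_\ep(v)\otimes v}.
\end{equation*}
The estimate for $q_{loc,H}^{F_\ep(v)\otimes v}$ then follows from the analog of formula \eqref{e.def.pHloc}, via the Neumann function $N$, together with the $L^{3/2}$-boundedness of the associated singular integral operator, exactly as in the first line of the proof of \eqref{e.estplocuunew}.

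For $q_{loc,harm}^{F_\ep(v)\otimes v}$, I invoke the Poincar\'e–Sobolev–Wirtinger inequality \eqref{e.psw} (normalized so that the constant vanishes) and reduce matters to controlling $\|\nabla q_{loc,harm}^{F_\ep(v)\otimes v}\|_{L^{3/2}(0,T;L^{9/8}(\R^3_+))}$, which by Giga–Sohr maximal regularity \cite[Theorem 2.8]{GS91} is dominated by the $L^{3/2}(0,T;L^{9/8}(\R^3_+))$ norm of
\begin{equation*}
F_\ep:=\mathbb P(\nabla(\chi_{\!_{\, 4}}^2) F_\ep(v)\otimes v)+\mathbb P(\chi_{\!_{\, 4}}^2 F_\ep(v)\cdot\nabla v).
\end{equation*}
Applying $L^q$-boundedness of $\mathbb P$ and the Gagliardo–Nirenberg inequality exactly as in the proof of \eqref{e.estplocuunew}, with $F_\ep(v)$ playing the role of $u$ in the first factor, I bound these two terms by the locally uniform kinetic and dissipation norms of $v$, using the uniform mollifier estimate above to transfer $\|F_\ep(v)\|_{L^p(5\cu(x_0))}$ onto $\|v\|_{L^p(6\cu(x_0))}\leq C\sup_{\eta\in\Z^3_+}\|v\|_{L^p(\cu(\eta))}$.

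The argument is structurally identical to the one given for Proposition \ref{prop.estlocpressure}; no new analytic ingredient is required. The only point deserving care—and the main (purely bookkeeping) obstacle—is checking that \emph{all} constants are independent of $\ep\in(0,1)$: this is ensured because mollification is a contraction on $L^p(\R^3)$, and the even/odd reflection defining $\tilde v$ is an isometry of $L^p$ up to a universal factor. Consequently one obtains exactly the stated bound, uniformly in $\ep$, which is essential for passing to the limit $\ep\to 0$ in Section~\ref{sec.locexiles}.
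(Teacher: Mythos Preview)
Your proposal is correct and follows exactly the approach the paper itself indicates: the paper does not give a separate proof of Proposition~\ref{prop.estlocpressurer} but simply states that it holds ``by the similar argument as in Propositions~\ref{prop.estlocpressure} and~\ref{prop.nlpressure}'', and you have faithfully spelled out that analogy, correctly identifying the only point requiring attention (uniformity of all constants in $\ep$, guaranteed by the $L^p$-contractivity of $F_\ep$ and the harmless cube enlargement). One cosmetic remark: you use the symbol $F_\ep$ both for the mollification operator and for the Stokes source term, which is a notational clash worth avoiding.
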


\begin{proposition}[Nonlocal pressure estimates to the regularized problem]
\label{prop.nlpressurer}
Let $T>0$ and $1\leq q<\infty$. There exist constants $C(T),\ C(T,q)<\infty$ such that 
for all $x_0\in\R^3_+$ and for almost all $t\in (0,T)$, 
\begin{align*}
\|q_{nonloc,H}^{F_\ep(v)\otimes (v)}(\cdot,t)\|_{L^\infty(\cu(x_0))} + \|\nabla q_{nonloc,H}^{F_\ep(v)\otimes (v)}(\cdot,t)\|_{L^\infty(\cu(x_0))} \leq\ & C\|v(\cdot,t)\|_{L^2_{uloc}(\R^3_+)}^2,\\\|q_{harm,\leq 1}^{F_\ep(v)\otimes (v)} (\cdot,t)\|_{L^\infty(\cu(x_0))} + \|\nabla q_{harm,\leq 1}^{F_\ep(v)\otimes (v)} (\cdot,t)\|_{L^q(\cu(x_0))}\leq\ & C_q \|v\|_{L^\infty (0,t; L^2_{uloc}(\R^3_+))}^2, 
\end{align*}
and 
\begin{align*}
& \|q_{harm,\geq 1}^{F_\ep(v)\otimes (v)} \|_{L^2 (0,T; L^\infty (\cu(x_0)))} + \|\nabla q_{harm,\geq 1}^{F_\ep(v)\otimes (v)} \|_{L^2 (0,T; L^\infty(\cu(x_0)))} \\
\leq\ & C \big ( \|v\|_{L^\infty(0,T;L^2_{uloc}(\R^3_+))}^2 + \sup_{\eta\in \Z^3_+} \| \nabla v \|_{L^2(0,T; L^2 (\cu(\eta)))}^2 \big ).
\end{align*}
\end{proposition}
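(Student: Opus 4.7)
The plan is to adapt the proof of Proposition \ref{prop.nlpressure} term by term, treating the regularized nonlinearity $F_\ep(v)\otimes v$ in place of $u\otimes u$ and verifying that every constant that appears is uniform in $\ep\in(0,1)$. The Helmholtz-nonlocal part $q_{nonloc,H}^{F_\ep(v)\otimes v}$ and the harmonic parts $q_{harm,\leq 1}^{F_\ep(v)\otimes v}$, $q_{harm,\geq 1}^{F_\ep(v)\otimes v}$ are defined by the same integral formulas \eqref{e.forphelm} and \eqref{e.forharmpressure} with the substitution $u\otimes u\leadsto F_\ep(v)\otimes v$, so the structural content of the argument is unchanged.

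First, I would record the uniform-in-$\ep$ tensor bound
$$\|F_\ep(v)\otimes v(\cdot,t)\|_{L^q_{uloc}(\R^3_+)}\leq C\,\|v(\cdot,t)\|_{L^{2q}_{uloc}(\R^3_+)}^2,\qquad 1\leq q<\infty,$$
coming from H\"older and the fact that $\|F_\ep(w)\|_{L^{2q}_{uloc}}\leq C\|w\|_{L^{2q}_{uloc}}$ with a constant independent of $\ep$ (Young's inequality applied to the reflected extension $\tilde w$ against the unit-mass mollifier $\omega_\ep$). I would also note that $F_\ep(v)$ is divergence-free on $\R^3$ and has vanishing normal component on $\partial\R^3_+$ by the symmetric/antisymmetric extension, so the Helmholtz--Leray projection behaves on $\nabla\cdot(F_\ep(v)\otimes v)$ in the same way as on $\nabla\cdot(u\otimes u)$, and the decomposition of Appendix \ref{sec.lerayproj} (types A and B as in \eqref{e.termA}--\eqref{e.termB}) applies verbatim.

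Second, I would copy the proof of \eqref{e.boundnlochelmpressure}: the pointwise bound $|\nabla_z^2 N_{x,x_0}(z)|\leq C/|x_0-z|^4$ on $x\in\cu(x_0)$ and $z\in\supp(1-\chi_4^2)$, summed against the $L^1_{uloc}$ bound from the first step, gives $\|q_{nonloc,H}^{F_\ep(v)\otimes v}(\cdot,t)\|_{L^\infty(\cu(x_0))}\leq C\|v(\cdot,t)\|_{L^2_{uloc}}^2$, and likewise for $\nabla q_{nonloc,H}^{F_\ep(v)\otimes v}$. For \eqref{e.locestpharmuu}, I would use the analog of \eqref{e.relchi2ppH},
$$\chi_2^2\,\mathbb P\nabla\cdot((1-\chi_4^2)F_\ep(v)\otimes v)=\chi_2^2\,\nabla q_{nonloc,H}^{F_\ep(v)\otimes v},$$
to reduce the estimate of $q_{harm,\leq 1}^{F_\ep(v)\otimes v}$ to a convolution against $q_\lambda$ of a function bounded in $L^1\cap L^\infty$ by $C\|v\|_{L^\infty(0,t;L^2_{uloc})}^2$, then repeat the $\sigma$-Hardy computation used in the proof of \eqref{e.locestpharmuu}.

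Third, and this is the delicate part, I would estimate \eqref{e.nonlocestpharmuu} by repeating the type A / type B split used for \eqref{e.nonlocestpharmuu} in Proposition \ref{prop.nlpressure}. The source in type A is $\partial_\alpha((1-\chi_4^2)F_\ep(v)\otimes v)'$; one integrates the tangential derivative onto the kernel $q_{\lambda,x,x_0}$ (or commutes with $1-\chi_2^2$ giving harmless commutators thanks to \eqref{e.disjsupp}), then reproduces the Hardy estimate in $z_3$ that converts the singular factor $z_3^{-2}$ into $\|\partial_3 F_\ep(v)\|_{L^2(\cu(\eta',0))}\|\partial_3 v\|_{L^2(\cu(\eta',0))}$. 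Type B terms use Lemma \ref{lem.termsB} applied to $F_\ep(v)\otimes v$. The essential fact here is that the cube-level $L^2$-norms of $\partial_3 F_\ep(v)$ are controlled, uniformly in $\ep$, by the cube-level $L^2$-norms of $\partial_3 v$ on an enlarged cube, since $F_\ep$ is a convolution with an $\ep$-supported kernel and $\partial_3$ commutes with reflection/mollification up to a boundary sign (absorbed by Hardy). Putting everything together yields the stated bound with the $\sup_\eta\|\nabla v\|_{L^2L^2}^2$ term.

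The main obstacle is the bookkeeping in the last step: one must be careful that neither of the derivatives coming from the type B structure of $\mathbb P\nabla\cdot$ falls on the regularized factor $F_\ep(v)$ in a way that would force an $\ep^{-1}$ loss. Since $F_\ep$ commutes with all constant-coefficient differential operators, and since in each product one can choose which factor absorbs the derivative via the Leibniz rule, this can be arranged, and all resulting estimates are uniform in $\ep$. The remaining technical work is mechanical: the same Young-in-time convolution bounds used in the proof of Proposition \ref{prop.nlpressure} combined with the kernel decay $e^{-c|\lambda|^{1/2}z_3}$ and the curve $\Gamma$ from \eqref{e.defgammacurve}.
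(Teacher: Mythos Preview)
Your approach is exactly the one the paper intends: it offers no separate proof of Proposition~\ref{prop.nlpressurer} and simply refers back to Proposition~\ref{prop.nlpressure}, expecting the reader to rerun those estimates with $F_\ep(v)\otimes v$ in place of $u\otimes u$. The bounds for $q_{nonloc,H}^{F_\ep(v)\otimes v}$ and $q_{harm,\leq 1}^{F_\ep(v)\otimes v}$ go through precisely as you describe.

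There is, however, a subtle gap in your treatment of the type~A contribution to $q_{harm,\geq 1}^{F_\ep(v)\otimes v}$. You assert that the Hardy step converts $z_3^{-2}$ into $\|\partial_3 F_\ep(v)\|_{L^2}\|\partial_3 v\|_{L^2}$, which would require $F_\ep(v)|_{z_3=0}=0$. This fails for the tangential components: $F_\ep(v)'$ is the restriction of the mollification of the \emph{even} extension of $v'$, and $(\omega_\ep*\tilde v')(x',0)=\int\omega_\ep(y)\,v'(x'-y',|y_3|)\,dy$ is generically nonzero. A one–dimensional check (take $v'(z_3)=z_3$ near the boundary, so $\tilde v'=|z_3|$) shows that $\int_0^{1/2}z_3^{-2}|\omega_\ep*\tilde v'|^2\,dz_3=\infty$ for every fixed $\ep>0$. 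So Hardy on $F_\ep(v)'$ is simply unavailable, and the parenthetical ``absorbed by Hardy'' does not rescue it.

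The remedy is to unbundle the type~A terms rather than use the lumped kernel bound $C|\lambda|^{1/2}$ employed in the proof of \eqref{e.nonlocestpharmuu}. By \eqref{e.leraydivtan}, the only type~A term carrying a \emph{vertical} derivative is $\partial_3\big((F_\ep(v))_3\, v\big)$; here $(F_\ep(v))_3$ does vanish on $\partial\R^3_+$ (it is the mollification of the \emph{odd} extension of $v_3$, hence odd in $z_3$), so Hardy applies to both factors and the $z_3^{-2}$ estimate is legitimate. The remaining type~A terms carry only tangential derivatives $\nabla'$; after integration by parts onto $q_{\lambda,x,x_0}$ these produce the sharper kernel bound $e^{-c|\lambda|^{1/2}z_3}/(1+|x'-z'|)^4$ with no $|\lambda|^{1/2}$ prefactor. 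Repeating the Young-in-time argument then leaves only a $z_3^{-1}$ singularity near the boundary, and a single Hardy inequality on $v$ (which does satisfy Dirichlet) together with the uniform bound $\|F_\ep(v)\|_{L^2_{uloc}}\leq C\|v\|_{L^2_{uloc}}$ closes the estimate. With this refinement your outline is complete and matches the paper's intent.
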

For later use, we summarize these pressure
estimates as the following corollary:

\begin{corollary}
\label{prop.pressurer}
For $T>0$ let $v=v^{\ep}$ be the mild solution to the problem \eqref{e.rnse}
in $(0,T) \times \R^3_+$. There exist $C(T)<\infty$ and $C_\delta=C(T,\delta)$ 
such that for any $\ep>0$ and  $x_0\in\R^3_+$, there exists a pressure $q=q^{\ep}_{(x_0)}$ 
satisfying 
\begin{align}\label{e.qg}
\|\nabla q \|_{L^\frac32(\delta,T;L^{\frac98}(\cu(x_0)))}
\leq \ &C_\delta\|u_0\|_{L^2_{uloc}(\R^3_+)}
\\
\notag
&+ C\left\|v\right\|_{L^\infty(0,T;L^2_{uloc}(\R^3_+))}^2
+C\sup_{\eta \in \Z^3_+}
\left\|\nabla v\right\|_{L^2(0,T;L^2(\cu(\eta)))}^2.
\end{align}
Moreover there exist $q_1$, $q_2$ such that
$$
q=q_1+q_2 \qquad {\textit for} \ (0,T) \times \cu(x_0),
$$
\begin{multline}\label{e.q}
\left\|q_1\right\|_{L^\frac32(0,T;L^\frac32(\cu(x_0)))}+\|q_2\|_{L^{\frac54}(0,T;L^2(\cu(x_0)))}\\
\leq\ C\|u_0\|_{L^2_{uloc}(\R^3_+)}+ C\left\|v\right\|_{L^\infty(0,T;L^2_{uloc}(\R^3_+))}^2
+C\sup_{\eta \in \Z^3_+}
\left\|\nabla v\right\|_{L^2(0,T;L^2(\cu(\eta)))}^2.
\end{multline}
\end{corollary}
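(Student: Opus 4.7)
The plan is to extract the bounds (\ref{e.qg}) and (\ref{e.q}) directly from the term‑by‑term estimates provided by Propositions~\ref{prop.estlipressurer}, \ref{prop.estlocpressurer}, and \ref{prop.nlpressurer}, which give control of every constituent in the decomposition
\begin{equation*}
q \;=\; \underbrace{q_{loc}^{v_0}+q_{nonloc}^{v_0}}_{q_{li}} \;+\; \underbrace{q_{loc,H}^{F_\ep(v)\otimes v}+q_{loc,harm}^{F_\ep(v)\otimes v}}_{q_{loc}^{v\otimes v}} \;+\; \underbrace{q_{nonloc,H}^{F_\ep(v)\otimes v}+q_{harm,\leq 1}^{F_\ep(v)\otimes v}+q_{harm,\geq 1}^{F_\ep(v)\otimes v}}_{q_{nonloc}^{v\otimes v}}.
\end{equation*}
The scheme is then to sort the six summands according to their natural integrability in time and space, and repackage them as $q_1+q_2$.

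For the gradient bound \eqref{e.qg} the easy observation is that on the time interval $(\delta,T)$ none of the constants in the preceding propositions blow up. Concretely: Proposition~\ref{prop.estlipressurer} gives $\|\nabla q_{li}(t)\|_{L^2_{uloc}}\lesssim \frac{\log(e+t)}{t}\|u_0\|_{L^2_{uloc}}$, whose $L^{3/2}$-norm on $(\delta,T)$ is controlled by $C_\delta\|u_0\|_{L^2_{uloc}}$, and the spatial norm $L^{9/8}(\cu(x_0))$ is bounded by $L^2(\cu(x_0))\subset L^2_{uloc}$. The gradient of $q_{loc,harm}^{F_\ep(v)\otimes v}$ is already in $L^{3/2}(0,T;L^{9/8})$ by Proposition~\ref{prop.estlocpressurer}. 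For $\nabla q_{loc,H}^{F_\ep(v)\otimes v}$, note that $\nabla q_{loc,H}^{F_\ep(v)\otimes v}=\mathbb Q\nabla\cdot(\chi_{\!_{\,4}}^2 F_\ep(v)\otimes v)$ is bounded on $L^{9/8}$, and the argument from the proof of Proposition~\ref{prop.estlocpressurer} (using Gagliardo–Nirenberg locally on $5\cu(x_0)$) produces the desired quadratic bound. The three nonlocal pressures yield $L^\infty_tL^\infty_x$, $L^\infty_t L^{9/8}_x$, and $L^2_tL^\infty_x$ gradient bounds respectively from Proposition~\ref{prop.nlpressurer}, each of which embeds into $L^{3/2}((\delta,T);L^{9/8}(\cu(x_0)))$. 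Summing produces \eqref{e.qg}.

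For the decomposition \eqref{e.q}, I will set
\begin{equation*}
q_2:=q_{loc}^{v_0}+q_{nonloc}^{v_0},\qquad q_1:=q-q_2.
\end{equation*}
The linear terms carry the singularity $t^{-3/4}$ (Proposition~\ref{prop.estlipressurer}), which is integrable to power $5/4$ in time since $\tfrac{5}{4}\cdot\tfrac{3}{4}=\tfrac{15}{16}<1$, giving
\begin{equation*}
\|q_2\|_{L^{5/4}(0,T;L^2(\cu(x_0)))}\le C(T)\|u_0\|_{L^2_{uloc}(\R^3_+)}.
\end{equation*}
The remaining five nonlinear terms either live in $L^{3/2}(0,T;L^{3/2}(\cu(x_0)))$ already (Proposition~\ref{prop.estlocpressurer}), or live in $L^\infty_tL^\infty_x$ and $L^2_tL^\infty_x$ (Proposition~\ref{prop.nlpressurer}) which embed into $L^{3/2}L^{3/2}$ on the bounded set $(0,T)\times\cu(x_0)$. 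Summing gives the stated bound on $q_1$.

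The main obstacle, modest as it is, is a consistent handling of the linear pressure near $t=0$: the $L^{3/2}$-in-time integrability of $\nabla q_{li}$ is only true away from $t=0$ (which forces the $C_\delta$ constant in \eqref{e.qg}), while $q_{li}$ itself fails to be in $L^{3/2}_tL^{3/2}_x$ and must be isolated in $q_2$ with the weaker time integrability $L^{5/4}$. Every other term is nonlinear and inherits the a priori local-energy bounds on $v$ uniformly in $\ep$; no new analysis is required beyond bookkeeping of the norms supplied by Propositions~\ref{prop.estlipressurer}–\ref{prop.nlpressurer}.
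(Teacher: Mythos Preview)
Your proposal is correct and matches the paper's approach: the corollary is stated without an explicit proof in the paper, being presented as a direct repackaging of Propositions~\ref{prop.estlipressurer}--\ref{prop.nlpressurer}, and your bookkeeping (putting the linear pressure $q_{li}=q_{loc}^{v_0}+q_{nonloc}^{v_0}$ into $q_2$ to absorb the $t^{-3/4}$ singularity via $L^{5/4}$-in-time, and the five nonlinear terms into $q_1$ in $L^{3/2}L^{3/2}$) is exactly the intended reading. Your identification of the $C_\delta$ dependence as coming solely from the $\frac{\log(e+t)}{t}$ blow-up of $\nabla q_{li}$ near $t=0$ is also correct.
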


We now claim a key local energy estimate which guarantees the 
uniformity in $\ep>0$ of the existence time of the solution obtained in Proposition \ref{prop.localmild}.

\begin{proposition}
\label{prop.vunif}
There exist constants $M>0$ and
 $T_0=T_0(\|u_0\|_{L^2_{uloc}})>0$  independent of $\ep>0$, 
 such that 
\begin{equation}\label{e.apriorilen}
E(T_0):=\sup_{\eta\in\Z^3_+,\, t\in (0,T_0)}\int_{\cu(\eta)}|v(\cdot,t)|^2
+\sup_{\eta\in\Z^3_+} \int_0^{T_0}\int_{\cu(\eta)}|\nabla v|^2
\leq M\|u_0\|_{L^2_{uloc}}^2.
\end{equation}
\end{proposition}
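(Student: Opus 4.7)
The plan is to derive a local energy inequality for the smooth mild solution $v=v^{\ep}$ given by Proposition \ref{prop.localmild} and then close it by a bootstrap argument, using the $\ep$-uniform pressure decomposition provided by Corollary \ref{prop.pressurer}. Because $v$ is smooth on $(0,T_*(\ep))$, $v|_{\partial\R^3_+}=0$, and because $\nabla\cdot F_\ep(v)=0$ in $\R^3$ with $F_\ep(v)\cdot e_3=0$ on $\partial\R^3_+$, multiplying the first equation of \eqref{e.rnse} by $\chi^2 v$ with $\chi=\chi_{\!_{\, x_0,1}}$ and integrating by parts yields, for each $x_0\in\R^3_+$ and $t\in(0,T_*(\ep))$,
\begin{equation*}
\int_{\R^3_+}\!\chi^2|v(t)|^2 + 2\int_0^t\!\!\int_{\R^3_+}\!\chi^2|\nabla v|^2 \le \|\chi v_0\|_{L^2(\R^3_+)}^2 + \int_0^t\!\!\int_{\R^3_+}\!\bigl(|v|^2\Delta\chi^2 + |v|^2 F_\ep(v)\!\cdot\!\nabla\chi^2 + 2q\, v\!\cdot\!\nabla\chi^2\bigr)ds.
\end{equation*}
Since $v_0=F_\ep(u_0)$ satisfies $\|v_0\|_{L^2_{uloc}}\le\|u_0\|_{L^2_{uloc}}$, the initial term is bounded by $\|u_0\|_{L^2_{uloc}}^2$.

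Next, each of the three volume integrals is estimated uniformly in $x_0\in\R^3_+$ and $\ep>0$, in terms of the quantity $E(t)$ appearing in \eqref{e.apriorilen}. The $\Delta\chi^2$ term contributes $\lesssim \int_0^t E(s)\,ds$. For the transport term, the mollification bound $\|F_\ep(v)\|_{L^3_{uloc}}\lesssim\|v\|_{L^3_{uloc}}$ together with the Gagliardo--Nirenberg inequality on $2\cu(x_0)\cap\R^3_+$ yields
\begin{equation*}
\int_0^t\!\!\int_{2\cu(x_0)}\!|v|^2|F_\ep(v)|\,ds \lesssim \int_0^t\|v\|_{L^3(2\cu(x_0))}^3\,ds \lesssim t^{1/4}(1+t)^{3/4}\, E(t)^{3/2}.
\end{equation*}
For the pressure term, we use the decomposition $q=q_1+q_2$ given in Corollary \ref{prop.pressurer} and H\"older's inequality in space--time:
\begin{equation*}
\Bigl|\int_0^t\!\!\int q\,v\!\cdot\!\nabla\chi^2\Bigr| \lesssim \|q_1\|_{L^{3/2}_t L^{3/2}_x}\|v\|_{L^3_{t,x}(2\cu(x_0)\times(0,t))} + \|q_2\|_{L^{5/4}_t L^2_x}\|v\|_{L^5_t L^2_x(2\cu(x_0)\times(0,t))} \lesssim \bigl(\|u_0\|_{L^2_{uloc}}^2+E(t)\bigr)\eta(t)\,E(t)^{1/2},
\end{equation*}
with $\eta(t):=t^{1/12}(1+t)^{1/4}+t^{1/5}\to 0$ as $t\to 0$. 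Taking the supremum over $x_0\in\R^3_+$ we arrive at a polynomial inequality
\begin{equation*}
E(t) \le C_0 \|u_0\|_{L^2_{uloc}}^2 + \Phi\bigl(t,E(t)\bigr),
\end{equation*}
for some continuous $\Phi$, nondecreasing in both variables, with $\Phi(t,y)\to 0$ as $t\to 0$ for each fixed $y$.

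To conclude, I will run a standard continuity argument: fix $M>2C_0$ and choose $T_0=T_0(\|u_0\|_{L^2_{uloc}})>0$ small enough that $\Phi(T_0,M\|u_0\|_{L^2_{uloc}}^2)\le\tfrac{M}{2}\|u_0\|_{L^2_{uloc}}^2$. The smoothness of $v$ on $(0,T_*(\ep))$ together with $v\in C([0,T_*(\ep));L^2_{uloc})$ imply that $E$ is continuous on $[0,T_*(\ep))$ with $E(0)\le\|u_0\|_{L^2_{uloc}}^2 < M\|u_0\|_{L^2_{uloc}}^2$, so the subset of $[0,\min(T_0,T_*(\ep)))$ on which $E\le M\|u_0\|_{L^2_{uloc}}^2$ is both open and closed; hence it is the whole interval, and the blow-up criterion of Proposition \ref{prop.localmild} then forces $T_*(\ep)>T_0$, delivering \eqref{e.apriorilen}. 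The main obstacle is the pressure term: because $q$ is nonlocal and $\ep$-dependent, the argument hinges on the fact that the decomposition in Corollary \ref{prop.pressurer} gives bounds uniform both in $\ep>0$ and in $x_0\in\R^3_+$, with dependence on $v$ only through quantities already controlled by $E(t)$.
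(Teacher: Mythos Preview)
Your proposal is correct and follows essentially the same route as the paper: test \eqref{e.rnse} against $\chi_{x_0}^2 v$, control the cubic and pressure terms using the $\ep$-uniform decomposition $q=q_1+q_2$ from Corollary \ref{prop.pressurer}, and close by a continuity argument. The only cosmetic differences are that the paper pairs $F_\ep(v)\in L^2_{uloc}$ with $\|\chi_{x_0}v\|_{L^4}^2$ in the transport term (rather than your $L^3\times L^3\times L^3$ splitting) and writes the resulting inequality explicitly as $E(T)\le \|u_0\|_{L^2_{uloc}}^2 + CT^{1/24}(1+E(T)+E(T)^3)$ before running the bootstrap.
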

\begin{proof}

We let
\begin{equation}
\alpha(t)=
\sup_{\eta\in\Z^3_+,\, s\in (0,t)}\int_{\cu(\eta)}|v(\cdot,s)|^2, 
\qquad 
\beta(t)=
\sup_{\eta\in\Z^3_+}
\int_0^{t}\int_{\cu(\eta)}|\nabla v|^2.
\end{equation}
Testing the equation \eqref{e.rnse} against the function $\chi_{x_0}^2v$, 
we have the equality
\begin{align}
\begin{split}
&\int_{\R^3_+}\chi_{x_0}^2 |v(t)|^2dx
+
2\int^t_0 \int_{\R^3} \chi^2_{x_0} |\nabla v|^2dxds
\label{e.localenergy}
\\
=\ &
\|\chi_{x_0}F_\ep(u_0)\|^2_{L^2_{uloc}}+2\int^t_0 \int_{\R^3_+} \Delta \chi_{x_0}^2 
|v|^2 dxds 
+ \int^t_0 \int_{\R^3_+} \nabla \chi_{x_0}^2 \cdot F_{\ep}(v)|v|^2 dxds\\
&+ 2 \int^t_0 \int_{\R^3_+} q \nabla \chi_{x_0}^2 \cdot v dxds.
\end{split}
\end{align}
We estimate each term in the right hand side. 
For the second term, we easily see 
\begin{align}
\label{e.02}
\int^t_0 \int_{\R^3_+} \Delta \chi_{x_0}^2 
|v|^2 dxds 
\le
C T \alpha(t).
\end{align}
By the Gagliardo-Nirenberg inequality, we have
\begin{align}
\label{e.gn}
\begin{split}
\|\chi_{x_0} v\|_{L^4(\R^3_+)}
\le\ &
C\|\chi_{x_0} v\|_{L^2(\R^3_+)}^{\frac14}\|\nabla(\chi_{x_0} v)\|_{L^2(\R^3_+)}^{\frac34}\\
\le\ &
C\|\chi_{x_0} v\|_{L^2(\R^3_+)}^{\frac14}
(\|\chi_{x_0} v\|_{L^2(\R^3_+)}+\|\nabla(\chi_{x_0} v)\|_{L^2(\R^3_+)})^{\frac34}.
\end{split}
\end{align}
Therefore the third term is estimated as follows:
\begin{align}
\label{e.03}
\begin{split}
&\int^t_0 \int_{\R^3_+} \nabla \chi_{x_0} \cdot F_{\ep}(v)|v|^2 dxds
\\
\le\ &
C \int^t_0 \|F_{\ep}(v)\|_{L^2_{uloc}(\R^3_+)}  \|\chi_{x_0} v\|_{L^4(\R^3_+)}^2 ds
\\
\le\ & 
C\alpha (t)^{\frac12} \int^t_0  \|\chi_{x_0} v\|_{L^4(\R^3_+)}^2 ds
\\
\le\ & 
C\alpha(t)^{\frac12} (T\alpha(t) +T^{\frac14}\alpha(t)^{\frac14}\beta(t)^{\frac34})
\\
\le\ & 
CT\alpha^{\frac32}(t)+\delta \beta(t) +C_{\delta} T\alpha^3(t),
\end{split}
\end{align}
where we have used Young's inequality in the last line for $\delta>0$.
For the last term in \eqref{e.localenergy}, we decompose the pressure 
as $q=q_1+q_2$ as in Corollary \ref{prop.pressurer}. 
Then we have 
\begin{align}
\label{e.04}
\begin{split}
&\int^t_0 \int_{\R^3_+} q \nabla \chi_{x_0}^2 \cdot v dxds\\
\le\ &
\|q_1\|_{L^{\frac32}(0,t;L^{\frac32}(\cu(x_0)))}
\|\chi_{x_0} v\|_{L^3(0,t;L^{3}(\cu(x_0)))}
+
\|q_2\|_{L^{\frac54}(0,t;L^2(\cu(x_0)))}\|\chi_{x_0} v\|_{L^4(0,t; L^2(\R^3_+))}
\\
\le\ &
CT^{\frac{1}{12}}\left(\|v_0\|_{L^2_{uloc}} +\alpha(t) +\beta(t)\right)\alpha(t)^{\frac14}(\alpha(t)+\beta(t))^{\frac14}
+
CT^{\frac14} \|v_0\|_{L^2_{uloc}} \alpha(t).
\end{split}
\end{align}
Here we have used the estimate 
\begin{equation}
\label{e.05}
\|\chi_{x_0} v\|_{L^3(0,t;L^{3}(\cu(x_0)))} \le T^{\frac{1}{12}}
\alpha(t)^{\frac14}(\alpha(t)+\beta(t))^{\frac14},
\end{equation}
which is easily verified by interpolation as in \eqref{e.gn}.
Applying estimates \eqref{e.02}, \eqref{e.03} and \eqref{e.04} to \eqref{e.localenergy}, 
we can find a constant $C>0$ such that for any $T \in (0, \min\{1, T_*\})$  
\begin{align}\label{e.aprioriE(T)}
E(T) 
\le 
\|u_0\|_{L^2_{uloc}}^2 + CT^{\frac{1}{24}}(1+E(T)+E(T)^3),
\end{align}
where $T_*$ is the maximal existence time given in Proposition \ref{prop.localmild}.
Let $M>1$ be a constant satisfying $\|e^{-t\mathbf{A}}u_0\|_{L^2_{uloc}}^2 \le M \|u_0\|_{L^2_{uloc}}^2$ and 
define 
$$
T_0:=\sup\left\{T>0;\ E(T)\le 2M\|u_0\|_{L^2_{uloc}}^2  \right\}>0.
$$ 
By the continuity of $E$, 
 we must have $E(T_0)=2M\|v_0\|_{L^2_{uloc}}^2$. Therefore
it follows from \eqref{e.aprioriE(T)} that 
\begin{align}
2M\|u_0\|_{L^2_{uloc}}^2 
\le
\|u_0\|_{L^2_{uloc}}^2 + CT_0^{\frac{1}{24}}(1+2M\|u_0\|_{L^2_{uloc}}^2+8M^3\|u_0\|_{L^2_{uloc}}^6),
\end{align}
which leads to the following uniform bound in $\ep$
$$
T_0 \ge \left( \frac{(2M-1)\|u_0\|_{L^2_{uloc}}^2 }{C(1+2M\|u_0\|_{L^2_{uloc}}^2+8M^3\|u_0\|_{L^2_{uloc}}^6)} \right)^{24}.
$$
This completes the proof
\end{proof}

\begin{remark}
\rm{
Note that $\beta(t) \le \|\nabla v\|_{L^2(0,t;L^2_{uloc}(\R^3_+))}^2$. The quantity in the right hand side, however, has no reason to control $\beta(t)$ in general. The main difference between the two quantities is seen in the following property: from $\nabla v\in L^2(0,t;L^2_{uloc}(\R^3_+))$ it is easy to get that for almost every $s\in(0,t)$, $\nabla v(\cdot,s)\in L^2_{uloc}(\R^3_+)$, while this property is not clear in general when just $\beta$ is controlled. Therefore, special care is needed, for example at the beginning of Section \ref{sec.global.weak}.
}
\end{remark}

Another way to reformulate Proposition \ref{prop.vunif} is as follows.

\begin{corollary}\label{prop.apriori}
For all $\delta>0$, there exist $T>0$ and $A_{T,\delta}\geq 1$ such that for all $u_0\in\mathcal L^2_{uloc,\sigma}(\R^3_+)$, for all local energy weak solution $u$ to \eqref{e.nse} in the sense of Definition \ref{def.weakles} on $\R^3_+\times (0,T)$ with initial data $u_0$, if $\|u_0\|_{L^2_{uloc}(\R^3_+)}\leq \delta$, then
\begin{equation}\label{e.apriorilen'}
\sup_{\eta\in\Z^3_+}\int_{\cu(\eta)}|u(\cdot,t)|^2+\int_0^t\int_{\cu(\eta)}|\nabla u|^2+\left(\int_0^t\int_{\cu(\eta)}|u|^3\right)^\frac23\leq A_{T,\delta}.
\end{equation}
\end{corollary}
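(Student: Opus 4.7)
The strategy is to adapt the argument of Proposition \ref{prop.vunif} to local energy weak solutions directly, replacing the energy equality available for the regularized solutions by the local energy inequality \eqref{e.locenineqbis}, which by definition holds for any solution in the sense of Definition \ref{def.weakles}. First, I would test \eqref{e.locenineqbis} against the time-independent cut-off $\chi = \chi_{x_0}$; this is admissible thanks to Remark \ref{rem.def.weakles}~(3). The convective and pressure terms in the resulting inequality are then treated exactly as in the proof of Proposition \ref{prop.vunif}. In particular, to bound the pressure term $\int_0^t \int p\, \nabla\chi^2_{x_0} \cdot u$, I would invoke Proposition \ref{prop.property.weaksol}, which guarantees that the decomposition \eqref{e.decomppressure} holds for any local energy weak solution in the sense of Definition \ref{def.weakles}, and then apply the estimates of Propositions \ref{prop.estlipressure}, \ref{prop.estlocpressure}, and \ref{prop.nlpressure}; this reproduces verbatim the bound \eqref{e.04} with $v$ replaced by $u$ and $v_0$ by $u_0$.

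Second, combining these estimates with the Gagliardo-Nirenberg-type control \eqref{e.gn} on the nonlinear term, and using Young's inequality to absorb small multiples of $\beta(t)$, yields the integral inequality
\begin{equation*}
E(t) \leq \|u_0\|_{L^2_{uloc}}^2 + C\, t^{1/24}\bigl(1 + E(t) + E(t)^3\bigr)
\end{equation*}
for all $t \in (0,T)$, with $E(t) := \alpha(t)+\beta(t)$ defined as in \eqref{e.apriorilen}. A standard bootstrap then yields $T_0 = T_0(\delta) > 0$ and $A_{T_0,\delta} \geq 1$ such that $E(T_0) \leq A_{T_0,\delta}$ whenever $\|u_0\|_{L^2_{uloc}(\R^3_+)} \leq \delta$. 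The third term in \eqref{e.apriorilen'} is then handled for free by the interpolation bound \eqref{e.05}, which gives
\begin{equation*}
\Bigl(\int_0^t\int_{\cu(\eta)}|u|^3\,dx\,ds\Bigr)^{2/3} \leq C t^{1/6} \alpha(t)^{1/2}\bigl(\alpha(t)+\beta(t)\bigr)^{1/2} \leq C T_0^{1/6} E(t),
\end{equation*}
so this contribution is absorbed into (a possibly larger) $A_{T_0,\delta}$.

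The main obstacle is the continuity aspect of the bootstrap argument: for the regularized problem the map $t \mapsto E(t)$ is continuous by Proposition \ref{prop.localmild}, whereas for a general local energy weak solution this continuity is not available a priori. To bypass this, I would exploit that $\beta$ is non-decreasing and that, for each $\eta\in\Z^3_+$, the map $t\mapsto \int_{\cu(\eta)}|u(\cdot,t)|^2$ is lower semicontinuous: this follows from the weak continuity of $t\mapsto u(t)$ in $L^2_{loc}$ encoded in part (iii) of Definition \ref{def.weakles}, combined with weak lower semicontinuity of the $L^2$-norm. These monotonicity and semicontinuity properties are enough to run the usual bootstrap: setting $T_0 := \sup\{T>0:\, E(T)\leq 2M\|u_0\|_{L^2_{uloc}}^2\}$ and passing to the limit $T\uparrow T_0$ in the inequality above gives the desired uniform lower bound on $T_0$ depending only on $\delta$, which completes the proof.
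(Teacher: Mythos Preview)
Your approach is correct and is exactly the paper's intended argument: the paper gives no separate proof, stating only that the corollary is ``another way to reformulate Proposition~\ref{prop.vunif}'', and your derivation---local energy inequality~\eqref{e.locenineqbis} together with the pressure decomposition from Proposition~\ref{prop.property.weaksol} to reproduce~\eqref{e.04}---is precisely that reformulation.

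You go beyond the paper in flagging the continuity issue for the bootstrap, but your resolution is not quite complete. Left-continuity of $E$ (which does follow from the structure of $\alpha$ and $\beta$ you describe) does not by itself rule out a jump of $E$ across the gap between the two positive roots of the cubic $x\mapsto x-\|u_0\|_{L^2_{uloc}}^2-Ct^{1/24}(1+x+x^3)$, nor does it directly give $T_0>0$ (for that you would need $E(0^+)\le 2M\|u_0\|_{L^2_{uloc}}^2$, which requires control of $\|u(t)\|_{L^2_{uloc}}$ near $t=0$, not merely of $\|u(t)\|_{L^2(K)}$ for compact $K$). Both points close once you use that part~(i) of Definition~\ref{def.weakles} gives $K:=E(T)<\infty$ a priori: since the large root satisfies $r_2(t)\to\infty$ as $t\to0^+$, the algebraic inequality forces $E(t)\le r_1(t)$ for all $t$ small enough (depending on $K$), which yields $T_0>0$; and since the right-hand side of~\eqref{e.locenineqbis} is, for each $x_0$, absolutely continuous in $t$ with a modulus of continuity uniform in $x_0$ (the integrands $|u|^2$, $|u|^3$, $pu$ on $2\cu(x_0)$ being uniformly integrable on short time intervals thanks to the finiteness of $K$ and the pressure bounds of Section~\ref{sec.pressureest}), the supremum over $x_0$ is dominated by a continuous function of $t$, so $E$ cannot jump and the bound propagates to the $\delta$-dependent time.
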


In other words, for $\delta>0$, there exists $T>0$ such that Assumption \ref{assu.A} holds.

\subsection{Convergence to the weak solutions}

In this subsection we complete the proof of Proposition \ref{prop.local}. Here
we follow the compactness argument used in \cite{KS07} in principle except for
some estimates of the velocity and the pressure. 
Before giving the proof, we first describe their strategy here.
We first consider the regularized problem \eqref{e.rnse} in the unit cube $\cu(0)$
and apply the compactness result to pass to the limit of some subsequence of 
the solutions.  We then apply similar argument in the bigger cubes $n\cu(0)$ for $n=2,3\cdots$.
Note that our pressure is defined only locally in the cube $\cu(x_0)$ for each $x_0$. 
Therefore we have to glue them appropriately to define it in $\R^3_+$.  
To this end we first derive the uniform (in $\ep$) bounds of $v^\ep$
and an appropriate pressure $q^{n,\ep}$ in $n\cu(0)$. 
In what follows we denote $n\cu(0)$ by $n\cu$ for simplicity.

\begin{proposition}
For $\ep>0$,  $n=1,2,\cdots$, and $u_0 \in L^2_{uloc}(\R^3_+)$, there exist  
constants $T_0>0$ and $A$ depending only on $\|u_0\|_{L^2_{uloc}}$
and exists a pair $(v, q^n)=(v^{\ep}, q^{n,\ep})$  satisfying the 
following statements:
\begin{enumerate}
\item  
$(v, q^n)$ is a solution to \eqref{e.rnse} in $[0,T_0) \times n\cu$ and satisfies
\begin{align}
\label{e.v1}
\sup_{t\in (0,T_0)}\int_{n\cu}|v(\cdot,t)|^2
+ \int_0^{T_0}\int_{n\cu}|\nabla v|^2
&\leq C(n) A,
\\
\label{e.v2}
\|\partial_t v \|_{(L^5(0,T_0;W^{1,3}_0(n\cu)))^*} \le C(n)A.
\end{align}
Here $C(n)$ is a constant depending only on $n$, and 
$(L^5(0,T_0;W^{1,3}_0(n\cu)))^*$ stands for the dual 
of the space $L^5(0,T_0;W^{1,3}_0(n\cu))$.
\item For any $\delta>0$ there exists a constant 
$C(\delta)$ such that 
\begin{align}
\label{e.q_ng}
\|\nabla q^n \|_{L^\frac32(\delta,T_0;L^{\frac98}(\cu(x_0) \cap n\cu))}
\leq\ C(\delta)A.
\end{align}
\item There exist $q^n_1$ and $q^n_2$ such that 
 $q^n$ can be decomposed as
$$
q^n=q^n_1+q^n_2,
$$
and the following estimate holds:
\begin{equation}
\label{e.q^n}
\left\|q^n_1\right\|_{L^\frac32(0,T_0;L^\frac32(n\cu))}+\|q^n_2\|_{L^{\frac54}(0,T_0;L^2(n\cu))}
\leq  
C(n)A.
\end{equation}
\end{enumerate}

\end{proposition}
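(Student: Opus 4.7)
The existence of the velocity field $v=v^\ep$ on a uniform time interval $[0,T_0)$ is already in hand: Proposition \ref{prop.localmild} yields a mild solution up to some maximal time $T_*(\ep)$, and the a priori bound of Proposition \ref{prop.vunif} (equivalently Corollary \ref{prop.apriori}) forces $T_*(\ep) \geq T_0$ where $T_0$ depends only on $\|u_0\|_{L^2_{uloc}}$. The estimate \eqref{e.v1} is then immediate because $n\cu$ is covered by $O(n^3)$ unit cubes $\cu(\eta)$, so summing the uniform local bound from \eqref{e.apriorilen} gives $C(n)\|u_0\|_{L^2_{uloc}}^2$.

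The heart of the argument is to assemble a single pressure $q^n$ on $n\cu$ out of the local pressures $q^\ep_{(x_0)}$ produced by the cut-off construction recalled just after Proposition \ref{prop.localmild} (analogous to the construction in the proof of Proposition \ref{prop.property.weaksol}). The key observation is that for any two base points $x_0, x_0' \in \R^3_+$, the difference $q^\ep_{(x_0)}-q^\ep_{(x_0')}$ is a function of $t$ alone. I will therefore fix one reference base point $x_0^\ast \in n\cu$ and define $q^n := q^\ep_{(x_0^\ast)}$ on $(0,T_0)\times n\cu$. For every other $x_0$ with $\cu(x_0)\cap n\cu \neq \emptyset$, the pressure $q^n$ coincides with $q^\ep_{(x_0)}$ modulo an additive $c_{x_0}(t)$. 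This immediately transfers the gradient estimate of Corollary \ref{prop.pressurer} to \eqref{e.q_ng} (constants drop out), after summing over the $O(n^3)$ cubes covering $n\cu$ and using the uniform bound on $\|v\|_{L^\infty(0,T_0;L^2_{uloc})}$ and $\sup_\eta \|\nabla v\|_{L^2(0,T_0;L^2(\cu(\eta)))}$ from \eqref{e.apriorilen}.

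For the decomposition \eqref{e.q^n}, I split $q^\ep_{(x_0^\ast)}$ as in Corollary \ref{prop.pressurer}, $q^n_1+q^n_2$, corresponding respectively to the terms controlled in $L^{3/2}_tL^{3/2}_x$ and the ones controlled in $L^{5/4}_tL^2_x$. Bounding these $L^p(L^q)$ norms on $n\cu$ requires summing the unit-cube bounds of Corollary \ref{prop.pressurer} over the $O(n^3)$ unit cubes, which accounts for the constant $C(n)$ in \eqref{e.q^n}. The main subtle point is that the local constants must be chosen compatibly: I will absorb the time-dependent offset $c_{x_0}(t)$ between $q^\ep_{(x_0)}$ and $q^n$ into either $q^n_1$ or $q^n_2$; the natural choice is to put it in $q^n_1$ (fattening it by a bounded $L^{3/2}_t$ function of $t$ depending only on $\|u_0\|_{L^2_{uloc}}$ via the a priori bound), which preserves the form of the estimate.

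Finally, for \eqref{e.v2} I use the equation \eqref{e.rnse}: for any test $\varphi \in L^5(0,T_0;W^{1,3}_0(n\cu))$,
\begin{equation*}
\langle \partial_t v, \varphi\rangle = -\int_0^{T_0}\!\!\int_{n\cu}\!\!\bigl(\nabla v : \nabla \varphi + F_\ep(v)\otimes v : \nabla\varphi - q^n\,\mathrm{div}\,\varphi\bigr)\,dxdt,
\end{equation*}
and estimate each term by Hölder: $\nabla v \in L^2_tL^2_x$, $F_\ep(v)\otimes v \in L^{5/2}_tL^{15/11}_x$ (by Gagliardo-Nirenberg interpolation as in \eqref{e.gn}), and $q^n \in L^{5/4}_tL^2_x + L^{3/2}_tL^{3/2}_x$ by \eqref{e.q^n}. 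All three pair with $L^5_tW^{1,3}_x$ by Hölder, yielding \eqref{e.v2} with a constant depending on $n$ through \eqref{e.q^n}. The hard part I expect is precisely the pressure gluing and the bookkeeping of the time-dependent constants across cubes; everything else is a careful application of the already-proved local pressure estimates and standard interpolation.
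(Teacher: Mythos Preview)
Your treatment of \eqref{e.v1}, \eqref{e.q_ng}, and \eqref{e.v2} is essentially correct and matches the paper. The gap is in your construction of the global decomposition \eqref{e.q^n}. Corollary \ref{prop.pressurer} produces, for each base point $x_0$, a decomposition $q^\ep_{(x_0)}=q_1^{(x_0)}+q_2^{(x_0)}$ together with estimates \emph{only on $\cu(x_0)$}; both the pieces and the estimates depend on $x_0$ through the cut-offs $\chi_{x_0,4}$, $\chi_{x_0,2}$. Fixing $q^n:=q^\ep_{(x_0^\ast)}$ and taking its decomposition from Corollary \ref{prop.pressurer} gives you $q^n_1,q^n_2$ that are controlled on $\cu(x_0^\ast)$, not on $n\cu$. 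Your proposal to ``sum the unit-cube bounds over the $O(n^3)$ cubes'' would require the decompositions at different base points to match up, which they do not; absorbing the constants $c_{x_0}(t)$ does not resolve this, because the individual pieces $q_1^{(x_0)}$, $q_2^{(x_0)}$ themselves change with $x_0$, not just by an additive function of $t$. You identify this as the hard part but do not actually carry it out.

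The paper bypasses the patching entirely by a rescaling argument: set $v_{(n)}(x,t)=n\,v(nx,n^2t)$, $q_{(n)}(x,t)=n^2 q(nx,n^2t)$, $v_{0(n)}(x)=n\,v_0(nx)$. Then $(v_{(n)},q_{(n)})$ solves the regularized system \eqref{e.rnse} with $\ep$ replaced by $\ep/n$ on $(0,T_0/n^2)\times\R^3_+$, and by \eqref{e.v1} the rescaled velocity is uniformly bounded in the local energy norm. Corollary \ref{prop.pressurer} applied \emph{once} at the unit scale (base point $0$) yields a single decomposition $q_{(n)}=q_{(n),1}+q_{(n),2}$ with the desired bounds on $\cu(0)$. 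Undoing the scaling, $q^n_i(x,t):=n^{-2}q_{(n),i}(x/n,t/n^2)$, gives the global decomposition on $n\cu$ with the $n$-dependent constants absorbed into $C(n)$. This is both simpler and avoids any compatibility issues between local pressures.
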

\begin{proof}
Let $v=v^{\ep}$ be the mild solution given in Proposition \ref{prop.localmild}. 
We then easily see that \eqref{e.v1} follows from Proposition \ref{prop.vunif}. 
To see \eqref{e.q^n}, consider
the rescaling $v \mapsto v_{(n)}(x,t)=nv(nx,n^2t)$, 
$q \mapsto q_{(n)}(x,t) = n^2 q(nx,n^2t)$ and $v_0 \mapsto v_{0(n)}(x)=nv_0(nx)$ 
for $n=1,2, \cdots$.
Then $v_{(n)}$ is a mild solution to the problem 
\begin{equation}
\label{e.rnsen}
\left\{ 
\begin{aligned}
& \partial_t v_{(n)}+F_{\frac{\ep}{n}}(v_{(n)}) \cdot\nabla v_{(n)}-\Delta v_{(n)}
+\nabla q_{(n)} = 0, \quad \nabla\cdot v_{(n)}=0 & \mbox{in} & \ (0,{\textstyle\frac{T_0}{n}})\times\R^3_+,  \\
& v_{(n)} = 0  & \mbox{on} & \ (0,{\textstyle\frac{T_0}{n}})\times\partial\R^3_+,\\
&v_{(n)}|_{t=0}=v_{0(n)}
& \mbox{in}  & \ \R^3_+.
\end{aligned}
\right.
\end{equation} 
By the estimate \eqref{e.v1}, $v_{(n)}$ is uniformly bounded in $\ep>0$ 
with respect to the local energy norm. 
Therefore from Corollary \ref{prop.pressurer}  
one can find a pressure $q_{(n)}=q_{(n),1}+q_{(n),2}$ satisfying 
the estimate
\begin{multline}\label{e.q}
\left\|q_{(n), 1}\right\|_{L^\frac32(0,T_0/n;L^\frac32(\cu(0)))}+\|q_{(n), 2}\|_{L^p(0,T_0/n;L^2(\cu(0)))}
\leq\  
C\|v_{0(n)}\|_{L^2_{uloc}(\R^3_+)}
\\+ C\left\|v_{(n)}\right\|_{L^\infty(0,T_0/n;L^2_{uloc}(\R^3_+))}^2
+C\sup_{\eta \in \Z^3_+}
\left\|\nabla v_{(n)}\right\|_{L^2(0,T_0/n;L^2(\cu(\eta)))}^2.
\end{multline}
rescaling back $q_{(n),i}$ ($i=1,2$) and defining the pressure as
$q^n_i(x,t):=\frac{1}{n^2}q_{(n)i}(\frac{x}{n}, \frac{t}{n^2})$, from \eqref{e.q} 
we obtain the estimate \eqref{e.q^n}. 
As for \eqref{e.q_ng}, 
since $\nabla q^n =\nabla q_{(x_0)}$ in $\cu(x_0) \cap n\cu$ by \eqref{e.rnse},
the estimate follows from \eqref{e.qg}.

It remains to show \eqref{e.v2}. Acting the test function 
$\varphi \in C^\infty_{0}(n\cu(0))^3$ to 
\eqref{e.rnse} and using the pressure decompostion
we have
\begin{align*}
&\left| \int^{T_0}_0\int_{n\cu}
\partial_tv \cdot \varphi dxdt
\right|
\\
=\ &\left| \int^{T_0}_0\int_{n\cu}(-\nabla v\cdot \nabla \varphi +v\otimes F_{\ep}(v) 
\nabla \varphi +p^n \nabla \cdot \varphi dxdt \right|
\\
 \le\ &C\left(\int^{T_0}_0\int_{n\cu}|\nabla v|^2dxdt\right)^\frac12 
\left(\int^{T_0}_0\int_{n\cu}| \nabla \varphi|^2dxdt\right)^{\frac12}
\\
&+
C\left(\int^{T_0}_0\int_{n\cu}| v|^3dxdt\right)^\frac13 
\left(\int^{T_0}_0\int_{n\cu}| F_{\ep}(v)|^3dxdt\right)^\frac13 
\left(\int^{T_0}_0\int_{n\cu}| \nabla \varphi|^3dxdt\right)^{\frac13}
\\
&+
\left(\int^{T_0}_0\int_{n\cu}|q^n_1|^{\frac32}dxdt\right)^\frac23 
\left(\int^{T_0}_0\int_{n\cu}| \nabla \varphi|^3dxdt\right)^{\frac13}
\\
&+
\left(\int^{T_0}_0(\int_{n\cu}|q^n_2|^{2}dx)^{\frac58}dt\right)^\frac45 
\left(\int^{T_0}_0(\int_{n\cu}| \nabla \varphi|^2dx)^{\frac52}dt\right)^{\frac15}.
\end{align*}
Since $\omega_\ep$ is supported in $B(0,\ep)$ for $\ep >0$, we  see from \eqref{e.05}
\begin{align}
\label{e.FL^3}
\int^{T_0}_0\int_{n\cu}| F_{\ep}(v)|^3dxdt
\le 
\int^{T_0}_0\int_{(n+\ep)\cu}| v|^3dxdt
\le
C(n)A.
\end{align}
Hence combining this and estimates \eqref{e.v1} and \eqref{e.q^n} we obtain
\begin{align}
\left|\int^{T_0}_0\int_{n\cu}
\partial_tv \cdot \varphi dxdt\right|
\le&
C(n)A \left(\int^{T_0}_0(\int_{n\cu}| \nabla \varphi|^3dx)^\frac53 dt\right)^{\frac15}.
\end{align}
This yields \eqref{e.v2} as desired.
\end{proof}
In order to complete the proof of Proposition \ref{prop.local} we argue by induction in $n$ 
to pass to the limit for $(v^\ep, p^\ep)$ ($\ep>0$). 
For $n=1$ by using estimates \eqref{e.v1},\eqref{e.v2} and \eqref{e.q^n} one 
can apply the Aubin-Lions lemma, and then using
the uniform bound in $L^{10/3}(0,T_0;L^{10/3}(\cu))$ we can extract
 a sequence $\{(v^{k}, q^{1,k})\}_{k=1}^\infty$ from $\{(v^\ep,q^{1,\ep}) \}_{\ep>0}$
such that 
\begin{align}
\ \ &v^{k} \xrightharpoonup{*} v^{(1)}   \qquad {\rm in} \ L^\infty(0,T_0;L^2(\cu)),\notag
\\
&v^{k} \rightharpoonup v^{(1)} \qquad {\rm in} \ L^2(0,T_0;W^{1,2}(\cu)),\notag
\\
&v^{k} \rightarrow v^{(1)} \qquad {\rm in} \ L^3(0,T_0;L^3(\cu)),
\label{e.vL^3}
\\&q^{1,k} \rightharpoonup q^{(1)}\quad \ {\rm in} \  L^{\frac54}(\delta,T_0;W^{1,\frac98}(\cu))
\ \ {\rm for \ any} \ \delta \in (0,T_0), \notag
\\
&q^{1,k}_{1} \rightharpoonup q^{(1)}_1 \quad \ {\rm in} \  L^{\frac32}(0,T_0;L^{\frac32}(\cu)),\notag
\\
&q^{1,k}_{2} \rightharpoonup q^{(1)}_2 \quad \  {\rm in} \  L^{\frac54}(0,T_0;L^2(\cu)), \notag
\end{align}
where $q^{1,k}$ and $q^{(1)}$ are decomposed as $q^{1,k}=q^{1,k}_1+q^{1,k}_2$
and $q^{(1)}=q^{(1)}_1+q^{(1)}_2$ respectively. 
From \eqref{e.FL^3} and \eqref{e.vL^3}, we also deduce
\begin{align}
F_k(v^{k}) \rightarrow v^{(1)} \qquad {\rm in} \ L^3(0,T_0;L^3(\cu(R))) \qquad {\rm for \ any} \ R<1.
\end{align}
Then $(v^{(1)},q^{(1)})$ satisfies \eqref{e.nse} 
in the sense of distributions and the local energy inequality in $(0,T_0) \times \cu$.
So we let $(u,p)=(v^{(1)},q^{(1)})$ in $(0,T_0) \times \cu$. 

For $n=2$ by the same argument we can find a subsequence of 
$\{v^{k}\}_{k=1}^{\infty}$ still denoted by $\{v^{k}\}_{k=1}^{\infty}$ 
and $\{ q^{2,k}\}_{k=1}^\infty$ such that 
\begin{align}
&v^{k} \xrightharpoonup{*} v^{(2)}   \qquad {\rm in}
\ L^\infty(0,T_0;L^2(2\cu)),\notag
\\
&v^{k} \rightharpoonup v^{(2)} \qquad {\rm in} \ L^2(0,T_0;W^{1,2}(2\cu)),\notag
\\
&v^{k} \rightarrow v^{(2)} \qquad {\rm in}  \ L^3(0,T_0;L^3(2\cu)),
\notag
\\
&q^{2,k} \rightharpoonup q^{(2)} \ \quad {\rm in}  \  L^{\frac54}(\delta,T_0; W^{1,\frac98}(2\cu))
\ \ {\rm for \ any} \ \delta \in (0,T_0),
\notag
\\
&q^{2,k}_{1} \rightharpoonup q^{(2)}_1  \ \quad {\rm in}  \  L^{\frac32}(0,T_0;L^{\frac32}(2\cu)),\notag
\\
&q^{2,k}_{2} \rightharpoonup q^{(2)}_2 \ \quad {\rm in}  \  L^{\frac54}(0,T_0;L^2(2\cu)),\notag
\\
&F_{k}(v^{k}) \rightarrow v^{(2)} \qquad {\rm in}  \ L^3(0,T_0;L^3(\cu(R))) \ \ {\rm for \ any} \ R<2,
\notag
\end{align}
where $v^{(2)}$, $q^{(2)}:=q^{(2)}_1+q^{(2)}_{2}$ satisfy  \eqref{e.nse} 
in the sense of distributions and also satisfy the local energy inequality in $(0,T_0) \times 2\cu$.
Moreover \eqref{e.q_ng} implies that for any $x_0 \in 2\cu$ 
\begin{align*}
\|\nabla q^{(2)} \|_{L^\frac32(\delta,T_0;L^{\frac98}(\cu(x_0) \cap 2\cu))}
\leq\ C(\delta)A.
\end{align*}
Since $v^{(2)}=u$ in $(0,T_0) \times \cu$, we may extend $u$ by letting $u=v^{(2)}$ in $(0,T_0) \times 2\cu$.
On the other hand,  we have from \eqref{e.rnse} that $\nabla q^{(2)}=\nabla p$ in 
$(0,T_0) \times \cu$. Hence there exists a function $h^{(2)} \in L^{5/4}(0,T_0)$ such that 
$q^{(2)}(x,t)=p(x,t)-h^{(2)}(t)$ for $(x,t) \in (0,T_0) \times \cu$. 
Therefore we let $p=q^{(2)}-h^{(2)}$ in  
$(0,T_0) \times 2\cu$. 

Repeating this procedure for $n=3,4,\cdots$, we obtain 
$u \in L^\infty(0,T_0;L^2_{uloc,\sigma}(\R^3_+))$ and
$p \in L^{\frac32}_{loc}((0,T_0) \times \overline{\R^3_+})$ satisfying
\eqref{e.nse} and the local energy inequality \eqref{e.locenineq}.
Moreover by the construction the pair $(u,p)$ satisfies
\begin{align} 
\sup_{x\in \R^3_+} \int_0^{T_0} \| \nabla u \|_{L^2 (B(x)\cap \R^3_+)}^2 d t + \sup_{x\in \R^3_+} 
\int_\delta^{T_0} \| \nabla p \|_{L^\frac98 (B(x)\cap \R^3_+)}^\frac32 d t<\infty
\quad 
{\rm for \ any} \ \delta \in (0,T_0),
\notag
\\
\label{e.ut}
\|\partial_t u \|_{(L^5(0,T_0;W^{1,3}_0(n\cu)))^*} <\infty \quad {\rm for \ any} \ n=1,2,\cdots,
\end{align}
\begin{align}
\label{e.leix}
\int_{\R^3_+} \varphi|u(t)|^2 dx +&2 \int^t_0\int_{\R^3_+} \varphi|\nabla u|^2dxds 
\\
&\le \int_{\R^3_+}\varphi |u_0|^2dx +\int ^t_0 \int_{\R^3_+} |u|^2\Delta \varphi 
+\nabla \varphi \cdot u(|u|^2+2p)dxds
\notag
\end{align} 
for all $t\in [0,T_0)$ and $\varphi \in C^\infty_0(\overline{\R^3_+})$. 
\eqref{e.ut} and the uniform $L^2_{uloc}$ bound yield
the continuity of the function $t\mapsto \langle u(t), w \rangle_{L^2(\R^3_+)}$ in $[0,T_0)$ 
for any compactly supported function $ w \in L^2(\R^3_+)^3$.
Since $F_\ep (u_0)$ converges to $u_0$ in $L^2(K)$
for any compact set $K \subset \overline{\R^3_+}$,   
we also see $\lim_{t \rightarrow 0+} \langle u(t),w\rangle= \langle u_0, w\rangle$ 
by taking the limit in the weak formulation of \eqref{e.rnse}.
Combining this with \eqref{e.leix} we obtain
\begin{align*}
\lim_{t\rightarrow 0+} \| u(t) -u_0 \|_{L^2(K)} =0.
\end{align*} 
 This completes the proof of Proposition \ref{prop.local}.

\section{Global existence of local energy weak solutions}\label{sec.global.weak}
 
In this section we prove Theorem \ref{prop.global.weak}. We construct a global in time local energy weak solution, which is an analogue of the weak solution constructed by Lemari\'e-Rieusset \cite{lemariebook} for the whole space case $\R^3$; see also Kikuchi and Seregin \cite{KS07}. In principle, the proof proceeds as in the case of $\R^3$. Nevertheless, our proof does not rely on the weak-strong uniqueness of the local energy weak solutions which was used in \cite{KS07} for the whole space case. In fact, compared with the case of Leray-Hopf weak solutions with finite energy, the weak-strong uniqueness for local energy weak solutions is a more delicate problem and seems to require additional work in handling the pressure term whose structure is more complicated in the presence of the physical boundary than in the whole space case. In this sense our proof below is simpler than the known ones for the whole space case.

\begin{proof}[Proof of Theorem \ref{prop.global.weak}]
\noindent{\emph{Step 1.}} Let us first assume that $T<\infty$. Thanks to Proposition \ref{prop.local} we already know that there exists a local energy weak solution to \eqref{e.nse} in $Q_{T_0}$ with initial data $u_0$ for some $T_0>0$. We may assume that $T_0<T$. 
The key tool to verify the global existence is the $\ep$-regularity theorem, stated as in Theorem \ref{thm.ep.regularity}, is the spatial decay of $(u,p)$ in Theorem  \ref{prop.decayinfty}. Indeed, from Theorem \ref{prop.decayinfty} and Theorem \ref{thm.ep.regularity}, for any $\delta\in (0,T_0)$ there exists $R_\delta >0$ such that $u$ is smooth in $[\delta, T_0] \times \{x\in \R_+^3~|~|x|\geq R_\delta\}$, and in particular, one can show the regularity $u(t)\in W^{1,2}_{uloc} (\{x\in \R_+^3~|~|x|\geq R_\delta\})$ for any $t\in (0,T_0]$. On the other hand, from the definition of the local energy weak solution we have $u\in L^2(0,T_0; W^{1,2} (B_{R_\delta}(0)\cap \R^3_+))$. 
Therefore, we conclude that there exists $t_0\in (0,T_0)$ such that $u(t_0) \in W^{1,2}_{uloc} (\R^3_+) $ and $\lim_{R\rightarrow \infty} \| \vartheta_R u(t_0) \|_{L^2_{uloc}(\R^3_+)} =0$, where the latter assertion holds again from Theorem \ref{prop.decayinfty}. By the embedding property we have $u(t_0)\in L^6_{uloc,\sigma} (\R^3_+)$ and $\lim_{R\rightarrow \infty} \| \vartheta_R u(t_0) \|_{L^p_{uloc}(\R^3_+)} =0$ for any $2\leq p<6$. Fix $\epsilon>0$. By using Lemma \ref{lem.characterize}, $u(t_0)$ is decomposed as 
\begin{align*}
u(t_0) = u^{1,\epsilon} (t_0) + u^{2,\epsilon} u(t_0),
\end{align*} 
where $u^{1,\epsilon} (t_0)\in \mathcal{L}^4_{uloc,\sigma} (\R^3_+)$ with $\| u^{1,\epsilon} (t_0) \|_{L^4_{uloc}(\R^3_+)} \leq \epsilon$, and $u^{2,\epsilon} \in L^2_\sigma (\R^3_+)$.
By Proposition 7.1 in \cite{MMP17} we can construct a mild solution $u^{1,\epsilon}$ to \eqref{e.nse} in $(t_0,T) \times \R^3_+$ with initial data $u^{1,\epsilon} (t_0)$ by taking $\epsilon>0$ small enough, and $u^{1,\epsilon}$ satisfies 
\begin{align*}
\sup_{t_0<t<T} \big ( \| u^{1,\epsilon} (t) \|_{L^4_{uloc}} + t^\frac38 \| u^{1,\epsilon} (t) \|_{L^\infty} + t^\frac12 \| \nabla u^{1,\epsilon} (t) \|_{L^4_{uloc}} \big ) \leq C_* \epsilon .
\end{align*}
Moreover, $u\in C([t_0,T); \mathcal{L}^4_{uloc,\sigma}(\R^3_+))$ and by the bootstrap argument as in \cite{GHM}, $u^{1,\epsilon}$ is smooth in $t>t_0$, and for any $0<\delta\ll 1$, we have 
\begin{align*}
\sup_{t_0+\delta\leq t<T} \big ( \| \partial_t u^{1,\epsilon} (t) \|_{L^\infty (\R^3_+)} + \sum_{k=0,1,2} \| \nabla^k u^{1,\epsilon} (t) \|_{L^\infty (\R^3_+)} \big ) <\infty.
\end{align*}
Note that the associated pressure $q^{1,\epsilon}$ has the structure given in Section \ref{sec.pressureest}, and in particular, $q^{1,\epsilon}$ at least belongs to $L^\frac32_{loc} ([t_0,T) \times \overline{\R^3_+})$ (though it has more regularity up to $t=t_0$ since $u^{1,\epsilon}$ has). 
Thanks to the enough regularity, $(u^{1,\epsilon}, q^{1,\epsilon})$ satisfies the local energy equality:
\begin{align}\label{proof.prop.global.weak.1} 
\begin{split}
& \| \chi u^{1,\epsilon} (t) \|_{L^2(\R^3_+)}^2 + 2\int_{t_0}^t \| \chi \nabla u^{1,\epsilon} \|_{L^2 (\R^3_+)}^2 d s \\
& \qquad = \| \chi u^{1,\epsilon} (t') \|_{L^2(\R^3_+)}^2 \\
& \qquad \qquad + \int_{t'}^t \langle |u^{1,\epsilon}|^2, \partial_s \chi^2 + \Delta \chi^2 \rangle_{L^2 (\R^3_+)}  + \langle u^{1,\epsilon}\cdot \nabla \chi^2, |u^{1,\epsilon}|^2 + 2 q^{1,\epsilon} \rangle_{L^2 (\R^3_+)} d s
\end{split}
\end{align}
for any $\chi\in C^\infty_c( [t_0,T)\times \overline{\R^3_+})$ and all $t_0 \leq t'< t<T$.
Next we construct $(u^{2,\epsilon}, q^{2,\epsilon})$ as a weak solution to the perturbed Navier-Stokes equations
\begin{equation}
\label{e.pnse}
\left\{ 
\begin{aligned}
& \partial_t u^{2,\ep} -\Delta u^{2,\ep} +\nabla q^{2,\ep}  = - {\rm div}\, \big ( u^{2,\ep}\otimes u^{2,\ep} + u^{1,\ep} \otimes u^{2,\ep} + u^{2,\ep} \otimes u^{1,\ep} \big ),  \\
& \nabla\cdot u^{2,\ep}  =0,   \qquad  \mbox{in} ~ \ (t_0,T)\times\R^3_+, \\
& u^{2,\ep}  = 0 \qquad  \mbox{on} ~ \ (0,T)\times\partial \R^3_+,\\
& u^{2,\ep}|_{t=0}  =u^{2,\ep} (t_0) \qquad  \mbox{in} ~ \ \R^3_+.
\end{aligned}
\right.
\end{equation} 
Since $u^{1,\ep}$ has the enough regularity stronger than the scaling as stated as above, it is standard to construct a Leray-Hopf weak solution $u^{2,\ep}\in C_w ([t_0,T); L^2_\sigma (\R^3_+))\cap L^2 (t_0, T; W^{1,2}_0 (\R^3_+))$, $q^{2,\ep}\in L^1_{loc} ([t_0,T)\times \overline{\R^3_+}) + L^\frac56 ([t_0,T); L^2_{uloc} (\R^3_+))$,  satisfying 
$\lim_{t\downarrow t_0} \| u^{2,\ep} (t)-u^{2,\ep} (t_0) \|_{L^2(\R^3_+)} =0$ and the local energy inequality: 
\begin{align}\label{proof.prop.global.weak.2} 
\begin{split}
& \| \chi u^{2,\epsilon} (t) \|_{L^2(\R^3_+)}^2 + 2\int_{t'}^t \| \chi \nabla u^{2,\epsilon} \|_{L^2 (\R^3_+)}^2 d s \\
& \qquad \leq \| \chi u^{2,\epsilon} (t') \|_{L^2(\R^3_+)}^2 \\
& \qquad \qquad + \int_{t'}^t \langle |u^{2,\epsilon}|^2, \partial_s \chi^2 + \Delta \chi^2 \rangle_{L^2 (\R^3_+)}  + \langle u^{2,\epsilon}\cdot \nabla \chi^2, |u^{2,\epsilon}|^2 + 2q^{2,\epsilon} \rangle_{L^2 (\R^3_+)} d s\\
& \qquad \qquad  - 2\int_{t'}^t \langle u^{1,\ep} \cdot \nabla u^{2,\ep} + u^{2,\ep} \cdot \nabla u^{1,\ep}, u^{2,\ep} \chi^2 \rangle_{L^2 (\R^3_+)} d s
\end{split}
\end{align}
for all $t\in (t',T)$ and for $a.e.~t'\in [t_0,T)$ including $t'=t_0$, where $\chi\in C_c^\infty ([t_0,T)\times \overline{\R^3_+})$ is an arbitrary test function. 
Set 
\begin{align*}
v(t) = u^{1,\ep} (t) + u^{2,\ep} (t), \quad q(t) = q^{1,\ep} (t) + q^{2,\ep} (t) , \qquad t\in (t_0, T).
\end{align*}
Then we have $\lim_{t \downarrow t_0} \| v(t) - u(t_0) \|_{L^2 (K)}=0$ for any compact set $K\subset \overline{\R^3_+}$, 
\begin{align*}
&v\in L^\infty (t_0,T; \mathcal{L}^2_{uloc,\sigma}(\R^3_+))\cap L^2 (t_0,T; W^{1,2}_{0,uloc} (\R^3_+)),\\
&q\in L^\frac32_{loc} ([t_0,T)\times \overline{\R^3_+})+ L^\frac56_{loc} ([t_0,T); L^2_{uloc} (\R^3_+)),
\end{align*} 
and $(v,q)$ satisfies the Navier-Stokes equations in $(t_0,T)$ in the sense of distributions
\begin{align}\label{proof.prop.global.weak.3} 
\begin{split}
& \int_{t_0}^T  -\langle v, \partial_s \varphi \rangle_{L^2 (\R^3_+)} + \langle \nabla v, \nabla \varphi \rangle_{L^2 (\R^3_+)} - \langle q, {\rm div}\, \varphi \rangle_{L^2 (\R^3_+)} + \langle v \cdot \nabla v, \varphi \rangle_{L^2 (\R^3_+)} d s\\
& \qquad =\langle u (t_0), \varphi (t_0) \rangle_{L^2 (\R^3_+)}
\end{split}
\end{align}
for any $\varphi \in C_c^\infty ([t_0,T) \times \overline{\R^3_+})^3$ such that $\varphi|_{x_3=0} =0$.
We also have the weak continuity of $v$ in time.
Next we shall show that $(v,q)$ satisfies the local energy inequality:
\begin{align}\label{proof.prop.global.weak.4} 
\begin{split}
& \| \chi v (t) \|_{L^2(\R^3_+)}^2 + 2\int_{t_0}^t \| \chi \nabla v \|_{L^2 (\R^3_+)}^2 d s \\
&  \leq  \| \chi v (t_0) \|_{L^2(\R^3_+)}^2 
 + \int_{t_0}^t \langle |v|^2, \partial_s \chi^2 + \Delta \chi^2 \rangle_{L^2 (\R^3_+)}  + \langle v \cdot \nabla \chi^2, |v|^2 + 2q \rangle_{L^2 (\R^3_+)} d s
\end{split}
\end{align}
for any $\chi\in C^\infty_c( [t_0,T)\times \overline{\R^3_+})$ and for all $t\in (t_0,T)$. 
Note that $v(t_0)=u(t_0)$.
To prove \eqref{proof.prop.global.weak.4} we first choose any $t'\in (t_0,t)$ such that \eqref{proof.prop.global.weak.2} holds. Then it suffices to show \eqref{proof.prop.global.weak.4} but $t_0$ replaced by such $t'$;
then we take the limit $t'\rightarrow t_0$ and by the continuity at the initial time $t=t_0$ in the local $L^2$ topology, we obtain \eqref{proof.prop.global.weak.4}.
The advantage to take $t'>t_0$ first is that we can use the smoothness of $u^{1,\ep}$ in $[t',T)$, which justifies the computation below:
\begin{align*}
 E_{t,t'}:=\ &\| \chi v (t) \|_{L^2(\R^3_+)}^2 + 2\int_{t'}^t \| \chi \nabla v \|_{L^2 (\R^3_+)}^2 d s \\
 =\ & \| \chi u^{1,\ep} (t) \|_{L^2 (\R^3_+)}^2 + \| \chi u^{2,\ep} (t) \|_{L^2 (\R^3_+)}^2 + 2\langle \chi u^{1,\ep} (t), \chi u^{2,\ep} (t) \rangle_{L^2(\R^3_+)} \\
& + 2\int_{t'}^t \| \chi \nabla u^{1,\ep} \|_{L^2 (\R^3_+)}^2 + \| \chi \nabla u^{1,\ep} \|_{L^2 (\R^3_+)}^2 + 2 \langle \chi \nabla u^{1,\ep}, \chi \nabla u^{2,\ep} \rangle_{L^2 (\R^3_+)} d s.
\end{align*}
Hence,
\begin{align*}
E_{t,t'}  \leq\ &  \| \chi u^{1,\ep} (t') \|_{L^2(\R^3_+)}^2  + \| \chi u^{2,\ep} (t')\|_{L^2 (\R^3_+)}^2 \\
&  + \int_{t'}^t \langle |u^{1,\epsilon}|^2, \partial_s \chi^2 + \Delta \chi^2 \rangle_{L^2 (\R^3_+)}  + \langle u^{1,\epsilon}\cdot \nabla \chi^2, |u^{1,\epsilon}|^2 + 2 q^{1,\epsilon} \rangle_{L^2 (\R^3_+)} d s \\
&  + \int_{t'}^t \langle |u^{2,\epsilon}|^2, \partial_s \chi^2 + \Delta \chi^2 \rangle_{L^2 (\R^3_+)}  + \langle u^{2,\epsilon}\cdot \nabla \chi^2, |u^{2,\epsilon}|^2 + 2 q^{2,\epsilon} \rangle_{L^2 (\R^3_+)} d s\\
&   - 2 \int_{t'}^t \langle u^{1,\ep} \cdot \nabla u^{2,\ep} + u^{2,\ep} \cdot \nabla u^{1,\ep}, u^{2,\ep} \chi^2 \rangle_{L^2 (\R^3_+)} d s\\
&  +  2\langle u^{2,\ep} (t), \chi^2 u^{1,\ep} (t) \rangle_{L^2(\R^3_+)} + 4\int_{t'}^t  \langle \chi \nabla u^{1,\ep}, \chi \nabla u^{2,\ep} \rangle_{L^2 (\R^3_+)} d s.
\end{align*}
Thus we have 
\begin{align}\label{proof.prop.global.weak.5} 
\begin{split}
E_{t,t'} \leq\ &  \| \chi v (t') \|_{L^2(\R^3_+)}^2 + \int_{t'}^t \langle |v|^2, \partial_s \chi^2 + \Delta \chi^2 \rangle_{L^2 (\R^3_+)}  d s   \\
&  -2  \langle u^{2,\ep} (t'), \chi^2 u^{1,\ep} (t')\rangle_{L^2 (\R^3_+)}^2  -2\int_{t'}^t \langle u^{2,\ep},  u^{1,\ep} (\partial_s \chi^2 + \Delta \chi^2 ) \rangle_{L^2 (\R^3_+)}  d s  \\
&  + \int_{t'}^t \langle u^{1,\epsilon}\cdot \nabla \chi^2, |u^{1,\epsilon}|^2 + 2 q^{1,\epsilon} \rangle_{L^2 (\R^3_+)} + \langle u^{2,\epsilon}\cdot \nabla \chi^2, |u^{2,\epsilon}|^2 + 2 q^{2,\epsilon} \rangle_{L^2 (\R^3_+)}  d s \\
&   - 2 \int_{t'}^t \langle u^{1,\ep} \cdot \nabla u^{2,\ep} + u^{2,\ep} \cdot \nabla u^{1,\ep}, u^{2,\ep} \chi^2 \rangle_{L^2 (\R^3_+)} d s\\
&  +  2\langle u^{2,\ep} (t), \chi^2 u^{1,\ep} (t) \rangle_{L^2(\R^3_+)} + 4\int_{t'}^t  \langle  \nabla u^{2,\ep}, \chi^2 \nabla u^{1,\ep} \rangle_{L^2 (\R^3_+)} d s.
\end{split}
\end{align}
Since $u^{2,\ep}$ satisfies \eqref{e.pnse} in the sense of distributions,
we have 
\begin{align*}
& \langle u^{2,\ep} (t), \chi^2 u^{1,\ep} (t) \rangle_{L^2(\R^3_+)} -  \langle u^{2,\ep} (t'), \chi^2 u^{1,\ep} (t')\rangle_{L^2 (\R^3_+)}^2 \nonumber \\
=\ &  \int_{t'}^t \langle u^{2,\ep}, \partial_s \big ( \chi^2 u^{1,\ep} ) \rangle_{L^2 (\R^3_+)} + \langle \nabla u^{2,\ep}, \nabla (\chi^2 u^{1,\ep}) \rangle_{L^2 (\R^3_+)}  + \langle q^{2,\ep}, {\rm div}\, (\chi^2 u^{1,\ep}) \rangle_{L^2 (\R^3_+)}  d s\nonumber \\
& \quad - \int_{t'}^t \langle u^{2,\ep}\cdot \nabla u^{2,\ep}, \chi^2 u^{1,\ep} \rangle_{L^2 (\R^3_+)} d s \nonumber \\
=\ & \int_{t'}^t \langle u^{2,\ep}, u^{1,\ep} \partial_s  \chi^2  \rangle_{L^2 (\R^3_+)}  d s + \int_{t'}^t \langle u^{2,\ep}, \chi^2 (\Delta u^{1,\ep} - \nabla q^{1,\ep} - u^{1,\ep} \cdot \nabla u^{1,\ep} ) \rangle_{L^2 (\R^3_+)} d s \nonumber \\
&  - \int_{t'}^t \langle \nabla u^{2,\ep}, \nabla (\chi^2 u^{1,\ep}) \rangle_{L^2 (\R^3_+)}  + \langle q^{2,\ep}, {\rm div}\, (\chi^2 u^{1,\ep}) \rangle_{L^2 (\R^3_+)}  d s \nonumber \\
&  - \int_{t'}^t \langle u^{2,\ep}\cdot \nabla u^{2,\ep} + u^{1,\ep} \cdot \nabla u^{2,\ep} + u^{2,\ep} \cdot \nabla u^{1,\ep} , \chi^2 u^{1,\ep} \rangle_{L^2 (\R^3_+)} d s \nonumber \\
\end{align*}
and hence,
\begin{align}\label{proof.prop.global.weak.6} 
\begin{split}
&\langle u^{2,\ep} (t), \chi^2 u^{1,\ep} (t) \rangle_{L^2(\R^3_+)} -  \langle u^{2,\ep} (t'), \chi^2 u^{1,\ep} (t')\rangle_{L^2 (\R^3_+)}^2  \\
 =\ & \int_{t'}^t \langle u^{2,\ep}, u^{1,\ep} \partial_s  \chi^2  \rangle_{L^2 (\R^3_+)}  d s + \int_{t'}^t -2 \langle \nabla u^{2,\ep}, \chi^2 \nabla u^{1,\ep} \rangle_{L^2 (\R^3_+)} + \langle u^{2,\ep}, u^{1,\ep} \Delta \chi^2 \rangle_{L^2 (\R^3_+)} d s \\
&  + \int_{t'}^t \langle u^{2,\ep} \cdot \nabla \chi^2, q^{1,\ep} \rangle_{L^2 (\R^3_+)} + \langle  u^{1,\ep} \nabla \chi^2, q^{2,\ep}  \rangle_{L^2 (\R^3_+)}  d s\\
&  - \int_{t'}^t \langle  u^{1,\ep} \cdot \nabla u^{1,\ep}, \chi^2 u^{2,\ep}  \rangle_{L^2 (\R^3_+)} d s\\
&  - \int_{t'}^t \langle u^{2,\ep}\cdot \nabla u^{2,\ep} + u^{1,\ep} \cdot \nabla u^{2,\ep} + u^{2,\ep} \cdot \nabla u^{1,\ep} , \chi^2 u^{1,\ep} \rangle_{L^2 (\R^3_+)} d s.
\end{split}
\end{align}
Combining \eqref{proof.prop.global.weak.5} and \eqref{proof.prop.global.weak.6}, we obtain \eqref{proof.prop.global.weak.4} for $t'$ replaced by $t_0$, as desired.
Finally we set $v(t) = u(t)$ and $q(t)=p(t)$ for $t\in [0,t_0]$. 
It is clear that $(v,q)$ satisfies the required regularity as a local energy weak solution in $Q_T$ .
In particular, $t\mapsto \langle v(t), w \rangle_{L^2 (\R^3+)}$ is continuous in $(0,T)$ for any compactly supported $w\in L^2 (\R^3_+)^3$.
Then for any $\varphi \in C_c^\infty ((0,T) \times \overline{\R^3_+})^3$ such that $\varphi|_{x_3=0} =0$,
we have from \eqref{proof.prop.global.weak.3},
\begin{align*}
& \int_{0}^T  -\langle v, \partial_s \varphi \rangle_{L^2 (\R^3_+)} + \langle \nabla v, \nabla \varphi \rangle_{L^2 (\R^3_+)} - \langle q, {\rm div}\, \varphi \rangle_{L^2 (\R^3_+)} + \langle v \cdot \nabla v, \varphi \rangle_{L^2 (\R^3_+)} d s\\
& =\int_{t_0}^T  -\langle v, \partial_s \varphi \rangle_{L^2 (\R^3_+)} + \langle \nabla v, \nabla \varphi \rangle_{L^2 (\R^3_+)} - \langle q, {\rm div}\, \varphi \rangle_{L^2 (\R^3_+)} + \langle v \cdot \nabla v, \varphi \rangle_{L^2 (\R^3_+)} d s\\
& \quad +\int_{0}^{t_0}  -\langle u, \partial_s \varphi \rangle_{L^2 (\R^3_+)} + \langle \nabla u, \nabla \varphi \rangle_{L^2 (\R^3_+)} - \langle p, {\rm div}\, \varphi \rangle_{L^2 (\R^3_+)} + \langle u \cdot \nabla u, \varphi \rangle_{L^2 (\R^3_+)} d s\\
& = \langle u (t_0), \varphi (t_0)\rangle_{L^2 (\R^3_+)} \\
& \quad +\int_{0}^{t_0}  -\langle u, \partial_s \varphi \rangle_{L^2 (\R^3_+)} + \langle \nabla u, \nabla \varphi \rangle_{L^2 (\R^3_+)} - \langle p, {\rm div}\, \varphi \rangle_{L^2 (\R^3_+)} + \langle u \cdot \nabla u, \varphi \rangle_{L^2 (\R^3_+)} d s\\
& = 0.
\end{align*}
It remains to show the local energy inequality in $[0,T)$.
It is clear that the local energy inequality holds for $t\in [0,t_0]$ since $v=u$ on $[0,t_0]$.
When $t>t_0$ we first apply \eqref{proof.prop.global.weak.4} then we use the local energy inequality for $t=t_0$,
which gives \eqref{e.locenineq}. 

\smallskip

\noindent{\emph{Step 2.}} We now construct a solution for $T=\infty$. The proof is recursive. From Step 1 above, we know that there exists a local energy weak solution $u$ in the sense of Definition \ref{def.weakles} on the time interval $(0,1)$ starting from the initial data $u_0\in\mathcal L^2_{uloc}(\R^3_+)$. Let $N\in\N$. Assume that a local energy weak solution $u$ has been constructed on the time interval $(0,N)$. Since $u\in L^\infty(0,N;\mathcal L^2_{uloc}(\R^3_+))$, we have that for almost all $t_0\in (0,N)$, $u(\cdot,t_0)\in\mathcal L^2_{uloc}(\R^3_+)$. In particular $t_0$ can be taken arbitrarily close to $N$. Fix $t_0\in (N-\frac12,N)$ to fix the ideas. Hence, we consider the solution, in the sense of Definition \ref{def.weakles}, $\tilde{u}$ constructed in Step 1 living on the time interval $(t_0,N+1)$ such that $\tilde{u}(\cdot,t_0):=u(\cdot,t_0)$. The function which is equal to $u$ on $(0,t_0)$ and to $\tilde{u}$ on $(t_0,N+1)$ is then a local energy weak solution in the sense of Definition \ref{def.weakles} on $(0,N+1)$. This follows from the exact same arguments as in Step 1 above. The proof is complete.
\end{proof}

\section{Application to a blow-up criteria in the half-space}\label{sec.blowup}

The goal of this section is to prove the following blow-up criterium in the half-space $\R^3_+$. We recall that a point $(x_0,t)$ is called regular if $u$ is bounded in a parabolic ball $B(x_0,r)\times (t-r^2,t)$. If $(x_0,t)$ is not regular it is, by definition, singular. We say that $u$ blows-up at time $T$ if $T$ is the time of the first occurrence of a singularity.

\begin{theorem}\label{theo.blowup}
Let $u$ be a (finite) energy weak solution (i.e. a Leray-Hopf solution) to the Navier-Stokes equations \eqref{e.nse} with initial data $u_0\in L^2_\sigma(\R^3_+)$. Assume that $u$ blows-up at a finite time $T>0$. Then
\begin{equation*}
\|u(\cdot,t)\|_{L^3(\R^3_+)}\longrightarrow \infty\qquad\mbox{as}\quad 
t\rightarrow T-0.
\end{equation*}
\end{theorem}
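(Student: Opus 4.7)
The plan is to follow the scheme developed by Seregin \cite{Ser12} in the whole space, adapted to $\R^3_+$ via the global local energy weak solutions of Theorem~\ref{prop.global.weak}, the decay at spatial infinity of Theorem~\ref{prop.decayinfty}, and the boundary $\ep$-regularity of Theorem~\ref{thm.ep.regularity}. I would argue by contradiction, assuming that $M:=\limsup_{t\to T^-}\|u(\cdot,t)\|_{L^3(\R^3_+)}<\infty$. Since $u$ is smooth on $(0,T)$ and blows up at $T$, one picks a sequence of (potentially) singular points $(x_k,t_k)$ with $t_k\uparrow T$, $x_k\to x_*\in\overline{\R^3_+}$, together with a natural sequence of scales $\lambda_k\to 0$ extracted from the blow-up rate of $|u|$ near $(x_k,t_k)$, and considers the rescaled pair
\begin{equation*}
u_k(y,s):=\lambda_k\, u(x_k+\lambda_k y,\, t_k+\lambda_k^2 s),\qquad p_k(y,s):=\lambda_k^2\, p(x_k+\lambda_k y,\, t_k+\lambda_k^2 s),
\end{equation*}
defined on $\Omega_k:=\{y\in\R^3 : x_k+\lambda_k y\in\R^3_+\}$. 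Scale invariance of the $L^3$ norm gives $\|u_k(\cdot,s)\|_{L^3(\Omega_k)}\leq M$ for every admissible $s\leq 0$.

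Depending on whether $d_k:=\mathrm{dist}(x_k,\partial\R^3_+)/\lambda_k$ stays bounded or tends to $\infty$, the limiting domain is either $\R^3_+$ (with the blow-up point on $\partial\R^3_+$ after translation) or the whole space $\R^3$. In either case, I would extract a subsequential limit $\bar u$ defined on an ancient cylinder $(-\infty,0)\times\Omega$, $\Omega\in\{\R^3,\R^3_+\}$, which is a local energy weak solution in the sense of Definition~\ref{def.weakles} and satisfies $\bar u\in L^\infty(-\infty,0;L^3(\Omega))$. This step relies on uniform local energy bounds for the $u_k$, obtained from the $L^3$ control via the local energy inequality, together with the pressure decomposition of Section~\ref{sec.pressureest} and the estimates of Propositions~\ref{prop.estlipressure}, \ref{prop.estlocpressure} and \ref{prop.nlpressure} applied at every scale. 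The decay at infinity of Theorem~\ref{prop.decayinfty} is invoked to guarantee $\bar u(\cdot,s)\in\mathcal{L}^2_{uloc,\sigma}(\Omega)$ for a.e.\ $s\leq 0$, so that the class of Definition~\ref{def.weakles} is preserved in the limit.

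The next step is to show $\bar u\equiv 0$ in a parabolic neighborhood of $(0,0)$. Since $\bar u\in L^\infty_s L^3_y$, one finds $s_0\leq 0$ arbitrarily close to $0$ with $\bar u(\cdot,s_0)\in L^3_\sigma(\Omega)$; by Proposition~\ref{prop.property.weaksol}, $\bar u$ is a mild solution, and Kato-type local smoothing yields smoothness of $\bar u$ on some short interval $(s_0,s_0+\sigma)$. Applying the backward uniqueness and unique continuation results of Escauriaza-Seregin-{\v S}ver{\'a}k \cite{ISS03} in the interior case, and their boundary counterparts from Seregin-Shilkin-{\v S}ver{\'a}k \cite{SSS04} in the boundary case, to the vorticity $\omega=\nabla\times\bar u$, with vanishing Cauchy data as $|y|\to\infty$ (a consequence of the uniform $L^3$ bound and Theorem~\ref{prop.decayinfty}), propagates $\bar u\equiv 0$ back through time. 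In the rescaled picture, this vanishing transfers to $u_k$ in a fixed parabolic cylinder centered at $(0,0)$ for $k$ large, so that Theorem~\ref{thm.ep.regularity} yields H\"older continuity of $u_k$ there, contradicting the choice of $(x_k,t_k)$ as accumulating singularities of $u$.

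The hard part of this program is the compactness step that identifies $\bar u$ as a genuine local energy weak solution in the sense of Definition~\ref{def.weakles}, because the pressure decomposition \eqref{e.decomppressure} involves cut-off dependent constants at every scale, and the nonlocal contributions converge only after subtracting these constants. This is the half-space analogue of the pressure convergence arguments of \cite{Ser12,KS07} and relies centrally on the uniform-in-scale bounds of Section~\ref{sec.pressureest}, where the careful splitting into $p_{loc}^{u_0}+p_{nonloc}^{u_0}+p_{loc,H}^{u\otimes u}+p_{loc,harm}^{u\otimes u}+p_{nonloc,H}^{u\otimes u}+p_{harm,\leq 1}^{u\otimes u}+p_{harm,\geq 1}^{u\otimes u}$ is the key ingredient. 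Once $\bar u$ is produced and identified as an ancient local energy weak solution in $L^\infty_s L^3_y$, the backward uniqueness argument of \cite{ISS03,SSS04} runs as in \cite{BS15} to close the proof.
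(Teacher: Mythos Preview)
There are two genuine gaps in your scheme, both at places where the paper's argument does something specific that you have elided.

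\textbf{First gap: the vanishing at the final time.} Backward uniqueness for the vorticity equation \eqref{e.backuniq} needs $\omega(\cdot,0)=0$, not ``vanishing Cauchy data as $|y|\to\infty$''. Spatial decay at infinity is used only to put yourself in the setting of the backward uniqueness theorem on a half-space and to get $\omega=0$ outside a large ball; it does not by itself force the solution to vanish. In the paper the rescaling is centered at the fixed point $x_0$ and at the \emph{blow-up time} $T$, with $\lambda_k=\sqrt{(T-t_k)/S}$, so that $s=0$ corresponds to $t=T$. One then shows $\|u(\cdot,T)\|_{L^3}\leq M$ by weak lower semicontinuity along the sequence $t_k$, and from $u^{(k)}(y,0)=\lambda_k u(\lambda_k y,T)$ together with the strong $C^0([\delta,0];L^{9/8}_{loc})$ convergence one obtains $\bar u(\cdot,0)=0$. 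Your rescaling is centered at $(x_k,t_k)$ with scales taken from the blow-up rate of $|u|$; at $s=0$ you sit at time $t_k<T$, and there is no mechanism that makes $\bar u(\cdot,0)$ vanish --- on the contrary, your choice of $\lambda_k$ is designed to make $|\bar u(0,0)|$ of order one. Without $\bar u(\cdot,0)=0$ the Liouville step has nothing to propagate backward.

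\textbf{Second gap: strong convergence to the initial data for the limit.} You invoke Theorem~\ref{prop.decayinfty} to place $\bar u(\cdot,s)$ in $\mathcal L^2_{uloc,\sigma}$, but that theorem applies only to solutions satisfying all of Definition~\ref{def.weakles}, in particular the strong $L^2_{loc}$ convergence \eqref{e.cvinit} at the initial time. Under a weak $L^3$ limit of the data $u^{(k)}(\cdot,-S)\rightharpoonup u_{-S}$, the limit $\bar u$ need not satisfy \eqref{e.cvinit}, so Theorem~\ref{prop.decayinfty} is not directly available and your use of it is circular. The paper handles this by Seregin's decomposition $u^{(k)}=w^{(k)}+v^{(k)}$, where $w^{(k)}$ solves the linear Stokes problem with data $u(\cdot,t_k)\in L^3$ (hence $w\in C^0([-S,0];L^3)$ with good initial trace), and $v^{(k)}$ solves the perturbed Navier-Stokes system with \emph{zero} initial data, so that the a priori bound \eqref{e.eqE(s)} forces $\|v(\cdot,s)\|_{L^2_{loc}}\to 0$ as $s\to -S$. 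Only then is $\bar u=v+w$ a genuine local energy weak solution with initial data $u_{-S}\in L^3\subset\mathcal L^2_{uloc,\sigma}$, and Theorem~\ref{prop.decayinfty} can be applied to produce the spatial decay \eqref{e.decayatinftyblup} that feeds into $\ep$-regularity far from the origin. Your compactness paragraph acknowledges that pressure convergence is delicate but misses this initial-time issue, which is the actual obstruction.

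A minor point: the negation of the conclusion is the existence of a sequence $t_k\to T$ with $\|u(\cdot,t_k)\|_{L^3}\leq M$, not $\limsup_{t\to T^-}\|u(\cdot,t)\|_{L^3}<\infty$; the latter is strictly stronger.
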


This result is not new. It has been initially proved by Barker and Seregin in \cite{BS15}. Our goal here is to give another proof of this result, based on the existence theory of local energy weak solutions developed in our present work. Our method is strongly inspired by the one developed by Seregin in \cite{Ser12}. In this paper, Seregin proves the analogous result of the blow-up of the $L^3$ norm for blow-up solutions in the whole space. 

\begin{proof}[Proof of Theorem \ref{theo.blowup}]
The only ingredient which was missing to transpose the proof of \cite{Ser12} to the case of the half-space $\R^3_+$ is Theorem \ref{prop.decayinfty} above. In \cite{BS15} Barker and Seregin avoid the use of decay properties for local energy weak solutions by modifying the technique of proof. They directly show that the rescaled solutions (see below) strongly converge in $L^3_{x,t}$. 
Our point is to show that the technique of \cite{Ser12} based on the convergence to local energy solutions also applies to $\R^3_+$.

The proof is by contraposition. Let $T>0$. Let $u$ be a finite energy weak solution to \eqref{e.nse} on $\R^3_+\times(0,\infty)$ with initial data $u_0\in L^2_\sigma(\R^3_+)$. Assume that there exists a constant $M<\infty$ and a sequence of times $t_k\in(0,T)$, $t_k\rightarrow T$ such that for all $k\in\N$,
\begin{equation}\label{e.bdu}
\|u(\cdot,t_k)\|_{L^3(\R^3_+)}\leq M.
\end{equation}
We aim at showing that $u$ is smooth. Let us consider the space-time point $(x_0,T)$, where $x_0$ is an arbitrary point in $\overline{\R^3_+}$. We will show that $(x_0,T)$ is a regular point for $u$. There are two cases: either $x_0\in\R^3_+$ or $x_0\in\partial\R^3_+$. The first case of an interior point uses the existence of local energy Leray solutions in $\R^3$. This case has been treated in \cite{Ser12} and hence we do not repeat the argument. We concentrate on the second case of the boundary regularity. Note however that the analysis of the interior point is parallel to the analysis of a boundary point, so it is easy to adapt the arguments below to the case $x_0\in\R^3_+$. 
The strategy proceeds in three steps: (i) prove that a properly rescaled sequence of solutions converges to a local energy solution of the Navier-Stokes equations which is zero at final time and has initial data in $\mathcal L^2_{uloc}(\R^3_+)$, (ii) prove a Liouville theorem for such solutions using a backward uniqueness result for parabolic equations, (iii) conclude the proof. The key ingredient of the proof is the $\epsilon$-regularity theorem, Theorem \ref{thm.ep.regularity}. Here we focus on Step (i).  
Step (ii) has been extensively developed by Seregin and his coauthors in \cite{ISS03,ESS03,Ser07,Ser10,Ser11} to name a few and is almost identical in $\R^3_+$ and $\R^3$. The details for $\R^3_+$ are given in \cite{BS15}, so we will just sketch the argument for Step (ii). As we just explained, we assume that $x_0\in\partial \R^3_+$ in the following lines. Without loss of generality we even assume $x_0=0$.

\smallskip

\noindent{\emph{Step (i): rescaling and passing to the limit.}} 
For all $k\in\mathbb N$, for $S>0$ to be determined later, let $\lambda_k:=\sqrt{\frac{T-t_k}{S}}$. For all $S>0$, we introduce the rescaled functions $v^{(k)}$ defined as follows
\begin{equation}\label{e.defvk}
u^{(k)}(y,s):=\lambda_ku(\lambda_ky,T+\lambda_k^2s),
\end{equation}
for all $(y,s)\in\R^3_+\times (-S,\infty)$. Let us emphasize that by definition $v^{(k)}$ depends on $S$, although we do not keep track of this dependence in the notation. Since the scaling is the one leaving invariant the Navier-Stokes equations, $v^{(k)}$ is still a weak solution to the Navier-Stokes equations, though on the domain $\R^3_+\times (-S,\infty)$. Moreover, the blow-up time being the time at which the first singularity appears, $u$ is smooth on $Q_T=\R^3_+\times(0,T)$. Hence, for all $k\in\N$, $u^{(k)}$ is smooth on $\R^3_+\times(-S,0)$. It is clear that $u^{(k)}$ is a local energy solution in the sense of Definition \ref{def.weakles} on $\R^3_+\times(-S,0)$ with initial data $u(\cdot,t_k)$. By invariance of the $L^3(\R^3_+)$ norm under the Navier-Stokes scaling, we get that
\begin{equation}\label{e.unifL3}
\|u^{(k)}(\cdot,-S)\|_{L^3(\R^3_+)}=\|u(\cdot,t_k)\|_{L^3(\R^3_+)}\leq M,
\end{equation}
where $C$ is the constant in \eqref{e.bdu}. Therefore, there exists $u_{-S}\in L^3(\R^3_+)$ such that up to a subsequence (still denoted the same)
\begin{equation}\label{e.weakcvinit}
u^{(k)}(\cdot,-S)\rightharpoonup u_{-S}
\end{equation}
weakly in $L^3(\R^3_+)$. Using Corollary \ref{prop.apriori}, we see that there exists $S>0$ such that $u^{(k)}$ is uniformly bounded in the local energy norm on the time interval $(-S,0)$. Therefore, there exists a constant $0<A<\infty$ such that for all $k\in\N$, for all $s\in(-S,0)$,
\begin{equation*}
\sup_{\eta\in\Z^3_+}\int_{\cu(\eta)}|u^{(k)}(\cdot,s)|^2+\int_{-S}^0\int_{\cu(\eta)}|\nabla u^{(k)}|^2+\left(\int_{-S}^0\int_{\cu(\eta)}|u^{(k)}|^3\right)^\frac23\leq A.
\end{equation*}
It is now a standard procedure (see Section \ref{sec.locexiles}) to see that $u^{(k)}$ converges (up to a subsequence) weakly star in $L^\infty(-S,0;L^2_{loc}(\R^3_+))$, weakly in $L^2(-S,0;H^1_{loc}(\R^3_+))$ and strongly in $L^3_{loc}(\R^3_+\times(-S,0))$ to $u$, a function which satisfies all the axioms in Definition \ref{def.weakles} except one: it is unclear that the strong continuity \eqref{e.cvinit} in $L^2_{loc}(\R^3_+)$ at initial time holds. The convergence \eqref{e.cvinit}, though, is essential to transfer the decay of the initial data in $\mathcal L^2_{uloc}$ to the solution, as explained in Section \ref{sec.decay}. The mere weak continuity to the initial data is not enough for this purpose. Hence the argument has to be modified in the way discovered by Seregin \cite{Ser12}.

Following \cite{Ser12}, we decompose $u^{(k)}$ into $u^{(k)}=v^{(k)}+w^{(k)}$, where $w^{(k)}$ is the solution to the linear Stokes problem
\begin{equation}
\label{e.blup.lin}
\left\{ 
\begin{aligned}
& \partial_tw^{(k)}-\Delta w^{(k)}+\nabla q^{(k)} = 0,  \quad \nabla\cdot w^{(k)}=0 & \mbox{in} & \ (-S,0)\times\R^3_+, \\
& w^{(k)} = 0  & \mbox{on} & \ (-S,0)\times\partial \R^3_+,\\
& w^{(k)}(\cdot,-S) = u(\cdot,t_k)  & \mbox{in} & \ \R^3_+,\\
\end{aligned}
\right.
\end{equation}
and $v^{(k)}$ is the solution to the perturbed Navier-Stokes system driven by $w^{(k)}$ with zero initial data
\begin{equation}
\label{e.blup.nl}
\left\{ 
\begin{aligned}
& \partial_tv^{(k)}+(v^{(k)}+w^{(k)})\cdot\nabla (v^{(k)}+w^{(k)})-\Delta v^{(k)}+\nabla p^{(k)} = 0,  & & \\
& \nabla\cdot v^{(k)}=0 & \mbox{in} & \ (-S,0)\times\R^3_+, \\
& v^{(k)} = 0  & \mbox{on} & \ (-S,0)\times\partial \R^3_+,\\
& v^{(k)}(\cdot,-S) = 0  & \mbox{in} & \ \R^3_+.\\
\end{aligned}
\right.
\end{equation}
Since the initial data for $w^{(k)}$ belongs to $L^3_\sigma(\R^3_+)$, the existence, uniqueness and a priori bounds for \eqref{e.blup.lin} are well-known in the half-space $\R^3_+$. The results of \cite{DHP01} enable to check Assumption (A) in \cite{Giga86} for the Stokes semigroup in $\R^3_+$. Hence, there exists a unique solution $w^{(k)}\in C^0([-S,0];L^3(\R^3_+))\cap L^5(\R^3_+\times (-S,0))$ such that there exists a constant $C(M)<\infty$ ($M$ is the constant in \eqref{e.bdu}), for all $S>0$,
\begin{equation}\label{e.blup.linbds}
\|w^{(k)}\|_{L^\infty(-S,0;L^3(\R^3_+))}+\|w^{(k)}\|_{L^5(\R^3_+\times (-S,0))}\leq C(M).
\end{equation}
The uniformity in $S>0$ of the constant $C(M)$ is due to the fact that the norms in the left hand side of \eqref{e.blup.linbds} are invariant under the Navier-Stokes scaling. Notice moreover that $w^{(k)}$ is smooth, so that it is uniformly bounded in $k$ in the local energy norm, i.e. for all $S>0$, there exists a constant $C(S)>0$ such that for all $k\in N$, 
\begin{equation*}
\sup_{\eta\in\Z^3_+,\, s\in (0,S)}\int_{\cu(\eta)}|w^{(k)}(\cdot,s)|^2+\sup_{\eta\in\Z^3_+} \int_0^{s}\int_{\cu(\eta)}|\nabla w^{(k)}|^2\leq C(S).
\end{equation*}
As for $v^{(k)}$, a small modification of the a priori estimate carried out in the proof of Proposition \ref{prop.vunif} enables to show that (see display \eqref{e.aprioriE(T)}) there exists a constant $C$ such that for all $k\in\N$, for all $s\in(-S,0]$,
\begin{equation}\label{e.eqE(s)}
E_k(s)\leq C(s+S)^\frac1{24}(1+E_k(s)+E_k(s)^3),
\end{equation}
where 
\begin{equation*}
E_k(s):=\sup_{\eta\in\Z^3_+,\, s'\in (0,s)}\int_{\cu(\eta)}|v^{(k)}(\cdot,s')|^2+\sup_{\eta\in\Z^3_+} \int_0^{s}\int_{\cu(\eta)}|\nabla v^{(k)}|^2.
\end{equation*}
From \eqref{e.eqE(s)}, we deduce on the one hand that there exists $S$ (uniform in $k$) such that $E_k(0)$ is uniformly bounded in $k$, and on the other hand $E_k(s)\rightarrow 0$ when $s\rightarrow -S$. We fix now $S$ as above. It follows now that (up to a subsequence) $w^{(k)}$ converges weakly star in $L^\infty(-S,0;L^3(\R^3_+))$, weakly in $L^5(\R^3_+\times(-S,0))$, weakly in $L^2(-S,0;H^1_{loc}(\overline{\R^3_+}))$ and strongly in $L^3_{loc}(\overline{\R^3_+}\times(-S,0))$ to a function $w\in C^0([-S,0];L^3(\R^3_+))\cap L^5(\R^3_+\times (-S,0))$ solving the Stokes system \eqref{e.blup.lin} with $u_{-S}$ defined in \eqref{e.weakcvinit} as initial data. Moreover, $v^{(k)}$ being uniformly bounded in the local energy norm, $v^{(k)}$ converges (up to a subsequence) weakly star in $L^\infty(-S,0;L^2_{loc}(\overline{\R^3_+}))$, weakly in $L^2(-S,0;H^1_{loc}(\overline{\R^3_+}))$, strongly in $L^3_{loc}(\overline{\R^3_+}\times(-S,0))$ and strongly in $C^0([\delta,0];L^\frac98_{loc}(\overline{\R^3_+}))$, for all $\delta>-S$ to a local energy solution $v$ of \eqref{e.blup.nl} with $w^{(k)}$ replaced by $w$. Notice that for all $x_0\in\R^3_+$, passing to the limit on $k$ in 
\begin{equation*}
\int_{\cu(x_0)}|v^{(k)}(\cdot,s')|^2\leq E_k(s)\leq C(s+S)^\frac1{24},
\end{equation*}
with $C$ uniform in $k$, where the last inequality is due to \eqref{e.eqE(s)}, yields
\begin{equation*}
\int_{\cu(x_0)}|v(\cdot,s')|^2\leq\liminf_{k}\int_{\cu(x_0)}|v^{(k)}(\cdot,s')|^2\leq C(s+S)^\frac1{24}.
\end{equation*}
Therefore, $v(\cdot,s)$ converges strongly to $0$ in $L^2_{loc}(\overline{\R^3_+})$ when $s\rightarrow -S$. To put it in a nutshell, $\bar{u}$ defined by $\bar{u}=v+w$ is a local energy weak solution with initial data $u_{-S}\in L^3(\R^3_+)\subset\mathcal L^2(\R^3_+)$ in the sense of Definition \ref{def.weakles}. Theorem \ref{prop.decayinfty} now implies that
\begin{align}\label{e.decayatinftyblup}
\begin{split}
&\sup_{s\in(-S,0)}\sup_{\eta\in\Z^3_+}\int_{\cu(\eta)}|\vartheta_R \bar{u}(\cdot,s)|^2+\int_{-S}^0\int_{\cu(\eta)}|\vartheta_R\nabla \bar{u}|^2\\
&\qquad+\left(\int_{-S}^0\int_{\cu(\eta)}|\vartheta_R \bar{u}|^3\right)^\frac23+\left(\int_\delta^0\int_{\cu(\eta)}|\vartheta_R \bar{p}|^\frac32\right)^\frac23\stackrel{R\rightarrow \infty}{\longrightarrow}0,
\end{split}
\end{align}
for all $\delta>-S$. Estimate \eqref{e.decayatinftyblup} is the key point of the proof and the main contribution of our work for the case of $\R^3_+$.

\smallskip

It remains to prove that $\bar{u}$ vanishes at final time, i.e. $\bar{u}(\cdot,0)=0$. This is standard and could have been done directly without relying on the decomposition of $u^{(k)}$ into $v^{(k)}+w^{(k)}$. We first remark that \eqref{e.bdu} implies that 
\begin{equation}\label{e.bduT}
\|u(\cdot,T)\|_{L^3(\R^3_+)}\leq M,
\end{equation}
where $M$ is the constant in \eqref{e.bdu}. Indeed, up to a subsequence $u(\cdot,t_k)$ converges weakly in $L^3(\R^3_+)$ to $\tilde{u}\in L^3(\R^3_+)$ and 
\begin{equation*}
\left(\int_{\R^3_+}|\tilde u|^3\right)^\frac13\leq \liminf_{k \rightarrow  \infty}\left(\int_{\R^3_+}|u(\cdot,t_k)|^3\right)^\frac13\leq M.
\end{equation*}
We show now that $\tilde{u}=u(\cdot,T)$. Indeed the global in time weak Leray solution $u$ satisfies the following weak continuity property: for all $\varphi\in C^\infty_c(\R^3_+)^3$, 
\begin{equation}\label{e.contweaktime}
\int_{\R^3_+}u(\cdot,t_k)\cdot\varphi\longrightarrow\int_{\R^3_+}u(\cdot,T)\cdot\varphi,\qquad k\rightarrow\infty.
\end{equation}
Therefore, for all $\varphi\in C^\infty_c(\R^3_+)^3$,
\begin{align*}
&\left|\int_{\R^3_+}(u(\cdot,T)-\tilde{u})\cdot\varphi\right|\\
\leq\ &\left|\int_{\R^3_+}(u(\cdot,T)-u(\cdot,t_k))\cdot\varphi\right|+\left|\int_{\R^3_+}(u(\cdot,t_k)-\tilde{u})\cdot\varphi\right|\stackrel{k\rightarrow\infty}{\longrightarrow} 0,
\end{align*}
where the first term goes to zero by the continuity property \eqref{e.contweaktime}, and the second term goes to zero thanks to the weak convergence in $L^3(\R^3_+)$. This concludes the proof of \eqref{e.bduT}. The second observation is that due to the strong convergence of $u^{(k)}$ to $\bar{u}$ in $C^0([\delta,0];L^\frac98_{loc}(\overline{\R^3_+}))$ for all $\delta>-S$, we have in particular, for all $a>0$
\begin{equation}\label{e.cvstrong}
\int_{B(0,a)\cap\R^3_+}|u^{(k)}(y,0)|dy
\stackrel{k\rightarrow\infty}\longrightarrow
\int_{B(0,a)\cap\R^3_+}|\bar{u}(y,0)|dy.
\end{equation}
Let $a>0$. We thus infer from
\begin{equation*}
\frac1{a^2}\int_{B(0,a)\cap\R^3_+}|u^{(k)}(y,0)|dy\leq \left(\int_{B(0,\lambda_ka)\cap\R^3_+}|u(x,T)|^3dx\right)^\frac13
\end{equation*}
that $\frac1{a^2}\int_{B(0,a)\cap\R^3_+}|\bar{u}(y,0)|dy=0$, 
where we used \eqref{e.cvstrong} to pass to the limit in the left hand side, while we used \eqref{e.bduT} to pass to the limit in the right hand side.

\smallskip

\noindent{\emph{Step (ii): Liouville-type result.}} The goal of this section is to show that $\bar u=0$ in $\R^3_+\times(-S,0)$. We just emphasize the main steps of the proof. The arguments have already been written in details for the half-space in \cite[end of Section 5]{BS15}. They are not very different from the arguments in \cite{ESS03,Ser12}.

First, we show that $\bar{u}=0$ in $(\R^3_+\setminus B(0,2R))\times(-S,0)$ for some large $R$. The key for this is the decay estimate \eqref{e.decayatinftyblup}. From this we know that $\bar{u}$ is smooth $\R^3_+\setminus B(0,R)$ thanks to the $\ep$-regularity theorem, Theorem \ref{thm.ep.regularity} above. This gives bounds in $L^\infty((\R^3_+\setminus B(0,R))\times (-S,0))$ on $\bar{u}$ and its first-order spatial derivatives (in fact on derivatives at any order, but this is not needed). This in turn, allows to apply the backward uniqueness theorem \cite[Theorem 5.1]{ESS03} on the vorticity $\omega=\nabla\times \bar{u}$, noticing in addition that 
\begin{equation}\label{e.backuniq}
|\partial_t\omega-\Delta\omega|\leq C(|\nabla\omega|+|\omega|)
\end{equation}
and $\omega(\cdot,0)=0$. Hence, $\omega=0$ on $(\R^3_+\setminus B(0,2R))\times(-S,0)$. Fix now $s\in(-S,0)$. From $\nabla\cdot\bar{u}=0$ and $\omega=0$ outside $B(0,2R)$, we know that $\Delta \bar u(\cdot,s)=0$. Hence, $\bar u(\cdot,s)$ is analytic on $\R^3_+\setminus B(0,2R)$. Moreover, we already know that $\bar{u}(\cdot,s)$ vanishes on $\partial\R^3_+$. As in \cite{BS15}, we easily get that $\partial_3\bar u(\cdot,s)=0$ on $\partial \R^3_+\setminus B(0,2R)$, and for all $k\in\N$, $\nabla^k\bar u(\cdot,s)=0$ on $\partial \R^3_+\setminus B(0,2R)$. So we can extend $\bar u(\cdot,s)$ by symmetry on $\R^3\setminus B(0,2R)$. Since $\bar u(\cdot,s)$ is analytic and vanishes at any order on $\partial \R^3_+\setminus B(0,2R)$, we get by unique continuation that $\bar u(\cdot,s)=0$ on $\R^3\setminus B(0,2R)$ for almost every $s\in(-S,0)$. 

The second step consists in showing that $\bar u$ is zero everywhere on $\R^3_+\times (-S,0)$. This follows from localizing in the ball $B(0,4R)\cap\R^3_+$. 
Let $D$ be a smooth $C^\infty$ domain such that $B(0,3R)\cap\R^3_+\subset D\subset B(0,4R)\cap\R^3_+$. Then $\bar{u}$ solves the Navier-Stokes equations with no-slip boundary condition on $\partial D$ since we know that $\bar u$ vanishes outside $D$. 
For almost all $s_0\in (-S,0)$, $\nabla \bar u(\cdot,s_0)\in L^2(D)$, so that by the classical theory of the Navier-Stokes equations, 
$\partial_t\bar u$, $\nabla^2\bar u$ and $\nabla \bar p$ belong to $L^2(D\times (s_0,s_0+\delta_0))$, for some $\delta_0>0$. Regularity for linear systems then implies bounds on $\nabla^k\bar u$ for $k=0,1$ in $L^\infty(B(0,4R)\cap\R^3_+)\times (s_0+\kappa,s_0+\delta_0-\kappa)$, for some tiny $\kappa>0$. Hence, because of the previous bounds and noticing furthermore that \eqref{e.backuniq} holds and that $\omega=0$ on $(B(0,4R)\setminus B(0,2R))\cap\R^3_+\times (s_0+\kappa,s_0+\delta_0-\kappa)$, we can apply unique continuation across spatial boundaries \cite[Theorem 4.1]{ESS03} to get $\omega=0$ on $B(0,4R)\cap\R^3_+\times (s_0+\kappa,s_0+\delta_0-\kappa)$. This being true for almost every $s_0\in(-S,0)$, we eventually get $\omega=0$.

It remains to conclude that $\bar{u}=0$. For almost every $s\in(-S,0)$, $\omega(\cdot,s)=0$ so that $\Delta\bar u(\cdot,s)=0$ in $D$. Moreover, $\bar u(\cdot,s)=0$ on $\partial D$. Therefore, $\bar u(\cdot,s)=0$ in $D$, hence in $\R^3_+$, which concludes the proof of Step (ii).

\smallskip

\noindent{\emph{Step (iii): end of the proof.}} We claim that there exists $\rho_*>0$ such that 
\begin{align}\label{e.epcrit}
\frac{1}{\rho_*^2}\int_{-\rho_*^2}^{0} \int_{B (0,\rho_*)\cap \R^3_+} \Big ( |u|^3 + |p|^\frac32 \Big ) d x d t <\ep_*,
\end{align}
where $\ep_*>0$ is the constant given by the $\ep$-regularity theorem, Theorem  \ref{thm.ep.regularity}. Our goal in this step is to prove this claim. From Step (ii) we know that $\bar{u}=0$ on $\R^3_+\times(\delta,0]$ for $\delta\in(-S,0)$. This fact combined with the strong convergence of $u^{(k)}$ to $\bar{u}$ in $L^3_{loc}(\overline{\R^3_+}\times(-S,0))$ gives that for $k$ sufficiently large, for all $\rho<\sqrt{S}$,
\begin{equation*}
\frac{1}{(\lambda_k\rho)^2}\int_{-(\lambda_k\rho)^2}^{0} \int_{B (0,\lambda_k\rho)\cap \R^3_+}|u|^3 dxdt=\frac{1}{\rho^2}\int_{-\rho^2}^{0} \int_{B (0,\rho)\cap \R^3_+}|u^{(k)}|^3 dyds<\frac{\ep_*}3.
\end{equation*}
The pressure part is slightly more difficult to handle. Indeed, we do have bounds on spatial derivatives of the pressure, thanks to results of Section \ref{sec.pressureest}. What we are lacking are bounds on time derivatives of the pressure in order to get strong convergence of the pressure in $L^\frac32_{loc}(\overline{\R^3_+}\times(-S,0))$. The point is whether this is true or not, we do not need the pressure to converge strongly. Based on the work of Section \ref{sec.pressureest}, we can decompose the pressure $p^{(k)}$ associated to $u^{(k)}$ into a local part, which will be controlled by the local $L^3$ norm of $u$, and a linear and a nonlocal part, for which the scale invariant quantity in \eqref{e.epcrit} will be small. According to \eqref{e.decomppressure}, we decompose $p^{(k)}$ as follows
\begin{align*}
\begin{split}
p^{(k)} & =p_{li}^{(k)} + p_{loc}^{u^{(k)}\otimes u^{(k)}}+p_{nonloc}^{u^{(k)}\otimes u^{(k)}}. 
\end{split}
\end{align*}
From the semigroup estimates \cite[Proposition 5.3]{MMP17}, we immediately get that
\begin{align*}
\|p_{li}^{(k)}(\cdot,s)\|_{L^\infty(B(0,1)\times (-S,0))}\leq\ & C(S+s)^{-\frac34}\|u^{(k)}(\cdot,-S)\|_{L^2_{uloc}(\R^3_+)}\\
\leq\ & C(S+s)^{-\frac34}\|u^{(k)}(\cdot,-S)\|_{L^3(\R^3_+)},
\end{align*}
so that for all $\delta>-S$, there is a constant $C(\delta,S)<\infty$, uniform in $k$, such that $\|p_{li}^{(k)}(\cdot,s)\|_{L^\infty(\R^3_+\times(-\delta,0))}\leq C(\delta,S)$. From Proposition \ref{prop.nlpressure}, we readily have the follo\-wing $L^\infty$ bound: there is a constant $C(S)<\infty$, uniform in $k$, such that
\begin{equation*}
\|p_{nonloc}^{u^{(k)}\otimes u^{(k)}}\|_{L^\infty(B(0,1)\times (-S,0))}\leq C(S).
\end{equation*}
Therefore, for all $\rho<\min(1,\sqrt{S})$,
\begin{align*}
&\frac{1}{(\lambda_k\rho)^2}\int_{-(\lambda_k\rho)^2}^{0} \int_{B (0,\lambda_k\rho)\cap \R^3_+} |p_{li}(\cdot,s)|^\frac32+ |p_{nonloc}^{u\otimes u}|^\frac32d x d t \\
\leq\ & \frac{1}{\rho^2}\int_{-\rho^2}^{0} \int_{B (0,\rho)\cap \R^3_+} |p_{li}^{(k)}(\cdot,s)|^\frac32+ |p_{nonloc}^{u^{(k)}\otimes u^{(k)}}|^\frac32d y d s \leq C\rho^3\stackrel{\rho\rightarrow 0}\longrightarrow 0.
\end{align*}
It remains to handle the local part of the pressure. For this we proceed slightly differently than in the proof of Proposition \ref{prop.estlocpressure}. Indeed (this idea is taken from \cite[display (5.5)]{BS15}) we estimate $\chi_{\!_{\, 4}}^2 u^{(k)}\cdot \nabla u^{(k)}$ in the slightly energy subcritical space $L^\frac{12}{11}(\R^3_+\times (-S,0))$ instead of $L^\frac32(-S,0;L^\frac98(\R^3_+))$. We have
\begin{align*}
&\int_{-S}^0\left(\int_{\R^3_+}|\chi_{\!_{\, 4}}^2 u^{(k)}\cdot \nabla u^{(k)}|^\frac{12}{11}dy\right)^\frac{11}{12}ds\\
\leq\ &C\|\nabla u^{(k)}\|_{L^2(-S,0;L^2_{uloc}(\R^3_+))}^\frac32\|u^{(k)}\|_{L^\infty(-S,0;L^2_{uloc}(\R^3_+))}^\frac34\|u^{(k)}\|_{L^3(B(0,5)\times (-S,0))}^\frac14.
\end{align*}
Hence, reproducing the estimates of the proof of Proposition \ref{prop.estlocpressure}, we obtain for all $\rho<\min(1,\sqrt{S})$
\begin{equation}\label{e.estplocuL3}
 \frac{1}{\rho^2}\int_{-\rho^2}^{0} \int_{B (0,\rho)\cap \R^3_+} |p_{loc}^{u^{(k)}\otimes u^{(k)}}|^\frac32d y d s\leq C\|u^{(k)}\|_{L^3(B(0,5)\times (-S,0))}^\frac14,
\end{equation}
with $C$ uniform in $k$. The right hand side of \eqref{e.estplocuL3} goes to $0$ when $k\rightarrow\infty$. In the end, we get that for $\rho$ sufficiently small and $k$ sufficiently large, we have 
\begin{align}
\frac{1}{(\lambda_k\rho)^2}\int_{-(\lambda_k\rho)^2}^{0} \int_{B (0,\lambda_k\rho)\cap \R^3_+}|u|^3+ |p_{li}(\cdot,s)|^\frac32+|p_{loc}^{u\otimes u}|^\frac32+ |p_{nonloc}^{u\otimes u}|^\frac32d x d t  <\ep_*,
\end{align}
which proves the claim. By \eqref{e.epcrit} and Theorem \ref{thm.ep.regularity}, we conclude that $u$ is smooth in $B(0,\rho_*)\cap\R^3_+\times (-\rho_*^2,0)$.
\end{proof}

\appendix

\section{Auxiliary tools}

\subsection{The Leray projector in the half-space}
\label{sec.lerayproj}

In this appendix, we consider the case of arbitrary dimension $d\geq 2$. Using the formulas of \cite{MMP17}, we have for all $f,\, g\in C^\infty_c(\R^d_+)$,
\begin{align}\label{e.leraydivtan}
\begin{split}
&\big(\mathbb P\nabla\cdot (f\otimes g)\big)'(z',z_d)=\nabla'\cdot(f'\otimes g)+\partial_d(f_dg)-\nabla'(f_dg_d)\\
&+\frac{\nabla'}{2(-\Delta')^\frac12}\int_0^\infty\left[P(|z_d-y_d|)+P(z_d+y_d)\right]\nabla'\otimes\nabla'\cdot (f'\otimes g')(z',y_d)dy_d\\
&+\frac{(-\Delta')^\frac12\nabla'}{2}\int_0^\infty\left[P(|z_d-y_d|)+P(z_d+y_d)\right]f_dg_d(z',y_d)dy_d\\
&-\frac{\nabla'}2\int_0^{z_d}P(z_d-y_d)\nabla'\cdot(f_dg'+f'g_d)(z',y_d)dy_d\\
&+\frac{\nabla'}2\int_{z_d}^\infty P(y_d-z_d)\nabla'\cdot(f_dg'+f'g_d)(z',y_d)dy_d\\
&+\frac{\nabla'}2\int_{0}^\infty P(z_d+y_d)\nabla'\cdot(f_dg'+f'g_d)(z',y_d)dy_d.
\end{split}
\end{align}
for the tangential component and 
\begin{align}\label{e.leraydivvert}
\begin{split}
&\big(\mathbb P\nabla\cdot (f\otimes g)\big)_d(z',z_d)=-\nabla'\cdot(f_dg')\\
&+\frac{1}{2}\int_0^{z_d}\left[P(z_d-y_d)+P(z_d+y_d)\right]\left(\nabla'\otimes\nabla'\cdot (f'\otimes g')+\Delta'(f_dg_d)\right)(z',y_d)dy_d\\
&-\frac{1}{2}\int_{z_d}^\infty\left[P(y_d-z_d)-P(z_d+y_d)\right]\left(\nabla'\otimes\nabla'\cdot (f'\otimes g')+\Delta'(f_dg_d)\right)(z',y_d)dy_d\\
&+\frac{(-\Delta')^\frac12}{2}\int_0^\infty\left[P(|z_d-y_d|)+P(z_d+y_d)\right]\nabla'\cdot(f_dg'+f'g_d)(z',y_d)dy_d
\end{split}
\end{align}
for the vertical component. Here $P(\cdot)=e^{-(-\Delta')^\frac12\cdot}$ denotes as usual the Poisson semigroup. Hence it appears that the operator $\mathbb P\nabla\cdot $ can be decomposed as the sum of the following two types of terms:
\begin{equation}
\tag{type A}\label{e.termA'}
\partial_\alpha(v\otimes w)
\end{equation}
for some $\alpha\in\{1,\ldots d-1\}$, and
\begin{equation}
\tag{type B}\label{e.termB'}
m_0(D')\nabla'\otimes\nabla'\int_0^\infty\left[P(|z_d-y_d|)+P(z_d+y_d)\right]v\otimes w dy_d,
\end{equation}
where $m_0(D')$ is a (tangential) Fourier multiplier homogeneous of order $0$, which may be a matrix. 
We have used in particular the formula $\frac{-\nabla'\cdot\nabla'}{(-\Delta')^\frac12}=(-\Delta')^{\frac12}$ to see that every term in \eqref{e.leraydivtan} can be put in this form. 

\smallskip

We show the following lemma on the estimate of the nonlocal terms \eqref{e.termB} 
in the Helmholtz-Leray projection. 

\begin{lemma}\label{lem.termsB}
For any $v,\ w\in L^2_{uloc}(\R^d_+)$, we have the following decomposition
\begin{multline*}
\left(m_0(D')\nabla'\otimes\nabla'\int_0^\infty \left[P(|z_d-y_d|)+P(z_d+y_d)\right](1-\chi_{\!_{\, 4}}^2)v\otimes w(\cdot,y_d)dy_d\right)(z')\\
=\mathcal B_1(z',z_d)+\nabla'\otimes\nabla'\mathcal B_2(z',z_d).
\end{multline*}
Moreover, both $\mathcal B_1$ and $\mathcal B_2$ belong to $L^\infty((0,\infty);L^1_{uloc}(\R^{d-1}))$ and we have the following bound:
\begin{align*}
&\|\mathcal B_1(\cdot,z_d)\|_{L^1_{uloc,z'}(\R^{d-1})}+\|\mathcal B_2(\cdot,z_d)\|_{L^1_{uloc,z'}(\R^{d-1})}\\
\leq\ & C\|v\|_{L^2_{uloc}(\R^d_+)}\|w\|_{L^2_{uloc}(\R^d_+)}
\end{align*}
for almost every $z_d\in(0,\infty)$ with a constant $C(d)<\infty$.
\end{lemma}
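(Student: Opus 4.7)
The plan is to split the $y_d$-integral into a piece near the diagonal $y_d = z_d$ and a piece away from it. On the near-diagonal piece I extract $\nabla'\otimes\nabla'$ outside the integral (this becomes $\mathcal{B}_2$), while on the off-diagonal piece I estimate the full operator directly (this becomes $\mathcal{B}_1$). Concretely, choose $\phi \in C^\infty(\R)$ with $\phi \equiv 1$ on $[-1/2, 1/2]$ and $\supp \phi \subset [-1,1]$. Since $\phi(z_d - y_d)$ has no $z'$-dependence, it commutes with $m_0(D')$, with $\nabla'\otimes\nabla'$, and with the $y_d$-integral. Writing $\tilde f := (1-\chi_{\!_{\, 4}}^2)\,v\otimes w$ and observing by Cauchy--Schwarz that $\|\tilde f\|_{L^1_{uloc}(\R^d_+)} \leq \|v\|_{L^2_{uloc}}\|w\|_{L^2_{uloc}}$, set
\begin{align*}
\mathcal{B}_1(z',z_d) &:= m_0(D')\nabla'\otimes\nabla' \int_0^\infty \bigl(1-\phi(z_d-y_d)\bigr)\bigl[P(|z_d-y_d|)+P(z_d+y_d)\bigr]\tilde f(\cdot,y_d)\, dy_d,\\
\mathcal{B}_2(z',z_d) &:= m_0(D')\int_0^\infty \phi(z_d-y_d)\bigl[P(|z_d-y_d|)+P(z_d+y_d)\bigr]\tilde f(\cdot,y_d)\, dy_d,
\end{align*}
making the decomposition in the statement automatic.

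For $\mathcal{B}_1$, on $\{|z_d - y_d| \geq 1/2\}$ an explicit computation using the Poisson kernel formula $P(t)(x') = c_d\, t/(t^2+|x'|^2)^{d/2}$ yields the $L^1$-kernel bound
\[
\|m_0(D')\nabla'\otimes\nabla' P(t)\|_{L^1(\R^{d-1})} \leq C/t^2
\]
for both $t = |z_d-y_d|$ and $t = z_d+y_d$. Young's inequality, combined with the fact that convolution in $z'$ with an $L^1$ kernel of norm $M$ is bounded on $L^1_{uloc}(\R^{d-1})$ with constant $\lesssim M$, then bounds $\|\mathcal{B}_1(\cdot, z_d)\|_{L^1_{uloc}}$ by
\[
C\int_{|z_d-y_d|\geq 1/2}\!\! \Bigl(\tfrac{1}{|z_d-y_d|^2} + \tfrac{1}{(z_d+y_d)^2}\Bigr) \|\tilde f(\cdot, y_d)\|_{L^1_{uloc}(\R^{d-1})}\, dy_d.
\]
Decomposing the $y_d$-range into unit intervals and using $\int_n^{n+1} \|\tilde f(\cdot, y_d)\|_{L^1_{uloc}(\R^{d-1})}\, dy_d \leq \|\tilde f\|_{L^1_{uloc}(\R^d_+)}$, the summable weights $(1+n)^{-2}$ produce the desired bound uniform in $z_d > 0$. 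For $\mathcal{B}_2$, the $y_d$-support is confined to $[z_d-1, z_d+1]$, a set of measure at most $2$; the Poisson kernel $P(t)$ has unit $L^1$-mass, so the bare Poisson integral (without the $m_0(D')$ factor) is immediately controlled in $L^1_{uloc}(\R^{d-1})$ by $\|\tilde f\|_{L^1_{uloc}(\R^d_+)}$, uniformly in $z_d$.

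The main obstacle is to handle the tangential Fourier multiplier $m_0(D')$ on $\mathcal{B}_2$: as a Calder\'on--Zygmund operator, $m_0(D')$ is not bounded on $L^1_{uloc}$, so the uniform bound on the bare Poisson integral does not automatically pass to $\mathcal{B}_2$. The resolution is to never decouple $m_0(D')$ from its neighboring Poisson factor: one writes $m_0(D')P(t) = m_0(D')P(t/2) \circ P(t/2)$ and uses that for any fixed $t > 0$ the modified operator $m_0(D')P(t/2)$ is smoothed by the Poisson semigroup and has a convolution kernel that can be estimated in $L^1$ on each dyadic annulus. For the portion with $t$ very small, one additionally exploits the identity $(-\Delta')^{1/2}P(t) = -\partial_t P(t)$ to trade the $0$-th order multiplier $m_0(D')$ against derivatives in the normal variable carried by the Poisson kernel, producing kernels with integrable $L^1(\R^{d-1})$ mass. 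Combining both cases closes the uniform $L^\infty_{z_d}(L^1_{uloc,z'})$ bound on $\mathcal{B}_2$.
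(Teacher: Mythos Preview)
Your decomposition is essentially the paper's: the paper treats $P(z_d+y_d)$ by splitting $\int_0^\infty = \int_1^\infty + \int_0^1$, and then handles $P(|z_d-y_d|)$ by a four-piece split at $y_d = z_d-1,\ z_d,\ z_d+1$; your single cutoff $\phi(z_d-y_d)$ does both at once (using $z_d+y_d \geq |z_d-y_d|$). The $\mathcal B_1$ argument is also the same idea: put $\nabla'\otimes\nabla'$ on the kernel and use $\|m_0(D')\nabla'\otimes\nabla'P(t)\|_{L^1(\R^{d-1})} = O(t^{-2})$.

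Where you diverge is in your last paragraph. The ``main obstacle'' you describe dissolves far more simply than you suggest. The paper records the pointwise bound $|[m_0(D')P(t)](y')| \leq Ct\,(t+|y'|)^{-d}$ (the $\alpha=0$ case of $|m_\alpha(D')P(y',t)| \leq Ct\,(t+|y'|)^{-d-\alpha}$, quoted from \cite{MMP17}); integrating in $y'$ and rescaling gives $\|m_0(D')P(t)\|_{L^1(\R^{d-1})} = C$ \emph{uniformly in $t>0$}. So your ``bare'' estimate for $\mathcal B_2$ already works with the multiplier included: no factoring $m_0(D')P(t) = m_0(D')P(t/2)\circ P(t/2)$, no dyadic annuli, and certainly no separate small-$t$ treatment via $(-\Delta')^{1/2}P = -\partial_t P$. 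The small-$t$ concern is a red herring --- the $L^1$ mass is scale-invariant.

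One actual slip: your inequality $\int_n^{n+1}\|\tilde f(\cdot,y_d)\|_{L^1_{uloc}(\R^{d-1})}\,dy_d \leq \|\tilde f\|_{L^1_{uloc}(\R^d_+)}$ is false in general --- the supremum over horizontal cubes sits inside the $y_d$-integral on the left but outside on the right, and a moving-support example (e.g.\ $\tilde f$ the indicator of $\{z_1 \in [Ny_d, Ny_d+1]\}$ for large $N$) breaks it. The fix is to work with a \emph{fixed} cube $Q'$ throughout and take the supremum only at the end: then Cauchy--Schwarz on the unit $d$-cube $Q'_{k'}\times[n,n+1]$ gives $\int_n^{n+1}\int_{Q'_{k'}}|v||w|\,dy_d \leq \|v\|_{L^2_{uloc}}\|w\|_{L^2_{uloc}}$, and the kernel weights in $(k',n)$ are summable. (The paper's write-up is equally terse at the corresponding step.)
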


\begin{remark}
{\rm In the paper \cite[Proposition 6.3]{MMP17} we have introduced another decomposition of the terms \eqref{e.termB} of the Helmholtz-Leray projection. Above, we have suggested another decomposition, which based on a splitting of the integral in the vertical variable (see proof below), rather than on splitting of low and high frequencies as in \cite{MMP17}. Notice that here this rough splitting is enough, since we are only considering the large scales, while in the aforementioned paper, we were considering both small and large scales.}
\end{remark}

\begin{proof}[Proof of Lemma \ref{lem.termsB}]
Below $m_0(D')$ stands for a tangential Fourier multiplier homogeneous of order $0$ which may change from line to line. Let us first concentrate on the part involving $P(z_d+y_d)$. The part involving $P(|z_d-y_d|)$ will be sketched below. So first we aim at estimating 
\begin{align*}
&\left(m_0(D')\nabla'\otimes\nabla'\int_0^\infty P(z_d+y_d)(1-\chi_{\!_{\, 4}}^2)v\otimes w(\cdot,y_d)dy_d\right)(z')\\
=\ &\left(m_0(D')\nabla'\otimes\nabla'\int_1^\infty P(z_d+y_d)(1-\chi_{\!_{\, 4}}^2)v\otimes w(\cdot,y_d)dy_d\right)(z')\\
&+\nabla'\otimes\nabla'\left(\int_0^1 m_0(D')P(z_d+y_d)(1-\chi_{\!_{\, 4}}^2)v\otimes w(\cdot,y_d)dy_d\right)(z')\\
=:\ &B_1(z',z_d)+\nabla'\otimes\nabla'B_2(z',z_d).
\end{align*}
In order to take care of the singularity near $\infty$ and $y_d=0$, we handle differently $B_1$ and $B_2$. Let $\gamma\in(0,1)$. For $B_1$, the idea is to put the derivatives on the Poisson kernel. This gives,
\begin{equation*}
B_1(z',z_d)=\left(\int_1^\infty m_0(D')\nabla'\otimes\nabla'P(z_d+y_d)(1-\chi_{\!_{\, 4}}^2)v\otimes w(\cdot,y_d)dy_d\right)(z'),
\end{equation*}
with 
$\gamma\in(0,1)$ fixed. We remark that for almost all $y_d\in(1,\infty)$, 
\begin{align*}
&\|m_0(D')\nabla'\otimes\nabla'P(z_d+y_d)(1-\chi_{\!_{\, 4}}^2)v\otimes w(\cdot,y_d)\|_{L^1_{uloc,z'}(\R^{d-1})}\\
\leq\ & C\|m_0(D')\nabla'\otimes\nabla'P(z_d+y_d)\|_{L^1_{z'}(\R^{d-1})}\|v\otimes w(\cdot,y_d)\|_{L^1_{uloc,z'}(\R^{d-1})}\\
\leq\ & \frac{C}{(z_d+y_d)^2}\|v(\cdot,y_d)\|_{L^2_{uloc,z'}(\R^{d-1})}\|w(\cdot,y_d)\|_{L^2_{uloc,z'}(\R^{d-1})}.
\end{align*}
We have also used the fact that for any tangential Fourier multiplier $m_\alpha(D')$ homogeneous of order $\alpha>-2$, for all $(y',y_d)\in\R^d_+$, the kernel $P(y',y_d)$ associated to the Poisson semigroup $P(y_d)$ satisfies the bound
\begin{equation*}
|m_\alpha(D')P(y',y_d)|\leq \frac{Cy_d}{(y_d+|y'|)^{d+\alpha}},
\end{equation*}
with a constant $C(d)<\infty$. This bound on fractional derivatives of the Poisson kernel is probably well-known. It can be easily proved by using Lemma 3.1 in \cite{MMP17}. Thus, for almost every $z_d\in(0,\infty)$
\begin{align*}
&\left\|\int_1^\infty m_0(D')\nabla'\otimes\nabla'P(z_d+y_d)(1-\chi_{\!_{\, 4}}^2)v\otimes w(\cdot,y_d)dy_d\right\|_{L^1_{uloc,z'}(\R^{d-1})}\\
\leq\ & \frac{C}{1+z_d}\|v\|_{L^2_{uloc}(\R^d_+)}\|w\|_{L^2_{uloc}(\R^d_+)}.
\end{align*}
As far as $B_2$ is concerned, we have
\begin{equation*}
B_2(z',z_d,s)=\left(\int_0^1m_0(D')P(z_d+y_d)(1-\chi_{\!_{\, 4}}^2)v\otimes w(\cdot,y_d)dy_d\right)(z').
\end{equation*}
We have for almost all $y_d\in(0,1)$, 
\begin{align*}
&\|m_0(D')P(z_d+y_d)(1-\chi_{\!_{\, 4}}^2)v\otimes w(\cdot,y_d)\|_{L^1_{uloc,z'}(\R^{d-1})}\\
\leq\ & C\|m_0(D')P(z_d+y_d)\|_{L^1_{z'}(\R^{d-1})}\|(1-\chi_{\!_{\, 4}}^2)v\otimes w(z',y_d)\|_{L^1_{uloc,z'}(\R^{d-1})}\\
\leq\ & C\|v(\cdot,y_d)\|_{L^2_{uloc,z'}(\R^{d-1})}\|w(\cdot,y_d)\|_{L^2_{uloc,z'}(\R^{d-1})}.
\end{align*}
In the end, for almost every $z_d\in(0,\infty)$,
\begin{align*}
&\left\|\int_{0}^1m_0(D')P(z_d+y_d)(1-\chi_{\!_{\, 4}}^2)v\otimes w(z',y_d)dy_d\right\|_{L^1_{uloc,z'}(\R^{d-1})}\\
\leq\ & C\|v\|_{L^2_{uloc}(\R^d_+)}\|w\|_{L^2_{uloc}(\R^d_+)}.
\end{align*}
We outline now how to deal with the part involving $P(|z_d-y_d|)$. It is handled in a similar way as the part involving $P(z_d+y_d)$. We split the integral in $y_d$ into 
\begin{equation}\label{e.splittingint}
\int_0^{z_d-1}\ldots+\int_{z_d-1}^{z_d}\ldots+\int_{z_d}^{z_d+1}\ldots+\int_{z_d+1}^\infty\ldots
\end{equation} 
and we deal with the first and the fourth integral similarly to $B_1$, while the second and the third are handled as $B_2$. This concludes the proof of the lemma.
\end{proof}

\subsection{A commutator lemma for the Helmholtz-Leray projection}

The following lemma is probably well-known, even in the half-space, but we could not find a reference. We state and prove it for the sake of completeness.

\begin{lemma}\label{lem.comm}
Let $a\in W^{1,\infty}(\R^d_+;\R)$ a Lipschitz function. Then, for all $1<p<\infty$, the commutator $[a,\mathbb P]:=a\mathbb P-\mathbb P(a\cdot)$ is bounded from $L^p(\R^d_+)$ to $L^q(\R^d_+)$ with $\frac1q=\frac1p-\frac1d$. Moreover, for $p$, $q$ and $f$ as above,
\begin{equation*} 
\|[a,\mathbb P]f\|_{L^q(\R^d_+)}\leq C\|\nabla a\|_{L^\infty}\|f\|_{L^p(\R^d_+)}.
\end{equation*}
\end{lemma}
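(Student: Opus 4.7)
The plan is to reduce the commutator to a weakly singular integral operator by exploiting the Lipschitz regularity of $a$, and then conclude via Hardy--Littlewood--Sobolev.

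First I would use $\mathbb P=I-\mathbb Q$ to write $[a,\mathbb P]=-[a,\mathbb Q]$, and represent $\mathbb Q$ via the Neumann function $N$ of the half-space. Since $\mathbb Q f=\nabla q$ where $q$ solves $-\Delta q=-\nabla\cdot f$ in $\R^d_+$ with $\partial_d q=-f_d$ on $\partial\R^d_+$, an integration by parts (using $\partial_{y_d}N=0$ on $\partial\R^d_+$) gives $q(x)=\int_{\R^d_+}\nabla_y N(x,y)\cdot f(y)\,dy$, so that $\mathbb Q f(x)=\int_{\R^d_+}K(x,y)f(y)\,dy$ with matrix kernel $K(x,y)=\nabla_x\nabla_y N(x,y)$. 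Because $N(x,y)=\Gamma(x-y)+\Gamma(x-y^\ast)$ with $y^\ast=(y',-y_d)$ and $\Gamma$ the Newtonian potential on $\R^d$, the kernel $K$ is of Calder\'on--Zygmund type and satisfies $|K(x,y)|\leq C|x-y|^{-d}$ on $\R^d_+\times\R^d_+$ (using $|x-y^\ast|\geq |x-y|$ when $x_d,y_d\geq 0$).

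Next, the commutator formally unfolds as
\begin{equation*}
[a,\mathbb P]f(x)=\int_{\R^d_+}K(x,y)\bigl(a(y)-a(x)\bigr)f(y)\,dy,
\end{equation*}
and the Lipschitz bound $|a(x)-a(y)|\leq \|\nabla a\|_{L^\infty}|x-y|$ yields the pointwise estimate
\begin{equation*}
|[a,\mathbb P]f(x)|\leq C\|\nabla a\|_{L^\infty}\int_{\R^d_+}\frac{|f(y)|}{|x-y|^{d-1}}\,dy.
\end{equation*}
Extending $f$ by zero to $\R^d$, the right-hand side is (up to a constant) the Riesz potential $I_1|f|$, and the Hardy--Littlewood--Sobolev inequality gives the claimed bound with $\frac1q=\frac1p-\frac1d$, provided $1<p<d$ (the only regime in which the relation is consistent).

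The main obstacle is justifying the commutator kernel representation rigorously: since $K(x,y)$ is only a principal-value singular kernel, one cannot a priori write $\mathbb Q f(x)$ as an absolutely convergent integral. The standard remedy is to argue by density with $f\in C_c^\infty(\R^d_+)$, truncate $K$ in a shell $\{|x-y|<\ep\}$, and observe that after multiplication by $a(y)-a(x)$ the integrand becomes absolutely integrable, so that the limit $\ep\to 0$ can be taken (this is the classical Calder\'on commutator argument, adapted to the half-space via the reflection structure of $N$). Once this representation is in hand, the rest is a direct application of Hardy--Littlewood--Sobolev.
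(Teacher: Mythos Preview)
Your proof is correct and follows essentially the same route as the paper's: both represent $\mathbb Q=\nabla q$ via the Neumann function, unfold the commutator into an integral with kernel $\nabla_x\nabla_y N(x,y)\bigl(a(y)-a(x)\bigr)$, use the Lipschitz bound on $a$ to reduce to a convolution with $|x-y|^{-d+1}$, and conclude by Hardy--Littlewood--Sobolev. The paper phrases this by introducing the two scalar potentials $P$ and $Q$ for $f$ and $af$ separately and computing $a\nabla P-\nabla Q$, while you go through $[a,\mathbb P]=-[a,\mathbb Q]$, but the resulting kernel identity is identical; your version is in fact slightly more careful about the principal-value justification, and you correctly flag the implicit restriction $1<p<d$ needed for the Sobolev exponent relation to make sense.
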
 

\begin{proof}
Let $f\in L^p(\R^d_+)$. By writing $f$ and $af$ as 
\begin{equation*}
f=\mathbb Pf+\nabla P,\qquad af=\mathbb P(af)+\nabla Q,
\end{equation*}
we see that $p$ and $q$ solve the following Neumann problems on $\R^d_+$:
\begin{equation*}
\left\{ 
\begin{aligned}
& -\Delta P =  \nabla\cdot f & \mbox{in} &\ \R^d_+, \\
& \partial_d P = f_d  & \mbox{on} &\ \partial \R^d_+. 
\end{aligned}
\right.
\end{equation*}
and
\begin{equation*}
\left\{ 
\begin{aligned}
& -\Delta Q =  \nabla\cdot (af) & \mbox{in} &\ \R^d_+, \\
& \partial_d Q = af_d  & \mbox{on} &\ \partial \R^d_+. 
\end{aligned}
\right.
\end{equation*}
Hence, we can use the Neumann function $N$ for the half-space to represent $P$ and $Q$. This gives, for all $x\in\R^d_+$
\begin{equation*}
P(x)=\int_{\R^d_+}N(x'-z',x_d,z_d)\nabla\cdot f(z',z_d)dz'dz_d,
\end{equation*}
and 
\begin{equation*}
Q(x)=\int_{\R^d_+}N(x'-z',x_d,z_d)\nabla\cdot (af)(z',z_d)dz'dz_d,
\end{equation*}
Hence, 
\begin{equation*}
a\nabla P-\nabla Q=-\int_{\R^d_+}\nabla^2 N(x-z',x_d,z_d)(a(x)-a(z))f(z)dz.
\end{equation*}
This yields the result by classical estimates on singular integral, since $|\nabla^2 N(x-z',x_d,z_d)(a(x)-a(z))|\leq C\|\nabla a\|_{L^\infty}|x-z|^{-d+1}$.
\end{proof}

\subsection{Characterization of $\mathcal{L}^p_{uloc,\sigma} (\R^3_+)$}

\begin{lemma}\label{lem.characterize} Let $1<p<\infty$.  Let $\vartheta_R$ be the cut-off used in Theorem \ref{prop.decayinfty}. Then 
\begin{multline}\label{eq.lem.characterize}
\mathcal{L}^p_{uloc,\sigma} (\R^3_+) \\
= \big \{ f\in L^p_{uloc}(\R^3_+)^3~|~{\rm div}\, f=0 ~\text{in}~\R^3_+\,, \quad  f_3=0~ \text{on}~\partial\R^3_+\,, \quad \lim_{R\rightarrow \infty} \| \vartheta_R f \|_{L^p_{uloc}(\R^3_+)} =0 \big \}\,.
\end{multline}
\end{lemma}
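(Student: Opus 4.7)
The plan is to prove the two inclusions of \eqref{eq.lem.characterize} separately. For the forward inclusion, fix $f\in\mathcal L^p_{uloc,\sigma}(\R^3_+)$ and a sequence $(f_n)\subset C_{c,\sigma}^\infty(\R^3_+)$ with $f_n\to f$ in $L^p_{uloc}$. The weak formulation $\int_{\R^3_+} f_n\cdot\nabla\varphi\,dx=0$ for all $\varphi\in C^\infty_c(\overline{\R^3_+})$ passes to the limit on any such test function (by $L^p_{uloc}$-convergence and the compact support of $\nabla\varphi$) and simultaneously encodes $\nabla\cdot f=0$ in $\R^3_+$ and $f_3=0$ on $\partial\R^3_+$. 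For the decay property, given $\epsilon>0$, choose $n$ with $\|f-f_n\|_{L^p_{uloc}}<\epsilon$; since $f_n$ is compactly supported, $\vartheta_R f_n\equiv 0$ for all $R$ large enough, hence
\begin{equation*}
\|\vartheta_R f\|_{L^p_{uloc}}\leq \|\vartheta_R\|_\infty\|f-f_n\|_{L^p_{uloc}}<\epsilon.
\end{equation*}

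For the reverse inclusion, fix $f$ in the right-hand side of \eqref{eq.lem.characterize}. The idea is to produce approximants in $C^\infty_{c,\sigma}(\R^3_+)$ by (a) truncating $f$ to its near-origin part $(1-\vartheta_R)f$, (b) adding a compactly supported corrector $w_R$ to restore the divergence-free condition and normal trace, and (c) mollifying the result. Precisely, for $R\geq 1$, $w_R$ has to satisfy $\nabla\cdot w_R=\nabla\vartheta_R\cdot f$ in $\R^3_+$, $w_R|_{\partial\R^3_+}=0$, and be supported in a fixed neighborhood of $A_R:=\supp(\nabla\vartheta_R)\cap\R^3_+\subset\{R\leq|x|_\infty\leq 2R\}$. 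The compatibility condition $\int_{\R^3_+}\nabla\vartheta_R\cdot f\,dx=0$ follows from integration by parts, using $\nabla\cdot f=0$ and $f_3|_{\partial\R^3_+}=0$. With such a $w_R$, the vector field $g_R:=(1-\vartheta_R)f+w_R$ is then compactly supported, divergence-free, and has vanishing normal trace.

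The main technical obstacle is to construct $w_R$ with the $R$-uniform bound
\begin{equation*}
\|w_R\|_{L^p_{uloc}(\R^3_+)}\leq C\|\vartheta_{R/2}f\|_{L^p_{uloc}(\R^3_+)},
\end{equation*}
so that $\|w_R\|_{L^p_{uloc}}\to 0$ as $R\to\infty$ by the decay hypothesis (using $\vartheta_{R/2}\equiv 1$ on $A_R$). A direct Bogovskii solver on the full annulus $A_R$ (diameter $\sim R$) only gives $\|w_R\|_{L^p(A_R)}\leq CR^{3/p}\|\vartheta_{R/2}f\|_{L^p_{uloc}}$, which is insufficient. Instead, I would construct $w_R$ explicitly, one face of the shell at a time, by one-dimensional integration along a coordinate direction: for example, on the top piece $\{R\leq x_3\leq 2R,|x_1|,|x_2|\leq 2R\}$ take
\begin{equation*}
w_{R,3}(x',x_3):=\int_R^{x_3}\nabla\vartheta_R\cdot f(x',s)\,ds-\varphi(x_3)\int_R^{2R}\nabla\vartheta_R\cdot f(x',s)\,ds
\end{equation*}
for a cutoff $\varphi$ with $\varphi(R)=0$ and $\varphi(2R)=1$, and solve a two-dimensional Bogovskii problem in each horizontal slice (with zero-mean datum $\varphi'(x_3)\int_R^{2R}\nabla\vartheta_R\cdot f\,ds$) to produce the tangential components; repeat on the lateral faces with the coordinate roles permuted and glue via a partition of unity subordinate to an $O(1)$-finite cover of $A_R$. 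H\"older's inequality applied to the one-dimensional integral, combined with the bound $\|\nabla\vartheta_R\|_\infty\leq C/R$, then yields for any unit cube $\cu(\eta)$ meeting $A_R$ the estimate $\|w_R\|_{L^p(\cu(\eta))}\leq CR\|\nabla\vartheta_R\cdot f\|_{L^p_{uloc}}\leq C\|\vartheta_{R/2}f\|_{L^p_{uloc}}$, which is the desired uniform bound.

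Finally, for each fixed $R$, $g_R$ is compactly supported in $\R^3_+$, divergence-free, and has $g_{R,3}|_{\partial\R^3_+}=0$. Extending $g_R$ to $\R^3$ by odd reflection of its normal component and even reflection of its tangential components (as for the operator $F_\epsilon$ in Section \ref{sec.locexiles}) and convolving with a standard radial mollifier $\omega_\delta$ produces $f_{R,\delta}\in C_{c,\sigma}^\infty(\R^3_+)$ for every $\delta>0$ small enough, with $f_{R,\delta}\to g_R$ in $L^p(\R^3_+)$ as $\delta\to 0$, hence also in $L^p_{uloc}(\R^3_+)$ since $g_R$ has compact support. The triangle inequality
\begin{equation*}
\|f-f_{R,\delta}\|_{L^p_{uloc}}\leq\|\vartheta_R f\|_{L^p_{uloc}}+\|w_R\|_{L^p_{uloc}}+\|g_R-f_{R,\delta}\|_{L^p_{uloc}},
\end{equation*}
combined with a diagonal extraction (first $R\to\infty$ using the decay, then $\delta\to 0$), yields a sequence in $C_{c,\sigma}^\infty(\R^3_+)$ converging to $f$ in $L^p_{uloc}(\R^3_+)$, which proves $f\in\mathcal L^p_{uloc,\sigma}(\R^3_+)$.
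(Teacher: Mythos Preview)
Your overall strategy (truncate, correct the divergence, then approximate) coincides with the paper's. The difference lies entirely in how the corrector is built. The paper does \emph{not} attempt an explicit face-by-face construction; instead it takes a partition of unity $\{\varphi_j\}$ subordinate to a cover of $\R^3_+$ by unit cubes $\{B_j\}$ with bounded overlap, and on each $B_j$ solves the Bogovski\u{\i} problem
\[
\nabla\cdot v_L^j=\varphi_j\,f\cdot\nabla\chi_L-\int_{B_j}\varphi_j\,f\cdot\nabla\chi_L\,dx\quad\text{in }B_j,\qquad v_L^j\in W^{1,p}_0(B_j),
\]
with constant independent of $j$ (all cubes are congruent). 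Since $|\nabla\chi_L|\lesssim L^{-1}$, this gives $\|v_L^j\|_{W^{1,p}(B_j)}\lesssim L^{-1}\|f\|_{L^p(B_j)}$, and the bounded-overlap property turns this immediately into an $L^p_{uloc}$ bound on $v_L:=\sum_j v_L^j$. The paper then sets $u_L:=\chi_L f-v_L\in L^p_\sigma(\R^3_+)$ and invokes the density of $C^\infty_{c,\sigma}(\R^3_+)$ in $L^p_\sigma(\R^3_+)$, rather than mollifying explicitly.

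Your construction, by contrast, has a gap. The datum for your two-dimensional Bogovski\u{\i} problem on each horizontal slice of the top face,
\[
x'\longmapsto \varphi'(x_3)\int_R^{2R}\nabla\vartheta_R\cdot f(x',s)\,ds,
\]
is \emph{not} mean-zero in general: the compatibility condition $\int_{\R^3_+}\nabla\vartheta_R\cdot f=0$ only says that the integral over the \emph{entire} annulus $A_R$ vanishes, not over each face separately. Without zero mean, the compactly supported Bogovski\u{\i} solver cannot be applied. The gluing step does not resolve this, since the face contributions to the total integral need not cancel pairwise. In addition, even for the normal component where your one-dimensional H\"older argument does give the right $L^p_{uloc}$ bound, the analogous control for the tangential components produced by a $2$D Bogovski\u{\i} operator on a square of side $\sim R$ is not as immediate as you suggest. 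The paper's unit-cube localization sidesteps both issues: zero mean is \emph{forced} by subtracting the cube average, and all Bogovski\u{\i} problems are posed on domains of fixed unit size.
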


\begin{proof} The inclusion $\subset$ is trivial in \eqref{eq.lem.characterize}. It suffices to show the inverse inclusion. The argument is almost parallel to the whole space case proved in Kikuchi-Seregin \cite{KS07}.
Let $f\in L^p_{uloc} (\R^3_+)^3$ be such that 
$${\rm div}\, f=0 ~\text{in}~\R^3_+\,, \quad  f_3=0~ \text{on}~\partial\R^3_+\,, \quad \lim_{R\rightarrow \infty} \| \vartheta_R f \|_{L^p_{uloc}(\R^3_+)} =0.
$$
Let $\{B_j\}$ be the collection of open cubes in $\R^3_+$ such that $|B_j| =1$, $\R^3_+=\cup_j B_j$, and any $B_{j_0}$ intersects with at most $10$ numbers of the other $B_j$. Let $\{\varphi_j\}$ be the partition of unity subordinate to $\{B_j\}$: $\varphi_j\in C_0^\infty (\overline{\R^3_+})$, ${\rm supp}\, \varphi_j \cap \R^3_+ \subset B_j$, and $\sum_j \varphi_j =1$ in $\R^3_+$. 
Let $\chi_L\in C_0^\infty (\R^3)$, $L\gg 1$, be a cut-off such that $\chi_L=1$ for $|x|\leq L$, $\chi_L=1$ for $|x|\geq 2L$, and $\|\nabla^k \chi_L\|_{L^\infty} \leq C_k L^{-k}$ for each $k$.
Let $v_L^j\in W_0^{1,p} (B_j)^3$ be the solution to the divergence problem
\begin{align}
\nabla \cdot v_L^j = \varphi_j f\cdot \nabla \chi_L  - \int_{B_j} \varphi_j f \cdot \nabla \chi_L  d x \quad \text{in }~B_j ,
\end{align}
satisfying 
\begin{align*}
\| v_L^j \|_{W^{1,p} (B_j)} & \leq C \| \varphi_j f\cdot \nabla \chi_L  - \int_{B_j} \varphi_j f \cdot \nabla \chi_L d x\|_{L^p(B_j)} \\
& \leq \frac{C}{L} \| f\|_{L^p(B_j)}.
\end{align*}
Here $C$ is independent of $L$ and $j$.
Set 
\begin{align*}
v_L=\sum_j v_L^j,
\end{align*}
which satisfies 
\begin{align*}
\nabla \cdot v_L = f \cdot \nabla \chi_L \quad \text{in}~ \R^3_+ \,, \quad v_L|_{\partial\R^3_+} =0, \qquad {\rm supp} \, v_L ~\text{is compact in }~\overline{\R^3_+},
\end{align*}
and 
\begin{align*}
\| v_L\|_{L^p_{uloc}(\R^3_+)} \leq C \sup_j \| v_L^j \|_{L^p (B_j)} \leq \frac{C}{L} \| f \|_{L^p (B_j)}. 
\end{align*}
Finally we set 
\begin{align*}
u_L = \chi_L f- v_L,
\end{align*}
which satisfies $\nabla \cdot u_L=0$ in $\R^3_+$ and $u_{L,3}=0$ on $\partial\R^3_+$ in the sense of generalized trace, and the support of $u_L$ is compact, i.e., $u_L\in L^p_\sigma (\R^3_+)$. 
It is easy to see that 
$$
\lim_{L\rightarrow \infty} \| f- u_L \|_{L^p_{uloc}(\R^3_+)} \leq \lim_{L\rightarrow \infty} \big ( \| (1-\chi_L) f \|_{L^p_{uloc} (\R^3_+)} + \| v_L \|_{L^p_{uloc} (\R^3_+)}\big ) =0.
$$
Finally, since $L^p_\sigma (\R^3_+)$ is the closure of $C_{0,\sigma}^\infty (\R^3_+)$ in $L^p (\R^3_+)^3\hookrightarrow L^p_{uloc} (\R^3_+)^3$, we conclude that the right-hand side of \eqref{eq.lem.characterize} is included in its left-hand side. The proof is complete.
\end{proof}

\section*{Acknowledgement}

The authors thank Tai-Peng Tsai for many helpful comments.
The first author is partially supported by JSPS Program for Advancing Strategic International Networks
to Accelerate the Circulation of Talented Researchers, 'Development of Concentrated Mathematical Center Linking to Wisdom of  the Next Generation', which is organized by Mathematical Institute of Tohoku University.
The second author is partially supported by JSPS grants 25707005 and 17K05312.
The third author acknowledges financial support from the French Agence Nationale de la Recherche under grant ANR-16-CE40-0027-01, as well as from the IDEX of the University of Bordeaux for the BOLIDE project.

\small
\bibliographystyle{abbrv}
\bibliography{lerayuloc}

\end{document}